\documentclass{amsart}

\usepackage{amsmath,amssymb,amsfonts,amsthm,mathtools,mathrsfs}
\usepackage[margin=2.5cm]{geometry}
\usepackage[expansion=false]{microtype}
\usepackage{xcolor}
\usepackage[hidelinks,hypertexnames=false]{hyperref}

\definecolor{darkblue}{RGB}{30,65,115}
\hypersetup{
  colorlinks=true,
  linkcolor=darkblue,
  citecolor=darkblue,
  urlcolor=darkblue,
  pdftitle={Topological string blowup equations via stable pairs},
  pdfauthor={Lutian Zhao}
}

\newtheorem{theorem}{Theorem}
\newtheorem{proposition}{Proposition}
\newtheorem{lemma}{Lemma}
\newtheorem{corollary}{Corollary}
\newtheorem{conjecture}{Conjecture}
\theoremstyle{definition}
\newtheorem{definition}{Definition}
\numberwithin{equation}{section}

\newcommand{\C}{\mathbb C}
\newcommand{\Q}{\mathbb Q}
\newcommand{\Z}{\mathbb Z}
\newcommand{\cO}{\mathcal O}
\newcommand{\cF}{\mathcal F}
\newcommand{\cV}{\mathcal V}
\newcommand{\cW}{\mathcal W}
\newcommand{\Tot}{\operatorname{Tot}}
\newcommand{\End}{\operatorname{End}}
\newcommand{\Hom}{\operatorname{Hom}}
\newcommand{\PE}{\operatorname{PE}}
\newcommand{\ch}{\operatorname{ch}}
\newcommand{\rk}{\operatorname{rk}}
\newcommand{\M}{\mathfrak M}
\newcommand{\K}{\mathbb K}
\newcommand{\eps}{\varepsilon}
\newcommand{\PT}{\mathrm{PT}}
\newcommand{\llbracket}{\mathopen{[\![}}
\newcommand{\rrbracket}{\mathclose{]\!]}}

\title[Topological string blowup equations via stable pairs]{Topological string blowup equations via stable pairs}
\author{Lutian Zhao}
\address{Kavli Institute for the Physics and Mathematics of the Universe, University of Tokyo, Kashiwa, Chiba 277--8583, Japan}
\email{lutian.zhao@ipmu.jp}
\date{}
\subjclass[2020]{14N35, 14D21, 14C05, 17B37}
\keywords{stable pairs, framed sheaves, equivariant K-theory, blowup formulas, local Hirzebruch surfaces, local \(\mathbb P^2\)}

\begin{document}

\allowdisplaybreaks
\raggedbottom
\setlength{\emergencystretch}{2em}

\begin{abstract}
The blowup equations of Huang, Sun and Wang are bilinear relations satisfied by the refined topological string partition function of a local Calabi--Yau 3-fold.  We prove the blowup equations for the local Hirzebruch surfaces \(\operatorname{Tot}_{\mathbb F_\ell}K_{\mathbb F_\ell}\), \(0\leq\ell\leq2\), as identities of torus-equivariant symmetrized \(K\)-theoretic stable pair invariants; for \(\ell=2\) the invariants are localized indices.  The proof identifies the stable pair vertex sum, after division by the fibre contribution, with the equivariant Euler characteristic of \((\det\mathcal V)^\ell\) on the moduli space of framed rank \(2\) sheaves on \(\mathbb P^2\).  The blowup formulas of Nakajima--Yoshioka for framed sheaves then yield the unity and vanishing equations.

For local \(\mathbb P^2\) we obtain the blowup equations conditionally on two explicitly stated conjectures.  We also state the Huang--Sun--Wang conjecture for general local Calabi--Yau 3-folds in the language of stable pairs, and formulate stable pair conjectures for the local rational elliptic surface involving the \(E_8\) lattice.
\end{abstract}

\maketitle

\setcounter{tocdepth}{1}
\tableofcontents

\setcounter{section}{-1}
\section{Introduction}

\subsection{Overview}
Let \(Y\) be a nonsingular quasi-projective 3-fold, and let \(\beta\in H_2(Y,\Z)\) be a nonzero class.  A stable pair on \(Y\) is a section
\[
 \cO_Y\xrightarrow{s}F
\]
of a pure \(1\)-dimensional sheaf \(F\) with proper support and \(0\)-dimensional cokernel.  The moduli space \(P_n(Y,\beta)\) of stable pairs with
\[
 [F]=\beta,\qquad \chi(F)=n
\]
carries a perfect obstruction theory \cite{PandharipandeThomasStablePairs}, which is symmetric when \(K_Y\) is trivial; if \(Y\) is a toric Calabi--Yau 3-fold, the \(K\)-theoretic stable pair invariants may be calculated by virtual localization, and the outcome is expressed by the stable pair vertex \cite{PandharipandeThomasVertex,KuhnLiuThimm}.

We use the vertex to prove blowup equations for the local Hirzebruch surfaces
\[
 \mathbb F_\ell=\mathbb P_{\mathbb P^1}(\cO\oplus\cO(\ell)),
 \qquad Y_\ell=\Tot_{\mathbb F_\ell}K_{\mathbb F_\ell},
 \qquad 0\leq\ell\leq2.
\]
Taki identified the refined vertex series of the \(SU(N)\) geometries with the \(K\)-theoretic Nekrasov partition function \cite{TakiRefined}; Huang, Sun and Wang formulated blowup equations for the refined topological string on an arbitrary local Calabi--Yau 3-fold and computed the local Hirzebruch cases \cite{HuangSunWang}; Grassi and Gu used blowup equations to prove BPS relations for a family of toric 3-folds \cite{GrassiGu}.  Our main result, Theorem~\ref{thm:Hirzebruch-main} below, establishes the local Hirzebruch equations as identities among stable pair invariants.

The vertex computation behind Proposition~\ref{prop:Hirzebruch-PT-ADHM} is the rank \(2\) case of a calculation of Taki \cite{TakiRefined}, and reading it as a statement about stable pair invariants requires the square root branches of \cite{Arbesfeld,KuhnLiuThimm}, the sign \((-1)^\ell\) on the base class, the normalization by the fibre contribution, and a formal series reading of the degree \(0\) factors.  For \(\ell=2\) the moduli spaces need not be proper, so the invariants are localized indices.  The \(j=1\) equations for \(\mathbb F_1\) carry a phase from the fourth root of the counting variable.

Throughout the paper, \(\mathrm i\) denotes a square root of \(-1\).  For a row vector the superscript \({}^t\) is the transpose; for a partition \(\lambda\), \(\lambda^t\) is the conjugate partition.

\subsection{Local Hirzebruch surfaces}
Here \(\mathbb P(E)\) parametrizes \(1\)-dimensional subspaces of the fibres of \(E\).  Let \(B\subset\mathbb F_\ell\) be the torus-invariant section with \(B^2=-\ell\), and let \(F\) be the fibre class.\footnote{When \(\ell=0\), either of the two torus-invariant sections may be taken as \(B\).}  The effective curve classes form the monoid
\[
 \operatorname{NE}(\mathbb F_\ell)
 =\Z_{\geq0}B\oplus\Z_{\geq0}F.
\]
Let \(t,q\) be independent refinement parameters, with fixed square roots.  For \(\beta=d_BB+d_FF\), we write \(Q^\beta=Q_B^{d_B}Q_F^{d_F}\) in the formal power series ring \(\Q(t^{1/2},q^{1/2})[[Q_B,Q_F]]\).  The variables \(Q_B,Q_F\) are the Novikov coordinates of the basis \(B,F\); no numerical values or convergence conditions are imposed on them.  Let \(Q_B=e^{-T_B}\) and \(Q_F=e^{-T_F}\), and let
\begin{equation}
 T_M=T_B+\left(\frac\ell2-1\right)T_F.
\label{eq:intro-fixed-coordinate}
\end{equation}
The compact divisor \(\mathbb F_\ell\subset Y_\ell\) translates the coordinates by
\[
 (T_F,T_B)\longmapsto\bigl(T_F+2a,\,T_B+(2-\ell)a\bigr),
\]
and \(T_M\) is invariant.

Our sign convention weights the class \(d_BB+d_FF\) by
\begin{equation}
 \bigl((-1)^\ell Q_B\bigr)^{d_B}Q_F^{d_F}.
\label{eq:intro-signed-Novikov-specialization}
\end{equation}

\subsection{Blowup equations}
Let \(\eps_1,\eps_2\) be the additive equivariant parameters, and let
\[
 \mathbf t=(T_F,T_M)^t,\qquad \mathbf C=(2,0)^t.
\]
An integer vector \(\mathbf r=(r_F,r_M)^t\) prescribes a translation in the two logarithmic coordinates.  Two such vectors are equivalent,
\begin{equation}
 \mathbf r\sim\mathbf r'
 \quad\Longleftrightarrow\quad
 \mathbf r'-\mathbf r\in2\mathbf C\Z;
\label{eq:Hirzebruch-r-equivalence}
\end{equation}
replacing \(\mathbf r\) by an equivalent vector only reindexes the lattice sum below.  The vectors we use are indexed by pairs \((j,d)\) in the sets
\begin{equation}
\begin{aligned}
 \mathcal I_0&=
 \bigl(\{0\}\times\{-1,0,1,2,3\}\bigr)
 \cup\bigl(\{1\}\times\{0,1,2\}\bigr),\\
 \mathcal I_1&=
 \bigl(\{0\}\times\{0,1,2,3\}\bigr)
 \cup\bigl(\{1\}\times\{0,1,2\}\bigr),\\
 \mathcal I_2&=
 \bigl(\{0\}\times\{0,1,2,3,4\}\bigr)
 \cup\bigl(\{1\}\times\{0,1,2\}\bigr),
\end{aligned}
\label{eq:intro-index-sets}
\end{equation}
via
\begin{equation}
\mathbf r_{\ell;j,d}
 =\begin{pmatrix}2j\\ \ell(1-j)+2-2d\end{pmatrix}.
\label{eq:Hirzebruch-main-shift}
\end{equation}
Here \(j\in\{0,1\}\) selects a coset \(\Z+j/2\) of the lattice, and \(d\) is an integer which shifts the instanton counting variable.

Theorem~\ref{thm:Hirzebruch-main}, stated after the normalization and scalar coefficients have been defined, proves the stable pair blowup identity for these shifts.  Its primary meaning is coefficientwise: divide by the unshifted degree \(0\) factor and expand in the signed base counting variable.  Each coefficient is then a finite sum of rational functions.  An analytic identity follows on domains where compatible continuations and normally convergent sums exist.

Following Huang--Sun--Wang, an equation is a \emph{unity equation} when its right side is a nonzero scalar multiple of the unshifted partition function, and a \emph{vanishing equation} when its right side is \(0\).  Up to \eqref{eq:Hirzebruch-r-equivalence}, the unity equations have representatives
\[
\begin{aligned}
 \ell=0,2:\quad&
 (0,4),(0,2),(0,0),(0,-2),(0,-4),(2,2),(2,-2),\\
 \ell=1:\quad&
 (0,3),(0,1),(0,-1),(0,-3),(2,2),(2,-2).
\end{aligned}
\]
For every \(\ell\), the class of \((2,0)\) gives a vanishing equation.

\subsection{Framed sheaves}
Proposition~\ref{prop:Hirzebruch-PT-ADHM} identifies the stable pair vertex sum over the four charts of \(\mathbb F_\ell\), taken in a preferred slope limit and divided by the fibre class contribution, term by term with the generating series
\[
 \sum_{n\geq0}\mathfrak q^n
 \chi_{\mathbf T_{\mathrm{fr}}}^{\mathrm{loc}}
 \bigl(\M(2,n),(\det\cV)^\ell\bigr)
\]
of equivariant Euler characteristics on the moduli space \(\M(2,n)\) of framed rank \(2\) sheaves on \(\mathbb P^2\).  Here \(\cV\) is the tautological bundle.

The \(K\)-theoretic blowup formulas of Nakajima--Yoshioka \cite{NakajimaYoshiokaII,NakajimaYoshiokaPerverse}, the two extreme cases established by Bershtein--Shchechkin \cite{BershteinShchechkin}, and their consequences derived by Shchechkin \cite{Shchechkin} are collected in Theorem~\ref{thm:finite-NY-levels}.  Translating the shifts of the framed sheaf variables into translations of \((T_F,T_M)\) gives Theorem~\ref{thm:Hirzebruch-main}.  Corollary~\ref{cor:Hirzebruch-exact-HSW} rewrites the result in the integral coordinates \((T_F,T_B)\) and verifies the parity condition and the normalization of \cite{HuangSunWang}.

The case \(\ell=1\) is related to the open/closed generating-series correspondence of Gr\"afnitz, Ruddat, Zaslow and Zhou \cite[Theorem~5.3]{GraefnitzRuddatZaslowZhou}.  Their Corollaries~5.5 and~5.7 identify individual invariants at winding \(1\) and \(2\).  We do not use this correspondence in the proof, nor assert an identification of individual invariants at higher winding.

\subsection{Local \texorpdfstring{\(\mathbb P^2\)}{P2}}
The local \(\mathbb P^2\) partition function is reached from the local \(\mathbb F_1\) geometry by flopping the curve \(B\subset Y_1\), whose normal bundle in \(Y_1\) is \(\cO_{\mathbb P^1}(-1)\oplus\cO_{\mathbb P^1}(-1)\), and then taking the large-volume limit of the flopped curve while keeping the hyperplane coordinate fixed.  We obtain the blowup equations of local \(\mathbb P^2\) only conditionally, on two conjectures which we state separately.  Conjecture~\ref{conj:P2-vertex-comparison} identifies the stable pair series with an explicit vertex sum.  We have not been able to prove it.  The natural route is to cut the two toric curves through the third fixed point and apply a degeneration formula, and the \(K\)-theoretic degeneration formula for stable pairs, with control of the square roots, is not available in the form required.  Conjecture~\ref{conj:P2-asymptotic-resummation} is the analytic input needed to send the framing parameter of \(Y_1\) to \(0\), and once it is granted the rest of the argument, in Appendix~\ref{app:u-zero}, is unconditional.  Under these hypotheses we obtain three blowup equations (Theorem~\ref{thm:P2-blowup-main}), a finite recursion determining all invariants (Corollary~\ref{cor:P2-finite-coefficients}), and, after an explicit change of sign conventions, agreement of the first coefficients with the tables of Choi--Katz--Klemm \cite{ChoiKatzKlemm}.

\subsection{Conjectures}
Section~\ref{sec:general-HSW} states the general Huang--Sun--Wang blowup conjecture for a local Calabi--Yau 3-fold in a form adapted to stable pairs, with the lattice data of \cite{HuangSunWang} written in the present notation.  Section~\ref{sec:rational-elliptic-conjecture} formulates stable pair conjectures for the local rational elliptic surface.  There the curve classes are decomposed using the \(E_8\) lattice, the equations are indexed by the \(240\) roots and the zero vector, and the coefficients are theta functions.

\subsection{Plan of the paper}
Section~\ref{sec:PT-refined-vertex} reviews the symmetrized stable pair invariants, the vertex, and the preferred slope limits.  Section~\ref{sec:ADHM} concerns framed sheaves on \(\mathbb P^2\), and Section~\ref{sec:localization-ADHM} carries out the localization on \(Y_\ell\) and proves Proposition~\ref{prop:Hirzebruch-PT-ADHM}.  Section~\ref{sec:NY} proves Theorem~\ref{thm:Hirzebruch-main}.  Section~\ref{sec:stable-pair-blowup} sets up the flop and large-volume transition to local \(\mathbb P^2\) and Section~\ref{sec:P2-blowup} derives its blowup equations, with the analytic part of the argument in Appendix~\ref{app:u-zero}.  Sections~\ref{sec:general-HSW} and \ref{sec:rational-elliptic-conjecture} contain the conjectures.

\subsection*{Acknowledgements}
I thank Min-xin Huang, Kaiwen Sun and Xin Wang for introducing me to this problem, and Jinwon Choi, Sheldon Katz, Hiraku Nakajima and Weite Pi for helpful discussions.  I thank Helge Ruddat and Rubik Poghossian for useful feedbacks.  This work was supported by JSPS KAKENHI Grant Number JP25K17226.

\section{Stable pairs and the vertex}
\label{sec:PT-refined-vertex}

\subsection{Novikov rings}
Let \(R\) be a commutative \(\Q\)-algebra, and let \(\Gamma\) be a commutative monoid with zero.  Fix an additive map
\[
 \deg:\Gamma\to\Z_{\geq0}
\]
with \(\deg(\gamma)>0\) for \(\gamma\neq0\) and with \(\deg^{-1}([0,N])\) finite for every \(N\).  The degree completed monoid algebra is
\begin{equation}
 R\llbracket\Gamma\rrbracket_{\deg}
 :=\varprojlim_N
 R[\Gamma]/(Q^\gamma:\deg(\gamma)>N).
\label{eq:completed-monoid-algebra-general}
\end{equation}
Elements are written \(\sum_{\gamma\in\Gamma}a_\gamma Q^\gamma\) with \(Q^\gamma Q^{\gamma'}=Q^{\gamma+\gamma'}\).  We call \(Q^\gamma\) the Novikov monomial of \(\gamma\).  If
\[
 \Gamma=\Z_{\geq0}\gamma_1\oplus\cdots\oplus\Z_{\geq0}\gamma_r,
\]
then the \(Q_i=Q^{\gamma_i}\) are the Novikov coordinates and \(R\llbracket\Gamma\rrbracket_{\deg}=R[[Q_1,\ldots,Q_r]]\).  The augmentation is the constant coefficient homomorphism \(\sum_\gamma a_\gamma Q^\gamma\mapsto a_0\), and the augmentation ideal is its kernel.

Suppose \(R\) is a \(\lambda\)-ring with Adams operations \(\psi_m\), which we extend to the monoid algebra by \(\psi_m(Q^\gamma)=Q^{m\gamma}\), so that on a Laurent monomial in equivariant characters, counting variables and the formal square roots chosen below, \(\psi_m\) raises every factor to the \(m\)-th power.  For \(f\) in the augmentation ideal, the plethystic exponential is
\begin{equation}
 \PE[f]
 =\exp\left(\sum_{m\geq1}\frac{\psi_m(f)}m\right).
\label{eq:global-plethystic-exponential}
\end{equation}
The double Pochhammer symbol is
\begin{equation}
 (x;p_1,p_2)_\infty
 =\prod_{a,b\geq0}(1-xp_1^ap_2^b)
 =\exp\left[-\sum_{m\geq1}
 \frac{x^m}{m(1-p_1^m)(1-p_2^m)}\right],
\label{eq:global-double-Pochhammer}
\end{equation}
an identity in \(\Q(p_1,p_2)[[x]]\).  When an analytic value is required, the product is first taken for \(|p_1|,|p_2|<1\).  We write
\[
 \K:=\Q(t^{1/2},q^{1/2}).
\]

\subsection{Torus and variables}
Let \(Y\) be a nonsingular quasi-projective toric Calabi--Yau 3-fold with dense torus \(\mathbf T=(\C^*)^3\), let \(\kappa\in\mathbf T^\vee\) be the character of the \(\mathbf T\)-action on \(K_Y^\vee\), and let \(\widetilde{\mathbf T}\to\mathbf T\) be the finite isogeny of tori with character lattice
\[
 \widetilde{\mathbf T}^{\vee}
 =\mathbf T^\vee+\Z\frac{\kappa}{2}
 \subset \mathbf T^\vee\otimes_{\Z}\Q.
\]
Then \(\kappa^{1/2}\) is a character of \(\widetilde{\mathbf T}\).

The stable pair variable \(y\) and the refined vertex variables \(t,q\) are related by
\begin{equation}
 t=-y\kappa^{1/2},\qquad q=-y\kappa^{-1/2}.
\label{eq:PT-refined-variables}
\end{equation}
We use the compatible square roots of Arbesfeld \cite{Arbesfeld},
\begin{equation}
 (tq)^{1/2}=y,\qquad
 (t/q)^{1/2}=-\kappa^{1/2},\qquad
 (q/t)^{1/2}=-\kappa^{-1/2}.
\label{eq:PT-refined-square-root-branches}
\end{equation}

For a vector bundle or virtual bundle \(E\), let
\[
 \Lambda_{-1}(E)=\sum_{i\geq0}(-1)^i\Lambda^iE.
\]
For a virtual bundle \(E_1-E_2\) this means \(\Lambda_{-1}(E_1)/\Lambda_{-1}(E_2)\) in localized \(K\)-theory.

\subsection{Partitions}
A partition is \(\lambda=(\lambda_1\geq\lambda_2\geq\cdots)\).  We use
\[
 |\lambda|=\sum_i\lambda_i,\qquad
 \|\lambda\|^2=\sum_i\lambda_i^2,\qquad
 n(\lambda)=\sum_i(i-1)\lambda_i,
\]
and the transpose \(\lambda^t\).  For a box \(s=(i,j)\), the arm and leg lengths are
\[
 a_\lambda(s)=\lambda_i-j,\qquad \ell_\lambda(s)=\lambda_j^t-i.
\]
Let \(\rho_i=\frac12-i\), and let
\[
 c_\lambda=\|\lambda\|^2-\|\lambda^t\|^2,
 \qquad
 \widetilde Z_\lambda(t,q)
 =\prod_{s\in\lambda}
  \left(1-q^{a_\lambda(s)}t^{\ell_\lambda(s)+1}\right)^{-1}.
\]
The alphabet \(t^{-\rho}q^{-\nu}\) is \((t^{i-1/2}q^{-\nu_i})_{i\geq1}\).  Let \(s_{\alpha/\beta}\) be the skew Schur function.

\subsection{The refined vertex}
The refined vertex of Iqbal, Koz\c{c}az and Vafa \cite[Section~4]{IqbalKozcazVafa} is
\begin{equation}
\begin{aligned}
 C_{\lambda\mu\nu}(t,q)
 ={}&\left(\frac qt\right)^{\|\mu\|^2/2}
 t^{c_\mu/2}q^{\|\nu\|^2/2}\widetilde Z_\nu(t,q)\\
 &\times\sum_\eta
 \left(\frac qt\right)^{(|\eta|+|\lambda|-|\mu|)/2}
 s_{\lambda^t/\eta}(t^{-\rho}q^{-\nu})
 s_{\mu/\eta}(q^{-\rho}t^{-\nu^t}).
\end{aligned}
\label{eq:refined-vertex-definition}
\end{equation}
The distinguished leg is \(\nu\).  The two edge framing monomials are
\begin{equation}
\begin{aligned}
 f_\lambda(t,q)
 &=(-1)^{|\lambda|}
 t^{\|\lambda^t\|^2/2}q^{-\|\lambda\|^2/2},\\
 \widetilde f_\lambda(t,q)
 &=(-1)^{|\lambda|}
 t^{(\|\lambda^t\|^2-|\lambda|)/2}
 q^{-(\|\lambda\|^2-|\lambda|)/2}.
\end{aligned}
\label{eq:modified-framing}
\end{equation}

\subsection{Symmetrized virtual structure sheaves}
For an effective compact curve class \(\beta\), the space \(P_n(Y,\beta)\) parametrizes complexes
\[
 I^\bullet=\{\cO_Y\xrightarrow{s}F\},\qquad
 [F]=\beta,\qquad \chi(F)=n,
\]
with \(F\) pure of dimension \(1\) with proper support and \(\operatorname{coker}(s)\) of dimension \(0\) \cite{PandharipandeThomasStablePairs}.  The symmetric obstruction theory has virtual tangent class \(T^{\mathrm{vir}}\) and virtual canonical line
\[
 K^{\mathrm{vir}}=\det(T^{\mathrm{vir}})^\vee.
\]

Suppose \(P_n(Y,\beta)\) is proper.  A nonequivariant square root \((K^{\mathrm{vir}})^{1/2}\) exists by \cite[Section~6.2, Proposition~6.4]{NekrasovOkounkov}, and it has a canonical \(\widetilde{\mathbf T}\)-equivariant lift \cite[Section~7.2]{NekrasovOkounkov}.  We fix such a square root.  The symmetrized virtual structure sheaf is
\begin{equation}
 \widehat{\cO}^{\mathrm{vir}}
 :=\cO^{\mathrm{vir}}\otimes(K^{\mathrm{vir}})^{1/2}.
\label{eq:PT-symmetrized-sheaf}
\end{equation}
If \(i_{\mathscr F}:\mathscr F\hookrightarrow P_n(Y,\beta)\) is a connected component of the fixed locus, we write
\begin{equation}
 \widehat{\cO}_{\mathscr F}^{\mathrm{vir}}
 :=\cO_{\mathscr F}^{\mathrm{vir}}
   \otimes i_{\mathscr F}^*(K^{\mathrm{vir}})^{1/2}.
\label{eq:fixed-locus-symmetrized-sheaf}
\end{equation}
For the framed Donaldson--Thomas and stable pair vertex moduli spaces we use the symmetrized virtual structure sheaves of \cite[Lemma~3.3.7]{KuhnLiuThimm}, after passing to \(\widetilde{\mathbf T}\).  These are the sheaves in the vertex factors of the toric factorization \cite[Section~1.2, equation~(3)]{KuhnLiuThimm}.

The explicit Donaldson--Thomas character calculations below use the normalization of Arbesfeld.  If the virtual character is written \(V=V_+-\kappa V_+^\vee\), then
\begin{equation}
 \det(V_+)^\vee\kappa^{\rk(V_+)/2}
\label{eq:polarized-square-root-character}
\end{equation}
is a square root weight for \(\det(V)^\vee\).  Multiplying by the ordinary virtual localization denominator gives exactly the symmetrized fixed point contribution of \cite[equation~(2.6)]{Arbesfeld}.  On a positive dimensional component of a framed vertex moduli space we use the square root of \cite[Lemma~3.3.7]{KuhnLiuThimm}.

\begin{lemma}
\label{lem:fixed-locus-square-root-independence}
Let \(\mathscr F\) be a proper connected torus-fixed component, and let \(L_1,L_2\) be two \(\widetilde{\mathbf T}\)-equivariant square roots of the same equivariant line bundle on \(\mathscr F\).  For every localized equivariant \(K\)-class \(G\),
\[
 \chi_{\widetilde{\mathbf T}}(\mathscr F,G\otimes L_1)
 =\chi_{\widetilde{\mathbf T}}(\mathscr F,G\otimes L_2)
\]
after tensoring the representation ring with \(\Q\).  In particular, the Donaldson--Thomas fixed point contributions computed from \eqref{eq:polarized-square-root-character} agree with those computed from the square root of \cite[Lemma~3.3.7]{KuhnLiuThimm}.
\end{lemma}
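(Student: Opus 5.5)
The key observation is that two equivariant square roots of the same line bundle differ by an equivariant line bundle whose square is trivial. Set \(M:=L_1\otimes L_2^{-1}\). Then \(M^{\otimes2}\cong\cO_{\mathscr F}\) \(\widetilde{\mathbf T}\)-equivariantly. Since \(\mathscr F\) is torus-fixed and connected, the torus acts on the fibres of \(M\) by a single character \(\chi\), and \(\chi^2=1\). Characters of a torus form a torsion-free lattice, so \(\chi=1\) and \(M\) is a non-equivariant \(2\)-torsion line bundle with trivial torus action. (If \(\widetilde{\mathbf T}\) were disconnected one would only get \(\chi\in\{\pm1\}\); but \(\widetilde{\mathbf T}\to\mathbf T\) is an isogeny of tori, so \(\widetilde{\mathbf T}\) is again a torus.)

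Next I would reduce the comparison to non-equivariant \(K\)-theory of \(\mathscr F\). Since the torus acts trivially on \(\mathscr F\), the equivariant \(K\)-group, localized, is \(K(\mathscr F)\otimes R(\widetilde{\mathbf T})_{\mathrm{loc}}\). Hence \(G\otimes L_2\) is a finite sum \(\sum_i c_i\,E_i\) with \(E_i\in K(\mathscr F)\) and \(c_i\) localized characters. Then
\[
 \chi_{\widetilde{\mathbf T}}(\mathscr F,G\otimes L_1)-\chi_{\widetilde{\mathbf T}}(\mathscr F,G\otimes L_2)=\sum_i c_i\,\bigl(\chi(\mathscr F,E_i\otimes M)-\chi(\mathscr F,E_i)\bigr),
\]
so it suffices to prove \(\chi(\mathscr F,E\otimes M)=\chi(\mathscr F,E)\) in \(\Q\) for every \(E\in K(\mathscr F)\).

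This is the main step. The class \(x:=[M]-1\) is nilpotent in \(K(\mathscr F)\otimes\Q\) on a proper scheme, since it lies in the first step of the topological/\(\gamma\)-filtration. From \(M^2=1\) we get \((1+x)^2=1\), so \(x(2+x)=0\). Because \(2+x\) is a unit after tensoring with \(\Q\) (its inverse is a finite geometric series in the nilpotent \(x\)), it follows that \(x=0\) in \(K(\mathscr F)\otimes\Q\), and therefore \(\chi(E\otimes M)=\chi(E)\).

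The one point to check is nilpotence of \(x\) on a possibly singular proper scheme with its virtual structure sheaf. I would handle it by working in \(K^0\) (the Grothendieck group of vector bundles) acting on \(K_0\), where \(G\) lives. Nilpotence of \(\mathrm{rank}\)-\(0\) classes in \(K^0(\mathscr F)\otimes\Q\) holds for quasi-projective \(\mathscr F\), which the fixed components here are. If nilpotence is unavailable, I would fall back on Riemann--Roch: \(\chi\) factors through \(\tau:K_0\to A_*\otimes\Q\), with \(\tau(E\otimes M)=\ch(M)\tau(E)\) and \(\ch(M)=1\) because \(c_1(M)\) is torsion.

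The final assertion then follows by applying this to the two square roots of \(i_{\mathscr F}^*K^{\mathrm{vir}}\): the polarized weight \eqref{eq:polarized-square-root-character} at isolated fixed points (or its family version) and the square root of \cite[Lemma~3.3.7]{KuhnLiuThimm}, with \(G\) the localized virtual structure sheaf divided by \(\Lambda_{-1}\) of the virtual normal bundle. At isolated points the argument is trivial, since two characters with the same square coincide.
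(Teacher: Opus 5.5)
Your proof is correct. Its first step is the same as the paper's: the linearization of \(M=L_1\otimes L_2^{-1}\) is a character of order dividing \(2\), and it is trivial because the character lattice of \(\widetilde{\mathbf T}\) is torsion free. After that your route is different.

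The paper passes to rational equivariant cohomology. Since \(c_1(M)\) is \(2\)-torsion, \(\ch_{\widetilde{\mathbf T}}(M)=1\), and equivariant Hirzebruch--Riemann--Roch gives the equality of indices. On a possibly singular or nonreduced \(\mathscr F\), this means the Baum--Fulton--MacPherson transformation, which is your fallback.

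You instead stay in \(K\)-theory:
\begin{enumerate}
\item You use the trivial action on \(\mathscr F\) to write the localized equivariant group as \(K(\mathscr F)\otimes R(\widetilde{\mathbf T})_{\mathrm{loc}}\).
\item You show \(x=[M]-1\) is nilpotent in \(K^0(\mathscr F)\), via Koszul complexes for globally generated line bundles on the projective scheme \(\mathscr F\).
\item You conclude from \(x(2+x)=0\) that \(x=0\) once \(2\) is inverted.
\end{enumerate}

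What each route buys:
\begin{itemize}
\item Your argument is more elementary and gives a slightly stronger conclusion. The classes \(G\otimes L_1\) and \(G\otimes L_2\) already coincide in localized equivariant \(K\)-theory with \(\Z[1/2]\)-coefficients, not just their Euler characteristics after tensoring with \(\Q\).
\item The cost is that you need an ample line bundle on \(\mathscr F\), so that rank-zero classes are nilpotent. This holds here because the fixed components are proper closed subschemes of quasi-projective moduli spaces.
\item The paper's argument is a one-line appeal to Riemann--Roch and never needs nilpotence of the \(\gamma\)-filtration. It does rely on Riemann--Roch machinery for singular schemes, which the paper leaves implicit.
\end{itemize}
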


\begin{proof}
The ratio \(M=L_1\otimes L_2^{-1}\) is an equivariant line bundle with \(M^{\otimes2}\cong\cO_{\mathscr F}\), so its first Chern class is \(2\)-torsion, and because \(\mathscr F\) is connected and fixed its linearization is a character of \(\widetilde{\mathbf T}\) of order dividing \(2\); the character lattice is torsion free, so the character is trivial.  Thus \(\ch_{\widetilde{\mathbf T}}(M)=1\) in rational equivariant cohomology.  Equivariant Hirzebruch--Riemann--Roch applied to \(G\) after localization gives the equality.  This is nothing more than the fixed locus form of the square root independence discussed in \cite[Section~2.6]{Arbesfeld}.  At every isolated Donaldson--Thomas fixed point of the framed vertex moduli space, both \eqref{eq:polarized-square-root-character} and the restriction of the square root of \cite[Lemma~3.3.7]{KuhnLiuThimm} square to the same virtual canonical character.
\end{proof}

\subsection{The stable pair series}
Let \(\operatorname{NE}_c(Y)\) be the monoid of effective curve classes represented by proper curves, and fix an additive degree
\[
 \deg:\operatorname{NE}_c(Y)\to\Z_{\geq0}
\]
satisfying the finiteness and positivity conditions of \eqref{eq:completed-monoid-algebra-general}.  We write \(\mathbf Q^\beta\) for the Novikov monomial of \(\beta\), and \(\beta>0\) means \(\beta\) is nonzero and effective.  If \(Y^{\mathbf T}\) is proper with inclusion \(j:Y^{\mathbf T}\hookrightarrow Y\), let
\begin{equation}
 \chi_{\mathbf T}^{\mathrm{loc}}(Y,\mathcal G)
 =\chi_{\mathbf T}\left(Y^{\mathbf T},
  \frac{j^*\mathcal G}{\Lambda_{-1}(N_{Y^{\mathbf T}/Y}^{\vee})}\right).
\label{eq:localized-euler-general}
\end{equation}
In a virtual localization formula, the normal bundle in \eqref{eq:localized-euler-general} is replaced by the moving part of the virtual tangent complex.

\begin{definition}
\label{def:KPT-series}
Suppose \(P_n(Y,\beta)\) is proper for every \(n\in\Z\) and every compact effective \(\beta\).  The symmetrized \(K\)-theoretic stable pair series is
\begin{equation}
\begin{aligned}
 Z_Y^{\mathrm{PT},K}(y,\mathbf Q)
 &:=1+\sum_{\substack{\beta>0\\ n\in\Z}}
 y^n\mathbf Q^\beta
 \chi_{\widetilde{\mathbf T}}\!\left(
 P_n(Y,\beta),\widehat{\cO}^{\mathrm{vir}}\right)\\
 &=1+\sum_{\substack{\beta>0\\ n\in\Z}}
 y^n\mathbf Q^\beta
 \sum_{\mathscr F\subset P_n(Y,\beta)^{\mathbf T}}
 \chi_{\widetilde{\mathbf T}}\left(
 \mathscr F,
 \frac{\widehat{\cO}_{\mathscr F}^{\mathrm{vir}}}
      {\Lambda_{-1}((N_{\mathscr F}^{\mathrm{vir}})^\vee)}
 \right).
\end{aligned}
\label{eq:KPT-series}
\end{equation}
\end{definition}

The second equality is virtual localization: \(\mathscr F\) runs over the connected fixed components, and the square root in \eqref{eq:fixed-locus-symmetrized-sheaf} is induced by the global square root \eqref{eq:PT-symmetrized-sheaf}.  For fixed \(\beta\), the integer \(n\) is bounded below.  The series therefore lies in
\[
 K_{\widetilde{\mathbf T}}^{\mathrm{loc}}((y))
 \llbracket\operatorname{NE}_c(Y)\rrbracket_{\deg},
\]
where \(R((y))=R[[y]][y^{-1}]\) and \(K_{\widetilde{\mathbf T}}^{\mathrm{loc}}\) is obtained from the representation ring of \(\widetilde{\mathbf T}\) by inverting the Euler factors \(1-\chi\) of the nontrivial characters \(\chi\) occurring in the moving virtual normal denominators.

\subsection{Slope limits}
To pass from the \(\mathbf T\)-equivariant index to a two-variable vertex formula, let \(\mathbf T_0=\ker(\kappa)\subset\mathbf T\).  A slope is a cocharacter \(\sigma:\C^*\to\mathbf T_0\).  For a localized character \(f\), let
\begin{equation}
 f^\sigma(g)=\lim_{a\to0}f(\sigma(a)g),
 \qquad g\in\widetilde{\mathbf T},
\label{eq:slope-limit}
\end{equation}
whenever the limit exists.  On a chart with weights \((t_1,t_2,t_3)\), write \(\sigma(a)=(a^{r_1},a^{r_2},a^{r_3})\) with \(r_1+r_2+r_3=0\).  For the third coordinate direction we use the four orders
\begin{equation*}
\begin{gathered}
 r_1\gg r_3>0\gg r_2,\qquad
 r_1\gg0>r_3\gg r_2,\\
 r_2\gg r_3>0\gg r_1,\qquad
 r_2\gg0>r_3\gg r_1.
\end{gathered}
\end{equation*}
Here \(\gg\) means coefficientwise stabilization.  For each fixed coefficient in the curve and stable pair variables only finitely many characters occur, and the value is required to stabilize along a sequence of cocharacters, which may depend on the coefficient; the first order is represented by \((r_1,r_3,r_2)=(N^2,N,-N^2-N)\) and the second by \((r_1,r_3,r_2)=(N^2+N,-N,-N^2)\) as \(N\to\infty\), the other two being obtained by interchanging the first two coordinates.  We call the result the coefficientwise limit along the preferred slope, and the limits in the other coordinate directions are defined by permuting the coordinates.

\subsection{Edge terms}
Consider a compact toric edge with tangent character \(h\) at its initial fixed point and ordered normal characters \((u,v)\), and write the normal bundle as \(\cO_{\mathbb P^1}(a)\oplus\cO_{\mathbb P^1}(b)\) with \(a+b=-2\).  For an edge partition \(\lambda\), let
\begin{equation}
\begin{aligned}
 \mathsf T_\lambda(u,v)
 &:=\sum_{s\in\lambda}
 \left(u^{-\ell_\lambda(s)}v^{a_\lambda(s)+1}
       +u^{\ell_\lambda(s)+1}v^{-a_\lambda(s)}\right),\\
 \mathsf E^{\mathrm{vir}}_{\lambda;(a,b)}(h,u,v)
 &:=\frac{\mathsf T_\lambda(u,v)}{1-h^{-1}}
  +\frac{\mathsf T_{\lambda^t}(h^{-b}v,h^{-a}u)}{1-h}.
\end{aligned}
\label{eq:general-edge-character}
\end{equation}
This is the compact edge virtual tangent character of \cite[equation~(4.8)]{Arbesfeld} in our arm and leg conventions.

For a virtual character \(V=\sum_i u_i-\sum_jv_j\), let
\begin{equation}
 \widehat a(V)
 =\frac{\prod_j(v_j^{1/2}-v_j^{-1/2})}
 {\prod_i(u_i^{1/2}-u_i^{-1/2})}.
\label{eq:symmetrized-Euler-character}
\end{equation}
If \(V=-\kappa V^\vee\), a polarization is a virtual character \(V_+\) with \(V=V_+-\kappa V_+^\vee\).  We use the polarization fixed by the cited vertex or edge formula and write
\[
 V_+=\sum_\chi m_\chi\chi,\qquad
 V=\sum_\chi m_\chi(\chi-\kappa\chi^{-1}),
 \qquad m_\chi\in\Z.
\]
A weight is attracting (respectively repelling) for \(\sigma\) if its exponent under \(\sigma(a)\) is positive (respectively negative).  For a generic slope \(\sigma\), let
\begin{equation}
 \operatorname{ind}_\sigma(V)
 =\sum_\chi m_\chi\,
   \operatorname{sgn}\bigl(\text{exponent of }\chi\text{ under }\sigma\bigr).
\label{eq:slope-index-definition}
\end{equation}
Then
\[
 (\widehat a(V))^\sigma=(-\kappa^{1/2})^{\operatorname{ind}_\sigma(V)}
\]
by \cite[Definition~4.1 and equation~(4.18)]{Arbesfeld}.  Let
\begin{equation}
 \chi(\lambda;(a,b))
 :=\sum_{(i,j)\in\lambda}
   \bigl(1-a(i-1)-b(j-1)\bigr)
 =|\lambda|-a n(\lambda)-b n(\lambda^t).
\label{eq:edge-partition-Euler-characteristic}
\end{equation}
The symmetrized edge factor is
\begin{equation}
 \widehat{\mathsf E}_{\lambda;(a,b)}(h,u,v)
 :=y^{\chi(\lambda;(a,b))}
   \widehat a\!\left(
    \mathsf E^{\mathrm{vir}}_{\lambda;(a,b)}(h,u,v)\right).
\label{eq:complete-symmetrized-edge-term}
\end{equation}
By \eqref{eq:PT-refined-variables} and \eqref{eq:slope-index-definition}, its coefficientwise limit along the preferred slope is the monomial
\begin{equation}
 t^{\left(\chi(\lambda;(a,b))+
 \operatorname{ind}_\sigma(
 \mathsf E^{\mathrm{vir}}_{\lambda;(a,b)}(h,u,v))\right)/2}
 q^{\left(\chi(\lambda;(a,b))-
 \operatorname{ind}_\sigma(
 \mathsf E^{\mathrm{vir}}_{\lambda;(a,b)}(h,u,v))\right)/2}.
\label{eq:edge-preferred-limit-monomial}
\end{equation}

\subsection{Vertex--edge factorization}
Let \(\mathsf V^{\mathrm{PT}}_{\lambda\mu\nu}\) be the symmetrized stable pair vertex of \cite[Section~1.2, equation~(3)]{KuhnLiuThimm}; when all three asymptotic partitions are empty its value is \(1\).  For a compact edge \(e\) with data \((h,u,v;a,b)\) as above, let
\[
 \mathsf E_{e,\lambda}
 :=\widehat{\mathsf E}_{\lambda;(a,b)}(h,u,v).
\]
Let \(\boldsymbol\lambda\) assign a partition \(\lambda_e\) to every compact edge, with \(\varnothing\) assigned to every noncompact edge, let \(\mathbf Q_e=Q^{[e]}\) be the Novikov monomial of the class of \(e\), and, at a torus-fixed vertex \(v\), let \(\lambda_{v,1},\lambda_{v,2},\lambda_{v,3}\) be the partitions on the three incident edges.  The vertex--edge factorization is
\begin{equation}
 Z_Y^{\mathrm{PT},K}
 =
 \sum_{\boldsymbol\lambda}
 \prod_v
 \mathsf V^{\mathrm{PT}}_{\lambda_{v,1},
 \lambda_{v,2},\lambda_{v,3}}
 \prod_e\mathsf E_{e,\lambda_e}\,
 \mathbf Q_e^{|\lambda_e|},
\label{eq:KPT-vertex-edge-sum}
\end{equation}
the products being over the torus-fixed vertices and the compact edges; when the stable pair moduli spaces are proper, \cite[Section~1.2, equation~(3)]{KuhnLiuThimm} identifies \eqref{eq:KPT-vertex-edge-sum} with the global invariant \eqref{eq:KPT-series}.  If only the torus-fixed loci are proper, we \emph{define} the localized series coefficientwise by the right side of \eqref{eq:KPT-vertex-edge-sum}, as we do for \(Y_2\); for \(Y_0\) and \(Y_1\) the same expression is the vertex--edge expansion of the global invariant.

Let \(\mathsf V^{\mathrm{DT}}_{\lambda\mu\nu}\) be the symmetrized Donaldson--Thomas vertex, with each compact edge term assigned to the same adjacent fixed point as in \(\mathsf V^{\mathrm{PT}}_{\lambda\mu\nu}\).

\begin{theorem}[Kuhn--Liu--Thimm \cite{KuhnLiuThimm}]
\label{thm:KLT-vertex}
For all asymptotic partitions \(\lambda,\mu,\nu\),
\begin{equation}
 \mathsf V^{\mathrm{DT}}_{\lambda\mu\nu}
 =
 \mathsf V^{\mathrm{DT}}_{\varnothing\varnothing\varnothing}
 \mathsf V^{\mathrm{PT}}_{\lambda\mu\nu}.
\label{eq:KLT-vertex}
\end{equation}
\end{theorem}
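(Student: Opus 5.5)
This theorem is quoted from Kuhn--Liu--Thimm, so the plan is to reconstruct the structure of their argument rather than prove anything new: the \(K\)-theoretic DT/PT vertex correspondence via wall-crossing. Fix asymptotic partitions \(\lambda,\mu,\nu\) and work on the local model \(\C^3\) with the three legs prescribed. The framed DT vertex moduli space is the space of \(\mathbf T\)-invariant ideal sheaves with cosupport asymptotic to the legs. The PT vertex moduli space is the corresponding space of stable pairs. Both are quasi-projective with proper \(\mathbf T\)-fixed loci, so both vertices are defined by virtual localization with the symmetrized sheaves of \cite[Lemma~3.3.7]{KuhnLiuThimm}. By Lemma~\ref{lem:fixed-locus-square-root-independence}, the choice of square root on each proper fixed component does not affect the fixed-point contributions, so I may compare the two sides using any convenient compatible square roots.

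The first step is to realize both moduli spaces inside a common master space of objects in a suitable abelian (or heart) category of framed sheaves with fixed asymptotics. One then introduces a one-parameter family of stability conditions which interpolates between ideal-sheaf stability and pair stability. At the walls, the destabilizing objects are exactly the \(0\)-dimensional sheaves, as in the Bridgeland/Toda proof of DT\(=\)PT in cohomology. The second step is to apply Joyce's (or Liu's) \(K\)-theoretic wall-crossing formula for symmetrized virtual structure sheaves. Because the theory is Calabi--Yau, the Euler pairing between a framed object and a \(0\)-dimensional sheaf vanishes up to the \(\kappa\)-twist. The wall-crossing then collapses to a product formula: the DT series with legs \((\lambda,\mu,\nu)\) equals the PT series multiplied by the generating series of invariants of \(0\)-dimensional sheaves. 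That factor is independent of the legs, and setting \(\lambda=\mu=\nu=\varnothing\) identifies it with \(\mathsf V^{\mathrm{DT}}_{\varnothing\varnothing\varnothing}\), which gives \eqref{eq:KLT-vertex}.

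The main obstacle is the wall-crossing step itself. It requires three things:
\begin{itemize}
\item an enhanced master space with a good \(\C^*\)-action and a symmetric obstruction theory;
\item properness of the relevant fixed loci, even though the vertex moduli spaces are noncompact;
\item control of the signs and square roots of \(K^{\mathrm{vir}}\) across the wall, so that the symmetrized classes glue compatibly.
\end{itemize}
I would try first to construct the square root globally on the master space from the polarization \(V_+\) of the framed deformation complex. This is the choice for which \eqref{eq:polarized-square-root-character} holds, and I would check that the \(\widehat a\)-contributions of the \(0\)-dimensional wall terms are independent of the framing data. The vanishing of the relevant Euler pairing should be the input that makes the wall terms factor out. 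This leg-independence is exactly the \(K\)-theoretic refinement that Kuhn--Liu--Thimm establish.
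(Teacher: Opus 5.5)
The paper gives no argument of its own for this statement: it quotes it from \cite{KuhnLiuThimm}. It only adds that both sides use their symmetrized sheaves, and that Lemma~\ref{lem:fixed-locus-square-root-independence} lets the Donaldson--Thomas fixed point contributions be computed from the polarized character \eqref{eq:polarized-square-root-character}. Deferring the wall-crossing step to \cite{KuhnLiuThimm}, as you ultimately do, therefore matches the paper, and so does your use of the square-root lemma. The problem is the mechanism you attribute to that step: it is wrong, and taken literally it would prove something false.

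\emph{The pairing does not vanish.} You claim that the Euler pairing between the framed object and a \(0\)-dimensional sheaf vanishes, and that this makes the wall terms factor out. For a sheaf \(F\) of length \(n\), \(\ch(F)=n[\mathrm{pt}]\), so Riemann--Roch gives \(\chi(I,F)=\rk(I)\cdot n=n\neq0\). If this pairing vanished, the wall-crossing terms coupling the pair class to \(0\)-dimensional classes would drop out. You would get \(\mathsf V^{\mathrm{DT}}_{\lambda\mu\nu}=\mathsf V^{\mathrm{PT}}_{\lambda\mu\nu}\), and in particular \(\mathsf V^{\mathrm{DT}}_{\varnothing\varnothing\varnothing}=1\), which is false. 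If ``vanishes up to the \(\kappa\)-twist'' means the Serre duality symmetry \(\chi(E,F)=-\kappa\,\chi(F,E)^\vee\), that symmetry does hold, but by itself it neither kills the wall terms nor makes them leg-independent.

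\emph{What the argument actually needs.} The pairing between two \(0\)-dimensional classes vanishes numerically, so their wall-crossing operators commute and the nested brackets exponentiate into a single product. The pairing of the rank one object with a length \(n\) sheaf is the nonzero number \(n\). It is independent of the legs only numerically, because \(1\)-dimensional sheaves pair to zero with points. In the \(\mathbf T\)-equivariant setting this is not enough: the \(\mathbf T\)-character of \(\chi(I_Z,\cO_0)\) does depend on the legs. A leg of cross-section \(\lambda\) along the first axis changes it by a rank zero but nonzero term such as \((1-w_2)(1-w_3)\sum_{(i,j)\in\lambda}w_2^{i-1}w_3^{j-1}\), with \(w_i\) the coordinate characters and up to dualization conventions.

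\emph{Where leg-independence comes from.} In a \(K\)-theoretic wall-crossing of the kind you invoke, the symmetrized terms are extracted from the limits of \(\widehat a\) of the \(\kappa\)-symmetrized Ext class in the auxiliary scaling variable. These limits are powers of \(-\kappa^{\pm1/2}\) determined by the rank \(n\) alone. This is the same phenomenon as \((\widehat a(V))^\sigma=(-\kappa^{1/2})^{\operatorname{ind}_\sigma(V)}\) in Section~\ref{sec:PT-refined-vertex}. So the check you propose should be that the wall terms depend on the framing data only through this nonzero rank, not that the pairing vanishes.
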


The equality is coefficientwise in \(y\) with coefficients in \(K_{\widetilde{\mathbf T}}^{\mathrm{loc}}\), for the symmetrized virtual structure sheaves of \cite{KuhnLiuThimm}, and on the Donaldson--Thomas side Lemma~\ref{lem:fixed-locus-square-root-independence} allows the fixed point contributions to be computed from \eqref{eq:polarized-square-root-character}.  The stable pair side uses the square root of \cite[Lemma~3.3.7]{KuhnLiuThimm}.

The factorization \eqref{eq:KPT-vertex-edge-sum} uses the redistribution of vertex and edge terms arising from the \v Cech complex of \cite[Section~3.3]{KuhnLiuThimm}, and, after converting tangent weights to coordinate function characters, the compact edge character \eqref{eq:general-edge-character} is the redistributed edge term of \cite[equation~(4.8)]{Arbesfeld}, assigned to the initial fixed point.

\subsection{Preferred slope limits of the vertex}
Arbesfeld's preferred slope vertex differs from \eqref{eq:refined-vertex-definition}.  Let
\begin{equation}
\begin{aligned}
 \mathsf C^{\mathrm A}_{\alpha\beta\gamma}(t,q)
 :={}&t^{-\|\alpha^t\|^2/2}q^{-\|\beta\|^2/2}
 \widetilde Z_{\gamma^t}(t,q)\\
 &\times\sum_\eta\left(\frac qt\right)^{|\eta|/2}
 s_{\alpha/\eta}(t^{-\rho}q^{-\gamma^t})
 s_{\beta^t/\eta}(q^{-\rho}t^{-\gamma}).
\end{aligned}
\label{eq:Arbesfeld-vertex}
\end{equation}
This is the function \(C(\alpha,\beta,\gamma)(t,q)\) defined immediately before \cite[Proposition~4.6]{Arbesfeld}.  Comparing with \eqref{eq:refined-vertex-definition},
\begin{equation}
 \mathsf C^{\mathrm A}_{\alpha\beta\gamma}(t,q)
 =a_{\alpha\beta\gamma}(t,q)
  C_{\alpha^t\beta^t\gamma^t}(t,q),
\label{eq:Arbesfeld-IKV-prefactor}
\end{equation}
where
\begin{equation}
 a_{\alpha\beta\gamma}(t,q)
 =t^{(-\|\alpha^t\|^2+\|\beta\|^2+|\alpha|-|\beta|)/2}
  q^{-(\|\beta\|^2+\|\beta^t\|^2+\|\gamma^t\|^2
       +|\alpha|-|\beta|)/2}.
\label{eq:Arbesfeld-IKV-monomial}
\end{equation}
We keep the monomial \eqref{eq:Arbesfeld-IKV-monomial} together with the compact edge terms.

\begin{proposition}
\label{prop:chart-slope-limit}
For a generic cocharacter satisfying one of the four orders above, the coefficientwise slope limit of \(\mathsf V^{\mathrm{DT}}_{\lambda\mu\nu}/ \mathsf V^{\mathrm{DT}}_{\varnothing\varnothing\varnothing}\), equivalently of \(\mathsf V^{\mathrm{PT}}_{\lambda\mu\nu}\), is respectively
\begin{equation*}
\begin{array}{c|c}
 r_1\gg r_3>0\gg r_2&
 \mathsf C^{\mathrm A}_{\lambda\mu\nu}(t,q)\\
 r_1\gg0>r_3\gg r_2&
 \mathsf C^{\mathrm A}_{\mu^t\lambda^t\nu^t}(q,t)\\
 r_2\gg r_3>0\gg r_1&
 \mathsf C^{\mathrm A}_{\mu^t\lambda^t\nu^t}(t,q)\\
 r_2\gg0>r_3\gg r_1&
 \mathsf C^{\mathrm A}_{\lambda\mu\nu}(q,t).
\end{array}
\end{equation*}
\end{proposition}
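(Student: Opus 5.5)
The plan is to reduce everything to Arbesfeld's preferred slope limit of the symmetrized Donaldson--Thomas vertex, and then transport it to the other three orders by symmetries of the vertex. First, by Theorem~\ref{thm:KLT-vertex}, \(\mathsf V^{\mathrm{PT}}_{\lambda\mu\nu}=\mathsf V^{\mathrm{DT}}_{\lambda\mu\nu}/\mathsf V^{\mathrm{DT}}_{\varnothing\varnothing\varnothing}\) coefficientwise in \(y\). Both descriptions of the statement are therefore the same series, and it is enough to treat the Donaldson--Thomas ratio. By Lemma~\ref{lem:fixed-locus-square-root-independence}, the fixed point contributions on the Donaldson--Thomas side may be computed with the polarized square root \eqref{eq:polarized-square-root-character}, which is precisely Arbesfeld's normalization. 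Each coefficient of \(y^n\) is then a finite sum over plane-partition-like configurations \(\pi\) with asymptotics \((\lambda,\mu,\nu)\) of terms \(\widehat a(V_\pi)\). By \eqref{eq:slope-index-definition}, each such term has slope limit \((-\kappa^{1/2})^{\operatorname{ind}_\sigma(V_\pi)}\), provided no weight of \(V_\pi\) lies in \(\ker\sigma\). Genericity of \(\sigma\) within the stated order, together with the coefficientwise stabilization along the sequences \((N^2,N,-N^2-N)\) and so on, guarantees this for every fixed coefficient, since only finitely many characters occur.

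For the first order \(r_1\gg r_3>0\gg r_2\), I would quote \cite[Definition~4.1, equation~(4.18) and Proposition~4.6]{Arbesfeld}. These give that the sum of \((-\kappa^{1/2})^{\operatorname{ind}_\sigma}y^{|\pi|}\), normalized by the empty vertex, equals \(\mathsf C^{\mathrm A}_{\lambda\mu\nu}(t,q)\) after the substitution \eqref{eq:PT-refined-variables} and the branches \eqref{eq:PT-refined-square-root-branches}. The one thing to check is that Arbesfeld's slope chamber and arm/leg conventions match the first row. This means comparing his ordering of the weights (the exponent of \(t_1\) dominant and positive, \(t_2\) dominant and negative, \(t_3\) small) with ours, and confirming that his edge redistribution matches the one fixed in \eqref{eq:general-edge-character}. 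Any mismatch would only permute labels, and that permutation would be absorbed below.

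For the remaining three orders I would use two symmetries. The first is the swap \(\tau\) of the first two coordinates, under which \(\mathsf V^{\mathrm{DT}}_{\lambda\mu\nu}(t_1,t_2,t_3)=\mathsf V^{\mathrm{DT}}_{\mu^t\lambda^t\nu^t}(t_2,t_1,t_3)\). Transposition appears because reflecting a 3d partition in the plane \(x_1=x_2\) interchanges the first two legs and transposes each asymptotic partition in the conventions where \(\lambda\) lies in the \((x_2,x_3)\)-plane, etc. This symmetry fixes \(\kappa\) and \(y\), hence fixes \(t,q\), and it maps the order \(r_1\gg r_3>0\gg r_2\) to \(r_2\gg r_3>0\gg r_1\). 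This gives the third row. The second symmetry is the sign reversal \(\sigma\mapsto\sigma^{-1}\). Since \(V_\pi=-\kappa V_\pi^\vee\) and \(\kappa\) is trivial on \(\mathbf T_0\), reversing \(\sigma\) changes \(\operatorname{ind}_\sigma(V_\pi)\) to \(-\operatorname{ind}_\sigma(V_\pi)\). On the limit this replaces \(-\kappa^{1/2}\) by \(-\kappa^{-1/2}\), i.e. swaps \(t\leftrightarrow q\) while fixing \(y=(tq)^{1/2}\). Reversal sends the first order to \(r_2\gg 0> r_3\gg r_1\), the fourth row with \(\mathsf C^{\mathrm A}_{\lambda\mu\nu}(q,t)\). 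Composing the two symmetries sends the first order to the second, giving \(\mathsf C^{\mathrm A}_{\mu^t\lambda^t\nu^t}(q,t)\).

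The main obstacle is the sign bookkeeping in the reversal step. The polarization \(V_+\) attached to \(\pi\) is not preserved by \(\sigma\mapsto\sigma^{-1}\), so the identity \(\operatorname{ind}_{\sigma^{-1}}=-\operatorname{ind}_\sigma\) must be checked against the fixed polarization. It is also necessary that the branch \((t/q)^{1/2}=-\kappa^{1/2}\) be used consistently, so that no stray factor \((-1)^{|\pi|}\) or \((q/t)^{\text{const}}\) appears. I would first handle this by noting that changing the polarization by \(\chi\mapsto\kappa\chi^{-1}\) flips the sign of \(m_\chi\) and also flips the slope sign of the weight. Hence \(\operatorname{ind}_\sigma\) is polarization independent, and the antisymmetry under reversal is immediate. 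As a fallback, I would check the one-leg case \(\mathsf C^{\mathrm A}_{\varnothing\varnothing\nu}\) directly via \(\widetilde Z_{\nu^t}(t,q)\) versus \(\widetilde Z_{\nu}(q,t)\).
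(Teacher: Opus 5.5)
Your argument is correct, but it is organized differently from the paper's. The paper invokes Theorem~\ref{thm:KLT-vertex} and Lemma~\ref{lem:fixed-locus-square-root-independence}, and then reads all four rows directly off the four preferred slope formulas that Arbesfeld states before \cite[Proposition~4.6]{Arbesfeld}. You import only one chamber and generate the other three from two symmetries:
\begin{itemize}
\item the reflection $x_1\leftrightarrow x_2$, which fixes $\kappa$ and $y$, hence $t,q$, and acts by $(\lambda,\mu,\nu)\mapsto(\mu^t,\lambda^t,\nu^t)$;
\item the reversal $\sigma\mapsto\sigma^{-1}$, which acts by $t\leftrightarrow q$.
\end{itemize}
This Klein four-group acts simply transitively on the four orders. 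It also carries the representing sequences $(N^2,N,-N^2-N)$, $(N^2+N,-N,-N^2)$ and their swaps onto one another. So it does not matter which chamber Arbesfeld's base formula lives in, and your bookkeeping reproduces the table exactly.

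The paper's route is shorter, but it needs four of Arbesfeld's conventions matched. Yours needs one match plus reflection invariance of the normalized symmetrized vertex in the conventions in use, which you rightly flag. At an isolated fixed point the square root is forced, because the character lattice of $\widetilde{\mathbf T}$ is torsion free. Your route also explains the shape of the table: rows related by the reflection differ by $(\lambda,\mu,\nu)\mapsto(\mu^t,\lambda^t,\nu^t)$, and rows related by reversal differ by $t\leftrightarrow q$.

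One small correction. Genericity of $\sigma$ inside $\ker\kappa$ cannot rule out weights of $V_\pi$ that are powers of $\kappa$, so that sentence should instead appeal to the cited limit formula. Your reversal step does not depend on this. For $V=V_+-\kappa V_+^\vee$ one has $\widehat a(V)(g^{-1})=\widehat a(V)(g)$ termwise. Hence the $\sigma^{-1}$-limit is the $\sigma$-limit evaluated at $g^{-1}$, that is, with $\kappa^{1/2}\mapsto\kappa^{-1/2}$, independently of any polarization.
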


\begin{proof}
By Theorem~\ref{thm:KLT-vertex}, the quotient of the Donaldson--Thomas vertex of \cite{KuhnLiuThimm} by its value at three empty partitions is the stable pair vertex, Lemma~\ref{lem:fixed-locus-square-root-independence} identifies the localized Donaldson--Thomas fixed point contributions with those computed from Arbesfeld's polarized character \eqref{eq:polarized-square-root-character}, and the four preferred slope formulas stated before \cite[Proposition~4.6]{Arbesfeld}, together with that proposition, compute the normalized Donaldson--Thomas limits.  Written in terms of \(\mathsf C^{\mathrm A}\), they are the four rows of the table.
\end{proof}

\begin{lemma}
\label{lem:global-preferred-slope-limit}
Let \(\sigma_N\) be a sequence of generic cocharacters which, at every toric chart, eventually satisfies one of the four orders of Proposition~\ref{prop:chart-slope-limit}.  Fix \(\beta\) and \(n\), and suppose only finitely many edge partition assignments and tuples of vertex degrees contribute to the coefficient of \(\mathbf Q^\beta y^n\).  Then this coefficient has a preferred slope limit, and the limit is obtained by taking the limit of every normalized vertex function and of every symmetrized edge factor before multiplying and summing over the edge partitions.
\end{lemma}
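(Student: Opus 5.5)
The plan is to write the coefficient of \(\mathbf Q^\beta y^n\) as a finite sum of finite products, and then use that limits commute with finite sums and products. First I will expand the vertex--edge factorization \eqref{eq:KPT-vertex-edge-sum}. Each normalized vertex function \(\mathsf V^{\mathrm{PT}}_{\lambda_{v,1}\lambda_{v,2}\lambda_{v,3}}\) is a series in \(y\). Its coefficients lie in \(K^{\mathrm{loc}}_{\widetilde{\mathbf T}}\), and its \(y\)-degree is bounded below for fixed partitions. Each edge factor \(\mathsf E_{e,\lambda_e}\) is the monomial \(y^{\chi(\lambda_e;(a,b))}\) times the localized character \(\widehat a(\mathsf E^{\mathrm{vir}})\). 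The coefficient of \(\mathbf Q^\beta y^n\) is therefore a sum over edge assignments \(\boldsymbol\lambda\) with \(\sum_e|\lambda_e|[e]=\beta\). Each summand splits further into a sum over tuples \((n_v)\) of vertex \(y\)-degrees with \(\sum_v n_v+\sum_e\chi(\lambda_e)=n\), and the summand for \((\boldsymbol\lambda,(n_v))\) is \(\prod_v[\mathsf V]_{y^{n_v}}\prod_e\widehat a(\mathsf E^{\mathrm{vir}}_{e})\). By hypothesis only finitely many pairs \((\boldsymbol\lambda,(n_v))\) contribute. The coefficient is thus a finite sum of finite products of localized characters, and each factor is a coefficient of one of the constituent series.

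Next I will establish that each factor has a limit along \(\sigma_N\). For the vertex coefficients, fix a chart. Since \(\sigma_N\) eventually satisfies one fixed order there, Proposition~\ref{prop:chart-slope-limit} gives the coefficientwise limit \([\mathsf V]_{y^{n_v}}^{\sigma}\) as the corresponding coefficient of the appropriate \(\mathsf C^{\mathrm A}\). Two details need care here. The first is that only finitely many characters occur in a fixed coefficient. The second is that the limit in the \(y\)-variable is really a statement about coefficients in \(t,q\) via \eqref{eq:PT-refined-variables}. To handle this I will pass to \(\widetilde{\mathbf T}\)-characters with \(y\) kept formal and \(\kappa^{1/2}\) fixed, so that the limit is taken inside \(K^{\mathrm{loc}}\) coefficientwise in \(y\). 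For the edge factors, I will show that for generic \(\sigma_N\) no weight of \(\mathsf E^{\mathrm{vir}}_e\) is eventually trivial. Each \(\widehat a\) factor \((w^{1/2}-w^{-1/2})^{\pm1}\) then tends to a monomial, which yields \((-\kappa^{1/2})^{\operatorname{ind}_\sigma}\) as in \eqref{eq:edge-preferred-limit-monomial}. The index is constant once the order has stabilized, since only finitely many weights occur and the sign of each exponent is eventually fixed.

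Finally, a finite product of functions on \(\widetilde{\mathbf T}\), each having a limit \(f^\sigma(g)=\lim f(\sigma_N\cdot g)\), has limit equal to the product of the limits, and the same holds for finite sums. This gives both existence of the limit and the claimed formula: take the limits of the normalized vertex functions and of the symmetrized edge factors first, then multiply and sum over \(\boldsymbol\lambda\).

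The main obstacle is the bookkeeping of "coefficientwise" limits. A fixed \(y^n\) coefficient of the global series mixes different \(y\)-coefficients of the vertex factors with edge monomials \(y^{\chi}\). I must make sure the limits are taken at fixed \(y\)-degree before the reorganization into \(t,q\). I will also need to check that the possibly coefficient-dependent sequence in the definition of the slope limit can be taken common to all the finitely many factors: I will choose a single \(N\) large enough that every factor has stabilized. The genericity required for the edge weights follows from the strict orders, since \(r_1,r_2,r_3\) are eventually nonzero and pairwise distinct in absolute growth.
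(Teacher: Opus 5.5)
Your proposal is correct and is essentially the paper's argument: you write the fixed coefficient as a finite sum, over edge assignments and vertex $y$-degrees, of products of vertex coefficients and edge factors, and then pass the preferred slope limit through these finite sums and products using Proposition~\ref{prop:chart-slope-limit} and \eqref{eq:edge-preferred-limit-monomial}. Your stated reason for genericity of the edge weights is not quite right: $|r_1|$ and $|r_2|$ grow at the same rate, and what the orders actually guarantee is that every weight other than a power of $\kappa$ has an eventually nonzero exponent. Genericity is a hypothesis of the lemma anyway, so nothing is lost.
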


\begin{proof}
Fix partitions \(\lambda_e\) on the compact edges and write
\[
 \mathsf V_v=\sum_m \mathsf V_{v,m}y^m,
 \qquad
 \mathsf E_{e,\lambda_e}=y^{c_e}\mathsf A_{e,\lambda_e},
 \qquad
 c_e=\chi(\lambda_e;(a_e,b_e)).
\]
The coefficient in question is
\[
 \left[y^n\right]
 \prod_v\mathsf V_v\prod_e\mathsf E_{e,\lambda_e}
 =\left(\prod_e\mathsf A_{e,\lambda_e}\right)
  \sum_{\sum_v m_v=n-\sum_e c_e}
  \prod_v\mathsf V_{v,m_v}.
\]
The sum on the right is finite by hypothesis.  Every \(\mathsf V_{v,m_v}\) in it stabilizes along \(\sigma_N\) by Proposition~\ref{prop:chart-slope-limit}, and every \(\mathsf A_{e,\lambda_e}\) stabilizes by \eqref{eq:edge-preferred-limit-monomial}, and the limit of the finite sum is the product of the limits; the same remark applies to the finite sum over edge partition assignments.  The lower bound on \(n\) for fixed \(\beta\) makes the result a Laurent series in \(y\).
\end{proof}

\section{Framed sheaves on \texorpdfstring{\(\mathbb P^2\)}{P2}}
\label{sec:ADHM}

\subsection{The ADHM description}
On a framed sheaf moduli space, \(q_1,q_2\) denote the characters of the two coordinate functions on \(\mathbb A^2\), the characters \(u_1,\ldots,u_r\) act on the framing space, and \(\mathfrak q\) is the formal variable recording \(c_2\).  In rank \(2\) we let \(u=u_1/u_2\).  Multiplying both framing characters by a common character allows the normalization \(u_2=1\).  

Let \(V=\C^n\) and \(W=\C^r\).  The moduli space of framed sheaves has the ADHM presentation
\begin{equation}
\begin{aligned}
 \M(r,n)=\bigl\{(B_1,B_2,I,J)\in{}&
 \End(V)^{\oplus2}\oplus\Hom(W,V)\oplus\Hom(V,W):\\
 & [B_1,B_2]+IJ=0,\ \text{stable}\bigr\}/GL(V),
\end{aligned}
\label{eq:ADHM-quotient}
\end{equation}
where stability means that a \(B_1,B_2\)-invariant subspace containing \(I(W)\) is all of \(V\).  The quotient \(\M(r,n)\) is nonsingular of dimension \(2rn\).  By the ADHM correspondence it is the moduli space of rank \(r\) torsion free sheaves on \(\mathbb P^2\) framed along the line at infinity, with \(c_1=0\) and \(c_2=n\) \cite{NakajimaYoshiokaI,NakajimaYoshiokaII}.

Let
\[
 \mathbf T_{\mathrm{fr}}=(\C^*)^2_{q_1,q_2}\times
 (\C^*)^r_{u_1,\ldots,u_r}.
\]
Let \(\cV\) be the rank \(n\) bundle associated to the principal \(GL(V)\)-bundle of \eqref{eq:ADHM-quotient} via the defining representation.  This is the tautological bundle.  Let \(\cW=W\otimes\cO\), with character \(u_1+\cdots+u_r\).  The ADHM deformation complex gives, in \(K_{\mathbf T_{\mathrm{fr}}}(\M(r,n))\),
\begin{equation}
 T^*\M(r,n)
 =\cW\cV^\vee+q_1q_2\cV\cW^\vee
 -(1-q_1)(1-q_2)\cV^\vee\cV,
\label{eq:ADHM-tangent}
\end{equation}
see \cite[Section~2]{NakajimaYoshiokaII}.

\begin{definition}\label{def:ADHM-index}
For \(\ell\in\Z\), let
\begin{equation}
 Z_{r,\ell}^{\mathrm{fr}}
 (\mathfrak q;q_1,q_2;u_1,\ldots,u_r)
 =\sum_{n\geq0}\mathfrak q^n
 \chi_{\mathbf T_{\mathrm{fr}}}^{\mathrm{loc}}
 \bigl(\M(r,n),(\det\cV)^\ell\bigr),
\label{eq:ADHM-index}
\end{equation}
where
\begin{equation}
 \chi_{\mathbf T_{\mathrm{fr}}}^{\mathrm{loc}}(X,\cF)
 =\sum_{p\in X^{\mathbf T_{\mathrm{fr}}}}
 \frac{\cF|_p}{\Lambda_{-1}(T_p^*X)}.
\label{eq:localized-Euler-convention}
\end{equation}
\end{definition}

We have
\[
 \operatorname{char}_{\mathbf T_{\mathrm{fr}}}\C[x,y] =((1-q_1)(1-q_2))^{-1}.
\]
In particular,
\[
 Z_{r,\ell}^{\mathrm{fr}}
 \in\Q(q_1,q_2,u_1,\ldots,u_r)[[\mathfrak q]].
\]

\subsection{Fixed points}
The fixed points are indexed by \(r\)-tuples of partitions \(\boldsymbol\lambda=(\lambda_1,\ldots,\lambda_r)\).  If the box \((i,j)\) has weight \(q_1^{i-1}q_2^{j-1}\), then
\begin{align}
 \cV|_{\boldsymbol\lambda}
 &=\sum_{a=1}^r u_a
   \sum_{(i,j)\in\lambda_a}q_1^{i-1}q_2^{j-1},
\label{eq:V-fixed-weight}\\
 \det\cV|_{\boldsymbol\lambda}
 &=\prod_{a=1}^r u_a^{|\lambda_a|}
   q_1^{n(\lambda_a)}q_2^{n(\lambda_a^t)}.
\label{eq:det-fixed-weight}
\end{align}
For an arbitrary box \(s=(i,j)\), whether or not in \(\lambda\), we set \(a_\lambda(s)=\lambda_i-j\) and \(\ell_\lambda(s)=\lambda_j^t-i\), where missing parts of a partition are \(0\).  Let
\begin{equation}
\begin{aligned}
 N_{\lambda\mu}(Q;q_1,q_2)
 :={}&\prod_{s\in\lambda}
 \left(1-Qq_1^{-\ell_\mu(s)}q_2^{a_\lambda(s)+1}\right)\\
 &\times\prod_{s\in\mu}
 \left(1-Qq_1^{\ell_\lambda(s)+1}q_2^{-a_\mu(s)}\right).
\end{aligned}
\label{eq:Nekrasov-factor}
\end{equation}
By \eqref{eq:ADHM-tangent}, the fixed point denominator is
\begin{equation}
 \Lambda_{-1}\!\left(T^*_{\boldsymbol\lambda}\M(r,n)\right)
 =\prod_{a,b=1}^r
 N_{\lambda_a\lambda_b}(u_a/u_b;q_1,q_2).
\label{eq:ADHM-Nekrasov-denominator}
\end{equation}
The closed formula \eqref{eq:Nekrasov-factor} for the tangent character at a fixed point, in terms of the arm and leg lengths of the two partitions, goes back to Flume and Poghossian, who derived it in the course of an algorithm for the instanton expansion of the Seiberg--Witten prepotential \cite[equations~(4.45)--(4.46)]{FlumePoghossian}.  Formula \eqref{eq:ADHM-Nekrasov-denominator} is their result written in the conventions used here.  These are those of the framed sheaf generating series of \cite{NakajimaYoshiokaII}.

\subsection{Linearization}
\begin{lemma}
\label{lem:linearization}
Changing the equivariant linearization from \(\cV\) to \(c\otimes\cV\), where \(c\) is a torus character, replaces
\[
 Z_{r,\ell}^{\mathrm{fr}}(\mathfrak q)
 \quad\text{by}\quad
 Z_{r,\ell}^{\mathrm{fr}}(c^\ell\mathfrak q).
\]
A simultaneous rescaling \(u_a\mapsto au_a\) has the same effect with \(c=a\).
\end{lemma}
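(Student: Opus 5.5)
The proof is a fixed point computation. We use \eqref{eq:localized-Euler-convention}, \eqref{eq:det-fixed-weight} and \eqref{eq:ADHM-Nekrasov-denominator}, and we check that both operations change only the numerator \((\det\cV)^\ell|_{\boldsymbol\lambda}\), and only by a factor \(c^{\ell n}\) on \(\M(r,n)\). That factor is then absorbed into \(\mathfrak q^n\).

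\emph{First claim.} Replacing the linearization of \(\cV\) by \(c\otimes\cV\) leaves the underlying nonequivariant bundle, the fixed point set and the tangent bundle \(T\M(r,n)\) unchanged. The denominators \(\Lambda_{-1}(T^*_{\boldsymbol\lambda}\M(r,n))\) are therefore the same. This is also visible in \eqref{eq:ADHM-tangent}: \(\cW\) carries its own linearization, and the term \(\cV^\vee\cV\) is invariant under the twist. At a fixed point \(\boldsymbol\lambda\) with \(\sum_a|\lambda_a|=n\), the rank \(n\) bundle \(\cV\) becomes \(c\,\cV|_{\boldsymbol\lambda}\), so
\[
\det(c\otimes\cV)|_{\boldsymbol\lambda}=c^{n}\det\cV|_{\boldsymbol\lambda}.
\]
The \(n\)-th summand of \eqref{eq:ADHM-index} is thus multiplied by \(c^{\ell n}\). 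Since this factor depends only on \(n\), the new series is \(\sum_n (c^\ell\mathfrak q)^n\chi^{\mathrm{loc}}(\M(r,n),(\det\cV)^\ell)=Z^{\mathrm{fr}}_{r,\ell}(c^\ell\mathfrak q)\).

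\emph{Second claim.} Under \(u_a\mapsto au_a\) for all \(a\), each ratio \(u_a/u_b\) is unchanged. By \eqref{eq:ADHM-Nekrasov-denominator} and \eqref{eq:Nekrasov-factor} the denominators are again unchanged. By \eqref{eq:det-fixed-weight} the numerator acquires the factor \(\prod_a a^{\ell|\lambda_a|}=a^{\ell n}\), which is the same substitution with \(c=a\). Equivalently, \eqref{eq:V-fixed-weight} shows that rescaling all \(u_a\) multiplies \(\cV|_{\boldsymbol\lambda}\) by \(a\), so this is the first claim in disguise.

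There is no real obstacle. The only point that needs care is to make clear that the denominator is computed from the intrinsic tangent bundle. Its ADHM presentation \eqref{eq:ADHM-tangent} involves only \(\cW\cV^\vee\), \(\cV\cW^\vee\) and \(\cV^\vee\cV\). The first two would pick up \(c^{\mp1}\) if \(\cV\) alone were twisted, so for the first claim one must either regard the tangent bundle as fixed, or twist \(\cW\) compatibly, which is what happens when all the \(u_a\) are rescaled.
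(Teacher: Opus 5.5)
Your proof is correct and is the same argument as the paper's one-line proof: on \(\M(r,n)\) the insertion \((\det\cV)^\ell\) acquires the character \(c^{\ell n}\), the tangent denominators are unchanged, and the factor is absorbed into \(\mathfrak q^n\). Your fixed-point check of both claims is a more explicit version of that argument. So is your remark that the denominator is the intrinsic tangent character, so that in the ADHM presentation only a common twist of \(\cV\) and \(\cW\), i.e.\ the rescaling of all \(u_a\), leaves the formula unchanged.
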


\begin{proof}
On \(\M(r,n)\), the line \((\det\cV)^\ell\) acquires the character \(c^{\ell n}\), which is absorbed by \(\mathfrak q^n\).
\end{proof}

\section{Localization on the local Hirzebruch surfaces}
\label{sec:localization-ADHM}

We prove Proposition~\ref{prop:Hirzebruch-PT-ADHM} by evaluating the localization formula on the four affine charts of \(\mathbb F_\ell\), dividing by the fibre class contribution, and comparing the resulting sum over pairs of partitions with the fixed point formula for
\[
 \chi_{\mathbf T_{\mathrm{fr}}}^{\mathrm{loc}}(\M(2,n),(\det\cV)^\ell).
\]
Throughout the section \(0\leq\ell\leq2\), and \(\mathbb F_\ell,Y_\ell,B,F\) are as in the Introduction.

\subsection{Toric geometry}
Let \(B_+\) be the toric section disjoint from \(B\); for \(\ell=0\) it is the second section of the chosen projection \(\mathbb F_0\to\mathbb P^1\).  Let \(F_0,F_\infty\) be the two torus-invariant fibres.  Then
\[
 [B_+]=[B]+\ell[F].
\]
The four fixed points are
\[
 P_{00}=B\cap F_0,\quad P_{01}=B_+\cap F_0,\quad
 P_{10}=B\cap F_\infty,\quad P_{11}=B_+\cap F_\infty.
\]
Keep the anticanonical character \(\kappa\), and choose tangent characters \(\xi\) along \(B\) and \(\eta\) along \(F_0\) at \(P_{00}\).  In this section the slope specialization is the coefficientwise limit
\begin{equation}
 \xi\longmapsto a^R\xi,\qquad
 \eta\longmapsto a^S\eta,\qquad
 \kappa\longmapsto\kappa,\qquad S\gg R>0,
\label{eq:Hirzebruch-slope-choice}
\end{equation}
where \(\gg\) has the stabilization meaning of Section~\ref{sec:PT-refined-vertex}.

The zero section \(\mathbb F_\ell\subset Y_\ell\) is a compact divisor with
\[
 -F\cdot\mathbb F_\ell=2,\qquad
 -B\cdot\mathbb F_\ell=2-\ell.
\]
The divisor shift in \((T_F,T_B)\) is therefore proportional to \((2,2-\ell)\), and the coordinate \(T_M\) of \eqref{eq:intro-fixed-coordinate} is unchanged by it.

\subsection{Variables}
With \(Q_B=e^{-T_B}\) and \(Q_F=e^{-T_F}\) as in the Introduction, let
\begin{equation}
 U=Q_F^{-1},\qquad
 e^{-T_M}=Q_BQ_F^{\ell/2-1},\qquad
 \zeta_\ell=(-1)^\ell e^{-T_M}.
\label{eq:Hirzebruch-framed-variables}
\end{equation}
We fix the logarithm
\begin{equation}
 \log\zeta_\ell=-T_M+\pi\mathrm{i}\ell,
 \qquad
 \zeta_\ell^{1/4}=e^{\pi\mathrm{i}\ell/4}e^{-T_M/4}.
\label{eq:Hirzebruch-signed-branch}
\end{equation}
The sign agrees with \eqref{eq:intro-signed-Novikov-specialization}.

Let \(p_i=e^{\eps_i}\).  The slope specialized stable pair series is written in these variables via
\begin{equation}
 (t,q)=(p_1,p_2^{-1}).
\label{eq:Hirzebruch-refined-variable-change}
\end{equation}
The branch \((tq)^{1/2}=y\) of \eqref{eq:PT-refined-square-root-branches} then gives \(y=p_1^{1/2}p_2^{-1/2}\), and the coordinate function characters of Definition~\ref{def:ADHM-index} are \(p_1^{-1},p_2^{-1}\).

For \(\ell=0,1\), let \(Z_{Y_\ell}^{\PT,\mathrm{ref}}\) be the global series of Definition~\ref{def:KPT-series} after the slope specialization \eqref{eq:Hirzebruch-slope-choice}, and, for \(\ell=2\), let \(Z_{Y_2}^{\PT,\mathrm{ref}}\) be the coefficientwise vertex--edge series \eqref{eq:KPT-vertex-edge-sum} after the same specialization; Lemma~\ref{lem:Hirzebruch-fixed-proper} below shows the moduli spaces in the first case, and the fixed loci in the second, are proper.  For \(\beta=d_BB+d_FF\), we write
\begin{equation}
 Z_{Y_\ell}^{\PT,\mathrm{ref}}(Q_B,Q_F)
 =\sum_{\substack{d_B,d_F\geq0\\ n\in\Z}}Z_{d_B,d_F,n}\,
   \bigl((-1)^\ell Q_B\bigr)^{d_B}Q_F^{d_F}y^n,
\label{eq:Hirzebruch-signed-series}
\end{equation}
where \(Z_{d_B,d_F,n}\) is the coefficient supplied by Definition~\ref{def:KPT-series} for \(\ell=0,1\) and by \eqref{eq:KPT-vertex-edge-sum} for \(\ell=2\).  Equivalently, \eqref{eq:Hirzebruch-signed-series} is the image of the series under the change of Novikov coordinates
\[
 \mathbf Q^{d_BB+d_FF}\longmapsto
 \bigl((-1)^\ell Q_B\bigr)^{d_B}Q_F^{d_F}
\]
on the same monoid of effective classes.

\subsection{The fibre normalization}
We divide by the coefficient of \(Q_B^0\).  Let
\begin{align}
 Z^{\mathrm{fib}}(Q_F;p_1,p_2)
&:={Z}_{Y_\ell}^{\PT,\mathrm{ref}}(0,Q_F;p_1,p_2),
\label{eq:Hirzebruch-fibre-series}\\
 Z_{Y_\ell}^{\PT,\mathrm{norm}}(Q_B,Q_F;p_1,p_2)
&:=\frac{Z_{Y_\ell}^{\PT,\mathrm{ref}}(Q_B,Q_F;p_1,p_2)}
 {Z^{\mathrm{fib}}(Q_F;p_1,p_2)}.
\label{eq:Hirzebruch-base-normalization}
\end{align}
In the specialization \((p_1,p_2)=(t,q^{-1})\), the coefficient of \(Q_B^0\) is independent of \(\ell\):
\begin{equation}
\begin{aligned}
 Z^{\mathrm{fib}}(Q_F;t,q^{-1})
 &=\prod_{i,j\geq1}
   \frac1{(1-Q_Ft^jq^{i-1})(1-Q_Fq^jt^{i-1})}\\
 &=\PE\left[\frac{(t+q)Q_F}{(1-t)(1-q)}\right].
\end{aligned}
\label{eq:Hirzebruch-empty-strip-normalization}
\end{equation}
Indeed, with empty partitions on \(B\) and \(B_+\), the four vertex factors split into the two fibre edge Cauchy contractions.  The Cauchy identity
\[
 \sum_\lambda s_\lambda(\mathbf x)s_\lambda(\mathbf y)
 =\prod_{i,j\geq1}(1-x_iy_j)^{-1}
\]
gives, with the vertex and edge monomials of \eqref{eq:refined-vertex-definition} and \eqref{eq:KPT-vertex-edge-sum},
\[
 \prod_{i,j\geq1}(1-Q_Ft^jq^{i-1})^{-1},
 \qquad
 \prod_{i,j\geq1}(1-Q_Fq^jt^{i-1})^{-1}
\]
respectively.  Taking logarithms of the product gives the second line of \eqref{eq:Hirzebruch-empty-strip-normalization}.

\subsection{The framed sheaf side}
Fix a square root \(U^{1/2}\) and additive lifts \(\log U,\log p_1,\log p_2,\log z\).  At the geometric specialization the lifts are
\[
 \log U=T_F,\qquad \log p_i=\eps_i,\qquad \log z=-T_M+\pi\mathrm{i}\ell.
\]
Let \(U_1=U^{1/2}\) and \(U_2=U^{-1/2}\), and let
\begin{equation}
 \mathcal A_\ell(U;p_1,p_2\mid z)
 :=Z_{2,\ell}^{\mathrm{fr}}
 \left(z(p_1p_2)^{-1-\ell/2};
 p_1^{-1},p_2^{-1};U_1^{-1},U_2^{-1}\right).
\label{eq:Hirzebruch-ADHM-series}
\end{equation}
Let
\begin{align}
 \mathcal A^{(0)}(U;p_1,p_2\mid z)
 &:=(p_1^{-1}p_2^{-1}z)^{
 -\frac{(\log U)^2}{4\log p_1\log p_2}},
\label{eq:Hirzebruch-degree-zero-exponential}\\
 \mathcal E(U;p_1,p_2)
 &:=(U;p_1,p_2)_\infty(U^{-1};p_1,p_2)_\infty,
\label{eq:Hirzebruch-Pochhammer-factor}
\end{align}
and
\begin{equation}
\widehat{\mathcal A}_\ell(U;p_1,p_2\mid z)
 :=\mathcal A^{(0)}(U;p_1,p_2\mid z)
   \mathcal E(U;p_1,p_2)
   \mathcal A_\ell(U;p_1,p_2\mid z).
\label{eq:Hirzebruch-completed-ADHM-series}
\end{equation}
In the gauge theory terminology of Shchechkin, \(\mathcal A^{(0)}\) and \(\mathcal E\) are the classical and one-loop factors \cite[equations~(2.15)--(2.16)]{Shchechkin}.

On any domain to which \(\mathcal E\) has been meromorphically continued, let
\begin{align}
 F_\ell^{(0)}(T_F,T_M;\eps_1,\eps_2)
 &:=\frac{T_F^2(T_M+\eps_1+\eps_2-\pi\mathrm{i}\ell)}
 {4\eps_1\eps_2},
\label{eq:Hirzebruch-degree-zero-polynomial}\\
 \widehat Z_{Y_\ell}(T_F,T_M;\eps_1,\eps_2)
 :={}&e^{F_\ell^{(0)}(T_F,T_M;\eps_1,\eps_2)}
 \mathcal E(e^{T_F};e^{\eps_1},e^{\eps_2})\notag\\
 &\times Z_{Y_\ell}^{\PT,\mathrm{norm}}
 (e^{-T_B},e^{-T_F};e^{\eps_1},e^{\eps_2}),
 \qquad T_B=T_M+\left(1-\frac\ell2\right)T_F.
\label{eq:Hirzebruch-full-PT}
\end{align}
The degree \(0\) part is written in logarithmic coordinates and is not a formal power series in \(Q_B,Q_F\).  The base degree \(0\) coefficient of \(\widehat Z_{Y_\ell}\) is \(e^{F_\ell^{(0)}}\mathcal E(e^{T_F};e^{\eps_1},e^{\eps_2})\); only \(Z_{Y_\ell}^{\PT,\mathrm{norm}}\) has base degree \(0\) coefficient \(1\).

For independent parameters the double product defining \(\mathcal E(U;p_1,p_2)\) need not converge.  The notation for the full partition function records its explicit degree \(0\) factor; it does not assert convergence of its instanton series.  Only the quotient of the two shifted degree \(0\) factors by the unshifted factor enters the normalized blowup identity.  Lemma~\ref{lem:Hirzebruch-shift-quotients} defines this quotient rationally after extracting its monomial.  The analytic interpretation requires the additional convergence assumptions stated in Theorem~\ref{thm:Hirzebruch-main}.

\subsection{Properness}
\begin{lemma}
\label{lem:Hirzebruch-fixed-proper}
For every compact curve class \(\beta\) and \(n\in\Z\), the fixed locus \(P_n(Y_\ell,\beta)^{\mathbf T}\) is proper.  For \(\ell=0,1\), the moduli space \(P_n(Y_\ell,\beta)\) is proper.
\end{lemma}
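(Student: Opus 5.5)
For the first statement, a \(\mathbf T\)-fixed stable pair \(\cO_{Y_\ell}\to F\) has scheme-theoretic support on the \(\mathbf T\)-invariant curves of \(Y_\ell\). These are the four compact toric curves \(B,B_+,F_0,F_\infty\) in the zero section, together with the four noncompact fibre lines of \(K_{\mathbb F_\ell}\) over the fixed points \(P_{ij}\). Since \(F\) has proper support and is \(\mathbf T\)-equivariant, its support is a \(\mathbf T\)-invariant proper curve. No proper curve contains a noncompact fibre line, so the support lies set-theoretically in \(B\cup B_+\cup F_0\cup F_\infty\).

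By the Pandharipande--Thomas description of fixed stable pairs on toric 3-folds \cite{PandharipandeThomasVertex}, the fixed locus is a disjoint union of products. The factors are the local vertex fixed loci, which are proper because they are closed in finite-type Quot-type schemes of bounded length with compact support. These are glued along edge partitions \(\lambda_e\) with \(\sum_e|\lambda_e|[e]=\beta\). For fixed \(\beta=d_BB+d_FF\), only finitely many edge assignments satisfy this, because \([B_+]=[B]+\ell[F]\) with \(\ell\ge0\) and all edge classes are nonnegative combinations of \(B,F\). The holomorphic Euler characteristic \(n\) then bounds the total length of the vertex cokernels, using the edge contribution \(\chi(\lambda;(a,b))\) of \eqref{eq:edge-partition-Euler-characteristic}. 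Hence \(P_n(Y_\ell,\beta)^{\mathbf T}\) is a finite union of proper pieces and is proper. This is the step where I would cite \cite{PandharipandeThomasVertex} for the product structure rather than reprove it.

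For the second statement, with \(\ell=0,1\), I would show that every stable pair with proper support is supported set-theoretically on the zero section \(\mathbb F_\ell\). The key input is that \(K_{\mathbb F_\ell}\) is anti-ample-enough: \(-K_{\mathbb F_\ell}\) is ample for \(\ell=0,1\), since \(\mathbb F_0,\mathbb F_1\) are del Pezzo. For a proper connected curve \(C\subset Y_\ell\), the projection \(\pi:Y_\ell\to\mathbb F_\ell\) maps \(C\) to a proper curve or a point. If \(C\not\subset\mathbb F_\ell\), consider the fibre coordinate, a section of \(\pi^*K_{\mathbb F_\ell}\) restricted to \(C\). When \(\pi(C)\) is a point, \(C\) lies in a fibre \(\cong\mathbb A^1\), which is impossible. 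Otherwise the coordinate gives a nonzero section of \(K_{\mathbb F_\ell}|_{\pi(C)}\), which has negative degree on every curve because \(-K\) is ample; this is a contradiction. So \(\operatorname{Supp}F\subset\mathbb F_\ell\) set-theoretically.

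Any such pair is then a pair on a fixed infinitesimal thickening of \(\mathbb F_\ell\) inside a projective compactification \(\overline Y_\ell=\mathbb P(K_{\mathbb F_\ell}\oplus\cO)\). The thickening order is bounded in terms of \(\beta\) and \(n\) by boundedness of stable pairs. Therefore \(P_n(Y_\ell,\beta)\) is an open and closed subset of \(P_n(\overline Y_\ell,\beta)\), namely the pairs avoiding the divisor at infinity. Closedness holds because a limit of curves supported near \(\mathbb F_\ell\) stays in \(\mathbb F_\ell\) by the same argument. Since \(P_n(\overline Y_\ell,\beta)\) is projective, \(P_n(Y_\ell,\beta)\) is proper.

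I expect the main obstacle to be this open-and-closed claim, specifically showing that the limits in \(\overline Y_\ell\) cannot acquire components at infinity. I would handle it via the class: any component meeting the divisor at infinity has class with positive pairing against that divisor, whereas \(\beta\) pushed forward has zero pairing. For \(\ell=2\), \(-K\) is only nef, with \(K\cdot B=0\), so curves can move off \(B\) in the fibre direction. This is why only the fixed-locus statement is claimed in that case.
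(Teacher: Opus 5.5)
Your proposal is correct and follows essentially the paper's route. For the fixed locus you use the Pandharipande--Thomas box-configuration description, with finitely many edge assignments and box data bounded by \(n\). For \(\ell=0,1\) you use the compactification with divisor \(D_\infty\): every component of a curve in the pushed-forward class has nonnegative intersection with \(D_\infty\), positive if it meets it, so none meets it. For components lying inside \(D_\infty\) this positivity uses \(N_{D_\infty}\cong K_{\mathbb F_\ell}^{-1}\) ample, and you should say so explicitly rather than appeal to ``the same argument''. Your preliminary step showing that proper curves lie on the zero section, and the thickening paragraph, are correct but unnecessary: the \(D_\infty\) intersection argument alone identifies the compactified stable pair space in that class with \(P_n(Y_\ell,\beta)\).
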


Consequently the localized symmetrized series
\begin{equation}
 Z_{Y_\ell}^{\PT,\mathrm{ref}}
 (Q_B,Q_F;p_1,p_2)
\label{eq:Hirzebruch-PT-series}
\end{equation}
is defined coefficientwise.

\begin{proof}
The fixed loci have the box configuration description of \cite[Sections~2.5--2.6]{PandharipandeThomasVertex}, and a fixed stable pair is specified by a stable pair vertex at each of the four fixed points of \(\mathbb F_\ell\) and by matching partitions on the compact edges.  Fixing \(\beta\) bounds the edge partitions.  For fixed edge partitions, the formula for \(\chi(F)=n\) of \cite[Section~2.5]{PandharipandeThomasVertex} bounds the boxes not forced by the asymptotic edge partitions, and, for each collection of edge partitions and finite box data, the reduced vertex components are finite products of \(\mathbb P^1\)s, the reduced global fixed component being their product with the prescribed edge matchings \cite[Propositions~2--3 and Section~2.6]{PandharipandeThomasVertex}.  Only finitely many such data occur, so the reduced fixed locus is projective, and properness is insensitive to nilpotent thickening.

For \(\ell=0,1\), the anticanonical divisor of \(\mathbb F_\ell\) is ample.  Compactify \(Y_\ell\) by \(\mathbb P_{\mathbb F_\ell}(\cO\oplus K_{\mathbb F_\ell})\), with divisor at infinity \(D_\infty\); since \(\cO(D_\infty)|_{D_\infty}\cong K_{\mathbb F_\ell}^{-1}\), every effective curve in \(D_\infty\) meets \(D_\infty\) positively.  View \(P_n(Y_\ell,\beta)\) as an open subspace of the projective stable pair space of the compactification, in the pushed forward curve class, noting that that class has intersection \(0\) with \(D_\infty\).  An irreducible component of the support not contained in \(D_\infty\) meets \(D_\infty\) nonnegatively, and positively if at all; a component contained in \(D_\infty\) meets it positively by the ampleness of \(K_{\mathbb F_\ell}^{-1}\).  Since the total intersection is \(0\), neither occurs.  The open subspace is therefore closed.

For \(\ell=2\), the section \(B\) has \(B^2=-2\) and anticanonical degree \(0\), and the argument does not prove properness of the full moduli space.  The first paragraph still gives properness of the fixed locus.
\end{proof}

\subsection{Chart weights}
In Arbesfeld's convention, the ordered tangent weights on the four charts are
\begin{equation}
\begin{array}{c|c|c}
 & (w_1,w_2,w_3)&\text{exponents under }\xi\mapsto a^R\xi,
                                      \ \eta\mapsto a^S\eta\\ \hline
 P_{00}&(\eta,\kappa \xi^{-1}\eta^{-1},\xi)&(S,-R-S,R)\\
 P_{01}&(\kappa \xi^{-1}\eta,\eta^{-1},\xi)&(S-R,-S,R)\\
 P_{10}&(\kappa \xi^{1-\ell}\eta^{-1},\xi^\ell \eta,\xi^{-1})
        &((1-\ell)R-S,\ell R+S,-R)\\
 P_{11}&(\xi^{-\ell}\eta^{-1},\kappa \xi^{\ell+1}\eta,\xi^{-1})
        &(-\ell R-S,(\ell+1)R+S,-R).
\end{array}
\label{eq:Hirzebruch-four-chart-weights}
\end{equation}
The first two charts satisfy \(r_1\gg r_3>0\gg r_2\), and the last two satisfy \(r_2\gg0>r_3\gg r_1\).

The compact edges, oriented from the first listed fixed point to the second, have the following tangent character, ordered normal characters at the first fixed point, and normal bundle degrees:
\begin{equation}
\begin{array}{c|c|c|c}
\text{curve}&\text{fixed points}&h&(u,v);\ (a,b)\\ \hline
F_0&P_{00},P_{01}&\eta&(\kappa \xi^{-1}\eta^{-1},\xi);\ (-2,0)\\
F_\infty&P_{10},P_{11}&\xi^\ell \eta
 &(\xi^{-1},\kappa \xi^{1-\ell}\eta^{-1});\ (0,-2)\\
B&P_{10},P_{00}&\xi^{-1}
 &(\kappa \xi^{1-\ell}\eta^{-1},\xi^\ell \eta);\ (\ell-2,-\ell)\\
B_+&P_{11},P_{01}&\xi^{-1}
 &(\xi^{-\ell}\eta^{-1},\kappa \xi^{\ell+1}\eta);\ (\ell,-\ell-2).
\end{array}
\label{eq:Hirzebruch-four-edge-data}
\end{equation}
In the last column, \((a,b)\) means \(N_{C/Y_\ell}\cong \cO_{\mathbb P^1}(a)\oplus\cO_{\mathbb P^1}(b)\) in the displayed order.  The degrees follow from the self-intersections and adjunction:
\begin{equation}
\begin{gathered}
 N_{F/\mathbb F_\ell}\cong\cO,\qquad
 K_{\mathbb F_\ell}|_F\cong\cO(-2),\\
 N_{B/\mathbb F_\ell}\cong\cO(-\ell),\qquad
 K_{\mathbb F_\ell}|_B\cong\cO(\ell-2),\\
 N_{B_+/\mathbb F_\ell}\cong\cO(\ell),\qquad
 K_{\mathbb F_\ell}|_{B_+}\cong\cO(-\ell-2).
\end{gathered}
\label{eq:Hirzebruch-normal-bundle-degrees}
\end{equation}

\subsection{Strip functions}
For a partition \(\tau\), let
\[
 \mathscr D_\tau(t,q)
 =q^{\|\tau\|^2/2}t^{\|\tau^t\|^2/2}
  \widetilde Z_\tau(t,q)\widetilde Z_{\tau^t}(q,t).
\]
Taki's rank \(2\) strip function is
\begin{equation}
 K_{\alpha\beta}(X;t,q)
 =\sum_\gamma(-X)^{|\gamma|}f_\gamma(t,q)
 C_{\varnothing,\gamma,\alpha}(t,q)
 C_{\gamma^t,\varnothing,\beta}(t,q).
\label{eq:Taki-strip-function}
\end{equation}
For partitions \(\lambda,\mu\), let
\begin{equation}
\begin{aligned}
 \mathscr R_{\lambda\mu}(Q_F;t,q)
 =\prod_{i,j\geq1}
 &\frac{1-Q_Ft^jq^{i-1}}
       {1-Q_Ft^{-\lambda_i^t+j}q^{-\mu_j+i-1}}\\
 &\times
 \frac{1-Q_Ft^{j-1}q^i}
       {1-Q_Ft^{-\lambda_i^t+j-1}q^{-\mu_j+i}}.
\end{aligned}
\label{eq:Hirzebruch-direct-cross-product}
\end{equation}
For fixed \(\lambda,\mu\), the row and column tails cancel, and \eqref{eq:Hirzebruch-direct-cross-product} is a rational function of \(Q_F,t,q\).

\subsection{The four chart localization}
\begin{lemma}
\label{lem:Hirzebruch-four-chart-normalization}
For \(\ell=0,1\) use the global square root \eqref{eq:PT-symmetrized-sheaf}; for \(\ell=2\) use the vertex--edge series \eqref{eq:KPT-vertex-edge-sum}.  With the chart weights \eqref{eq:Hirzebruch-four-chart-weights} and the edge characters \eqref{eq:general-edge-character}, the coefficientwise preferred slope limit of the vertex--edge localization sum, divided by the base degree \(0\) contribution, is
\begin{equation}
\begin{aligned}
 \sum_{\lambda,\mu}
 &(-Q_B)^{|\lambda|+|\mu|}Q_F^{\ell|\lambda|}
 f_\lambda(t,q)^{-\ell-1}f_\mu(t,q)^{1-\ell}
 \mathscr D_\lambda(t,q)\mathscr D_\mu(t,q)
 \mathscr R_{\lambda\mu}(Q_F;t,q).
\end{aligned}
\label{eq:Hirzebruch-four-chart-result}
\end{equation}
\end{lemma}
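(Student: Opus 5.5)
The plan is to evaluate the vertex--edge sum \eqref{eq:KPT-vertex-edge-sum} term by term in the preferred slope limit and then resum over the two fibre edges. By Lemma~\ref{lem:Hirzebruch-fixed-proper} every coefficient of \(\mathbf Q^\beta y^n\) involves only finitely many edge partition assignments and vertex degrees. Lemma~\ref{lem:global-preferred-slope-limit} therefore lets me replace each normalized vertex by its chart limit from Proposition~\ref{prop:chart-slope-limit}, and each symmetrized edge factor by the monomial \eqref{eq:edge-preferred-limit-monomial}. The chart table \eqref{eq:Hirzebruch-four-chart-weights} tells me which row applies: \(P_{00},P_{01}\) use \(\mathsf C^{\mathrm A}_{\lambda\mu\nu}(t,q)\), and \(P_{10},P_{11}\) use \(\mathsf C^{\mathrm A}_{\lambda\mu\nu}(q,t)\), all in the variables \eqref{eq:Hirzebruch-refined-variable-change}. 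I then assign partitions \(\lambda\) to \(B\), \(\mu\) to \(B_+\), and \(\alpha,\beta\) to \(F_0,F_\infty\), with the edge partitions placed in the leg positions dictated by the ordered weights.

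First I compute the four edge monomials. This requires \(\operatorname{ind}_\sigma\) of \(\mathsf E^{\mathrm{vir}}_{\lambda;(a,b)}\) for the data \eqref{eq:Hirzebruch-four-edge-data} under \(S\gg R>0\). Every weight in \(\mathsf T_\lambda(u,v)\) has exponent dominated by its \(\eta\)-power, so the sign count reduces to counting boxes with given arm and leg signs. The count is governed by \(\|\lambda\|^2,\|\lambda^t\|^2,|\lambda|\) and by \(\chi(\lambda;(a,b))\) from \eqref{eq:edge-partition-Euler-characteristic}. The outcome is a framing monomial of the shape \(f_\lambda^{\pm1}\) or \(\widetilde f_\lambda^{\pm1}\), times powers of \(t,q\). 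Combined with the prefactors \(a_{\alpha\beta\gamma}\) from \eqref{eq:Arbesfeld-IKV-monomial}, these should assemble into \(f_\lambda^{-\ell-1}f_\mu^{1-\ell}\), the factors \(q^{\|\tau\|^2/2}t^{\|\tau^t\|^2/2}\) in \(\mathscr D\), and the fibre framings \(f_\gamma\) appearing in Taki's strip \eqref{eq:Taki-strip-function}. The factor \(Q_F^{\ell|\lambda|}\) comes from \([B_+]=[B]+\ell[F]\) together with the bookkeeping of \(Q_B\) against \(Q_F\). The sign \((-1)^{|\lambda|+|\mu|}\) comes from the \((-1)^{|\cdot|}\) in the framings combined with the signed variable \((-1)^\ell Q_B\).

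Second, for fixed \(\lambda,\mu\), the sum over the fibre partitions at \(F_0\) is exactly a strip \(K_{\lambda\mu}(Q_F;t,q)\), after using \eqref{eq:Arbesfeld-IKV-prefactor} to convert \(\mathsf C^{\mathrm A}\) to \(C\) with transposed legs. The same holds at \(F_\infty\), with \(t\leftrightarrow q\) and transposes. I will evaluate each strip by Cauchy-type skew Schur identities, \(\sum_\gamma s_{\gamma/\eta}(x)s_{\gamma^t/\eta'}(y)\) and their dual versions, in Taki's manner. Each strip yields a product of \(\widetilde Z\) factors, half of \(\mathscr D_\lambda\mathscr D_\mu\), and a ratio of infinite products \(\prod(1-Q_Ft^{-\lambda^t_i+j}q^{-\mu_j+i-1})^{-1}\). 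The \(\lambda=\mu=\varnothing\) values are precisely the two products in \eqref{eq:Hirzebruch-empty-strip-normalization}. Dividing by them produces the numerators of \(\mathscr R_{\lambda\mu}\), one line of \eqref{eq:Hirzebruch-direct-cross-product} from each fibre. The division is coefficientwise legitimate because the base degree \(0\) term is invertible in \(\K[[Q_F]]\).

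The main obstacle is the first step: getting every monomial and sign exactly right. This includes the slope indices of the \(B\) and \(B_+\) edges, whose normal degrees depend on \(\ell\), and the interaction of Arbesfeld's vertex normalization with the redistribution of edge terms to the initial fixed point. I would handle it by computing \(\operatorname{ind}_\sigma\) box by box in closed form, checking the result against the single-box cases \(\lambda=(1)\) and \(\mu=(1)\) for each \(\ell\), and then verifying that the accumulated \(t,q\) exponent is the claimed one via the identities \(\sum_{s}a_\lambda(s)=n(\lambda^t)\) and \(\sum_s\ell_\lambda(s)=n(\lambda)\).
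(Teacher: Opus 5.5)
Your overall route is the paper's: chart limits from Proposition~\ref{prop:chart-slope-limit} (your row assignment for the four charts is correct), edge monomials from \eqref{eq:edge-preferred-limit-monomial}, conversion by \eqref{eq:Arbesfeld-IKV-prefactor}, Taki strips over the fibre partitions, Cauchy evaluation, division by the empty strips, and the sign conversion. The gap is in the edge step, which as you describe it gives the wrong fibre monomials.

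The slope index has to be taken on a finite polarization. For a box with leg \(L\) and arm \(A\), the two summands of \eqref{eq:general-edge-character} combine, using \(a+b=-2\) and \(huv=\kappa\), into \(X(h^d-h)/(1-h)+Y(h^{2-d}-h)/(1-h)\). Here \(X=u^{-L}v^{A+1}\), \(Y=u^{L+1}v^{-A}\), \(d=aL-b(A+1)\), and \(X(h^d-h)/(1-h)\) is a finite polarization. The term \(\mathsf T_\lambda(u,v)/(1-h^{-1})\) alone is not a finite character. The count also depends on the values of \(L,A\) through \(d\), not on signs of arms and legs, which are nonnegative.

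On \(B\) and \(B_+\) one has \(h=\xi^{-1}\), every weight \(Xh^k\) has \(\eta\)-exponent \(L+A+1>0\), and your \(\eta\)-dominance heuristic is right. On the fibres it fails:
\begin{itemize}
\item On \(F_0\), the weights \(Xh^k\), \(-2L\le k\le 0\), have \(\eta\)-exponents \(L+k\) running symmetrically from \(-L\) to \(L\).
\item On \(F_\infty\), the weights \(Xh^k\), \(1\le k\le 2A+1\), enter with multiplicity \(-1\) and have \(\eta\)-exponents \(k-A-1\) running from \(-A\) to \(A\).
\end{itemize}
The whole index per box, \(+1\) on \(F_0\) and \(-1\) on \(F_\infty\), is carried by the single weight with vanishing \(\eta\)-exponent. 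Its sign is fixed by its \(\xi\)-exponent \((L+A+1)R>0\). A count by \(\eta\)-exponents cannot see this weight, returns index \(0\), and loses the factors \((t/q)^{|\alpha|/2}\) and \((q/t)^{|\beta|/2}\).

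This error shows up in your second step. Write \(\mu_B,\mu_{B_+}\) for the partitions on \(B,B_+\) and \(\bar f_\tau=(-1)^{|\tau|}f_\tau\). With the correct indices, the edge and vertex monomials combine to \((t/q)^{|\alpha|/2}\bar f_\alpha(t,q)\) and \((q/t)^{|\beta|/2}\bar f_{\beta^t}(q,t)\). So the \(\alpha\)-sum is \(K_{\mu_{B_+}\mu_B}(\sqrt{t/q}\,Q_F;t,q)\) and the \(\beta\)-sum is \(K_{\mu_B^t\mu_{B_+}^t}(\sqrt{q/t}\,Q_F;q,t)\), not strips at argument \(Q_F\). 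Taki's evaluation multiplies the argument of \(K(X;t,q)\) by \(\sqrt{q/t}\) and that of \(K(X;q,t)\) by \(\sqrt{t/q}\). Only these arguments therefore give products in \(Q_F\) itself. At argument \(Q_F\) you would get products in \(\sqrt{q/t}\,Q_F\) and \(\sqrt{t/q}\,Q_F\), and neither \(\mathscr R_{\lambda\mu}(Q_F;t,q)\) nor \eqref{eq:Hirzebruch-empty-strip-normalization} would come out.

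Finally, your labels are swapped. The strip \(K_{\mu_{B_+}\mu_B}\) comes from the vertices \(C_{\varnothing,\alpha,\mu_{B_+}}(t,q)\) at \(P_{01}\) and \(C_{\alpha^t,\varnothing,\mu_B}(t,q)\) at \(P_{00}\). So the partition entering \(\mathscr R_{\lambda\mu}\) through \(t^{-\lambda_i^t}\) is the one on \(B_+\). That is also the partition carrying \(Q_F^{\ell|\cdot|}\) (from \([B_+]=[B]+\ell[F]\)) and the exponent \(-\ell-1\). With \(\lambda\) on \(B\) you would obtain \eqref{eq:Hirzebruch-four-chart-result} with \(\lambda\) and \(\mu\) interchanged.
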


\begin{proof}
Theorem~\ref{thm:KLT-vertex} identifies each normalized stable pair vertex with the normalized Donaldson--Thomas vertex.  Multiplying over the four charts and cancelling the four empty Donaldson--Thomas vertices against the degree \(0\) Donaldson--Thomas factor gives, coefficient by coefficient, the stable pair/Donaldson--Thomas identity after division by the degree \(0\) Donaldson--Thomas series, the global vertex--edge normalization being \cite[equation~(4.17)]{Arbesfeld}, with compact edge characters as in \cite[equation~(4.8)]{Arbesfeld}.  The chart weights \eqref{eq:Hirzebruch-four-chart-weights} satisfy the hypotheses of Lemma~\ref{lem:global-preferred-slope-limit}, so for each coefficient we may replace every normalized vertex and every symmetrized edge factor by its preferred slope limit before taking products and the finite sum over edge partitions.

\emph{Vertices.}  Put partitions \(\alpha\) and \(\beta\) on \(F_0\) and \(F_\infty\), and \(\mu_2\) and \(\mu_1\) on \(B\) and \(B_+\).  With the three entries ordered as the chart coordinates in \eqref{eq:Hirzebruch-four-chart-weights}, Proposition~\ref{prop:chart-slope-limit} gives the four vertex functions
\begin{equation}
\begin{gathered}
 \mathsf C^{\mathrm A}_{\alpha,\varnothing,\mu_2^t}(t,q),\qquad
 \mathsf C^{\mathrm A}_{\varnothing,\alpha^t,\mu_1^t}(t,q),\\
 \mathsf C^{\mathrm A}_{\varnothing,\beta,\mu_2}(q,t),\qquad
 \mathsf C^{\mathrm A}_{\beta^t,\varnothing,\mu_1}(q,t).
\end{gathered}
\label{eq:Hirzebruch-four-Arbesfeld-vertices}
\end{equation}
By \eqref{eq:Arbesfeld-IKV-prefactor}, the four refined vertices are
\begin{equation}
\begin{gathered}
 C_{\alpha^t,\varnothing,\mu_2}(t,q),\qquad
 C_{\varnothing,\alpha,\mu_1}(t,q),\\
 C_{\varnothing,\beta^t,\mu_2^t}(q,t),\qquad
 C_{\beta,\varnothing,\mu_1^t}(q,t).
\end{gathered}
\label{eq:Hirzebruch-four-IKV-vertices}
\end{equation}
Their product is the four vertex product in \(K_{\mu_1\mu_2}(\mathord\cdot;t,q) K_{\mu_2^t\mu_1^t}(\mathord\cdot;q,t)\), with \(K\) as in \eqref{eq:Taki-strip-function}; compare \cite[equations~(3.2)--(3.4)]{TakiRefined}.  The product of the four monomials \eqref{eq:Arbesfeld-IKV-monomial} is
\begin{equation}
 q^{-\frac12(\|\alpha\|^2+\|\alpha^t\|^2)}
 t^{-\frac12(\|\beta\|^2+\|\beta^t\|^2)}
 \prod_{i=1}^2q^{-\|\mu_i\|^2/2}t^{-\|\mu_i^t\|^2/2}.
\label{eq:Hirzebruch-four-vertex-monomials}
\end{equation}

\emph{Edges.}  For an edge partition \(\tau\), use \(\chi(\tau;(a,b))\) of \eqref{eq:edge-partition-Euler-characteristic}.  Substituting \eqref{eq:Hirzebruch-four-edge-data} in \eqref{eq:edge-preferred-limit-monomial} gives
\begin{equation}
\begin{array}{c|c}
F_0&
t^{(\|\alpha^t\|^2+|\alpha|)/2}
q^{(\|\alpha^t\|^2-|\alpha|)/2}\\[2pt]
F_\infty&
t^{(\|\beta\|^2-|\beta|)/2}
q^{(\|\beta\|^2+|\beta|)/2}\\[2pt]
B&
t^{(2-\ell)\|\mu_2^t\|^2/2}
q^{\ell\|\mu_2\|^2/2}\\[2pt]
B_+&
t^{-\ell\|\mu_1^t\|^2/2}
q^{(\ell+2)\|\mu_1\|^2/2}.
\end{array}
\label{eq:Hirzebruch-four-edge-monomials}
\end{equation}
The four slope indices can be computed from one finite polarization.  For a box of an edge partition, write \(L\) for its leg and \(A\) for its arm, and put
\[
 X=u^{-L}v^{A+1},\qquad Y=u^{L+1}v^{-A},\qquad
 d=aL-b(A+1).
\]
Since \(a+b=-2\) and \(huv=\kappa\), its contribution to \eqref{eq:general-edge-character} is
\[
 X\frac{h^d-h}{1-h}+Y\frac{h^{2-d}-h}{1-h}.
\]
Thus \(X(h^d-h)/(1-h)\) is a polarization.  More explicitly,
\[
 \frac{h^d-h}{1-h}=
 \begin{cases}
 \displaystyle\sum_{k=d}^{0}h^k,&d\leq0,\\[3pt]
 0,&d=1,\\[2pt]
 \displaystyle-\sum_{k=1}^{d-1}h^k,&d\geq2.
 \end{cases}
\]
The minus sign in the last case is a virtual multiplicity in \eqref{eq:slope-index-definition}.

For \(F_0\), \(d=-2L\), and the exponent of \(Xh^k\) is
\[
 (L+A+1)R+(L+k)S,\qquad -2L\leq k\leq0.
\]
There are \(L+1\) attracting weights and \(L\) repelling weights, so the index per box is \(1\).
For \(F_\infty\), \(d=2(A+1)\), and the polarization has negative multiplicity on \(1\leq k\leq2A+1\).  Its weight exponent is
\[
 \bigl(L+(A+1)(1-\ell)+k\ell\bigr)R+(k-A-1)S.
\]
The exponent at \(k=A+1\) is \((L+A+1)R>0\).  Hence \(A+1\) weights are attracting and \(A\) are repelling; their negative multiplicities give index \(-1\).

For \(B\), \(d=(\ell-2)L+\ell(A+1)\), and the exponent of \(Xh^k\) is
\[
 \bigl((\ell-1)L+\ell(A+1)-k\bigr)R+(L+A+1)S.
\]
For \(B_+\), \(d=\ell L+(\ell+2)(A+1)\), and it is
\[
 \bigl(\ell L+(\ell+1)(A+1)-k\bigr)R+(L+A+1)S.
\]
All weights in these two finite polarizations are attracting for the stabilized slope \(S\gg R>0\).  The signed number of weights is \(1-d\), giving respectively
\[
 (2-\ell)L-\ell A+1-\ell,\qquad
 -(\ell+2)A-\ell L-\ell-1.
\]
Summing over boxes, with \(\|\tau\|^2=\sum_{s\in\tau}(2a_\tau(s)+1)\) and \(\|\tau^t\|^2=\sum_{s\in\tau}(2\ell_\tau(s)+1)\), we find
\begin{equation}
\begin{aligned}
 \operatorname{ind}_\sigma
 \mathsf E^{\mathrm{vir}}_{\alpha;(-2,0)}
 \left(\eta,\kappa\xi^{-1}\eta^{-1},\xi\right)
 &=|\alpha|,\\
 \operatorname{ind}_\sigma
 \mathsf E^{\mathrm{vir}}_{\beta;(0,-2)}
 \left(\xi^\ell\eta,\xi^{-1},
       \kappa\xi^{1-\ell}\eta^{-1}\right)
 &=-|\beta|,\\
 \operatorname{ind}_\sigma
 \mathsf E^{\mathrm{vir}}_{\mu_2;(\ell-2,-\ell)}
 \left(\xi^{-1},\kappa\xi^{1-\ell}\eta^{-1},\xi^\ell\eta\right)
 &=\frac{(2-\ell)\|\mu_2^t\|^2
              -\ell\|\mu_2\|^2}{2},\\
 \operatorname{ind}_\sigma
 \mathsf E^{\mathrm{vir}}_{\mu_1;(\ell,-\ell-2)}
 \left(\xi^{-1},\xi^{-\ell}\eta^{-1},
       \kappa\xi^{\ell+1}\eta\right)
 &=-\frac{(\ell+2)\|\mu_1\|^2
               +\ell\|\mu_1^t\|^2}{2}.
\end{aligned}
\label{eq:Hirzebruch-four-edge-indices}
\end{equation}

Let \(\bar f_\tau(t,q)=t^{\|\tau^t\|^2/2}q^{-\|\tau\|^2/2}\), so \(\bar f_\tau=(-1)^{|\tau|}f_\tau\).  Multiplying \eqref{eq:Hirzebruch-four-edge-monomials} by \eqref{eq:Hirzebruch-four-vertex-monomials} gives, edge by edge,
\begin{equation}
 \left(\frac tq\right)^{|\alpha|/2}\bar f_\alpha(t,q),\quad
 \left(\frac qt\right)^{|\beta|/2}\bar f_{\beta^t}(q,t),\quad
 \bar f_{\mu_2}(t,q)^{1-\ell},\quad
 \bar f_{\mu_1}(t,q)^{-\ell-1}.
\label{eq:Hirzebruch-combined-edge-monomials}
\end{equation}

\emph{Fibre sums.}  For either fibre edge, the factor \(\mathbf Q_e^{|\lambda_e|}\) in \eqref{eq:KPT-vertex-edge-sum} is \(Q_F^{|\lambda_e|}\) by \eqref{eq:Hirzebruch-signed-series}.  The first two factors in \eqref{eq:Hirzebruch-combined-edge-monomials} turn the sums over \(\alpha\) and \(\beta\) into
\begin{equation}
 K_{\mu_1\mu_2}\!
 \left(\sqrt{t/q}\,Q_F;t,q\right),\qquad
 K_{\mu_2^t\mu_1^t}\!
 \left(\sqrt{q/t}\,Q_F;q,t\right),
\label{eq:Hirzebruch-two-oriented-strip-arguments}
\end{equation}
compare \cite[equation~(3.2)]{TakiRefined}.  The strip evaluation \cite[equation~(3.3)]{TakiRefined} uses the argument \(\sqrt{q/t}\,X\) for \(K(X;t,q)\) and \(\sqrt{t/q}\,X\) for \(K(X;q,t)\), so both functions in \eqref{eq:Hirzebruch-two-oriented-strip-arguments} have the resulting argument \(Q_F\), whence the two Cauchy identities give \(\mathscr D_{\mu_1}\mathscr D_{\mu_2} \mathscr R_{\mu_1\mu_2}(Q_F;t,q)\), exactly as in \cite[equations~(3.10)--(3.11)]{TakiRefined}.

\emph{Signs.}  Let \(D=|\mu_1|+|\mu_2|\).  The signed specialization \eqref{eq:intro-signed-Novikov-specialization} contributes
\[
 \bigl((-1)^\ell Q_B\bigr)^D Q_F^{\ell|\mu_1|}.
\]
Since \(\bar f_\tau=(-1)^{|\tau|}f_\tau\) and \(-\ell-1\equiv1-\ell\equiv\ell+1\pmod2\),
\begin{equation}
\begin{aligned}
 &\bigl((-1)^\ell Q_B\bigr)^D
  \bar f_{\mu_1}^{-\ell-1}\bar f_{\mu_2}^{1-\ell}\\
 &\qquad=(-Q_B)^D
  f_{\mu_1}^{-\ell-1}f_{\mu_2}^{1-\ell}.
\end{aligned}
\label{eq:Hirzebruch-section-sign-conversion}
\end{equation}
Relabeling \((\mu_1,\mu_2)=(\lambda,\mu)\) and applying Lemma~\ref{lem:global-preferred-slope-limit} gives \eqref{eq:Hirzebruch-four-chart-result}.
\end{proof}

\subsection{The fixed point formula}
For a partition \(\lambda\), let
\begin{equation}
 T_\lambda(U;p_1,p_2)
 =\prod_{(i,j)\in\lambda}U^{-1}p_1^{1-i}p_2^{1-j}.
\label{eq:Hirzebruch-determinant-box-monomial}
\end{equation}

\begin{lemma}
\label{lem:Hirzebruch-ADHM-normalization}
At the fixed point \((\lambda_1,\lambda_2)\), the summand of \(\mathcal A_\ell\) is
\begin{equation}
 \frac{
 \displaystyle
 \prod_{a=1}^2
 \left((p_1p_2)^{-\lvert\lambda_a\rvert/2}
 T_{\lambda_a}(U_a;p_1,p_2)\right)^\ell
 (p_1^{-1}p_2^{-1}z)^{\lvert\lambda_1\rvert+\lvert\lambda_2\rvert}}
 {\displaystyle
 \prod_{a,b=1}^2
 N_{\lambda_a\lambda_b}
 (U_b/U_a;p_1^{-1},p_2^{-1})},
\label{eq:Hirzebruch-Nekrasov-fixed-term}
\end{equation}
where \(N_{\lambda\mu}\) is the finite arm--leg product \eqref{eq:Nekrasov-factor}.
\end{lemma}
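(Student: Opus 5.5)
The plan is to specialize Definition~\ref{def:ADHM-index} directly via the fixed point formula \eqref{eq:localized-Euler-convention}, and then rewrite each piece in the variables of \eqref{eq:Hirzebruch-ADHM-series}. In \(\mathcal A_\ell\) the framed sheaf parameters are
\[
 \mathfrak q=z(p_1p_2)^{-1-\ell/2},\qquad (q_1,q_2)=(p_1^{-1},p_2^{-1}),\qquad (u_1,u_2)=(U_1^{-1},U_2^{-1}).
\]
The summand at \(\boldsymbol\lambda=(\lambda_1,\lambda_2)\) is
\[
 \mathfrak q^{|\lambda_1|+|\lambda_2|}\,(\det\cV|_{\boldsymbol\lambda})^\ell\Big/\Lambda_{-1}(T^*_{\boldsymbol\lambda}\M(2,n)).
\]
I will treat the numerator and the denominator separately.

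\emph{Denominator.} By \eqref{eq:ADHM-Nekrasov-denominator}, this is \(\prod_{a,b}N_{\lambda_a\lambda_b}(u_a/u_b;q_1,q_2)\). Substituting \(u_a/u_b=U_a^{-1}/U_b^{-1}=U_b/U_a\) and \(q_i=p_i^{-1}\) gives exactly the denominator of \eqref{eq:Hirzebruch-Nekrasov-fixed-term}. No reordering of \(N\) is needed, since the arguments match term by term.

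\emph{Numerator.} By \eqref{eq:det-fixed-weight}, I have
\[
 \det\cV|_{\boldsymbol\lambda}=\prod_a u_a^{|\lambda_a|}q_1^{n(\lambda_a)}q_2^{n(\lambda_a^t)}.
\]
Since \(n(\lambda)=\sum_{(i,j)\in\lambda}(i-1)\) and \(n(\lambda^t)=\sum_{(i,j)\in\lambda}(j-1)\), the factor for \(a\) equals
\[
 \prod_{(i,j)\in\lambda_a}U_a^{-1}p_1^{1-i}p_2^{1-j}=T_{\lambda_a}(U_a;p_1,p_2)
\]
by \eqref{eq:Hirzebruch-determinant-box-monomial}. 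The \(\mathfrak q\)-power contributes
\[
 z^{N}(p_1p_2)^{-N}(p_1p_2)^{-\ell N/2},\qquad N=|\lambda_1|+|\lambda_2|.
\]
Here the first two factors form \((p_1^{-1}p_2^{-1}z)^{N}\), and the last factor distributes as \(\prod_a((p_1p_2)^{-|\lambda_a|/2})^\ell\). Combining these with the \(T_{\lambda_a}^\ell\) gives the numerator of \eqref{eq:Hirzebruch-Nekrasov-fixed-term}.

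The only point needing care, and the one I expect to be the main obstacle, is branch bookkeeping. The half-integer powers \((p_1p_2)^{-\ell/2}\) and \(U^{\pm1/2}\) must be read with the fixed square roots \(U^{1/2}\) and the chosen lifts \(\log p_i\). Since \(\ell N/2\) need not be an integer, I will interpret \((p_1p_2)^{-\ell N/2}\) as \(((p_1p_2)^{1/2})^{-\ell N}\) for a single fixed square root. This makes the splitting of the power across \(a=1,2\) consistent. The same care is needed to confirm that the convention \eqref{eq:localized-Euler-convention} uses \(T^*\), matching \eqref{eq:ADHM-Nekrasov-denominator} without inverting characters. With both checks in place, the lemma is a direct substitution.
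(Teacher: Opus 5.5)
Your proposal is correct and matches the paper's proof: substitute \(q_i=p_i^{-1}\) and \(u_a=U_a^{-1}\) into \eqref{eq:det-fixed-weight} to obtain \(\prod_a T_{\lambda_a}(U_a;p_1,p_2)\), read off \((p_1p_2)^{-\ell(|\lambda_1|+|\lambda_2|)/2}\) from the counting-variable substitution, and use \(u_a/u_b=U_b/U_a\) in \eqref{eq:ADHM-Nekrasov-denominator} for the denominator. Your remark that the half-integral powers should be read with one fixed square root of \(p_1p_2\) is a harmless precaution that the paper leaves implicit.
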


The exponent \(\ell\) is exactly the power of the determinant line.  The insertion \((\det\cV)^\ell\) is the whole shape dependent numerator apart from the monomial recording \(c_2\).  

\begin{proof}
Substitute \(q_i=p_i^{-1}\) and \(u_a=U_a^{-1}\) in \eqref{eq:det-fixed-weight}:
\[
 \det\cV
 =\prod_{a=1}^2
 U_a^{-\lvert\lambda_a\rvert}
 p_1^{-n(\lambda_a)}p_2^{-n(\lambda_a^t)}
 =\prod_{a=1}^2T_{\lambda_a}(U_a;p_1,p_2).
\]
The factor \((p_1p_2)^{-\ell(\lvert\lambda_1\rvert+ \lvert\lambda_2\rvert)/2}\) comes from the substitution for the counting variable in \eqref{eq:Hirzebruch-ADHM-series}.  The same substitution in \eqref{eq:ADHM-Nekrasov-denominator} gives the denominator, since \(u_a/u_b=U_b/U_a\).
\end{proof}

Under \(U=e^{T_F}\), \(z=\zeta_\ell\) and \(p_i=e^{\eps_i}\), the branch \eqref{eq:Hirzebruch-signed-branch} identifies the exponential in \eqref{eq:Hirzebruch-full-PT} with \eqref{eq:Hirzebruch-degree-zero-exponential}.

\subsection{Comparison}
\begin{proposition}
\label{prop:Hirzebruch-PT-ADHM}
For \(0\leq\ell\leq2\),
\begin{equation}
 Z_{Y_\ell}^{\PT,\mathrm{norm}}(Q_B,Q_F;p_1,p_2)
 =\mathcal A_\ell
 (Q_F^{-1};p_1,p_2\mid\zeta_\ell).
\label{eq:Hirzebruch-normalized-PT-ADHM}
\end{equation}
Consequently,
\begin{equation}
 \widehat Z_{Y_\ell}(T_F,T_M;\eps_1,\eps_2)
 =\widehat{\mathcal A}_\ell
 \bigl(e^{T_F};e^{\eps_1},e^{\eps_2}\mid\zeta_\ell\bigr).
\label{eq:Hirzebruch-PT-ADHM}
\end{equation}
\end{proposition}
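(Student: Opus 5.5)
The plan is to compare the two sides term by term over pairs of partitions. On the left, Lemma~\ref{lem:Hirzebruch-four-chart-normalization} already writes \(Z_{Y_\ell}^{\PT,\mathrm{norm}}\) as the sum \eqref{eq:Hirzebruch-four-chart-result} over \((\lambda,\mu)\), with \((t,q)=(p_1,p_2^{-1})\) from \eqref{eq:Hirzebruch-refined-variable-change}. On the right, Lemma~\ref{lem:Hirzebruch-ADHM-normalization} writes \(\mathcal A_\ell\) as the sum over fixed points \((\lambda_1,\lambda_2)\) of \eqref{eq:Hirzebruch-Nekrasov-fixed-term}, with \(U=Q_F^{-1}\) and \(z=\zeta_\ell\). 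After identifying \((\lambda_1,\lambda_2)\) with \((\lambda,\mu)\) (or its transposes, fixed by the check below), it suffices to prove an identity of rational functions in \(Q_F,p_1,p_2\) for each pair. The monomial \(e^{-T_M}=Q_BQ_F^{\ell/2-1}\) together with \(\zeta_\ell=(-1)^\ell e^{-T_M}\) matches \((-Q_B)^{|\lambda|+|\mu|}\) against \(z^{|\lambda_1|+|\lambda_2|}\), up to residual powers of \(Q_F\) and of the signs \((-1)^{(\ell+1)|\cdot|}\) hidden in \(f_\lambda^{-\ell-1}f_\mu^{1-\ell}\). Those residual factors must be absorbed in the next step.

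The core step is Taki's rank \(2\) computation, which I would do in two parts.

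\textbf{Diagonal factors.} I would rewrite \(\mathscr D_\tau(t,q)\) via the standard hook identity \(\widetilde Z_\tau(t,q)\widetilde Z_{\tau^t}(q,t)\) as an inverse product over boxes of \((1-q^{a}t^{\ell+1})(1-t^{\ell}q^{a+1})\). After \(t=p_1\), \(q=p_2^{-1}\), this is, up to a monomial, the diagonal Nekrasov factor \(N_{\tau\tau}(1;p_1^{-1},p_2^{-1})^{-1}\).

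\textbf{Off-diagonal factors.} The cross product \(\mathscr R_{\lambda\mu}(Q_F;t,q)\) is an infinite product with cancelling tails. Using the standard identity that converts \(\prod_{i,j}(1-Qt^{-\lambda^t_i+j}q^{-\mu_j+i-1})/(1-Qt^jq^{i-1})\) into the finite arm--leg product \(N_{\lambda\mu}\) (as in Taki and Awata--Kanno), I would show that \(\mathscr R_{\lambda\mu}\) equals \(\bigl(N_{\lambda\mu}(U_2/U_1)N_{\mu\lambda}(U_1/U_2)\bigr)^{-1}\) up to a monomial in \(Q_F,p_1,p_2\). The two factors of \(\mathscr R\) give the two orderings, and \(U_2/U_1=Q_F\).

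\textbf{Monomial bookkeeping.} Finally I would collect every monomial and check that it matches the numerator of \eqref{eq:Hirzebruch-Nekrasov-fixed-term}. The monomials to collect are the framing factors \(f^{-\ell-1}\), \(f^{1-\ell}\), the monomials from converting \(N_{\tau\tau}\) and \(\mathscr R\), the powers \(Q_F^{\ell|\lambda|}\) against \(Q_F^{(\ell/2-1)(|\lambda|+|\mu|)}\), and the factor \(\prod_a\bigl((p_1p_2)^{-|\lambda_a|/2}T_{\lambda_a}(U_a)\bigr)^\ell(p_1p_2)^{-|\lambda_1|-|\lambda_2|}\), including \(U_a^{\mp|\lambda_a|\ell/2}\). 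The \(\ell\)-dependence should then reduce to \(\ell\) times the relation \(f_\lambda^{-1}f_\mu^{-1}\leftrightarrow\det\cV\) (using \(\|\tau\|^2=|\tau|+2n(\tau^t)\) and \(\|\tau^t\|^2=|\tau|+2n(\tau)\)) together with \(Q_F^{\ell(|\lambda|-|\mu|)/2}\). I expect this bookkeeping, together with the signs, to be the main obstacle. The transpose and ordering conventions between \((\lambda,\mu)\) and \((\lambda_1,\lambda_2)\) must be fixed so that the \(\ell\)-linear parts agree. I would settle them first on single boxes, \(|\lambda|+|\mu|=1\), for generic \(\ell\), and only then run the general computation.

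For the consequence \eqref{eq:Hirzebruch-PT-ADHM}, I would multiply \eqref{eq:Hirzebruch-normalized-PT-ADHM} by \(e^{F^{(0)}_\ell}\mathcal E\). By the remark after Lemma~\ref{lem:Hirzebruch-ADHM-normalization}, with \(U=e^{T_F}\), \(z=\zeta_\ell\) and the branch \eqref{eq:Hirzebruch-signed-branch}, the exponential \(e^{F^{(0)}_\ell}\) equals \(\mathcal A^{(0)}\): indeed \(-(T_F)^2(-\eps_1-\eps_2-T_M+\pi\mathrm i\ell)/(4\eps_1\eps_2)=F_\ell^{(0)}\). Comparing with \eqref{eq:Hirzebruch-completed-ADHM-series} then gives the result.
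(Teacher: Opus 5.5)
Your proposal is correct and follows essentially the paper's route. You compare the four-chart formula of Lemma~\ref{lem:Hirzebruch-four-chart-normalization} termwise with the fixed point formula of Lemma~\ref{lem:Hirzebruch-ADHM-normalization}, using Taki's diagonal and cross identities \eqref{eq:Hirzebruch-direct-diagonal-factor}--\eqref{eq:Hirzebruch-direct-cross-factor} with \((\lambda_1,\lambda_2)=(\lambda,\mu)\) and no transposes, and then multiply by \(\mathcal A^{(0)}\mathcal E\).

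The only cosmetic difference is the framing. The paper first matches with framing \((Q_F,1)\) and then recenters by Lemma~\ref{lem:linearization}. You work in the centered framing from the start, which is why the powers \(Q_F^{\ell(|\lambda|-|\mu|)/2}\) appear explicitly in your bookkeeping.
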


Equation~\eqref{eq:Hirzebruch-normalized-PT-ADHM} holds coefficientwise in \(Q_B\), with coefficients in \(\Q(Q_F^{1/2},p_1^{1/2},p_2^{1/2})\), each coefficient being mapped to its Laurent expansion at \(Q_F^{1/2}=0\); the expressions of Lemma~\ref{lem:Hirzebruch-four-chart-normalization} and the framed sheaf side use the same injective expansion homomorphism.  For \(\ell=2\), the equality is an equality of localized indices in the sense of Lemma~\ref{lem:Hirzebruch-fixed-proper}.  Multiplying both sides by the same meromorphic continuations of the classical and one-loop factors gives \eqref{eq:Hirzebruch-PT-ADHM}.

\begin{proof}
Lemma~\ref{lem:Hirzebruch-four-chart-normalization} gives the localization sum in the preferred slope limit with all prefactors and edge characters included; the passage from chart limits to the global sum is Lemma~\ref{lem:global-preferred-slope-limit}, using the finiteness of Lemma~\ref{lem:Hirzebruch-fixed-proper}.  For a fixed power of \(Q_B\) only finitely many pairs of partitions occur.

The rest is Taki's calculation, in rank \(2\) and in our variables.  In \cite[Section~3]{TakiRefined}, the framing parameters satisfy \(e_1e_2=1\) and the ratio \(e_1/e_2\) is our \(Q_F\), so we take \(e_1=Q_F^{1/2}\), \(e_2=Q_F^{-1/2}\) on the fixed square root branch, and the two arguments before Taki's Cauchy evaluation are the oriented arguments \eqref{eq:Hirzebruch-two-oriented-strip-arguments}; after the two square root substitutions in the definition of \(\widetilde Q_F\) in \cite[equation~(3.3)]{TakiRefined}, both give \(Q_F\).  His base Novikov coordinate becomes \(-Q_B\) in the stable pair sign convention.  The rank \(2\) case of \cite[equations~(3.3)--(3.5) and (3.12)--(3.19) in the arXiv version]{TakiRefined}, with \(m=\ell\), is then exactly \eqref{eq:Hirzebruch-four-chart-result}, and division by \eqref{eq:Hirzebruch-empty-strip-normalization} removes the base degree \(0\) contribution; in Taki's notation, the \(B_+\)-edge carries \(Q_{B,1}=-Q_BQ_F^\ell\) and the \(B\)-edge carries \(Q_{B,2}=-Q_B\).  Equivalently, if \(d=|\lambda|+|\mu|\), then
\begin{equation}
 Q_B^dQ_F^{\ell|\lambda|}
 =\bigl(Q_BQ_F^{\ell/2-1}\bigr)^dQ_F^d
   e_1^{\ell|\lambda|}e_2^{\ell|\mu|},
\label{eq:Hirzebruch-Taki-curve-translation}
\end{equation}
the \(N=2\) case of the curve monomial identities \cite[equations~(3.15)--(3.18) in the arXiv version]{TakiRefined}.

For the partition dependent factors, abbreviate \(N_{\alpha\beta}(x)=N_{\alpha\beta}(x;t^{-1},q)\).  The diagonal and cross term identities \cite[equations~(3.12)--(3.14) in the arXiv version]{TakiRefined}, in our arm--leg convention, are
\begin{equation}
 \frac1{N_{\tau\tau}(1)}
 =(-1)^{|\tau|}\left(\frac tq\right)^{|\tau|/2}
  \mathscr D_\tau(t,q)
\label{eq:Hirzebruch-direct-diagonal-factor}
\end{equation}
and
\begin{equation}
\begin{aligned}
 &\frac1{N_{\lambda\mu}(Q_F)N_{\mu\lambda}(Q_F^{-1})}\\
 &\quad=(-Q_F)^d
 \left(\frac qt\right)^{-d/2+
 (\|\lambda^t\|^2-\|\mu^t\|^2)/2}
 q^{(c_\lambda-c_\mu)/2}
 \mathscr R_{\lambda\mu}(Q_F;t,q).
\end{aligned}
\label{eq:Hirzebruch-direct-cross-factor}
\end{equation}
Using \(\|\tau\|^2=|\tau|+2n(\tau^t)\) and \(\|\tau^t\|^2=|\tau|+2n(\tau)\), the product of \eqref{eq:Hirzebruch-direct-diagonal-factor} and \eqref{eq:Hirzebruch-direct-cross-factor} gives, term by term,
\begin{equation}
\begin{aligned}
 &(-Q_B)^dQ_F^{\ell|\lambda|}
 f_\lambda^{-\ell-1}f_\mu^{1-\ell}
 \mathscr D_\lambda\mathscr D_\mu\mathscr R_{\lambda\mu}\\
 &\quad=
 \frac{\left((-1)^\ell(q/t)^{1+\ell/2}
 Q_BQ_F^{-1}\right)^d
 \left(
 Q_F^{|\lambda|}t^{-n(\lambda)-n(\mu)}
 q^{n(\lambda^t)+n(\mu^t)}
 \right)^\ell}
 {N_{\lambda\lambda}(1)N_{\lambda\mu}(Q_F)
  N_{\mu\lambda}(Q_F^{-1})N_{\mu\mu}(1)}.
\end{aligned}
\label{eq:Hirzebruch-direct-termwise-comparison}
\end{equation}
The sign is also termwise:
\begin{equation}
 (-1)^d
 (-1)^{-(\ell+1)|\lambda|+(1-\ell)|\mu|}
 =(-1)^{\ell d}.
\label{eq:Hirzebruch-parity-conversion}
\end{equation}

Summing \eqref{eq:Hirzebruch-direct-termwise-comparison} over \(\lambda,\mu\) proves
\begin{equation}
 Z_{Y_\ell}^{\PT,\mathrm{norm}}(Q_B,Q_F;t,q^{-1})
 =Z_{2,\ell}^{\mathrm{fr}}\left(
 (-1)^\ell\left(\frac qt\right)^{1+\ell/2}
 Q_BQ_F^{-1};t^{-1},q;Q_F,1\right).
\label{eq:Hirzebruch-uncentered-comparison}
\end{equation}
Rescaling the framing characters from \((Q_F,1)\) to \((Q_F^{1/2},Q_F^{-1/2})\) by Lemma~\ref{lem:linearization} gives \eqref{eq:Hirzebruch-normalized-PT-ADHM} with \((p_1,p_2)=(t,q^{-1})\).  Since \(t\) and \(q\) are independent, the identity holds for independent \(p_1,p_2\).  Finally, multiply both sides by \(\mathcal A^{(0)}(U;p_1,p_2\mid\zeta_\ell) \mathcal E(U;p_1,p_2)\); by \eqref{eq:Hirzebruch-signed-branch}, the first factor is the degree \(0\) exponential of \eqref{eq:Hirzebruch-full-PT}.
\end{proof}

\section{Blowup equations for local Hirzebruch surfaces}
\label{sec:NY}

The blowup identities of Nakajima--Yoshioka, Bershtein--Shchechkin and Shchechkin, written in the variables of Proposition~\ref{prop:Hirzebruch-PT-ADHM}, give Theorem~\ref{thm:Hirzebruch-main} once the translations of \(T_F\) and \(T_M\) are computed.

\subsection{The bilinear sum}
The determinant line insertions belong to the \(K\)-theoretic Donaldson theory of G\"ottsche--Nakajima--Yoshioka \cite{GottscheNakajimaYoshioka}.  We use the rank \(2\) formulas in the variables fixed above.

Let \(p_i=e^{\eps_i}\), and let \(\widehat{\mathcal A}_\ell\) be the full framed sheaf partition function \eqref{eq:Hirzebruch-completed-ADHM-series}.  Recall the index sets \(\mathcal I_\ell\) of \eqref{eq:intro-index-sets}: \(j\in\{0,1\}\) selects the lattice coset \(\nu\in\Z+j/2\), and \(d\) shifts the counting variable in the two chart factors.  For \((j,d)\in\mathcal I_\ell\), let
\begin{equation}
\begin{aligned}
 \mathscr B_{\ell}^{j,d}(U;p_1,p_2\mid z)
 :={}&\sum_{\nu\in\Z+j/2}
 \widehat{\mathcal A}_\ell
 \left(Up_1^{2\nu};p_1,p_2/p_1
       \mid p_1^{d-1+\ell(j-1)/2}z\right)\\
 &\hspace{13mm}\times
 \widehat{\mathcal A}_\ell
 \left(Up_2^{2\nu};p_1/p_2,p_2
       \mid p_2^{d-1+\ell(j-1)/2}z\right).
\end{aligned}
\label{eq:finite-NY-bilinear-operator}
\end{equation}
This notation is interpreted coefficientwise after division by the unshifted degree \(0\) factor, as made explicit below.  To interpret it as an analytic sum also requires normal convergence of the normalized summands and compatible continuations of the degree \(0\) factors.  Let
\[
 \K_{\mathrm{fr}}
 =\Q(U^{1/2},p_1^{1/4},p_2^{1/4}).
\]
Specializations are allowed wherever the fixed point denominators \eqref{eq:Nekrasov-factor} occurring in \eqref{eq:ADHM-Nekrasov-denominator} are regular.

\subsection{Shift quotients}
\begin{lemma}
\label{lem:Hirzebruch-shift-quotients}
Let \(r\in\Z\), \(s\in\tfrac12\Z\), and keep the chosen logarithms.  The quotient of the two shifted classical factors by the unshifted one is
\begin{equation}
\begin{aligned}
 &\frac{\mathcal A^{(0)}(Up_1^r;p_1,p_2/p_1\mid p_1^s z)
             \mathcal A^{(0)}(Up_2^r;p_1/p_2,p_2\mid p_2^s z)}
 {\mathcal A^{(0)}(U;p_1,p_2\mid z)}\\
 &\hspace{18mm}=
 z^{r^2/4}U^{r(s+1)/2}(p_1p_2)^{r^2s/4}.
\end{aligned}
\label{eq:Hirzebruch-classical-shift-quotient}
\end{equation}
For \(r\in\Z\), let
\begin{equation}
 D_r(x,y)=
 \frac{x^r}{(1-x)(1-y/x)}
 +\frac{y^r}{(1-x/y)(1-y)}
 -\frac1{(1-x)(1-y)}.
\label{eq:Hirzebruch-Dr-definition}
\end{equation}
Then
\begin{equation}
 D_r(x,y)=
 \begin{cases}
 -\displaystyle\sum_{\substack{a,b\geq0\\a+b\leq r-1}}x^ay^b,
     &r\geq1,\\[6pt]
 0,&r=0,-1,\\[2pt]
 -\displaystyle\sum_{\substack{a,b\geq0\\a+b\leq-r-2}}
     x^{-a-1}y^{-b-1},&r\leq-2.
 \end{cases}
\label{eq:Hirzebruch-Dr-Laurent-polynomial}
\end{equation}
The one-loop quotient
\begin{equation}
 \mathcal L_r(U;p_1,p_2)
 :=\frac{\mathcal E(Up_1^r;p_1,p_2/p_1)
             \mathcal E(Up_2^r;p_1/p_2,p_2)}
 {\mathcal E(U;p_1,p_2)}
\label{eq:Hirzebruch-one-loop-shift-quotient}
\end{equation}
is a rational function of \(U,p_1,p_2\), with
\begin{equation}
 \log\mathcal L_r
 =-\sum_{m\geq1}\frac{
 U^mD_r(p_1^m,p_2^m)+U^{-m}D_{-r}(p_1^m,p_2^m)}m.
\label{eq:Hirzebruch-one-loop-shift-log}
\end{equation}
\end{lemma}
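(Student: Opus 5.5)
The plan is to verify the three assertions separately. Each reduces to an identity among explicit exponents once logarithms are taken. Throughout, write \(L=\log U\), \(e_i=\log p_i\) and \(w=\log z\) for the fixed lifts. The shifted arguments then have lifts \(L+re_1\) or \(L+re_2\) and \(w+se_1\) or \(w+se_2\), and the deformed equivariant parameters are \((e_1,e_2-e_1)\) and \((e_1-e_2,e_2)\).

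\emph{Classical factor.} By \eqref{eq:Hirzebruch-degree-zero-exponential},
\[
 \log\mathcal A^{(0)}(U;p_1,p_2\mid z)=-\frac{L^2(w-e_1-e_2)}{4e_1e_2}.
\]
For the first shifted factor the numerator becomes \((L+re_1)^2(w+se_1-e_2)\) over \(4e_1(e_2-e_1)\). For the second it becomes \((L+re_2)^2(w+se_2-e_1)\) over \(4(e_1-e_2)e_2\). I would put the three terms over the common denominator \(4e_1e_2(e_2-e_1)\) and expand in powers of \(L\).

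The \(L^2\) terms should cancel identically; this is the standard Nakajima--Yoshioka cancellation. The \(L^1\) and \(L^0\) terms should have numerators divisible by \(e_1e_2(e_2-e_1)\). Concretely, the \(L\)-linear part should reduce to \(\tfrac{r(s+1)}2L\), and the constant part to \(\tfrac{r^2}4w+\tfrac{r^2s}4(e_1+e_2)\). Exponentiating then gives \eqref{eq:Hirzebruch-classical-shift-quotient}. This is a routine polynomial check, which I would carry out by evaluating the antisymmetric combination \(f(e_1,e_2)-f(e_2,e_1)\) and dividing by \(e_2-e_1\).

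\emph{The function \(D_r\).} I would prove \eqref{eq:Hirzebruch-Dr-Laurent-polynomial} by a recursion in \(r\). Using \(x^{r+1}-x^r=-x^r(1-x)\), and the analogous identity for \(y\), the difference is
\[
 D_{r+1}-D_r=-\frac{x^{r+1}}{x-y}+\frac{y^{r+1}}{x-y}=-\frac{x^{r+1}-y^{r+1}}{x-y}.
\]
This equals \(-h_r(x,y)\), the complete homogeneous polynomial, for \(r\geq0\). It vanishes for \(r=-1\). For \(r\leq-2\) it equals \((xy)^{r+1}h_{-r-2}(x,y)\).

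Next I would check directly that \(D_0=0\) by clearing denominators; this is the partial fraction identity \(\frac1{(1-x)(1-y/x)}+\frac1{(1-x/y)(1-y)}=\frac1{(1-x)(1-y)}\). Telescoping upward from \(D_0\) then gives the sum over \(a+b\leq r-1\) for \(r\geq1\). Telescoping downward gives \(D_{-1}=0\) and, for \(r\leq-2\), the stated sum of \(x^{-a-1}y^{-b-1}\).

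\emph{One-loop quotient.} By \eqref{eq:global-double-Pochhammer},
\[
 \log\mathcal E(U;p_1,p_2)=-\sum_{m\geq1}\frac{U^m+U^{-m}}{m(1-p_1^m)(1-p_2^m)}.
\]
I substitute the shifted data into this expansion. The term \(U^mp_1^{rm}/((1-p_1^m)(1-p_2^m/p_1^m))\) and its partner with \(p_2\) combine, after subtracting the unshifted term, into \(U^mD_r(p_1^m,p_2^m)\). The \(U^{-m}\) terms carry \(p_i^{-rm}\) and give \(D_{-r}(p_1^m,p_2^m)\). This yields \eqref{eq:Hirzebruch-one-loop-shift-log} termwise in \(m\).

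Since \(D_{\pm r}\) is a finite Laurent polynomial, the identity \(\sum_m U^mx^{ma}y^{mb}/m=-\log(1-Ux^ay^b)\) converts \(\log\mathcal L_r\) into the logarithm of a finite product of factors \((1-U^{\pm1}p_1^ap_2^b)^{\pm1}\). Hence \(\mathcal L_r\) is rational.

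The main obstacle is interpretive rather than computational. The individual Pochhammer factors need not converge for the deformed parameters, since \(p_2/p_1\) and \(p_1/p_2\) cannot both be small. I would therefore define the quotient \(\mathcal L_r\) through the combination of the \(m\)-th terms, where the infinite parts cancel exactly by the rational identity for \(D_r\). I would then note that this agrees with the quotient of meromorphic continuations on any domain where those continuations exist.
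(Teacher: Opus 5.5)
Your proposal is correct. The classical-factor computation and the termwise plethystic-logarithm treatment of \(\mathcal L_r\) match the paper's proof. Your remark that \(\mathcal L_r\) should be defined through the combined \(m\)-th terms, because the three double products need not share a chamber of convergence, is the same point the paper makes right after the lemma. Your expansion does go through as predicted. The \(L^2\) terms cancel. Over the denominator \(4e_1e_2(e_2-e_1)\), the remaining numerators are \(2rLe_1e_2(s+1)(e_2-e_1)\) and \(r^2e_1e_2(e_2-e_1)(w+s(e_1+e_2))\), which give exactly the stated exponent.

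The one genuine difference is how you prove the Laurent-polynomial formula for \(D_r\).
\begin{itemize}
\item The paper multiplies by \((1-x)(1-y)(x-y)\) and sums two finite geometric progressions for \(r\geq1\). It checks \(D_0=D_{-1}=0\) by substitution, and deduces the range \(r\leq-2\) from the reflection identity \(D_{-r-1}(x^{-1},y^{-1})=xyD_r(x,y)\).
\item You compute the first difference \(D_{r+1}-D_r=-(x^{r+1}-y^{r+1})/(x-y)\) and telescope in both directions from the single partial-fraction identity \(D_0=0\).
\end{itemize}
Your route is uniform in \(r\) and gives \(D_{-1}=0\) and the whole negative range without a separate symmetry argument. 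The paper's route is a direct case-by-case verification and records the reflection symmetry along the way.

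A minor sharpening of your last step: every coefficient of \(D_{\pm r}\) equals \(-1\). So \(\mathcal L_r\) is a finite product of factors \((1-U^{\pm1}p_1^ap_2^b)^{-1}\) only, not \((1-U^{\pm1}p_1^ap_2^b)^{\pm1}\), as the paper states.
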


Equation~\eqref{eq:Hirzebruch-one-loop-shift-log} followed by \eqref{eq:Hirzebruch-Dr-Laurent-polynomial} gives the rational continuation of the quotient even when the three double products in \eqref{eq:Hirzebruch-one-loop-shift-quotient} have no common chamber of convergence.

\begin{proof}
Write \(L=\log U\), \(a_i=\log p_i\) and \(Z=\log z\).  The logarithm of the left side of \eqref{eq:Hirzebruch-classical-shift-quotient}, put over the common denominator \(4a_1a_2(a_2-a_1)\), reduces to
\[
 \frac{r^2}{4}Z+\frac{r(s+1)}2L+\frac{r^2s}{4}(a_1+a_2).
\]
Taking plethystic logarithms of the three double Pochhammer factors in \eqref{eq:Hirzebruch-one-loop-shift-quotient} gives \eqref{eq:Hirzebruch-one-loop-shift-log}.

For \(r\geq1\), multiply \(D_r(x,y)\) by \((1-x)(1-y)(x-y)\) and sum the two finite geometric progressions in \eqref{eq:Hirzebruch-Dr-Laurent-polynomial}; the numerators agree, and replacing \((r,x,y)\) by \((-r-1,x^{-1},y^{-1})\) gives the case \(r\leq-2\), by the identity
\[
 D_{-r-1}(x^{-1},y^{-1})=xyD_r(x,y).
\]
Direct substitution gives \(D_0=D_{-1}=0\).  Exponentiating \eqref{eq:Hirzebruch-one-loop-shift-log} now gives a finite product of factors \((1-Up_1^ap_2^b)^{-1}\) and \((1-U^{-1}p_1^ap_2^b)^{-1}\) with integral \(a,b\).
\end{proof}

\subsection{The normalized bilinear sum}
For \((j,d)\in\mathcal I_\ell\), let
\[
 s_{\ell;j,d}=d-1+\frac\ell2(j-1),
\]
and, with \(\mathcal L_r\) as in \eqref{eq:Hirzebruch-one-loop-shift-quotient},
\begin{equation}
\begin{aligned}
 \overline{\mathscr B}_{\ell}^{j,d}(U;p_1,p_2\mid z)
 :={}&\sum_{\nu\in\Z+j/2}
 z^{\nu^2}U^{\nu(s_{\ell;j,d}+1)}
 (p_1p_2)^{\nu^2s_{\ell;j,d}}
 \mathcal L_{2\nu}(U;p_1,p_2)\\
 &\quad\times
 \mathcal A_\ell\left(
 Up_1^{2\nu};p_1,p_2/p_1
 \mid p_1^{s_{\ell;j,d}}z\right)\\
 &\quad\times
 \mathcal A_\ell\left(
 Up_2^{2\nu};p_1/p_2,p_2
 \mid p_2^{s_{\ell;j,d}}z\right).
\end{aligned}
\label{eq:normalized-finite-NY-bilinear-operator}
\end{equation}
Coefficientwise after cancellation of the degree \(0\) factors, the two shift quotient formulas give
\[
 \overline{\mathscr B}_{\ell}^{j,d}
 =\bigl(\mathcal A^{(0)}(U;p_1,p_2\mid z)
          \mathcal E(U;p_1,p_2)\bigr)^{-1}
   \mathscr B_{\ell}^{j,d}.
\]

In a summand of \eqref{eq:normalized-finite-NY-bilinear-operator} we have \(r=2\nu\) and \(s=d-1+\frac\ell2(j-1)\).  The framed sheaf sums contain only nonnegative integral powers of the shifted \(z\)-variables.  By \eqref{eq:Hirzebruch-classical-shift-quotient} the lowest power in the summand is \(z^{r^2/4}=z^{\nu^2}\), and by \eqref{eq:Hirzebruch-one-loop-shift-quotient} the remaining one-loop factor is rational.  Every coefficient therefore receives contributions from finitely many \(\nu\), and
\[
 \overline{\mathscr B}_{\ell}^{0,d}
 \in\K_{\mathrm{fr}}[[z]],
 \qquad
 \overline{\mathscr B}_{\ell}^{1,d}
 \in z^{1/4}\K_{\mathrm{fr}}[[z]].
\]

\subsection{A dense family for the degree \(0\) factors}
\begin{lemma}
\label{lem:Shchechkin-dense-domain}
Let \(s,V\in\C^*\), let \(m,n\) be positive integers, and let
\begin{equation}
 U^{1/2}=V,\qquad
 p_1^{1/4}=s^{-m},\qquad p_2^{1/4}=s^n,
 \qquad 0<\lvert s\rvert<1.
\label{eq:Shchechkin-dense-specialization}
\end{equation}
Exclude the finitely many ratios \(m/n\) for which a base occurring in \cite[equations~(3.2)--(3.27)]{Shchechkin} becomes \(1\), and exclude the poles of the factors there.  On every remaining specialization, all double Pochhammer factors in Shchechkin's convolution argument admit simultaneous meromorphic continuations, obtained by base inversion from products whose bases have modulus less than \(1\).  The set of specializations \eqref{eq:Shchechkin-dense-specialization}, subject to these exclusions, is Zariski dense in the torus with coordinates \(U^{1/2},p_1^{1/4},p_2^{1/4}\).
\end{lemma}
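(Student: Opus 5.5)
The proof splits into two independent claims: existence of simultaneous continuations on the specialized family, and Zariski density of that family.

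\emph{Continuation.} Under \eqref{eq:Shchechkin-dense-specialization}, every multiplicative base in Shchechkin's argument is a monomial in \(p_1,p_2\) and their ratios. Examples are \(p_1\), \(p_2\), \(p_2/p_1\), \(p_1/p_2\), and the shifted combinations appearing in \cite[equations~(3.2)--(3.27)]{Shchechkin}. Each such base specializes to \(s^{4k}\) for an integer \(k\) that is a linear form in \(m,n\) with integer coefficients. For instance, \(p_1=s^{-4m}\), \(p_2=s^{4n}\), and \(p_2/p_1=s^{4(m+n)}\). A base equals \(1\) identically exactly when \(k=0\). Since only finitely many linear forms occur, and none vanishes identically as a form in \((m,n)\), \(k=0\) happens only for finitely many ratios \(m/n\); these are the excluded ratios.

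For every other base, \(|s|<1\) gives \(|s^{4k}|<1\) if \(k>0\) and \(|s^{4k}|>1\) if \(k<0\). A double Pochhammer symbol \((x;a,b)_\infty\) with \(|a|,|b|<1\) converges and is meromorphic in \(x\). When one or both bases have modulus \(>1\), I will use the standard inversion rule, read off from the exponential form \eqref{eq:global-double-Pochhammer}:
\[
 (x;a,b)_\infty=\frac1{(xa^{-1};a^{-1},b)_\infty},
\]
applied once per inverted base. This gives a meromorphic continuation expressed through products with bases of modulus \(<1\).

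The continuations of the various factors must be simultaneous and compatible. Each factor is defined by a single formula on the whole family, and the identities used in the convolution argument are quotient identities. Lemma~\ref{lem:Hirzebruch-shift-quotients} shows these quotients are rational, so the identities hold between the continued functions, away from the excluded poles.

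\emph{Density.} The family is the image of the map
\[
 (s,V)\mapsto (V,s^{-m},s^n),
\]
where \(0<|s|<1\), \(V\in\C^*\), and \((m,n)\) ranges over positive pairs with nonexcluded ratio. Suppose a Laurent polynomial \(P(U^{1/2},p_1^{1/4},p_2^{1/4})\) vanishes on the family. Fix \(V\) and \((m,n)\). The function \(s\mapsto P(V,s^{-m},s^n)\) is a Laurent polynomial in \(s\) vanishing on a punctured disc, so it vanishes identically. Now write \(P=\sum c_{\alpha,a,b}V^\alpha x^ay^b\). The coefficient of \(V^\alpha s^{k}\) is \(\sum_{-ma+nb=k}c_{\alpha,a,b}\). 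Choose \((m,n)\) generic, meaning avoiding the finitely many directions where two distinct exponents \((a,b)\) in the support satisfy \(-ma+nb=-ma'+nb'\). Infinitely many admissible coprime \((m,n)\) remain after also avoiding the excluded ratios. For such \((m,n)\), distinct monomials map to distinct powers of \(s\), so every \(c_{\alpha,a,b}=0\). Hence \(P=0\), and the family is Zariski dense.

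\emph{Main obstacle.} The hard step is the bookkeeping: checking that every base in Shchechkin's equations (3.2)--(3.27) really is a monomial in \(p_1^{1/4},p_2^{1/4}\). This includes the bases \(p_1\), \(p_2/p_1\), \(p_1/p_2\), \(p_2\) of the two blown-up charts in \eqref{eq:finite-NY-bilinear-operator}. It also requires checking that no such monomial is identically trivial. I would list them explicitly; I expect them all to be of the form \(p_1^{i}p_2^{j}\) with \((i,j)\neq(0,0)\), which would make the linear-form argument above go through.
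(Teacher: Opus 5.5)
Your proposal is correct and matches the paper's proof. Under the specialization every base becomes a nonzero integral power of \(s\), base inversion gives the continuations from products with bases of modulus less than \(1\), and density follows by choosing \((m,n)\) off finitely many slopes so that the exponents \(-ma+nb\) on the support are distinct. The one detail the paper makes explicit that you leave implicit is that the excluded pole and zero loci are proper analytic subsets with dense complement, so vanishing on the family still forces vanishing on the whole punctured disc; your appeal to Lemma~\ref{lem:Hirzebruch-shift-quotients} for compatibility of identities is not needed for this statement.
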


\begin{proof}
Every base in the cited formulas is a monomial in \(p_1,p_2\), hence a nonzero integral power of \(s\) under \eqref{eq:Shchechkin-dense-specialization}.  For a base \(a\) with \(\lvert a\rvert>1\) we use the inversion rule
\begin{equation}
 (x;a,b)_\infty
 =\left(xa^{-1};a^{-1},b\right)_\infty^{-1},
\label{eq:double-Pochhammer-base-inversion}
\end{equation}
which follows by comparing plethystic logarithms, and similarly for \(b\).  Once the cases with a base equal to \(1\) are excluded, every required factor is a product or a reciprocal of a product with both bases of modulus less than \(1\), and such products converge normally away from their polar divisors; this is the convergence family \(p_1=q_{\mathrm{Sh}}^{-m'},p_2=q_{\mathrm{Sh}}^{n'}\) of \cite[Section~2.1 and footnote~2]{Shchechkin}, with \(q_{\mathrm{Sh}}=s^4\), \(m'=m\), \(n'=n\).  On a simply connected open subset of the \((s,V)\)-domain, choose \(\log s\) and \(\log V\), and use the compatible logarithms
\[
 \log p_1=-4m\log s,\qquad
 \log p_2=4n\log s,\qquad
 \log U=2\log V.
\]

For density, let \(F(V,X,Y)\) be a nonzero Laurent polynomial,
\[
 F(V,X,Y)=\sum_{(a,b)}f_{a,b}(V)X^aY^b,
 \qquad f_{a,b}(V)\in\C[V^{\pm1}],
\]
with distinct pairs \((a,b)\).  Choose positive integers \(m,n\) outside the excluded set such that the integers \(-ma+nb\) are distinct for the finitely many \((a,b)\) with \(f_{a,b}\neq0\); this avoids only finitely many rational slopes, and \(F(V,s^{-m},s^n)\) is then a nonzero Laurent polynomial in \(V,s\), which cannot vanish for all \(V\in\C^*\) and all \(s\) in the punctured disc.  The zeros and poles excluded in the coefficientwise argument are proper analytic subsets of the open patches, with dense complements.  Deleting them does not affect the conclusion.
\end{proof}

\subsection{Shchechkin's identities in coefficientwise form}
\begin{lemma}
\label{lem:Shchechkin}
\textup{(1)} Let \(\mathcal Z_{\mathrm{Sh}}^{[\ell]}(U;p_1,p_2\mid z)\) be Shchechkin's full rank \(2\) five-dimensional Nekrasov function, the product of his classical factor, one-loop factor and framed sheaf sum.  Then
\begin{equation}
 \mathcal Z_{\mathrm{Sh}}^{[\ell]}(U;p_1,p_2\mid z)
 =\widehat{\mathcal A}_\ell(U;p_1,p_2\mid z).
\label{eq:Shchechkin-full-function-identification}
\end{equation}
After setting his compactification radius \(R\) to \(1\), the variables are related by
\begin{equation}
 u_{\mathrm{Sh}}=U,\quad
 (u_{1,\mathrm{Sh}},u_{2,\mathrm{Sh}})=(U^{1/2},U^{-1/2}),\quad
 (q_{1,\mathrm{Sh}},q_{2,\mathrm{Sh}})=(p_1,p_2),\quad
 z_{\mathrm{Sh}}=z,\quad l_{\mathrm{Sh}}=\ell.
\label{eq:Shchechkin-variable-map}
\end{equation}
If \(d\) is the integer in \eqref{eq:finite-NY-bilinear-operator}, then \(d_{\mathrm{Sh}}=d-1\); his lattice indices are \(j_{\mathrm{Sh}}=j\) and \(n_{\mathrm{Sh}}=\nu\).

\textup{(2)} Keep the chosen roots in \(\K_{\mathrm{fr}}\) and let \(w=z^{1/4}\).  After division by the common classical and one-loop factor, every identity used in Theorem~\ref{thm:finite-NY-levels} is an identity of coefficients in \(\K_{\mathrm{fr}}[[w]]\).  The identities \eqref{eq:determinant-power-common-relation} lie in \(\K_{\mathrm{fr}}[[z]]\) for \(j=0\) and in \(z^{1/4}\K_{\mathrm{fr}}[[z]]\) for \(j=1\).
\end{lemma}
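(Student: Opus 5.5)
Part (1) is a dictionary check between Shchechkin's conventions and ours, carried out factor by factor; part (2) is a bookkeeping statement about where coefficients live after normalization.

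\emph{Step 1: framed sheaf sums.} Write Shchechkin's instanton sum as a sum over pairs of partitions, with fixed point denominator and determinant insertion \cite[equations~(2.15)--(2.16)]{Shchechkin}. Substitute \eqref{eq:Shchechkin-variable-map} and compare with the summand \eqref{eq:Hirzebruch-Nekrasov-fixed-term} of Lemma~\ref{lem:Hirzebruch-ADHM-normalization}. Three things must match:
\begin{itemize}
\item[(i)] the arm--leg products, after the substitution \(q_i=p_i^{-1}\), \(u_a=U_a^{-1}\) that produced \eqref{eq:Hirzebruch-Nekrasov-fixed-term};
\item[(ii)] the power \(\ell=l_{\mathrm{Sh}}\) of the determinant line, including the monomial \((p_1p_2)^{-\ell|\lambda|/2}\);
\item[(iii)] the counting variable, where \(z(p_1p_2)^{-1-\ell/2}\) in \eqref{eq:Hirzebruch-ADHM-series} must equal Shchechkin's \(z_{\mathrm{Sh}}\) with his normalization of the Chern--Simons level.
\end{itemize}
Any discrepancy by a common character is absorbed using Lemma~\ref{lem:linearization}.

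\emph{Step 2: classical and one-loop factors.} Compare Shchechkin's classical factor with \(\mathcal A^{(0)}\) of \eqref{eq:Hirzebruch-degree-zero-exponential} at \(R=1\), using the fixed logarithms. Compare his one-loop factor with \(\mathcal E\) of \eqref{eq:Hirzebruch-Pochhammer-factor}, by comparing plethystic logarithms \eqref{eq:global-double-Pochhammer}. This gives \eqref{eq:Shchechkin-full-function-identification}.

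\emph{Step 3: matching the bilinear sums.} Match Shchechkin's bilinear sum with \eqref{eq:finite-NY-bilinear-operator}. His shifts \(q_{i}^{d_{\mathrm{Sh}}+\ell(j-1)/2}\) should reproduce \(p_i^{d-1+\ell(j-1)/2}\), which forces \(d_{\mathrm{Sh}}=d-1\). His lattice sum over \(n\in\Z+j/2\) with shift \(u\,q_i^{2n}\) reproduces our sum over \(\nu\). I expect this identification of indices, and the sign and offset conventions in Step~1 (iii), to be the main obstacle. The check I would rely on is to compare the lowest-order coefficient of a unity equation on both sides.

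\emph{Part (2).} Divide both sides by \(\mathcal A^{(0)}\mathcal E\) and apply Lemma~\ref{lem:Hirzebruch-shift-quotients}.
\begin{itemize}
\item Each summand becomes \(z^{\nu^2}\) times a monomial in \(U^{1/2},p_i^{1/4}\), times a rational \(\mathcal L_{2\nu}\), times power series in \(z\) with coefficients in \(\Q(U^{1/2},p_1,p_2)\).
\item Since \(\nu^2\in\Z\) for \(j=0\) and \(\nu^2\in\tfrac14+\Z\) for \(j=1\), the sums lie in \(\K_{\mathrm{fr}}[[z]]\) and in \(z^{1/4}\K_{\mathrm{fr}}[[z]]\) respectively.
\item Only finitely many \(\nu\) contribute to each coefficient, so every identity becomes a coefficient identity in \(\K_{\mathrm{fr}}[[w]]\).
\end{itemize}

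Shchechkin proves his identities analytically, on the convergence family of Lemma~\ref{lem:Shchechkin-dense-domain}. There each coefficient is a rational function equal on a Zariski dense set, hence equal identically, which transfers the analytic identities to coefficientwise ones.
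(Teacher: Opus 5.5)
Your Part~(1) and the bookkeeping half of Part~(2) follow the paper. That bookkeeping covers the lowest power $z^{\nu^2}$, $\nu^2\in\Z$ or $\tfrac14+\Z$, finitely many $\nu$ per coefficient, and coefficients in $\K_{\mathrm{fr}}$.

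Three corrections to Step~1:
\begin{itemize}
\item Shchechkin's fixed point sum and arm--leg product are his equations~(2.3)--(2.4). His (2.15)--(2.16) are the classical and one-loop factors of your Step~2.
\item The arm--leg match is $N_{\lambda\mu}(Q;p_1^{-1},p_2^{-1})=N^{\mathrm{Sh}}_{\mu\lambda}(Q;p_1,p_2)$, with the partitions swapped. You then relabel $(a,b)$, which turns $U_b/U_a$ into $U_a/U_b$.
\item You cannot absorb a discrepancy with Lemma~\ref{lem:linearization}. That rescales the counting variable by $c^\ell$ and contradicts the asserted $z_{\mathrm{Sh}}=z$. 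You must check that the numerator of his (2.3) is exactly $(p_1^{-1}p_2^{-1}z)^{|\lambda_1|+|\lambda_2|}\prod_a(p_1p_2)^{-\ell|\lambda_a|/2}T_{\lambda_a}(U_a;p_1,p_2)^\ell$, which it is.
\end{itemize}

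The genuine gap is your last paragraph, which is where the content of Part~(2) lies.
\begin{itemize}
\item Lemma~\ref{lem:Shchechkin-dense-domain} only says that the double Pochhammer (degree~$0$) factors in Shchechkin's argument have simultaneous continuations on that family. It says nothing about convergence of the instanton sums, and at nonzero determinant power no such convergence is available.
\item The Nakajima--Yoshioka formulas and the Bershtein--Shchechkin extreme cases enter as coefficientwise inputs. The remaining identities used in Theorem~\ref{thm:finite-NY-levels} are different: $c_{0;0,-1}$, $c_{0;0,3}$, $c_{1;0,3}$ and \eqref{eq:det-two-det-zero} come out of Shchechkin's convolution argument. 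That argument reindexes a double lattice sum, cancels a convolution factor, and solves for unknown coefficients.
\item Saying these identities hold analytically on the family, hence coefficientwise, presupposes they are already identities of convergent series. That is precisely what is not established.
\end{itemize}

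The paper instead reruns the argument as a formal calculation in $z$:
\begin{itemize}
\item After division by the unshifted classical factor, the three classical factors in Shchechkin's (3.3)--(3.7) contribute $z^{(m^2+n^2)/2}$. So each coefficient involves finitely many $(m,n)$ and partitions, and the substitution $m=m'+n'$, $n=m'-n'$ is legitimate coefficientwise.
\item Only the degree~$0$ products are evaluated on the patches of Lemma~\ref{lem:Shchechkin-dense-domain}, read in a field of meromorphic germs. All instanton sums stay formal.
\item The cancellation steps behind his (4.1)--(4.4) are justified by coefficient induction. The even convolution has nonzero leading coefficient $b_0$. The two odd convolutions have determinant equal to a nonzero product times $U^{(d_{12}-d'_{12})/4}-U^{(d'_{12}-d_{12})/4}$, which is invertible for generic $U$.
\item His Proposition~4.1 is proved order by order from the linear system (4.7) for the two unknown coefficients at order $z^k$. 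Its determinant is $(p_1^{-k}-1)(p_2^{-k}-1)(p_2^{-k}-p_1^{-k})$.
\end{itemize}
Only after this formal rederivation does your closing Zariski-density step apply.
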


\begin{proof}
Let \(N^{\mathrm{Sh}}\) be the finite product of Shchechkin's equation~(2.4).  Directly from the two arm--leg definitions,
\[
 N_{\lambda\mu}(Q;p_1^{-1},p_2^{-1})
 =N^{\mathrm{Sh}}_{\mu\lambda}(Q;p_1,p_2).
\]
Relabeling \((a,b)\) in the full rank \(2\) product,
\begin{equation}
 \prod_{a,b}N_{\lambda_a\lambda_b}
  (U_b/U_a;p_1^{-1},p_2^{-1})
 =\prod_{a,b}N^{\mathrm{Sh}}_{\lambda_a\lambda_b}
  (U_a/U_b;p_1,p_2).
\label{eq:Shchechkin-denominator-translation}
\end{equation}
The numerator of his equation~(2.3) is
\[
 (p_1^{-1}p_2^{-1}z)^{\sum_a|\lambda_a|}
 \prod_a(p_1p_2)^{-\ell|\lambda_a|/2}
 T_{\lambda_a}(U_a;p_1,p_2)^\ell,
\]
which is \eqref{eq:Hirzebruch-Nekrasov-fixed-term}: his equation~(2.3) is the fixed point formula of Lemma~\ref{lem:Hirzebruch-ADHM-normalization} after \eqref{eq:Shchechkin-variable-map}, his classical factor (2.15) is \(\mathcal A^{(0)}\) and his one-loop factor (2.16) is \(\mathcal E\), whence \eqref{eq:Shchechkin-full-function-identification} follows.  In the first chart of his equation~(2.21), the exponent of \(p_1\) multiplying \(z\) is
\[
 d_{\mathrm{Sh}}+\frac\ell2(j-1)
 =d-1+\frac\ell2(j-1),
\]
and the second chart has the analogous exponent of \(p_2\); since the framing shifts are \(Up_i^{2\nu}\), every entry of \eqref{eq:finite-NY-bilinear-operator} agrees with his equation~(2.21).  This proves (1).

For (2), every coefficient of \(\mathcal A_\ell\) is a finite sum over pairs of partitions and lies in \(\K_{\mathrm{fr}}\), and in a summand of \eqref{eq:normalized-finite-NY-bilinear-operator} indexed by \(\nu\in\Z+j/2\) the classical quotient contributes the lowest power \(z^{\nu^2}\), so only finitely many \(\nu\) and finitely many partitions contribute to a fixed power of \(w\).

We use Shchechkin's convolution calculations as formal series calculations; no convergence of an instanton series with nonzero determinant power is required.  The inputs are the coefficientwise Nakajima--Yoshioka formulas cited below and the extreme cases explained in \cite[footnote~6]{BershteinShchechkin}.

To justify reindexing the double sum in \cite[equations~(3.3)--(3.7)]{Shchechkin}, put \(a=\log p_1\), \(b=\log p_2\), and \(L=\log U\).  After dividing by the common unshifted classical factor, the exponent of \(z\) in its three classical factors is
\[
\begin{aligned}
 &\frac{(L+2m(a+b))^2}{8a(a+b)}
 -\frac{(L+2na+2mb)^2}{8a(b-a)}\\
 &\qquad-\frac{(L+2(m+n)b)^2}{8b(a-b)}
 +\frac{L^2}{8b(a+b)}
 =\frac{m^2+n^2}{2}.
\end{aligned}
\]
The counting-variable shifts multiply \(z\) by constants and do not change this valuation.  Thus a coefficient of degree at most \(N\) involves only \(m^2+n^2\leq2N\) and finitely many partitions.  The substitution \(m=m'+n'\), \(n=m'-n'\) changes the quadratic form into \(m'^2+n'^2\) and is legitimate coefficientwise, including its two parity classes.  The parity-dependent counting-variable shifts at determinant power \(1\) leave this argument unchanged.

The degree \(0\) coefficients in the intermediate convolutions can be read in a field of meromorphic germs on any generic patch supplied by Lemma~\ref{lem:Shchechkin-dense-domain}; adjoin the finitely many roots needed for \(z^{1/8}\) and the monomial coefficients.  Only the degree \(0\) products are evaluated on these patches.  All instanton sums remain formal.  The even convolution has a nonzero leading coefficient \(b_0\), so coefficient induction gives injectivity.  The two odd convolutions have a determinant equal to a nonzero product times
\[
 U^{(d_{12}-d'_{12})/4}
 -U^{(d'_{12}-d_{12})/4},
 \qquad d_{12}\ne d'_{12},
\]
where \(d_{12},d'_{12}\) are their total counting-variable shifts.  It is invertible for generic \(U\).  These coefficient inductions, and the parity-dependent one at determinant power \(1\), prove the relations used in Shchechkin's equations~(4.1)--(4.4) over the germ field.

The proof of \cite[Proposition~4.1]{Shchechkin} is coefficientwise as well.  At order \(z^k\), the two unknown coefficients satisfy the linear system of his equation~(4.7), with determinant
\[
 (p_1^{-k}-1)(p_2^{-k}-1)(p_2^{-k}-p_1^{-k}),
\]
a nonzero element of the rational function field.  The double Pochhammer expressions of his equation~(4.9) are ordinary formal power series in \(z\); their quotients follow from plethystic logarithms and \eqref{eq:Hirzebruch-Dr-Laurent-polynomial}.

By \eqref{eq:Hirzebruch-one-loop-shift-quotient}, \eqref{eq:Hirzebruch-classical-shift-quotient} and the finite fixed point formula, every coefficient of each normalized identity lies in \(\K_{\mathrm{fr}}\).  The difference of the two sides vanishes on the Zariski dense family of Lemma~\ref{lem:Shchechkin-dense-domain}, hence is the zero rational function.
\end{proof}

\subsection{The framed sheaf blowup identities}
For \(0\leq\ell\leq2\) and \((0,d)\in\mathcal I_\ell\), let
\begin{equation}
 c_{\ell;0,d}(p_1,p_2\mid z)=
 \begin{cases}
 1-(p_1p_2)^{-1}z,&(\ell,d)=(0,-1),\\[2pt]
 1-p_1p_2z,&(\ell,d)=(0,3),\\[2pt]
 1,&\ell=0\text{ and }0\leq d\leq2,\\[2pt]
 1,&\ell=1,\\[2pt]
 1,&\ell=2\text{ and }0\leq d\leq2,\\[2pt]
 (1-z)^{-1},&(\ell,d)=(2,3),\\[2pt]
 \displaystyle\frac{1-p_1p_2z}
 {(1-z)(1-p_1z)(1-p_2z)},&(\ell,d)=(2,4).
 \end{cases}
\label{eq:intro-Hirzebruch-even-scalars}
\end{equation}
For \(0\leq\ell\leq2\) and \((1,d)\in\mathcal I_\ell\), let
\begin{equation}
 c_{\ell;1,d}(p_1,p_2\mid z)=
 \begin{cases}
 (p_1p_2)^{-1/4}z^{1/4},&d=0,\\
 0,&d=1,\\
 -(p_1p_2z)^{1/4},&d=2\text{ and }\ell=0,1,\\[2pt]
 \displaystyle-\frac{(p_1p_2z)^{1/4}}{1-z},
 &d=2\text{ and }\ell=2.
 \end{cases}
\label{eq:intro-Hirzebruch-odd-scalars}
\end{equation}

\begin{theorem}
\label{thm:finite-NY-levels}
For \(0\leq\ell\leq2\) and every \((j,d)\in\mathcal I_\ell\), the following equality holds in \(\K_{\mathrm{fr}}[[z^{1/4}]]\), with the rational scalars expanded at \(z=0\):
\begin{equation}
 \overline{\mathscr B}_{\ell}^{j,d}(U;p_1,p_2\mid z)
 =c_{\ell;j,d}(p_1,p_2\mid z)
  \mathcal A_\ell(U;p_1,p_2\mid z).
\label{eq:determinant-power-common-relation}
\end{equation}
\end{theorem}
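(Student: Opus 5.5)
The plan is to deduce Theorem~\ref{thm:finite-NY-levels} from the Nakajima--Yoshioka blowup formulas, read through Lemma~\ref{lem:Shchechkin}, rather than recompute anything on the blowup.

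\emph{Step 1: the cases \(0\leq d\leq \ell+1\) (or the corresponding range).}  Consider the moduli of framed sheaves on \(\widehat{\mathbb P}^2\) with \(c_1=-jC\) and \(\det\cV\) twisted by \(\cO(dC)\)-type data.  Localization on \(\widehat{\mathbb P}^2\) has two fixed points, with tangent weights \((p_1,p_2/p_1)\) and \((p_1/p_2,p_2)\).  Its fixed point sum is exactly the bilinear sum \eqref{eq:finite-NY-bilinear-operator}: the lattice sum over \(\nu\in\Z+j/2\) records the exceptional-divisor component of \(c_1\), and the classical and one-loop quotients of Lemma~\ref{lem:Hirzebruch-shift-quotients} give the normalized form \eqref{eq:normalized-finite-NY-bilinear-operator}.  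The Nakajima--Yoshioka vanishing of higher direct images under \(\widehat{\M}\to\M\) holds for the relevant line bundle twists \cite{NakajimaYoshiokaII,NakajimaYoshiokaPerverse}, so pushforward to \(\M(2,n)\) gives \(\overline{\mathscr B}=\mathcal A_\ell\) for \(j=0\).  For \(j=1\), the pushforward either vanishes (\(d=1\)) or yields \(\mathcal A_\ell\) multiplied by the monomial \(c_{\ell;1,d}\).  The monomials \((p_1p_2)^{-1/4}z^{1/4}\) and \(-(p_1p_2z)^{1/4}\) come from the lowest terms \(\nu=\pm\tfrac12\) together with Lemma~\ref{lem:Hirzebruch-shift-quotients}.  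All of this is coefficientwise in \(\K_{\mathrm{fr}}[[z^{1/4}]]\) by Lemma~\ref{lem:Shchechkin}(2).

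\emph{Step 2: the extreme cases.}  These are \((\ell,d)=(0,-1),(0,3),(2,3),(2,4)\), and \((2,1,2)\) with its \((1-z)^{-1}\).  Here the vanishing theorem fails, and I would invoke Bershtein--Shchechkin \cite[footnote~6]{BershteinShchechkin} and Shchechkin's Proposition~4.1 \cite{Shchechkin}.  Concretely, the bilinear sum is written as a convolution of known Nekrasov functions.  Using the unity relations from Step 1 and the dense-domain degree-zero evaluations of Lemma~\ref{lem:Shchechkin-dense-domain}, one reduces \(\overline{\mathscr B}/\mathcal A_\ell\) to a \(U\)-independent series \(c(z)\).  That series is then pinned down order by order via the linear system with the nonzero determinant \((p_1^{-k}-1)(p_2^{-k}-1)(p_2^{-k}-p_1^{-k})\).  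Comparing with the double Pochhammer expressions of his (4.9), simplified via \eqref{eq:Hirzebruch-Dr-Laurent-polynomial}, gives the rational scalars in \eqref{eq:intro-Hirzebruch-even-scalars}--\eqref{eq:intro-Hirzebruch-odd-scalars}.

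\emph{Step 3: translate conventions.}  Use the dictionary \eqref{eq:Shchechkin-variable-map} together with \(d_{\mathrm{Sh}}=d-1\), \(j_{\mathrm{Sh}}=j\) and \(n_{\mathrm{Sh}}=\nu\).  Then check, case by case, that each scalar in Shchechkin's (4.1)--(4.4) matches \(c_{\ell;j,d}\) after rescaling by the classical monomial.  Finally, extend from the dense family to an identity of rational coefficients, as at the end of Lemma~\ref{lem:Shchechkin}.

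The main obstacle is Step 2 for \(\ell=2\), \(d=3,4\).  These are the genuinely new non-unity scalars, so the \(U\)-independence of the ratio must be established carefully.  I would first try to prove \(U\)-independence by checking that \(\overline{\mathscr B}/\mathcal A_\ell\) is invariant under \(U\mapsto U^{-1}\) and bounded in \(U\) coefficientwise.
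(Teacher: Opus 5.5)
\textbf{There is a genuine gap in the determinant power \(2\) part of Step~2, which you yourself leave open.} Steps~1 and~3, and the \(\ell=0,1\) part of Step~2, agree with the paper. The unity and odd-monomial cases come from the Nakajima--Yoshioka/Shchechkin list, with \cite[footnote~6]{BershteinShchechkin} supplying the extreme values at determinant power \(1\) and a lowest-order check of the normalization. The scalars \(c_{0;0,-1}\), \(c_{0;0,3}\), \(c_{1;0,3}\) are read off from Shchechkin's (4.2)--(4.3) and the computation after (4.3). Your stated range \(0\leq d\leq\ell+1\) is off: for \(\ell=0\) the unity cases are \(d=0,1,2\), for \(\ell=2\) the case \(d=3\) is not unity, and \((\ell,j,d)=(1,0,3)\) appears in neither of your steps.

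The strategy you sketch for \(\ell=2\) would not close the gap. Each coefficient of \(\overline{\mathscr B}_2^{j,d}/\mathcal A_2\) is a rational function of \(U^{1/2}\) with possible poles along \(U=p_1^ap_2^b\), coming from the Nekrasov denominators and from \(\mathcal L_{2\nu}\). Invariance under \(U\mapsto U^{-1}\) together with control at \(U=0,\infty\) does not exclude these poles. Proving that they cancel is the content of a blowup formula, which is exactly what is unavailable in the extreme range. Even granting \(U\)-independence, the determinant \((p_1^{-k}-1)(p_2^{-k}-1)(p_2^{-k}-p_1^{-k})\) belongs to the linear system in Shchechkin's proof of his Proposition~4.1, which fixes the series relating levels \(2\) and \(0\). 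It does not give you a system that determines the blowup scalar to all orders in \(z\).

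\textbf{The missing idea is to use that proposition itself to reduce every \(\ell=2\) identity to an \(\ell=0\) one.} Shchechkin's Proposition~4.1 states \(\widehat{\mathcal A}_2=(z;p_1,p_2)_\infty\widehat{\mathcal A}_0\) \cite{Shchechkin}. Since \(\mathcal A^{(0)}\) and \(\mathcal E\) do not depend on \(\ell\), this gives \(\mathcal A_2=(z;p_1,p_2)_\infty\mathcal A_0\) after normalization.

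In \(\overline{\mathscr B}_2^{j,d}\) the shifted counting variables are \(p_i^sz\), with \(s=s_{2;j,d}\) independent of \(\nu\). The prefactor \(z^{\nu^2}U^{\nu(s+1)}(p_1p_2)^{\nu^2s}\mathcal L_{2\nu}\) depends on \(\ell\) only through \(s\). So the two Pochhammer factors leave the lattice sum, and what remains is \(\overline{\mathscr B}_0^{j,d'}\) with \(s_{0;j,d'}=s\), that is, \(d'=d-1\) for \(j=0\) and \(d'=d\) for \(j=1\). Put
\[
 \mathscr P_r=\frac{(p_1^rz;p_1,p_2/p_1)_\infty(p_2^rz;p_1/p_2,p_2)_\infty}{(z;p_1,p_2)_\infty},
\]
whose plethystic logarithm is \(-\sum_{m\geq1}z^mD_r(p_1^m,p_2^m)/m\). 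Then
\[
 c_{2;0,d}=c_{0;0,d-1}\mathscr P_{d-2},\qquad c_{2;1,d}=c_{0;1,d}\mathscr P_{d-1}.
\]
The values \(D_{-2}=-(xy)^{-1}\), \(D_{-1}=D_0=0\), \(D_1=-1\), \(D_2=-(1+x+y)\) give
\[
 \mathscr P_{-2}=\bigl(1-(p_1p_2)^{-1}z\bigr)^{-1},\quad \mathscr P_{-1}=\mathscr P_0=1,\quad \mathscr P_1=(1-z)^{-1},\quad \mathscr P_2=\bigl((1-z)(1-p_1z)(1-p_2z)\bigr)^{-1}.
\]
This produces every \(\ell=2\) scalar, including \(d=3,4\) and \((j,d)=(1,2)\), with no \(U\)-independence argument. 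Note that it consumes the \(\ell=0\) extreme identities as input: for example \(c_{2;0,0}=c_{0;0,-1}\mathscr P_{-2}=1\) and \(c_{2;0,4}=c_{0;0,3}\mathscr P_2\).
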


The equality is in \(\K_{\mathrm{fr}}[[z]]\) for \(j=0\) and in \(z^{1/4}\K_{\mathrm{fr}}[[z]]\) for \(j=1\).  The scalars belong to \(\K_{\mathrm{fr}}(z^{1/4})\), not to \(\K_{\mathrm{fr}}\) alone; their Taylor expansions at \(z=0\) belong to the indicated series spaces.  Restoring the common degree \(0\) factor gives \(\mathscr B_{\ell}^{j,d}=c_{\ell;j,d}\widehat{\mathcal A}_\ell\) in this coefficientwise sense.

\begin{proof}
By Lemma~\ref{lem:Shchechkin}(2), every identity invoked below holds coefficientwise over \(\K_{\mathrm{fr}}\).

For \(d_{\mathrm{Sh}}=-1,0,1\), Lemma~\ref{lem:Shchechkin}(1) identifies Shchechkin's equation~(2.21), after division by the factor \(\mathcal A^{(0)}\mathcal E\) in \eqref{eq:normalized-finite-NY-bilinear-operator}, with \eqref{eq:determinant-power-common-relation}.  His coefficient list gives
\[
 c_{\ell;0,d}=1\quad(d=0,1,2),
\]
and at determinant powers \(0\) and \(1\),
\[
 c_{\ell;1,0}=(p_1p_2)^{-1/4}z^{1/4},\qquad
 c_{\ell;1,1}=0,\qquad
 c_{\ell;1,2}=-(p_1p_2z)^{1/4}.
\]
At determinant power \(1\), the extension to the two extreme values is explained in \cite[footnote~6]{BershteinShchechkin}: the Grassmannian pushforward in the last wall-crossing step is computed by Borel--Bott--Weil.  The scalar normalization used here is that of \cite[equation~(2.21)]{Shchechkin}.  It is checked directly at lowest order: \(\mathcal L_1=(1-U)^{-1}\), \(\mathcal L_{-1}=(1-U^{-1})^{-1}\), and the coefficient of \(z^{1/4}\) in the odd sum is
\[
 (p_1p_2)^{(d-1)/4}
 \frac{U^{d/2}-U^{1-d/2}}{1-U}.
\]
For \(d=0,1,2\) this is respectively
\((p_1p_2)^{-1/4},0,-(p_1p_2)^{1/4}\).  Shchechkin's equations~(4.2)--(4.3) give
\[
 c_{0;0,-1}=1-(p_1p_2)^{-1}z,
 \qquad c_{0;0,3}=1-p_1p_2z,
\]
and the calculation following his equation~(4.3) gives \(c_{1;0,3}=1\).  Lemma~\ref{lem:Hirzebruch-ADHM-normalization} identifies the determinant power \(\ell\) in the fixed point formula with the insertion \((\det\cV)^\ell\).

The determinant power \(2\) cases all follow from
\begin{equation}
 \widehat{\mathcal A}_2(U;p_1,p_2\mid z)
 =(z;p_1,p_2)_\infty
  \widehat{\mathcal A}_0(U;p_1,p_2\mid z),
\label{eq:det-two-det-zero}
\end{equation}
which is Proposition~4.1, equation~(4.5), of \cite{Shchechkin}.  For \(r\in\Z\), let
\begin{equation}
 \mathscr P_r(p_1,p_2\mid z)
 :=\frac{(p_1^rz;p_1,p_2/p_1)_\infty
          (p_2^rz;p_1/p_2,p_2)_\infty}
         {(z;p_1,p_2)_\infty}.
\label{eq:det-two-Pochhammer-ratio}
\end{equation}
With \(D_r\) as in \eqref{eq:Hirzebruch-Dr-definition}, the plethystic logarithm of \(\mathscr P_r\) is \(-\sum_{m\geq1}z^mD_r(p_1^m,p_2^m)/m\).  Reduction to a common denominator gives
\[
 D_{-2}=-(xy)^{-1},\qquad D_{-1}=D_0=0,
 \qquad D_1=-1,\qquad D_2=-(1+x+y),
\]
and exponentiating,
\begin{equation}
\begin{gathered}
 \mathscr P_{-2}=\frac1{1-(p_1p_2)^{-1}z},\qquad
 \mathscr P_{-1}=\mathscr P_0=1,\qquad
 \mathscr P_1=\frac1{1-z},\\
 \mathscr P_2=\frac1{(1-z)(1-p_1z)(1-p_2z)}.
\end{gathered}
\label{eq:det-two-Pochhammer-values}
\end{equation}
For \(j=0\), substituting \eqref{eq:det-two-det-zero} in the two factors of \eqref{eq:finite-NY-bilinear-operator} and using the determinant power \(0\) relation with index \(d-1\) gives
\begin{equation}
 c_{2;0,d}=c_{0;0,d-1}\mathscr P_{d-2},
 \qquad 0\leq d\leq4.
\label{eq:det-two-factor-from-det-zero}
\end{equation}
For \(j=1\), the same substitution gives
\begin{equation}
 c_{2;1,d}=c_{0;1,d}\mathscr P_{d-1},
 \qquad 0\leq d\leq2.
\label{eq:det-two-odd-from-det-zero}
\end{equation}
Equations \eqref{eq:det-two-Pochhammer-values}--\eqref{eq:det-two-odd-from-det-zero} give the determinant power \(2\) entries of \eqref{eq:intro-Hirzebruch-even-scalars} and \eqref{eq:intro-Hirzebruch-odd-scalars}.

The normalized identities lie in the two formal series spaces of Lemma~\ref{lem:Shchechkin}(2), and each framed sheaf coefficient is a finite partition sum with coefficients in \(\K_{\mathrm{fr}}\).  After the stable pair substitution, all fourth roots use the branch \eqref{eq:Hirzebruch-signed-branch}.
\end{proof}

\subsection{The stable pair blowup identities}

\begin{theorem}\label{thm:Hirzebruch-main}
Let \(0\leq\ell\leq2\) and \((j,d)\in\mathcal I_\ell\).  Use \(\widehat Z_{Y_\ell}\), \(\mathbf t,\mathbf C,\mathbf r_{\ell;j,d}\), and \(\zeta_\ell\) as defined above, with the lift \eqref{eq:Hirzebruch-signed-branch}.  Then
\begin{equation}
\begin{aligned}
 &\sum_{n\in\Z}
 \widehat Z_{Y_\ell}\left(
 \mathbf t+\eps_1\left(\mathbf Cn+
 \frac{\mathbf r_{\ell;j,d}}2\right);
 \eps_1,\eps_2-\eps_1\right)\\
 &\hspace{18mm}\times
 \widehat Z_{Y_\ell}\left(
 \mathbf t+\eps_2\left(\mathbf Cn+
 \frac{\mathbf r_{\ell;j,d}}2\right);
 \eps_1-\eps_2,\eps_2\right)\\
 &\qquad=
 c_{\ell;j,d}(e^{\eps_1},e^{\eps_2}\mid\zeta_\ell)
 \widehat Z_{Y_\ell}(\mathbf t;\eps_1,\eps_2),
\end{aligned}
\label{eq:Hirzebruch-main-blowup}
\end{equation}
where the scalar coefficients are \eqref{eq:intro-Hirzebruch-even-scalars} and \eqref{eq:intro-Hirzebruch-odd-scalars}.

Precisely, divide both sides by
\[
 e^{F_\ell^{(0)}(T_F,T_M;\eps_1,\eps_2)}
 \mathcal E(e^{T_F};e^{\eps_1},e^{\eps_2})
\]
and use the rational shift quotients of Lemma~\ref{lem:Hirzebruch-shift-quotients}.  The resulting equality lies in \(\K_{\mathrm{fr}}[[\zeta_\ell]]\) for \(j=0\), and in \(\zeta_\ell^{1/4}\K_{\mathrm{fr}}[[\zeta_\ell]]\) for \(j=1\); the rational scalars are expanded at \(\zeta_\ell=0\).  Every coefficient is a finite sum.  For \(\ell=2\), the stable pair coefficients are localized indices.  The scalar vanishes identically exactly when \((j,d)=(1,1)\).

If the three degree \(0\) factors have compatible meromorphic continuations realizing these shift quotients, and the instanton series and normalized bilateral sum converge normally on a common domain adjoining the expansion at \(\zeta_\ell=0\), then \eqref{eq:Hirzebruch-main-blowup} also holds there analytically, and on its connected meromorphic continuations.
\end{theorem}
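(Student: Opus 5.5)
The plan is to reduce \eqref{eq:Hirzebruch-main-blowup} to Theorem~\ref{thm:finite-NY-levels} through the identification of Proposition~\ref{prop:Hirzebruch-PT-ADHM}, $\widehat Z_{Y_\ell}(T_F,T_M;\eps_1,\eps_2)=\widehat{\mathcal A}_\ell(e^{T_F};e^{\eps_1},e^{\eps_2}\mid\zeta_\ell)$. Once the shifts of the arguments are matched, the theorem is the relation $\mathscr B_\ell^{j,d}=c_{\ell;j,d}\widehat{\mathcal A}_\ell$ read in stable pair variables.

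\emph{Step 1: translate the shifts.}
Write $\mathbf r=\mathbf r_{\ell;j,d}=(2j,\ell(1-j)+2-2d)^t$, and put $\nu=n+j/2$.

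The shifted $T_F$ in the first factor is $T_F+\eps_1(2n+j)=T_F+2\nu\eps_1$. Hence $U\mapsto Up_1^{2\nu}$, and the equivariant parameters $(\eps_1,\eps_2-\eps_1)$ become $(p_1,p_2/p_1)$.

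The shifted $T_M$ is $T_M+\eps_1\mathbf r_M/2$. Since $\zeta_\ell=(-1)^\ell e^{-T_M}$ with the fixed lift \eqref{eq:Hirzebruch-signed-branch}, this gives
\[
 \zeta_\ell\mapsto p_1^{-\mathbf r_M/2}\zeta_\ell,\qquad -\tfrac{\mathbf r_M}{2}=d-1+\tfrac\ell2(j-1)=s_{\ell;j,d}.
\]
So the first factor is exactly the first factor of \eqref{eq:finite-NY-bilinear-operator}, and symmetrically the second factor is its second factor.

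There is one subtlety. $F_\ell^{(0)}$ in \eqref{eq:Hirzebruch-degree-zero-polynomial} contains $T_M+\eps_1+\eps_2$, and under the parameter change $(\eps_1,\eps_2)\to(\eps_1,\eps_2-\eps_1)$ this becomes $T_M+\eps_2$. I must check that this agrees with $\mathcal A^{(0)}$ evaluated at $(p_1,p_2/p_1)$, namely $(p_1^{-1}(p_2/p_1)^{-1}z)$. It does, since $p_1\cdot p_2/p_1=p_2$. So the identification holds factor by factor, with the branch of $\log z$ shifted by $s\eps_i$ consistently. The sum over $n\in\Z$ becomes the sum over $\nu\in\Z+j/2$.

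\emph{Step 2: normalize.}
Divide both sides by $e^{F^{(0)}_\ell}\mathcal E$. By Lemma~\ref{lem:Hirzebruch-shift-quotients}, with $r=2\nu$ and $s=s_{\ell;j,d}$, each summand becomes the corresponding summand of $\overline{\mathscr B}^{j,d}_\ell$ in \eqref{eq:normalized-finite-NY-bilinear-operator}. The classical quotient contributes $z^{\nu^2}U^{\nu(s+1)}(p_1p_2)^{\nu^2 s}$, and the one-loop quotient $\mathcal L_{2\nu}$ is rational.

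The main obstacle is bookkeeping of branches. The fourth root $\zeta_\ell^{1/4}$ enters through $z^{\nu^2}$ with $\nu\in\frac12+\Z$, and $U^{\nu(s+1)}$ requires $U^{1/2}$ when $\ell=1$ makes $s$ half-integral. I will fix all roots inside $\K_{\mathrm{fr}}(\zeta_\ell^{1/4})$ using \eqref{eq:Hirzebruch-signed-branch}. I will then check that the same lift is used on both sides, so that the quotient formula is an identity of monomials, not merely up to roots of unity. Substituting $z=\zeta_\ell$ into Theorem~\ref{thm:finite-NY-levels} then gives the normalized equality in $\K_{\mathrm{fr}}[[\zeta_\ell]]$, respectively $\zeta_\ell^{1/4}\K_{\mathrm{fr}}[[\zeta_\ell]]$.

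Finiteness of each coefficient follows as in Lemma~\ref{lem:Shchechkin}(2): the lowest power is $z^{\nu^2}$, and only finitely many partitions contribute to each power. For $\ell=2$, Proposition~\ref{prop:Hirzebruch-PT-ADHM} is an identity of localized indices, so the conclusion is of the same nature.

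\emph{Step 3: remaining assertions.}
That the scalar vanishes exactly for $(j,d)=(1,1)$ is read off from \eqref{eq:intro-Hirzebruch-even-scalars} and \eqref{eq:intro-Hirzebruch-odd-scalars}. For the analytic statement, assume compatible continuations realizing the shift quotients and normal convergence on a domain adjoining $\zeta_\ell=0$. Then both sides are analytic functions whose expansions agree coefficientwise, so they coincide there. The identity theorem extends the equality along connected meromorphic continuations.
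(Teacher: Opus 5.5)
Your proposal is correct and follows essentially the same route as the paper's proof (given as Proposition~\ref{prop:Hirzebruch-main-rational}). There too the shifts become \(U\mapsto Up_i^{2\nu}\) and \(z\mapsto p_i^{s_{\ell;j,d}}z\), Proposition~\ref{prop:Hirzebruch-PT-ADHM} identifies the three factors, Lemma~\ref{lem:Hirzebruch-shift-quotients} normalizes by the classical and one-loop quotients, Theorem~\ref{thm:finite-NY-levels} gives the identity coefficientwise, and the analytic statement follows from normal convergence and the identity principle.
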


\begin{proposition}
\label{prop:Hirzebruch-main-rational}
Let \(0\leq\ell\leq2\) and \((j,d)\in\mathcal I_\ell\).  With \(U=e^{T_F}\), \(p_i=e^{\eps_i}\) and \(\zeta_\ell=e^{\pi\mathrm{i}\ell-T_M}\), the coefficientwise identity
\begin{equation}
 \overline{\mathscr B}_{\ell}^{j,d}
 \bigl(e^{T_F};e^{\eps_1},e^{\eps_2}\mid\zeta_\ell\bigr)
 =c_{\ell;j,d}(e^{\eps_1},e^{\eps_2}\mid\zeta_\ell)
  \mathcal A_\ell
  \bigl(e^{T_F};e^{\eps_1},e^{\eps_2}\mid\zeta_\ell\bigr)
\label{eq:Hirzebruch-main-rational}
\end{equation}
holds over \(\K_{\mathrm{fr}}\), and every coefficient is a finite sum in \(\K_{\mathrm{fr}}\).  It is precisely the normalized interpretation of \eqref{eq:Hirzebruch-main-blowup}.  Under the additional convergence and continuation assumptions of Theorem~\ref{thm:Hirzebruch-main}, it also gives the analytic identity.  For \(\ell=2\), these are identities of localized indices.
\end{proposition}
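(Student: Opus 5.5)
The plan is to deduce the proposition from Theorem~\ref{thm:finite-NY-levels} by specialization, and then to check that the specialized identity is exactly the normalized form of \eqref{eq:Hirzebruch-main-blowup}. Identity \eqref{eq:determinant-power-common-relation} holds coefficientwise in \(\K_{\mathrm{fr}}[[z^{1/4}]]\) for independent \(U^{1/2},p_1^{1/4},p_2^{1/4}\). Substituting \(U=e^{T_F}\), \(p_i=e^{\eps_i}\), and \(z=\zeta_\ell\), with the fourth root fixed by \eqref{eq:Hirzebruch-signed-branch}, gives \eqref{eq:Hirzebruch-main-rational} at once. Each coefficient is a finite sum of elements of \(\K_{\mathrm{fr}}\), by the finiteness discussion after \eqref{eq:normalized-finite-NY-bilinear-operator}: the classical quotient has lowest power \(z^{\nu^2}\), and the fixed point formula is a finite sum.

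The substantive step is matching the left side of \eqref{eq:Hirzebruch-main-blowup} with \(\mathscr B_\ell^{j,d}\) under Proposition~\ref{prop:Hirzebruch-PT-ADHM}. By \eqref{eq:Hirzebruch-PT-ADHM}, \(\widehat Z_{Y_\ell}(T_F,T_M;\eps_1,\eps_2)=\widehat{\mathcal A}_\ell(e^{T_F};e^{\eps_1},e^{\eps_2}\mid\zeta_\ell)\). I will compute the shifted arguments. The first factor shifts \(\mathbf t\) by \(\eps_1(\mathbf Cn+\mathbf r/2)\), where \(\mathbf C=(2,0)^t\) and \(\mathbf r=(2j,\ell(1-j)+2-2d)^t\). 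Thus \(T_F\mapsto T_F+\eps_1(2n+j)\), which is \(U\mapsto Up_1^{2\nu}\) with \(\nu=n+j/2\in\Z+j/2\). Similarly \(T_M\mapsto T_M+\eps_1(\ell(1-j)/2+1-d)\), so \(\zeta_\ell=e^{\pi\mathrm i\ell-T_M}\) is multiplied by \(p_1^{d-1+\ell(j-1)/2}=p_1^{s_{\ell;j,d}}\). The equivariant parameters \((\eps_1,\eps_2-\eps_1)\) become \((p_1,p_2/p_1)\). The second factor is symmetric, with \(p_2\). This reproduces \eqref{eq:finite-NY-bilinear-operator} term by term, with the \(n\)-sum equal to the \(\nu\)-sum. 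The one point to check is branches. The shifted \(\zeta_\ell\) must carry the logarithm \(\log\zeta_\ell+s\eps_i\), so that the classical factor \(e^{F^{(0)}_\ell}\) of the shifted argument is \(\mathcal A^{(0)}\) evaluated with the shifted lifts. This is consistent because \(\pi\mathrm i\ell\) enters \(F_\ell^{(0)}\) only through \(\log\zeta_\ell\).

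Next, dividing by \(e^{F^{(0)}_\ell}\mathcal E\) and applying Lemma~\ref{lem:Hirzebruch-shift-quotients} with \(r=2\nu\), \(s=s_{\ell;j,d}\) converts \(\mathscr B\) into \(\overline{\mathscr B}\). Here the classical quotient \(z^{r^2/4}U^{r(s+1)/2}(p_1p_2)^{r^2s/4}\) becomes \(z^{\nu^2}U^{\nu(s+1)}(p_1p_2)^{\nu^2 s}\), and the one-loop quotient is \(\mathcal L_{2\nu}\). The right side \(c_{\ell;j,d}\widehat Z_{Y_\ell}\) becomes \(c_{\ell;j,d}\mathcal A_\ell\). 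This establishes that \eqref{eq:Hirzebruch-main-rational} is precisely the normalized reading of the theorem. The claim that the scalar vanishes only for \((j,d)=(1,1)\) is read off from \eqref{eq:intro-Hirzebruch-even-scalars} and \eqref{eq:intro-Hirzebruch-odd-scalars}. For \(\ell=2\), the left and right sides involve the localized series of Lemma~\ref{lem:Hirzebruch-fixed-proper}, so the identities are identities of localized indices.

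Finally, for the analytic statement: under normal convergence on a common domain, both sides define meromorphic functions. Their expansions at \(\zeta_\ell=0\) agree coefficientwise, so the two sides agree on that domain. Multiplying back by the compatible continuations of the degree \(0\) factors, which realize the rational shift quotients, gives \eqref{eq:Hirzebruch-main-blowup}, and the identity extends along connected continuation by uniqueness. I expect the main obstacle to be the bookkeeping of the fractional powers for \(j=1\). There \(z^{1/4}\), \(U^{\nu(s+1)}\) with half-integral \(\nu\), and \((p_1p_2)^{\nu^2 s}\) must all be evaluated on the fixed branches of \(\K_{\mathrm{fr}}\). These branches must agree with the branches chosen in Lemma~\ref{lem:Shchechkin}, which produce the phase \(e^{\pi\mathrm i\ell/4}\) in \(c_{\ell;1,d}\).
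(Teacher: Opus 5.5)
Your proposal is correct and takes the same route as the paper. You specialize Theorem~\ref{thm:finite-NY-levels}, and you match $U\mapsto Up_i^{2\nu}$, $z\mapsto p_i^{s_{\ell;j,d}}z$ with the translations by $\eps_i(\mathbf Cn+\mathbf r_{\ell;j,d}/2)$ via Proposition~\ref{prop:Hirzebruch-PT-ADHM}. You then identify the prefactor in \eqref{eq:normalized-finite-NY-bilinear-operator} with the ratio of degree $0$ factors from Lemma~\ref{lem:Hirzebruch-shift-quotients}.

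Your check that the shifted lift $\log\zeta_\ell+s_{\ell;j,d}\eps_i$ makes $e^{F_\ell^{(0)}}$ agree with $\mathcal A^{(0)}$ at the shifted arguments is a useful detail that the paper leaves implicit. One small correction: the phase $e^{\pi\mathrm i\ell/4}$ comes from evaluating $\zeta_\ell^{1/4}$ on the branch \eqref{eq:Hirzebruch-signed-branch}, not from Lemma~\ref{lem:Shchechkin}.
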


\begin{proof}
Apply Theorem~\ref{thm:finite-NY-levels} and write \(\nu=n+j/2\).  Under the specialization of Proposition~\ref{prop:Hirzebruch-PT-ADHM}, the first framed sheaf factor has
\[
 U\longmapsto Up_1^{2n+j},\qquad
 z\longmapsto p_1^{d-1+\ell(j-1)/2}z,
\]
which is the translation by \(\eps_1(\mathbf Cn+\mathbf r_{\ell;j,d}/2)\) in \((T_F,T_M)\), while the second factor gives the translation by \(\eps_2(\mathbf Cn+\mathbf r_{\ell;j,d}/2)\), and the changes \((p_1,p_2)\mapsto(p_1,p_2/p_1)\) and \((p_1,p_2)\mapsto(p_1/p_2,p_2)\) become \((\eps_1,\eps_2-\eps_1)\) and \((\eps_1-\eps_2,\eps_2)\).

The explicit factor multiplying the two \(\mathcal A_\ell\)-series in \eqref{eq:normalized-finite-NY-bilinear-operator} is the product of the classical quotient \eqref{eq:Hirzebruch-classical-shift-quotient} and the one-loop quotient \eqref{eq:Hirzebruch-one-loop-shift-quotient}.  It is the ratio of the two shifted degree \(0\) factors of \(\widehat Z_{Y_\ell}\) to the unshifted one.  Multiplying \eqref{eq:determinant-power-common-relation} by the unshifted factor converts its left side into the bilinear sum of the two shifted copies of \(\widehat Z_{Y_\ell}\), in the coefficientwise rational sense.

For each coefficient of the lattice sum, substitute \(U=e^{T_F}\), \(p_i=e^{\eps_i}\) and \(z=e^{\pi\mathrm{i}\ell-T_M}\); Proposition~\ref{prop:Hirzebruch-PT-ADHM} identifies the two shifted factors and the unshifted factor with the corresponding stable pair series, and Theorem~\ref{thm:finite-NY-levels} carries the coefficient on the right to \(c_{\ell;j,d}(e^{\eps_1},e^{\eps_2}\mid\zeta_\ell)\), which is \eqref{eq:Hirzebruch-main-rational}.  Under the analytic assumptions of Theorem~\ref{thm:Hirzebruch-main}, normal convergence identifies the sums with their expansions, and the identity principle gives the stated analytic continuation.
\end{proof}

The equivalence \eqref{eq:Hirzebruch-r-equivalence} only reindexes \(n\) in \eqref{eq:Hirzebruch-main-blowup}, and the lists of representatives in the Introduction are read off from \eqref{eq:Hirzebruch-main-shift}.  The coefficient vanishes for \((j,d)=(1,1)\) by \eqref{eq:intro-Hirzebruch-odd-scalars}; this is the vanishing equation for the class of \((2,0)\).

\subsection{Local \texorpdfstring{\(\mathbb F_0\)}{F0} and local \texorpdfstring{\(\mathbb F_2\)}{F2}}
Let \(z=e^{-T_M}\), and choose the base Novikov coordinates so that \(Q_B^{(0)}=Q_B^{(2)}Q_F\).  The base degree normalized series satisfy
\begin{equation}
 Z_{Y_2}^{\PT,\mathrm{norm}}(Q_B^{(2)},Q_F;p_1,p_2)
 =(z;p_1,p_2)_\infty
  Z_{Y_0}^{\PT,\mathrm{norm}}(Q_B^{(0)},Q_F;p_1,p_2),
\label{eq:F0-F2-base-normalized-relation}
\end{equation}
and, after adjoining the classical and one-loop factors,
\begin{equation}
\begin{aligned}
 \widehat Z_{Y_2}(T_F,T_M;\eps_1,\eps_2)
 ={}&\exp\left(-\frac{\pi\mathrm{i}T_F^2}{2\eps_1\eps_2}\right)
 \bigl(e^{-T_M};e^{\eps_1},e^{\eps_2}\bigr)_\infty\\
 &\times\widehat Z_{Y_0}(T_F,T_M;\eps_1,\eps_2).
\end{aligned}
\label{eq:F0-F2-stable-pair-relation}
\end{equation}
Indeed, the choice \(Q_B^{(0)}=Q_B^{(2)}Q_F\) gives \(\zeta_0=\zeta_2=z\) as multiplicative coordinates, and Proposition~\ref{prop:Hirzebruch-PT-ADHM} applied to \eqref{eq:det-two-det-zero} gives \eqref{eq:F0-F2-base-normalized-relation}.  The logarithms \eqref{eq:Hirzebruch-signed-branch} differ by \(2\pi\mathrm{i}\), and \eqref{eq:Hirzebruch-degree-zero-polynomial} gives
\[
 F_2^{(0)}-F_0^{(0)}
 =-\frac{\pi\mathrm{i}T_F^2}{2\eps_1\eps_2}.
\]
Adjoining the degree \(0\) exponentials and \(\mathcal E(U;p_1,p_2)\) gives \eqref{eq:F0-F2-stable-pair-relation}.

\subsection{The Huang--Sun--Wang normalization}
In the integral coordinates \((T_F,T_B)\), Theorem~\ref{thm:Hirzebruch-main} takes the form of \cite{HuangSunWang} once the parity congruence and the degree \(0\) normalization are compared.

Let \(D=\eps_1\eps_2\), \(s=\eps_1+\eps_2\), and
\begin{equation}
\begin{aligned}
 F_{\mathrm{HSW}}(T_F,T_M;\eps_1,\eps_2)
 :={}&\frac{2\zeta(3)}D-
 \frac{\pi\mathrm{i}\,sT_F}{2D}
 +\frac{\eps_1^2+\eps_2^2+3D}{48D}T_M\\
 &+\frac{\eps_1^3+\eps_1^2\eps_2+
          \eps_1\eps_2^2+\eps_2^3}{48D}.
\end{aligned}
\label{eq:Hirzebruch-HSW-exponent}
\end{equation}
This is the rank \(2\) specialization of \cite[equation~(3.120)]{HuangSunWang}, whose gauge root coordinate \(a_{12}\) is our \(T_F\).  Let \(Q_{\mathrm{inst}}=e^{-T_M}\) with logarithm \(\log Q_{\mathrm{inst}}=2\pi\mathrm{i}-T_M\).  The framed sheaf counting coordinate in Theorem~\ref{thm:Hirzebruch-main} is \(\zeta_\ell=(-1)^\ell Q_{\mathrm{inst}}\), with fourth root fixed by \eqref{eq:Hirzebruch-signed-branch}.  We keep the two additive lifts distinct: \(2\pi\mathrm{i}-T_M\) is the unsigned gauge theory lift used in the perturbative normalization of Huang--Sun--Wang, and \(-T_M+\pi\mathrm{i}\ell\) is the lift of the signed stable pair coordinate.  Let
\[
 \widehat Z_{Y_\ell}^{\mathrm{HSW}}
 =e^{-F_{\mathrm{HSW}}}\widehat Z_{Y_\ell}.
\]
This removes the normalization term of \cite[equations~(3.118)--(3.120)]{HuangSunWang}.  Their further term linear in the shift invariant coordinate \(T_M\) is convention dependent \cite[equation~(3.124)]{HuangSunWang}, and we set it to \(0\); restoring it only multiplies the nonzero coefficient on the right by a function independent of \(T_F\).

To state the lattice and parity data in an integral basis, let
\begin{equation}
\begin{gathered}
 \widetilde{\mathbf t}=\begin{pmatrix}T_F\\T_B\end{pmatrix},\qquad
 A_\ell=\begin{pmatrix}1&0\\ \ell/2-1&1\end{pmatrix},\qquad
 \mathbf t=A_\ell\widetilde{\mathbf t},\\
 \widetilde{\mathbf C}_\ell=A_\ell^{-1}\mathbf C
 =\begin{pmatrix}2\\2-\ell\end{pmatrix},\qquad
 \widetilde{\mathbf r}_{\ell;j,d}=A_\ell^{-1}\mathbf r_{\ell;j,d}
 =\begin{pmatrix}
 2j\\ \ell+2-2d+2(1-\ell)j
 \end{pmatrix}.
\end{gathered}
\label{eq:Hirzebruch-integral-HSW-data}
\end{equation}
Both vectors on the last line are integral, also for \(\ell=1\), and
\begin{equation}
 \widetilde{\mathbf r}_{\ell;j,d}
 \equiv \widetilde{\mathbf B}_\ell
 :=\begin{pmatrix}0\\\ell\end{pmatrix}\pmod{2}.
\label{eq:Hirzebruch-common-checkerboard}
\end{equation}
The class \(\widetilde{\mathbf B}_\ell\) is the parity class \((0,2+\ell)^t\bmod 2\) of the corresponding rank \(2\) geometry in \cite[Section~3.4]{HuangSunWang}.  All equations below, including the \(j=1\) equations for \(\mathbb F_1\), satisfy the same parity congruence, and the equivalence relation becomes the integral relation \(\widetilde{\mathbf r}'-\widetilde{\mathbf r} \in2\widetilde{\mathbf C}_\ell\Z\).  Let
\[
 \widehat Z_{Y_\ell,\mathrm{int}}^{\mathrm{HSW}}
 (\widetilde{\mathbf t};\eps_1,\eps_2)
 :=\widehat Z_{Y_\ell}^{\mathrm{HSW}}
 \left(T_F,T_B+\left(\frac\ell2-1\right)T_F;
 \eps_1,\eps_2\right).
\]

\begin{corollary}
\label{cor:Hirzebruch-exact-HSW}
For \((j,d)\in\mathcal I_\ell\),
\begin{equation}
\begin{aligned}
 &\sum_{n\in\Z}(-1)^n
 \widehat Z_{Y_\ell,\mathrm{int}}^{\mathrm{HSW}}\left(
 \widetilde{\mathbf t}+\eps_1\left(\widetilde{\mathbf C}_\ell n+
 \frac{\widetilde{\mathbf r}_{\ell;j,d}}2\right);
 \eps_1,\eps_2-\eps_1\right)\\
 &\hspace{18mm}\times
 \widehat Z_{Y_\ell,\mathrm{int}}^{\mathrm{HSW}}\left(
 \widetilde{\mathbf t}+\eps_2\left(\widetilde{\mathbf C}_\ell n+
 \frac{\widetilde{\mathbf r}_{\ell;j,d}}2\right);
 \eps_1-\eps_2,\eps_2\right)\\
 &\qquad=
 \mathrm{i}^j\exp\left(-\frac{\bigl(\ell(1-j)-2d\bigr)s}{48}\right)
 c_{\ell;j,d}(e^{\eps_1},e^{\eps_2}\mid\zeta_\ell)
 \widehat Z_{Y_\ell,\mathrm{int}}^{\mathrm{HSW}}
 (\widetilde{\mathbf t};\eps_1,\eps_2),
 \qquad T_M=T_B+\left(\frac\ell2-1\right)T_F.
\end{aligned}
\label{eq:Hirzebruch-exact-HSW}
\end{equation}
\end{corollary}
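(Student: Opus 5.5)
Starting from Theorem~\ref{thm:Hirzebruch-main}, the plan is to change coordinates to the integral basis and compare the HSW degree $0$ normalizations. In the first step I use $\mathbf t=A_\ell\widetilde{\mathbf t}$, $\mathbf C=A_\ell\widetilde{\mathbf C}_\ell$ and $\mathbf r_{\ell;j,d}=A_\ell\widetilde{\mathbf r}_{\ell;j,d}$; the translation $\widetilde{\mathbf t}+\eps_i(\widetilde{\mathbf C}_\ell n+\widetilde{\mathbf r}/2)$ is then exactly $\mathbf t+\eps_i(\mathbf Cn+\mathbf r/2)$. Hence the bilinear sum of $\widehat Z_{Y_\ell,\mathrm{int}}$ is literally the left side of \eqref{eq:Hirzebruch-main-blowup}. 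Integrality of $\widetilde{\mathbf r}$ and the congruence \eqref{eq:Hirzebruch-common-checkerboard} are direct from \eqref{eq:Hirzebruch-integral-HSW-data}, and they are only bookkeeping.

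The second step is to insert $e^{-F_{\mathrm{HSW}}}$ into both sides. For each summand I compute
\[
F_{\mathrm{HSW}}(\mathbf t+\eps_1\mathbf v;\eps_1,\eps_2-\eps_1)+F_{\mathrm{HSW}}(\mathbf t+\eps_2\mathbf v;\eps_1-\eps_2,\eps_2)-F_{\mathrm{HSW}}(\mathbf t;\eps_1,\eps_2),
\qquad \mathbf v=\mathbf Cn+\tfrac12\mathbf r_{\ell;j,d}.
\]
This is a mechanical calculation, because $F_{\mathrm{HSW}}$ is affine in $(T_F,T_M)$. I will use the standard blowup identities for the denominators: $1/(\eps_1(\eps_2-\eps_1))+1/((\eps_1-\eps_2)\eps_2)=1/(\eps_1\eps_2)$, which kills the $\zeta(3)$ term, and the analogous identities for the cubic constant and the $(\eps_1^2+\eps_2^2+3D)/48D$ coefficient. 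Together these give the $s$-dependent constant, with the $T_M$ coefficient reducing to $1/48\cdot(\text{shift})$ terms. The $T_M$ shift is $\eps_i r_M/2$ with $r_M=\ell(1-j)+2-2d$, so I expect a contribution of the form $-(\ell(1-j)-2d)s/48$, up to the $+2$ combining with the constant cubic term. For the $-\pi\mathrm i sT_F/(2D)$ term, the shift $\eps_i(2n+j)$ of $T_F$ contributes $-\pi\mathrm i(2n+j)/2$ in total. This gives $(-1)^n\,\mathrm i^{-j}$, and after moving $(-1)^n$ into the sum it produces the $(-1)^n$ and, after inversion, the $\mathrm i^j$. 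The $T_F$-linear parts with the unshifted $T_F$ cancel by the same denominator identity, using $\eps_1+(\eps_2-\eps_1)=\eps_2$ and its partner, so that $s$ is preserved in the combination.

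The third step is to divide the identity of Theorem~\ref{thm:Hirzebruch-main} by $e^{F_{\mathrm{HSW}}(\mathbf t)}$. The summand-independent phase and exponential then move to the right-hand side, and the $n$-dependent sign stays in the sum. This gives \eqref{eq:Hirzebruch-exact-HSW}, read coefficientwise as in Proposition~\ref{prop:Hirzebruch-main-rational}; the prefactor is an entire function independent of $\zeta_\ell$, so the interpretation is unchanged.

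The main obstacle is the exact bookkeeping of the constant: I must confirm that the cubic term and the $T_M$ term combine to exactly $-(\ell(1-j)-2d)s/48$, with no leftover $s/24$-type constant. I also need to check that the branch of $\log Q_{\mathrm{inst}}=2\pi\mathrm i-T_M$ does not introduce an extra phase, since $F_{\mathrm{HSW}}$ is written in $T_M$ and not in the logarithm. My first move is to verify the combination symbolically for $n=0$ and then note that the $n$-dependence is purely through the $T_F$ term.
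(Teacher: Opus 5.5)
Your proposal is correct and is essentially the paper's proof. You transport the translations through $A_\ell$, then compute $\Delta F$, the sum of the two chart values of $F_{\mathrm{HSW}}$ minus the unshifted value: the $\zeta(3)$, unshifted $T_F$ and unshifted $T_M$ terms cancel, the $T_F$ shift gives $-\pi\mathrm{i}(2n+j)/2$, the $T_M$ shift gives $+r_Ms/48$, and the cubic constant gives $-s/24$. Finally you divide by $e^{\Delta F}$.

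One sign to fix: the exponential enters $\Delta F$ as $+(\ell(1-j)-2d)s/48$, not with the minus sign you anticipate. The minus sign in the statement arises only after inversion, exactly as $\mathrm{i}^{-j}$ becomes $\mathrm{i}^{j}$. Since $F_{\mathrm{HSW}}$ is written in $T_M$ itself, no further branch phase appears.
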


\begin{proof}
Let
\[
 A=2n+j,\qquad r=\ell(1-j)+2-2d,
\]
let \(F=F_{\mathrm{HSW}}\), and let
\begin{align*}
 \Delta F={}&F(T_F+\eps_1A,T_M+\eps_1r/2;
                  \eps_1,\eps_2-\eps_1)\\
 &+F(T_F+\eps_2A,T_M+\eps_2r/2;
                  \eps_1-\eps_2,\eps_2)
   -F(T_F,T_M;\eps_1,\eps_2).
\end{align*}
The \(\zeta(3)\), \(T_F\) and \(T_M\) coefficients cancel, and the shifted term linear in \(T_F\) contributes \(-\pi\mathrm{i}A/2\).  Write
\[
 g(a,b)=\frac{a^2+b^2+3ab}{48ab},\qquad
 h(a,b)=\frac{a^3+a^2b+ab^2+b^3}{48ab}.
\]
Putting everything over a common denominator, one finds
\begin{align*}
 &g(a,b-a)+g(a-b,b)=g(a,b),\\
 &a\,g(a,b-a)+b\,g(a-b,b)=\frac{a+b}{24},\\
 &h(a,b-a)+h(a-b,b)-h(a,b)=-\frac{a+b}{24}.
\end{align*}
Hence
\[
 \Delta F=-\frac{\pi\mathrm{i}A}{2}+\frac{(r-2)s}{48}
 =-\pi\mathrm{i}\left(n+\frac j2\right)
  +\frac{\bigl(\ell(1-j)-2d\bigr)s}{48},
\]
and
\begin{equation}
 e^{\Delta F}=(-1)^n e^{-\pi\mathrm{i}j/2}
 \exp\left(\frac{\bigl(\ell(1-j)-2d\bigr)s}{48}\right).
\label{eq:Hirzebruch-HSW-sign}
\end{equation}
Replacing \(\widehat Z_{Y_\ell}\) by \(e^{F_{\mathrm{HSW}}}\widehat Z_{Y_\ell}^{\mathrm{HSW}}\) in \eqref{eq:Hirzebruch-main-blowup} and dividing by \eqref{eq:Hirzebruch-HSW-sign} gives the equation in \((T_F,T_M)\), and conjugating every translation by \(A_\ell^{-1}\) gives \eqref{eq:Hirzebruch-exact-HSW} in the coordinates \((T_F,T_B)\).  The formula for \(\widetilde{\mathbf r}_{\ell;j,d}\) and the congruence \eqref{eq:Hirzebruch-common-checkerboard} follow by multiplying out \eqref{eq:Hirzebruch-integral-HSW-data}.
\end{proof}

For \(j=0\), the exponential in \eqref{eq:Hirzebruch-HSW-sign} is the rank \(2\) factor of \cite[equation~(3.128)]{HuangSunWang}.  For \(j=1\), the fixed logarithm and fourth root branches contribute the additional phase \(e^{-\pi\mathrm{i}/2}\), and the \(\ell\)-dependent term drops out of the real exponent since \(\ell(1-j)=0\).  These factors are independent of \(T_F\); with the definitions \eqref{eq:intro-Hirzebruch-odd-scalars} of \(c_{\ell;j,d}\) and \eqref{eq:Hirzebruch-signed-branch} of \(\zeta_\ell^{1/4}\) they remain in the displayed coefficient.

\section{The transition to local \texorpdfstring{\(\mathbb P^2\)}{P2}}
\label{sec:stable-pair-blowup}

\subsection{The flop and large-volume limit}
\label{sec:diagonal-P2}
Let \(X=\Tot_{\mathbb P^2}K_{\mathbb P^2}\), and let \(H\in H_2(X,\Z)\) be the class of a line in the zero section.  We fix a global square root \((K^{\mathrm{vir}})^{1/2}\); its restrictions supply the square roots on the fixed components.

The stable pair/framed sheaf comparison of this section assumes Conjecture~\ref{conj:P2-vertex-comparison}.  The blowup equations and the coefficient recursion assume in addition Conjecture~\ref{conj:P2-asymptotic-resummation}.

For a rational function of the framing ratio \(u\), let
\begin{equation}
 \iota_0:
 \Q(q_1^{1/2},q_2^{1/2})(u)
 \longrightarrow
 \Q(q_1^{1/2},q_2^{1/2})((u))
\label{eq:global-u-expansion}
\end{equation}
be the Laurent expansion at \(u=0\), applied separately to each coefficient of the variable recording \(c_2\).  In this section the framed sheaf characters are
\begin{equation}
 q_1=t^{-1},\qquad q_2=q,\qquad (u_1,u_2)=(u,1).
\label{eq:introduction-variable-change}
\end{equation}
We use rank \(2\) and determinant power \(1\).  If \(z\) is the variable recording \(c_2\) after comparison with stable pairs, then
\begin{equation}
 \mathfrak q=-\left(\frac qt\right)^{3/2}z.
\label{eq:rank-two-specialization}
\end{equation}
By \eqref{eq:det-fixed-weight} and \eqref{eq:ADHM-Nekrasov-denominator},
\begin{equation}
\begin{aligned}
 \iota_0Z_{2,1}^{\mathrm{fr}}
 \left(-\left(\frac qt\right)^{3/2}z;t^{-1},q;u,1\right)
 ={}&\iota_0\sum_{\lambda_1,\lambda_2}
 \left(-\left(\frac qt\right)^{3/2}z\right)^{|\lambda_1|+|\lambda_2|}\\
 &\quad\times
 \frac{\displaystyle\prod_{a=1}^2
 u_a^{|\lambda_a|}t^{-n(\lambda_a)}
 q^{n(\lambda_a^t)}}
 {\displaystyle\prod_{a,b=1}^2
 N_{\lambda_a\lambda_b}
 (u_a/u_b;t^{-1},q)},
 \\
 &\text{where }(u_1,u_2)=(u,1).
\end{aligned}
\label{eq:geometric-fixed-point-sum}
\end{equation}
The factor multiplying \(z\) comes from the cotangent character convention \eqref{eq:localized-Euler-convention}.

For \(\ell=1\), restoring the fibre series in Proposition~\ref{prop:Hirzebruch-PT-ADHM} gives
\begin{equation}
\begin{aligned}
 Z_{Y_1}^{\PT,\mathrm{ref}}(Q_B,Q_F;t,q^{-1})
 ={}&\PE\left[\frac{(t+q)Q_F}{(1-t)(1-q)}\right]\\
 &\times\iota_0 Z_{2,1}^{\mathrm{fr}}\left(
 -\left(\frac qt\right)^{3/2}Q_BQ_F^{-1};
 t^{-1},q;Q_F,1\right).
\end{aligned}
\label{eq:F1-verified-finite-parameter-form}
\end{equation}
One sets \(u=Q_F\) and keeps \(Q=Q_BQ_F\) fixed, so \(Q_B=Q/u\) and the framed sheaf argument is proportional to \(Qu^{-2}\), and the large-volume specialization after the flop is represented by extraction of the coefficient of \(u^0\), applied separately to every coefficient of \(Q\), as in Theorem~\ref{thm:P2-PT-ADHM}.

\subsection{Properness and the square root}
\begin{lemma}
\label{lem:PT-fixed-proper}
The moduli space \(P_n(X,dH)\) is proper for every \(n\in\Z\) and \(d\geq0\).
\end{lemma}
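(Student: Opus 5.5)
The plan is to repeat the compactification argument used for \(\ell=0,1\) in Lemma~\ref{lem:Hirzebruch-fixed-proper}, now for \(X=\Tot_{\mathbb P^2}K_{\mathbb P^2}\). The input we need is that \(K_{\mathbb P^2}^{-1}=\cO(3)\) is ample.

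First, compactify \(X\) as \(\overline X=\mathbb P_{\mathbb P^2}(\cO\oplus K_{\mathbb P^2})\), which is a projective toric 3-fold. The divisor at infinity \(D_\infty\) has \(\cO(D_\infty)|_{D_\infty}\cong K_{\mathbb P^2}^{-1}\). By Pandharipande--Thomas, the stable pair space \(P_n(\overline X,\iota_*dH)\) is projective. The space \(P_n(X,dH)\) sits inside it as the open locus of pairs whose sheaf \(F\) has support disjoint from \(D_\infty\); on this locus the section is an isomorphism near \(D_\infty\), and pairs on \(X\) with proper support push forward. Since the zero section \(\mathbb P^2\) is disjoint from \(D_\infty\), we have \(\iota_*H\cdot D_\infty=0\).

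Next, we show this open locus is also closed. Take any pair in \(P_n(\overline X,\iota_*dH)\) and write its support cycle as \(\sum m_iC_i\), with \(m_i>0\) and \(\sum m_i C_i\cdot D_\infty=0\).
\begin{itemize}
\item A component \(C_i\) not contained in \(D_\infty\) satisfies \(C_i\cdot D_\infty\geq0\), with equality exactly when it is disjoint from \(D_\infty\).
\item A component contained in \(D_\infty\cong\mathbb P^2\) satisfies \(C_i\cdot D_\infty=\deg(\cO(3)|_{C_i})>0\).
\end{itemize}
So every term is nonnegative and all must vanish, and the support avoids \(D_\infty\). Equivalently, membership in the open set is a closed condition on the support: the support of the universal \(F\) is proper over the moduli space, and its image-meeting-\(D_\infty\) locus is empty. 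Since \(P_n(X,dH)\) is open and closed in a projective scheme, it is projective, hence proper. The case \(d=0\) is immediate, since then the space is empty or a point.

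The main care point is checking that the curve class in \(\overline X\) is determined. One must confirm that \(H_2(\overline X)\) pairs with \(D_\infty\) as claimed, so that any curve on \(\overline X\) in the pushed-forward class really has total intersection \(0\) with \(D_\infty\). This holds because intersection numbers depend only on the homology class. I expect no real obstacle beyond this.
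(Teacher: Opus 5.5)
Your proposal is correct and matches the paper's proof. The paper also compactifies to \(\overline X=\mathbb P_{\mathbb P^2}(\cO\oplus\cO(-3))\) and observes that the pushed-forward class has intersection \(0\) with \(D_\infty\). It then argues, as you do, that components off \(D_\infty\) meet it nonnegatively while components inside meet it positively because \(N_{D_\infty/\overline X}\cong\cO(3)\), so the projective stable pair space of \(\overline X\) in that class is \(P_n(X,dH)\) itself.
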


\begin{proof}
Compactify
\[
 X\subset\overline X
 =\mathbb P_{\mathbb P^2}(\cO\oplus\cO(-3))
\]
with divisor at infinity \(D_\infty\).  The closure of a \(1\)-dimensional support in class \(dH\) has intersection number \(0\) with \(D_\infty\).  If an irreducible curve \(C\) is not contained in \(D_\infty\), then \(D_\infty\cdot C\) is the degree of the effective Cartier divisor \(C\cap D_\infty\), which is positive if the intersection is nonempty.  If \(C\subset D_\infty\), then
\[
 D_\infty\cdot C
 =\deg\!\left(N_{D_\infty/\overline X}|_C\right)>0,
 \qquad
 N_{D_\infty/\overline X}\cong\cO_{\mathbb P^2}(3).
\]
Every effective curve in class \(dH\) is therefore disjoint from \(D_\infty\), and the projective stable pair space of \(\overline X\) in this class is \(P_n(X,dH)\) itself.
\end{proof}

\begin{lemma}
\label{lem:global-orientation-localization}
Let \(d>0\) and write \(Q^d=\mathbf Q^{dH}\).  The coefficient of \(y^nQ^d\) in Definition~\ref{def:KPT-series} is
\begin{equation}
 \chi_{\widetilde{\mathbf T}}
 \bigl(P_n(X,dH),\widehat\cO^{\mathrm{vir}}\bigr).
\label{eq:P2-global-symmetrized-index}
\end{equation}
Let \(\mathscr F\subset P_n(X,dH)^{\mathbf T}\) be connected, choose a generic real cocharacter \(\sigma\in\Hom(\C^*,\ker\kappa) \otimes_{\Z}\mathbb R\), and write the moving virtual tangent class according to its attracting and repelling weights as
\[
 N^{\mathrm{vir}}=N_+-\kappa N_+^\vee.
\]
The square root induced on the fixed obstruction theory is
\begin{equation}
 (K_{\mathscr F}^{\mathrm{vir}})^{1/2}
 =\kappa^{-\rk(N_+)/2}\det(N_+)\otimes
   (K^{\mathrm{vir}})^{1/2}|_{\mathscr F},
\label{eq:P2-induced-fixed-orientation}
\end{equation}
and with these square roots the fixed locus formula \eqref{eq:KPT-series} is the virtual localization of \eqref{eq:P2-global-symmetrized-index}.
\end{lemma}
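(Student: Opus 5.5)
The plan has three steps: identify the coefficient with the global index, compute the restriction of the global square root to a fixed component, and then apply virtual localization.

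\emph{Step 1: the coefficient.} By Lemma~\ref{lem:PT-fixed-proper}, \(P_n(X,dH)\) is proper for every \(n\) and \(d\). Hence the hypothesis of Definition~\ref{def:KPT-series} holds. Since \(H_2(X,\Z)\) is generated by \(H\), the compact effective classes are exactly \(dH\) with \(d\geq0\). The coefficient of \(y^nQ^d\) is then the first line of \eqref{eq:KPT-series}, namely \eqref{eq:P2-global-symmetrized-index}.

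\emph{Step 2: restricting the square root.} Restrict the symmetric perfect obstruction theory to the connected fixed component \(\mathscr F\) and split it by weights:
\[
 T^{\mathrm{vir}}|_{\mathscr F}=T^{\mathrm{fix}}+N^{\mathrm{vir}}.
\]
The fixed part \(T^{\mathrm{fix}}\) defines the obstruction theory of \(\mathscr F\) (Graber--Pandharipande), with \(K^{\mathrm{vir}}_{\mathscr F}=\det(T^{\mathrm{fix}})^\vee\). Taking determinants gives
\[
 K^{\mathrm{vir}}|_{\mathscr F}=K^{\mathrm{vir}}_{\mathscr F}\otimes\det(N^{\mathrm{vir}})^\vee .
\]
Symmetry of the obstruction theory, \(T^{\mathrm{vir}}=-\kappa(T^{\mathrm{vir}})^\vee\), restricts to the moving part. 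Since \(\sigma\) lies in \(\ker\kappa\), \(\kappa\) has weight \(0\) under \(\sigma\), so \(\chi\mapsto\kappa\chi^{-1}\) exchanges attracting and repelling weights. Therefore the attracting part \(N_+\) determines the decomposition \(N^{\mathrm{vir}}=N_+-\kappa N_+^\vee\). Then
\[
 \det(N^{\mathrm{vir}})=\det(N_+)\det(\kappa N_+^\vee)^{-1}=\kappa^{-\rk N_+}\det(N_+)^{2}.
\]
Substituting, the line bundle
\[
 \kappa^{-\rk(N_+)/2}\det(N_+)\otimes(K^{\mathrm{vir}})^{1/2}|_{\mathscr F}
\]
squares to \(K^{\mathrm{vir}}_{\mathscr F}\), which is \eqref{eq:P2-induced-fixed-orientation}. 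The half-integral power \(\kappa^{1/2}\) makes sense on \(\widetilde{\mathbf T}\).

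\emph{Step 3: localization.} Apply virtual localization (Graber--Pandharipande, in the \(K\)-theoretic form used by Nekrasov--Okounkov) to \(\widehat\cO^{\mathrm{vir}}=\cO^{\mathrm{vir}}\otimes(K^{\mathrm{vir}})^{1/2}\). This gives the sum over \(\mathscr F\) of
\[
 \chi\Bigl(\mathscr F,\ \cO^{\mathrm{vir}}_{\mathscr F}\otimes(K^{\mathrm{vir}})^{1/2}|_{\mathscr F}\big/\Lambda_{-1}\bigl((N^{\mathrm{vir}})^\vee\bigr)\Bigr).
\]
Rewrite \((K^{\mathrm{vir}})^{1/2}|_{\mathscr F}\) as \((K^{\mathrm{vir}}_{\mathscr F})^{1/2}\otimes\kappa^{\rk(N_+)/2}\det(N_+)^{-1}\). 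The leftover factor \(\kappa^{\rk N_+/2}\det(N_+)^{-1}/\Lambda_{-1}\bigl((N^{\mathrm{vir}})^\vee\bigr)\) is then the symmetrized \(\widehat a\)-type moving contribution, in the sense of \eqref{eq:symmetrized-Euler-character} and the polarization convention \eqref{eq:polarized-square-root-character}. This is the form of \eqref{eq:KPT-series} with \eqref{eq:fixed-locus-symmetrized-sheaf} read through the induced square root.

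\emph{Main obstacle.} The delicate point is that \eqref{eq:P2-induced-fixed-orientation} should be an equality of \(\widetilde{\mathbf T}\)-equivariant line bundles, not just of their squares. Step 2 only proves that the squares agree. For localized indices this suffices: Lemma~\ref{lem:fixed-locus-square-root-independence} shows any two equivariant square roots of the same line bundle on a proper connected fixed component give the same Euler characteristic, and \(\mathscr F\) is proper by Lemma~\ref{lem:PT-fixed-proper}. I would therefore use that lemma to define the induced square root as this choice. I would also check that the result does not depend on \(\sigma\): changing the chamber moves a weight between \(N_+\) and \(-\kappa N_+^\vee\), and each such move changes the factor \(\kappa^{-\rk N_+/2}\det N_+\) and the symmetrized denominator compatibly.
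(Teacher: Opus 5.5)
Your proposal is correct and follows the paper's proof: properness from Lemma~\ref{lem:PT-fixed-proper}, virtual localization for the symmetrized sheaf from \cite[Section~7]{NekrasovOkounkov}, and the computation \(\det N^{\mathrm{vir}}=\kappa^{-\rk(N_+)}(\det N_+)^2\) combined with \(K^{\mathrm{vir}}|_{\mathscr F}=K^{\mathrm{vir}}_{\mathscr F}\otimes(\det N^{\mathrm{vir}})^{-1}\). Your additional remarks are consistent supplements that the paper leaves implicit: rewriting the moving factor as \(\widehat a(N^{\mathrm{vir}})\), independence of the chamber, and the appeal to Lemma~\ref{lem:fixed-locus-square-root-independence} for the choice of square root.
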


\begin{proof}
Properness is Lemma~\ref{lem:PT-fixed-proper}.  The determinant line identity and virtual localization for the symmetrized virtual structure sheaf are proved in \cite[Section~7]{NekrasovOkounkov}.  If \(T^{\mathrm{vir}}|_{\mathscr F}=V_+-\kappa V_+^\vee\), then
\[
 K^{\mathrm{vir}}|_{\mathscr F}
 =\kappa^{\rk(V_+)}(\det V_+)^{-2},
\]
and for the moving part
\[
 \det N^{\mathrm{vir}}
 =\kappa^{-\rk(N_+)}(\det N_+)^2.
\]
The identity \(K^{\mathrm{vir}}|_{\mathscr F}=K_{\mathscr F}^{\mathrm{vir}} \otimes(\det N^{\mathrm{vir}})^{-1}\) gives \eqref{eq:P2-induced-fixed-orientation} and the localization formula.
\end{proof}

\begin{definition}
\label{def:PT-refined-series}
Let \(Z_X^{\mathrm{PT},\mathrm{ref}}(Q;t,q)\) be the symmetrized stable pair series of Definition~\ref{def:KPT-series}, with the chosen global square root, after the substitution \eqref{eq:PT-refined-variables}.
\end{definition}

\begin{lemma}
\label{lem:P2-kappa-rigidity}
For every \(n\in\Z\) and \(d\geq0\), the coefficient of \(y^nQ^d\) in \(Z_X^{\mathrm{PT},K}\) lies in \(\Z[\kappa^{\pm1/2}]\).  It may therefore be computed with any generic cocharacter of \(\ker(\kappa)\).  The same holds after the substitution defining \(Z_X^{\mathrm{PT},\mathrm{ref}}\).
\end{lemma}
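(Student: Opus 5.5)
The argument is a rigidity statement for a proper moduli space, and I will prove the two parts in order.

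\emph{Integrality.} By Lemma~\ref{lem:PT-fixed-proper}, \(P_n(X,dH)\) is proper. Lemma~\ref{lem:global-orientation-localization} identifies the coefficient with the global index \(\chi_{\widetilde{\mathbf T}}(P_n(X,dH),\widehat\cO^{\mathrm{vir}})\). Since the space is proper, this index is an honest element of the representation ring, namely a Laurent polynomial in the characters of \(\widetilde{\mathbf T}\). No localization is needed at this stage, so the coefficient lies in \(\Z[\widetilde{\mathbf T}^\vee]\).

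\emph{Rigidity.} The remaining task is to show that this Laurent polynomial depends only on \(\kappa^{1/2}\), i.e.\ that it is constant along \(\mathbf T_0=\ker\kappa\). I would use the standard Nekrasov--Okounkov rigidity argument \cite[Section~7]{NekrasovOkounkov}.

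\begin{enumerate}
\item Restrict the \(\widetilde{\mathbf T}\)-character to a generic one-parameter subgroup \(\sigma:\C^*\to\mathbf T_0\). Compute it by virtual localization, using the fixed-locus square roots \eqref{eq:P2-induced-fixed-orientation}.
\item For each fixed component \(\mathscr F\), write \(N^{\mathrm{vir}}=N_+-\kappa N_+^\vee\). The contribution then carries \(\widehat a(N^{\mathrm{vir}})\) times a class on \(\mathscr F\).
\item Because \(\sigma\) lies in \(\ker\kappa\), every pair \(w,\kappa w^{-1}\) has opposite \(\sigma\)-exponents. Hence each factor \((w^{1/2}-w^{-1/2})/((\kappa w^{-1})^{1/2}-(\kappa w^{-1})^{-1/2})\) stays bounded as \(a\to0\) and as \(a\to\infty\), tending to \(\pm\kappa^{\mp1/2}\). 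The symmetrization by \((K^{\mathrm{vir}})^{1/2}\) is exactly what produces this balanced ratio.
\item The same holds for the twist \(\det(N_+)\kappa^{-\rk N_+/2}\) paired with the \(K^{1/2}\) restriction, which is the content of \eqref{eq:P2-induced-fixed-orientation}.
\item The weights of \(\sigma\) on the fixed part of \(\widehat\cO^{\mathrm{vir}}_{\mathscr F}\) are trivial. So the total is a Laurent polynomial in \(a\) with finite limits at \(0\) and \(\infty\), and is therefore constant in \(a\).
\end{enumerate}

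Letting \(\sigma\) range over a Zariski-dense set of cocharacters of \(\mathbf T_0\) shows that the character is constant on \(\mathbf T_0\). Hence it lies in \(\Z[\widetilde{\mathbf T}^\vee/\mathbf T_0^\vee]=\Z[\kappa^{\pm1/2}]\).

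\emph{Consequences.} Independence of the cocharacter is then immediate. Any generic cocharacter of \(\ker\kappa\) leaves the element unchanged, and its slope limit \eqref{eq:slope-limit} equals the element itself. For the refined series, the substitution \eqref{eq:PT-refined-variables} sends \(\kappa^{1/2}\) and \(y\) to \(-(t/q)^{1/2}\) and \((tq)^{1/2}\), using the branches \eqref{eq:PT-refined-square-root-branches}. It therefore maps \(\Z[\kappa^{\pm1/2}]\) into Laurent polynomials in \(t^{1/2},q^{1/2}\), so the same statement holds after the substitution.

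\emph{Main obstacle.} I expect the delicate step to be the boundedness of the limits when the fixed components are positive dimensional. There one has to check that the \(\sigma\)-degrees of \(\det N_+\) cancel exactly against the denominators \(\Lambda_{-1}(N^{\mathrm{vir}\vee})\), in equivariant \(K\)-theory of \(\mathscr F\) rather than at isolated points. I would handle this via the splitting principle applied to \(N_+\), reducing to line bundles with nontrivial \(\sigma\)-weight, and then use the same term-by-term bounded-ratio estimate as in step 3.
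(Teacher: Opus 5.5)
Your proposal is correct and follows the paper's route. The paper's proof also reduces, via Lemmas~\ref{lem:PT-fixed-proper} and \ref{lem:global-orientation-localization}, to the equivariant index of \(\widehat\cO^{\mathrm{vir}}\) on a proper moduli space, and then simply cites \cite[Proposition~7.4]{NekrasovOkounkov} for both membership in \(\Z[\kappa^{\pm1/2}]\) and invariance under subtori of \(\ker(\kappa)\); that proposition is exactly the bounded-limit rigidity argument you reconstruct. The only thing the paper adds is the trivial case \(d=0\), which you skip: there Lemma~\ref{lem:global-orientation-localization} does not apply, and the coefficient is the constant term (\(1\) for \(n=0\), \(0\) otherwise).
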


\begin{proof}
For \(d=0\) the coefficient is the constant term, \(1\) for \(n=0\) and \(0\) otherwise.  For \(d>0\), by Lemmas~\ref{lem:PT-fixed-proper} and \ref{lem:global-orientation-localization} the coefficient is the equivariant Euler characteristic of the global symmetrized virtual structure sheaf on a proper moduli space, and Proposition~7.4 of \cite{NekrasovOkounkov} places it in \(\Z[\kappa^{\pm1/2}]\) and proves invariance under subtori of \(\ker(\kappa)\).
\end{proof}

Under \eqref{eq:introduction-variable-change}, \((q_1q_2)^{1/2}=\sqrt{q/t}=-\kappa^{-1/2}\) on the branch \eqref{eq:PT-refined-square-root-branches}.  We write \(N\) for the integer \(c_2\).

\subsection{Macdonald functions and the two-leg sum}
Let \(\operatorname{Sym}_{\mathsf p}\) be the ring of symmetric functions over \(\K\), with power sums \(\mathsf p_k\).  The variables \(\mathsf p_k\) are independent of the equivariant parameters \(p_1,p_2\).  For a partition \(\alpha=(1^{m_1}2^{m_2}\cdots)\), let
\[
 \mathsf p_\alpha=\prod_i \mathsf p_{\alpha_i},\qquad
 z_\alpha=\prod_{j\geq1}j^{m_j}m_j!,
\]
and let the Macdonald scalar product \cite[Chapter~VI]{Macdonald} be
\begin{equation}
 \langle \mathsf p_\alpha,\mathsf p_\beta\rangle_{q,t}
 =\delta_{\alpha\beta}z_\alpha
 \prod_i\frac{1-q^{\alpha_i}}{1-t^{\alpha_i}}.
\label{eq:Macdonald-scalar-product}
\end{equation}
Let \(P_\lambda(\mathbf X;q,t)\) be the monic Macdonald function, so that
\begin{equation}
 \langle P_\lambda,P_\mu\rangle_{q,t}
 =\frac{\delta_{\lambda\mu}}{b_\lambda(q,t)},
 \qquad
 b_\lambda(q,t)=
 \prod_{s\in\lambda}
 \frac{1-q^{a_\lambda(s)}t^{\ell_\lambda(s)+1}}
      {1-q^{a_\lambda(s)+1}t^{\ell_\lambda(s)}}.
\label{eq:Macdonald-norm}
\end{equation}
We write
\begin{equation}
 s_\eta=\sum_\sigma \mathsf K_{\eta\sigma}(q,t)P_\sigma
\label{eq:Schur-Macdonald-change}
\end{equation}
for the change of homogeneous bases.  Principal specializations use
\[
 t^{-\rho}=(t^{1/2},t^{3/2},\ldots),\qquad
 q^{-\rho}=(q^{1/2},q^{3/2},\ldots).
\]
With \(C_{\lambda\mu\nu}(t,q)\) the refined vertex \eqref{eq:refined-vertex-definition}, let
\begin{align}
 \mathsf Z_{\lambda\mu}(Q)
 &:=
 \sum_\tau(-Q)^{|\tau|}
 C_{\lambda^t\varnothing\tau}(t,q)
 C_{\varnothing\mu\tau^t}(q,t),
 \label{eq:two-leg-Z}\\
 \mathsf G_{\lambda\mu}(Q)
 &:=
 \frac{\mathsf Z_{\lambda\mu}(Q)}
 {\mathsf Z_{\varnothing\varnothing}(Q)}.
 \label{eq:two-leg-G}
\end{align}

\begin{lemma}\label{lem:two-leg-polynomiality}
Let \(m=|\lambda|+|\mu|\).  Then \(\mathsf G_{\lambda\mu}(Q)\) is a polynomial of degree at most \(m\).  In fact
\begin{equation}
\begin{aligned}
 \mathsf G_{\lambda\mu}(Q)
 ={}&g_{\lambda\mu}
 \sum_{\eta,\sigma}
 c_{\lambda\mu}^{\eta}
 \mathsf K_{\eta\sigma}(q,t)P_\sigma(t^{-\rho};q,t)\\
 &\qquad\times
 \prod_{(i,j)\in\sigma}
 \left(1-Qt^{i-1/2}q^{-j+1/2}\right),
\end{aligned}
\label{eq:two-leg-Macdonald}
\end{equation}
where \(c_{\lambda\mu}^{\eta}\) are the Littlewood--Richardson coefficients and
\begin{equation}
 g_{\lambda\mu}
 =(-1)^{|\mu|}
 q^{|\lambda|/2}t^{-|\lambda|/2}
 \widetilde f_{\mu^t}(t,q).
\label{eq:g-lambda-mu}
\end{equation}
Consequently,
\begin{equation}
 x^m\mathsf G_{\lambda\mu}(x^{-1})\in\K[x],
 \qquad
 \left.x^m\mathsf G_{\lambda\mu}(x^{-1})\right|_{x=0}
 =[Q^m]\mathsf G_{\lambda\mu}(Q).
\label{eq:two-leg-leading-coefficient}
\end{equation}
\end{lemma}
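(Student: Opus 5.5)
Expand both refined vertices in \eqref{eq:two-leg-Z} explicitly, perform the sum over \(\tau\) with a Cauchy identity, and then divide out the \(\lambda=\mu=\varnothing\) part.

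\emph{Step 1: the two vertices.} By \eqref{eq:refined-vertex-definition}, \(C_{\lambda^t\varnothing\tau}(t,q)\) reduces to a single skew Schur term. Since \(\mu=\varnothing\) forces \(\eta=\varnothing\), it equals
\[
 (q/t)^{|\lambda|/2}q^{\|\tau\|^2/2}\widetilde Z_\tau(t,q)\,s_\lambda(t^{-\rho}q^{-\tau}).
\]
Similarly \(C_{\varnothing\mu\tau^t}(q,t)\) is a framing monomial in \(\mu\), times \(t^{\|\tau^t\|^2/2}\widetilde Z_{\tau^t}(q,t)\), times \(s_\mu(t^{-\rho}q^{-\tau})\); this uses \((\tau^t)^t=\tau\) and the roles of \(t,q\) exchanged. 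I will collect the \(\mu\)-monomials and check that they assemble into \(g_{\lambda\mu}\) of \eqref{eq:g-lambda-mu}; this is routine bookkeeping with \(\widetilde f\).

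The \(\tau\)-dependent prefactor is then \(\mathscr D_\tau(t,q)=q^{\|\tau\|^2/2}t^{\|\tau^t\|^2/2}\widetilde Z_\tau\widetilde Z_{\tau^t}\). The Schur functions combine as
\[
 s_\lambda s_\mu=\sum_\eta c^\eta_{\lambda\mu}s_\eta
\]
at the same alphabet \(t^{-\rho}q^{-\tau}\). Expanding \(s_\eta=\sum_\sigma\mathsf K_{\eta\sigma}P_\sigma\) by \eqref{eq:Schur-Macdonald-change} reduces everything to
\[
 \sum_\tau(-Q)^{|\tau|}\mathscr D_\tau(t,q)\,P_\sigma(t^{-\rho}q^{-\tau};q,t).
\]

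\emph{Step 2: the key Macdonald identity.} I would prove
\[
 \sum_\tau(-Q)^{|\tau|}\mathscr D_\tau\,P_\sigma(t^{-\rho}q^{-\tau};q,t)
 =\mathsf Z_{\varnothing\varnothing}(Q)\,P_\sigma(t^{-\rho};q,t)\prod_{(i,j)\in\sigma}\bigl(1-Qt^{i-1/2}q^{-j+1/2}\bigr).
\]
The first move is to recognise \(\mathscr D_\tau\) as a multiple of \(b_\tau(q,t)\) times principal specializations of \(P_\tau\). This rewrites the \(\tau\)-sum as a Macdonald Cauchy kernel pairing \(\Pi(\mathbf X,\mathbf Y;q,t)=\sum_\tau b_\tau P_\tau(\mathbf X)P_\tau(\mathbf Y)\) with \(\mathbf Y\) principally specialised. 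Evaluation symmetry \(P_\sigma(t^{-\rho}q^{-\tau})/P_\sigma(t^{-\rho})=P_\tau(t^{-\rho}q^{-\sigma})/P_\tau(t^{-\rho})\), possibly after a \(q\leftrightarrow t\) duality, then moves the \(\sigma\)-dependence into the specialization of the kernel. The kernel specialised at \(t^{-\rho}q^{-\sigma}\) against \(-Q\)-scaled \(q^{-\rho}\) is a ratio of infinite products. Its quotient by the \(\sigma=\varnothing\) product telescopes, row by row, to the finite product over boxes of \(\sigma\), in the same way as the row and column tails cancel in \eqref{eq:Hirzebruch-direct-cross-product}.

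\emph{Step 3: conclusions.} Dividing by \(\mathsf Z_{\varnothing\varnothing}\) gives \eqref{eq:two-leg-Macdonald}. Since \(c^\eta_{\lambda\mu}\neq0\) forces \(|\eta|=m\) and \(\mathsf K_{\eta\sigma}\) preserves degree, each product has exactly \(|\sigma|=m\) factors linear in \(Q\). Hence \(\mathsf G_{\lambda\mu}\) is a polynomial of degree at most \(m\), and \eqref{eq:two-leg-leading-coefficient} follows immediately: the constant term of \(x^m\mathsf G(x^{-1})\) is the \(Q^m\) coefficient.

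The main obstacle is Step 2. The danger is getting the exact \(q\leftrightarrow t\) orientation and the normalization of \(P_\sigma\) right, so that evaluation symmetry applies with matching parameters. I would first verify the identity for \(\sigma=(1)\) with \(P_{(1)}=\mathsf p_1\), where it reduces to a geometric series computation. I would then use the Macdonald difference operator (the \(E\)-operator), whose eigenvalues on \(P_\sigma\) generate the box product, as a fallback route if the evaluation-symmetry argument proves awkward.
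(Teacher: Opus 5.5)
Your proposal is essentially the paper's proof: the paper likewise reduces each vertex to a single Schur function with prefactor \(g_{\lambda\mu}\), expands \(s_\lambda s_\mu\) by Littlewood--Richardson and then in the \(P_\sigma\), applies evaluation symmetry, sums over \(\tau\) with a Cauchy identity, cancels the row tails against \(\sigma=\varnothing\), and ends with the same degree count. To make Step 2 precise, use the exact identity \(\mathscr D_\tau=P_{\tau^t}(q^{-\rho};t,q)\,P_\tau(t^{-\rho};q,t)\); then evaluation symmetry applies with the parameters \((q,t)\) as they stand, and the dual Cauchy identity \(\sum_\nu P_{\nu^t}(\mathbf y;t,q)P_\nu(\mathbf x;q,t)=\prod_{i,j}(1+x_iy_j)\) gives the single product \(\prod_{i,j\geq1}(1-Qt^{i-1/2}q^{j-1/2-\sigma_i})\). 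Your \(b_\tau\)-kernel version agrees with this only if the second factor is the \(\omega_{q,t}\)-twisted specialization \((\omega_{q,t}P_\tau)(q^{-\rho})=q^{\|\tau\|^2/2}\widetilde Z_\tau(t,q)\), which is not an ordinary principal specialization of \(P_\tau(\,\cdot\,;q,t)\).
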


\begin{proof}
One of the two nondistinguished legs is empty in each vertex, so \eqref{eq:refined-vertex-definition} gives
\begin{equation}
\begin{aligned}
 &C_{\lambda^t,\varnothing,\tau}(t,q)
  C_{\varnothing,\mu,\tau^t}(q,t)\\
 &\quad=g_{\lambda\mu}
 P_{\tau^t}(q^{-\rho};t,q)P_\tau(t^{-\rho};q,t)
 s_\lambda(t^{-\rho}q^{-\tau})
 s_\mu(t^{-\rho}q^{-\tau}).
\end{aligned}
\label{eq:two-leg-vertex-contraction}
\end{equation}
The \(Q\)-independent factor is \(g_{\lambda\mu}\) of \eqref{eq:g-lambda-mu}; this is also the prefactor of \cite[Section~4]{IqbalKozcaz}.  Since
\[
 s_\lambda s_\mu
 =\sum_\eta c_{\lambda\mu}^{\eta}s_\eta,
\]
expanding \(s_\eta\) by \eqref{eq:Schur-Macdonald-change} gives
\[
 \mathsf Z_{\lambda\mu}(Q)
 =g_{\lambda\mu}\sum_{\eta,\sigma}
 c_{\lambda\mu}^{\eta}\mathsf K_{\eta\sigma}(q,t)
 J_\sigma(Q),
\]
where
\begin{equation}
 J_\sigma(Q)
 =
 \sum_\nu(-Q)^{|\nu|}
 P_{\nu^t}(q^{-\rho};t,q)
 P_\nu(t^{-\rho};q,t)
 P_\sigma(t^{-\rho}q^{-\nu};q,t).
\label{eq:J-sigma}
\end{equation}
The evaluation symmetry \cite[Chapter~VI, Section~6, Example~6]{Macdonald} gives
\[
\begin{aligned}
 &P_\nu(t^{-\rho};q,t)
 P_\sigma(t^{-\rho}q^{-\nu};q,t)\\
 &\hspace{22mm}=
 P_\sigma(t^{-\rho};q,t)
 P_\nu(t^{-\rho}q^{-\sigma};q,t),
\end{aligned}
\]
and the dual Cauchy identity applied to \eqref{eq:J-sigma} gives
\[
 J_\sigma(Q)
 =
 P_\sigma(t^{-\rho};q,t)
 \prod_{i,j\geq1}
 \left(1-Qt^{i-1/2}q^{j-1/2-\sigma_i}\right).
\]
After division by \(J_\varnothing(Q)\) the infinite tails cancel:
\begin{equation}
 \frac{J_\sigma(Q)}{J_\varnothing(Q)}
 =
 P_\sigma(t^{-\rho};q,t)
 \prod_{(i,j)\in\sigma}
 \left(1-Qt^{i-1/2}q^{-j+1/2}\right).
\label{eq:J-ratio}
\end{equation}
This is \eqref{eq:two-leg-Macdonald}.

If \(c_{\lambda\mu}^{\eta}\neq0\) then \(|\eta|=m\), and \eqref{eq:Schur-Macdonald-change} preserves degree, so \(\mathsf K_{\eta\sigma}\neq0\) forces \(|\sigma|=m\) and every product in \eqref{eq:two-leg-Macdonald} has exactly \(m\) linear factors.  Equation~\eqref{eq:two-leg-leading-coefficient} is the leading coefficient identity for a polynomial of degree at most \(m\).
\end{proof}

\subsection{The vertex sum}
For \(m=|\lambda|+|\mu|\), let
\begin{equation}
 \mathsf L_{\lambda\mu}(t,q)
 =
 [x^m]\mathsf G_{\lambda\mu}(x).
\label{eq:L-lambda-mu}
\end{equation}
The first values are
\begin{equation}
 \mathsf L_{\varnothing\varnothing}=1,
 \qquad
 \mathsf L_{(1),\varnothing}=-\frac{\sqrt t}{1-t},
 \qquad
 \mathsf L_{\varnothing,(1)}
 =-\frac{t}{\sqrt q(1-t)}.
\label{eq:two-leg-first-coefficients}
\end{equation}
Let
\begin{equation}
\begin{aligned}
 \mathsf Z_{\mathbb P^2}^{\mathrm{vert}}(Q;t,q)
 ={}&\sum_{\lambda,\mu,\nu}
 (-Q)^{|\lambda|+|\mu|+|\nu|}
 f_\nu(q,t)^2
 \widetilde f_\lambda(t,q)\widetilde f_\mu(q,t)\\
 &\quad\times
 C_{\varnothing\lambda\nu^t}(t,q)
 C_{\mu^t\varnothing\nu}(q,t)
 \mathsf L_{\lambda\mu}(t,q).
\end{aligned}
\label{eq:P2-IK-sum}
\end{equation}
Only finitely many triples of partitions occur in each coefficient of \(Q\).  The expression \eqref{eq:two-leg-Macdonald} agrees with the normalized coefficient of Iqbal--Koz\c{c}az \cite[Section~4]{IqbalKozcaz}, which motivates \eqref{eq:L-lambda-mu} and \eqref{eq:P2-IK-sum}.

\subsection{The three charts}
Let \(P_i\) be the fixed point of \(\mathbb P^2\) at which the \(i\)-th homogeneous coordinate is nonzero, and let \(\xi_i\) be the character of that coordinate, lifted so that \(\xi_1\xi_2\xi_3=\kappa\).  We use the two-parameter subgroup of \(\ker(\kappa)\),
\[
 (a,b)\longmapsto(ab,a/b,a^{-2}),
\]
which acts on the characters \(\xi_i\) by
\begin{equation}
 (w_1,w_2,w_3)
 =(\xi_1ab,\xi_2a/b,\xi_3a^{-2}).
\label{eq:P2-iterated-cocharacter}
\end{equation}
We take the iterated limit \(a\to0\) and then \(b\to\infty\).  Lemma~\ref{lem:P2-three-chart-weights} shows that the residual \(b\)-action makes this limit generic at all three fixed points.

\begin{lemma}
\label{lem:P2-three-chart-weights}
For a character of the subgroup \eqref{eq:P2-iterated-cocharacter}, let \(\operatorname{val}_a\) and \(\operatorname{val}_b\) be the exponents of \(a\) and \(b\).  At \(P_i\), order the tangent characters as
\[
 \left(\frac{w_j}{w_i},\frac{w_k}{w_i},w_i^3\right),
\]
with \((i,j,k)\) equal to \((1,2,3)\), \((2,1,3)\), \((3,1,2)\) respectively.  Then
\begin{equation}
 \operatorname{val}_a(T_{P_1}X)
 =\operatorname{val}_a(T_{P_2}X)=(0,-3,3),
 \qquad
 \operatorname{val}_a(T_{P_3}X)=(3,3,-6),
\label{eq:P2-three-chart-valuations}
\end{equation}
and
\begin{equation}
 \operatorname{val}_b(T_{P_1}X)=(-2,-1,3),\qquad
 \operatorname{val}_b(T_{P_2}X)=(2,1,-3),\qquad
 \operatorname{val}_b(T_{P_3}X)=(1,-1,0).
\label{eq:P2-three-chart-residual-valuations}
\end{equation}
Put \(\lambda,\mu,\nu\) on the edges \(P_1P_3,P_2P_3,P_1P_2\).  After the cyclic reordering
\[
 (w_1^3,w_3/w_1,w_2/w_1),\qquad
 (w_2^3,w_3/w_2,w_1/w_2)
\]
of the coordinates at \(P_1\) and \(P_2\), with partition triples \((\varnothing,\lambda^t,\nu)\) and \((\varnothing,\mu^t,\nu)\), the residual limit \(b\to\infty\) gives the orders \(r_1\gg r_3>0\gg r_2\) and \(r_1\gg0>r_3\gg r_2\), and Proposition~\ref{prop:chart-slope-limit} gives
\begin{equation}
 \mathsf C^{\mathrm A}_{\varnothing,\lambda^t,\nu}(t,q)
 \quad\text{at }P_1,
 \qquad
 \mathsf C^{\mathrm A}_{\mu,\varnothing,\nu^t}(q,t)
 \quad\text{at }P_2.
\label{eq:P2-first-two-Arbesfeld-vertices}
\end{equation}
\end{lemma}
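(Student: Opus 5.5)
The plan is a direct valuation computation followed by matching orders with Proposition~\ref{prop:chart-slope-limit}. First I will record the exponents of the three rescaled characters from \eqref{eq:P2-iterated-cocharacter}: \(w_1\) has \((\operatorname{val}_a,\operatorname{val}_b)=(1,1)\), \(w_2\) has \((1,-1)\), and \(w_3\) has \((-2,0)\), while the \(\xi_i\) contribute nothing. Since the tangent characters at \(P_i\) are \(w_j/w_i\), \(w_k/w_i\) and \(w_i^3\), subtracting these exponent vectors gives the table at once. At \(P_1\) the \(a\)-exponents are \((1-1,\,-2-1,\,3)=(0,-3,3)\) and the \(b\)-exponents are \((-1-1,\,0-1,\,3)=(-2,-1,3)\). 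At \(P_2\) they are \((0,-3,3)\) and \((2,1,-3)\). At \(P_3\) they are \((3,3,-6)\) and \((1,-1,0)\). This establishes \eqref{eq:P2-three-chart-valuations} and \eqref{eq:P2-three-chart-residual-valuations}, and the sums \(0\) confirm that the subgroup lies in \(\ker\kappa\).

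Next I will interpret the iterated limit. The limit \(a\to0\) is taken first, so \(a\) dominates, and \(b\) only breaks ties among weights whose \(a\)-exponent is \(0\). At \(P_3\) all three \(a\)-exponents are nonzero, so that chart is already generic. At \(P_1\) and \(P_2\), the only weight with \(a\)-exponent \(0\) is \(w_j/w_i\), and its \(b\)-exponent is \(\mp2\neq0\), so genericity holds there as well. In the equivalent one-parameter cocharacter \(a=s^{N}\), \(b=s^{-1}\) with \(N\gg0\), each weight exponent equals \(N\operatorname{val}_a-\operatorname{val}_b\). I would also check that this cocharacter representation lies within the coefficientwise stabilization convention of the slope-limits subsection.

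Then I will reorder the charts. At \(P_1\), in the order \((w_1^3,w_3/w_1,w_2/w_1)\), the exponents become \((3N-3,\,-3N+1,\,2)\). This is of the form \(r_1\gg r_3>0\gg r_2\), so the first row of Proposition~\ref{prop:chart-slope-limit} applies with partition triple \((\varnothing,\lambda^t,\nu)\), giving \(\mathsf C^{\mathrm A}_{\varnothing,\lambda^t,\nu}(t,q)\). At \(P_2\), in the order \((w_2^3,w_3/w_2,w_1/w_2)\), the exponents are \((3N+3,\,-3N-1,\,-2)\). This is \(r_1\gg0>r_3\gg r_2\), i.e.\ the second row, which with \((\lambda,\mu,\nu)=(\varnothing,\mu^t,\nu)\) gives \(\mathsf C^{\mathrm A}_{\mu,\varnothing,\nu^t}(q,t)\).

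The main obstacle is the bookkeeping of edge-to-leg assignments and transposes. I have to check that the edge \(P_1P_3\) leaves \(P_1\) along \(w_3/w_1\) and the edge \(P_1P_2\) leaves along \(w_2/w_1\). Under the orientation conventions of Arbesfeld used in Proposition~\ref{prop:chart-slope-limit}, the partition \(\lambda\) on that edge then appears as \(\lambda^t\) in the second slot and \(\nu\) in the third. The analogous check is needed at \(P_2\). The Hirzebruch computation \eqref{eq:Hirzebruch-four-Arbesfeld-vertices} will serve as the template for these conventions. There is also a subtlety in the second row at \(P_2\): "\(0\gg r_2\)" requires \(|r_2|\) to dominate \(|r_3|\), which the ordering above provides.
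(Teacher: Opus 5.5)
Your proposal is essentially the paper's proof: substitute the subgroup into the three tangent characters at each \(P_i\) and read off the exponents. Your tables are correct. Your one-parameter representatives \((3N-3,-3N+1,2)\) and \((3N+3,-3N-1,-2)\) give, on any fixed finite set of characters, the same signs as the paper's representatives of the two orders. So the first two rows of Proposition~\ref{prop:chart-slope-limit} apply exactly as you say. The transposes in \((\varnothing,\lambda^t,\nu)\) and \((\varnothing,\mu^t,\nu)\) are part of the labelling in the statement, so the only thing to check is which slot each edge occupies, and you identified the slots correctly.

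One claim is wrong, although the displayed conclusions do not depend on it: \(P_3\) is \emph{not} generic for the \(a\)-limit alone.
\begin{itemize}
\item Genericity concerns every character in the vertex character, i.e.\ every monomial in the tangent weights modulo \(\kappa\), not only the three tangent weights themselves.
\item At \(P_3\) the two compact weights both have \(\operatorname{val}_a=3\). A monomial with zero \(a\)-exponent is therefore \((w_1/w_2)^m\kappa^c\).
\item The character \(w_1/w_2\) occurs in the vertex character and has \(a\)-exponent \(0\). Only the residual \(b\)-valuations \((1,-1)\) make its exponent nonzero.
\end{itemize}
This is exactly the content of the paper's remark that the residual weights separate the two equal weights at the third point.

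The same correction applies to your argument at \(P_1\) and \(P_2\). The correct statement is that every monomial with zero \(a\)-exponent is \((w_j/w_i)^m\kappa^c\), and \(w_j/w_i\) has \(b\)-exponent \(\mp2\neq0\).
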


At \(P_3\), the two compact directions have leading \(a\)-weights of the same sign.

\begin{proof}
At \(P_i\), the two tangent characters of \(\mathbb P^2\) are \(w_j/w_i\) and \(w_k/w_i\), and the fibre of \(K_{\mathbb P^2}\) has character
\[
 \frac{\kappa w_i^2}{w_jw_k}=w_i^3,
\]
since \(w_1w_2w_3=\kappa\).  The ordered tangent characters of \(X\) are thus \(w_j/w_i\), \(w_k/w_i\), \(w_i^3\), and substituting \eqref{eq:P2-iterated-cocharacter} gives \eqref{eq:P2-three-chart-valuations} and \eqref{eq:P2-three-chart-residual-valuations}.  The residual weights separate the zero weight at the first two points and the two equal weights at the third.
\end{proof}

\begin{conjecture}
\label{conj:P2-vertex-comparison}
For the iterated limit of the subgroup \eqref{eq:P2-iterated-cocharacter}, the coefficientwise slope limit of the symmetrized stable pair localization formula is
\begin{equation}
 Z_{X}^{\mathrm{PT},\mathrm{ref}}(Q;t,q)
 =\mathsf Z_{\mathbb P^2}^{\mathrm{vert}}(Q;t,q).
\label{eq:P2-PT-equals-IK-vertex}
\end{equation}
\end{conjecture}

The natural approach to the conjecture is to cut the two compact toric curves through \(P_3\) and apply a relative \(K\)-theoretic degeneration formula for stable pairs, which requires the degeneration formula itself, and the compatibility of its symmetrized square roots with the two edge factors.  Neither is available at present in the generality needed.

\subsection{Strip identities}
For partitions \(\alpha,\beta\), let
\begin{equation}
 \mathcal K_{\alpha\beta}(u;t,q)
 =\sum_\lambda(-u)^{|\lambda|}\widetilde f_\lambda(t,q)
 C_{\varnothing,\lambda,\alpha}(t,q)
 C_{\lambda^t,\varnothing,\beta}(t,q).
\label{eq:strip-contraction}
\end{equation}
Then
\begin{equation}
\begin{aligned}
 \mathcal K_{\alpha\beta}(u;t,q)
 ={}&q^{(\|\alpha\|^2+\|\beta\|^2)/2}
 \widetilde Z_\alpha(t,q)\widetilde Z_\beta(t,q)\\
 &\times\prod_{i,j\geq1}
 \left(1-u t^{-\alpha_i^t+j-1}q^{-\beta_j+i}\right)^{-1}.
\end{aligned}
\label{eq:strip-product}
\end{equation}
With \(N_{\lambda\mu}(x)=N_{\lambda\mu}(x;t^{-1},q)\),
\begin{equation}
 \frac1{N_{\lambda\lambda}(1)}
 =(-1)^{|\lambda|}\left(\frac tq\right)^{|\lambda|/2}
  \mathscr D_\lambda(t,q),
\label{eq:self-Nekrasov-product}
\end{equation}
and
\begin{equation}
\begin{aligned}
 &\frac1{N_{\alpha\beta}(u)N_{\beta\alpha}(u^{-1})}\\
 &\quad={}
 (-u)^m
 \left(\frac qt\right)^{-m/2+
 (\|\alpha^t\|^2-\|\beta^t\|^2)/2}
 q^{(c_\alpha-c_\beta)/2}
 \mathcal S_{\alpha\beta}(u;t,q),
\end{aligned}
\label{eq:cross-Nekrasov-product}
\end{equation}
where \(m=|\alpha|+|\beta|\) and
\begin{equation}
\begin{aligned}
 \mathcal S_{\alpha\beta}(u;t,q)
 =\prod_{i,j\geq1}
 \frac{(1-u t^{j-1}q^i)(1-u t^jq^{i-1})}
 {(1-u t^{-\alpha_i^t+j-1}q^{-\beta_j+i})
  (1-u t^{-\alpha_i^t+j}q^{-\beta_j+i-1})}.
\end{aligned}
\label{eq:R-alpha-beta}
\end{equation}
The product in \eqref{eq:strip-product} is a formal power series in \(u\).  Solving \eqref{eq:cross-Nekrasov-product} for \(\mathcal S_{\alpha\beta}\) gives the finite expression
\begin{equation}
\begin{aligned}
 \mathcal S_{\alpha\beta}(u;t,q)
 ={}&\frac{(-u)^{-m}
 (q/t)^{m/2-(\|\alpha^t\|^2-\|\beta^t\|^2)/2}
 q^{-(c_\alpha-c_\beta)/2}}
 {N_{\alpha\beta}(u;t^{-1},q)
  N_{\beta\alpha}(u^{-1};t^{-1},q)},
\end{aligned}
\label{eq:R-alpha-beta-finite}
\end{equation}

To verify these identities, substitute the vertex formula of \cite[Section~2]{IqbalKozcaz}: one Schur function remains from each vertex, and the framing monomial cancels their shape dependent prefactors; the Cauchy identity then gives \eqref{eq:strip-product}.  The arm--leg factor satisfies
\begin{equation}
 \frac1{N_{\alpha\beta}(x)}
 =\prod_{i,j\geq1}
 \frac{1-x t^{j-1}q^i}
 {1-x t^{-\alpha_j^t+i-1}q^{-\beta_i+j}},
\label{eq:Nekrasov-infinite-product}
\end{equation}
whose tails cancel against the empty partition product.  Setting \(\alpha=\beta\) gives \eqref{eq:self-Nekrasov-product}.  For the cross factors, multiply \eqref{eq:Nekrasov-infinite-product} for \((\alpha,\beta,u)\) and \((\beta,\alpha,u^{-1})\), and replace every factor of the second product by \(1-X=-X(1-X^{-1})\).  The exponents of \(t\) and \(q\) in the resulting monomial are given by
\[
 \sum_{s\in\alpha}a_\alpha(s)
 -\sum_{s\in\beta}\bigl(a_\beta(s)+1\bigr)
 =n(\alpha^t)-n(\beta^t)-|\beta|,
\]
\[
 \sum_{s\in\alpha}\bigl(\ell_\beta(s)+1\bigr)
 -\sum_{s\in\beta}\ell_\alpha(s)
 =|\beta|-n(\alpha)+n(\beta),
\]
which follow by summing along rows and along columns respectively.  Together with
\[
 \|\lambda\|^2=|\lambda|+2n(\lambda^t),\qquad
 \|\lambda^t\|^2=|\lambda|+2n(\lambda),
\]
these give the monomial of \eqref{eq:cross-Nekrasov-product}; the factors not inverted by \(1-X=-X(1-X^{-1})\) are exactly \eqref{eq:R-alpha-beta}.  These are the identities used in the fixed point comparisons of \cite{TakiRefined,AwataKanno}.

The empty partition factors are
\begin{align}
\mathcal K_{\varnothing\varnothing}(u;t,q)
\mathcal K_{\varnothing\varnothing}(u;q,t)
&=
 \PE\left[\frac{(t+q)u}{(1-t)(1-q)}\right],
\label{eq:empty-one-leg-factors}\\
 \mathsf Z_{\varnothing\varnothing}(x;t,q)
 &=\PE\left[-\frac{\sqrt{tq}\,x}{(1-t)(1-q)}\right].
\label{eq:empty-two-leg-explicit}
\end{align}
The first follows by applying the Cauchy identity to the two empty one-leg sums, the second by setting both external partitions to \(\varnothing\) in \eqref{eq:two-leg-Z} and applying the dual Cauchy identity.

\subsection{The fixed point identity}
\begin{lemma}
\label{lem:direct-ADHM-regrouping}
For partitions \(\lambda,\mu,\nu\), let
\begin{align*}
 \mathsf A_{\lambda\mu\nu}
 &:={}
 f_\nu(q,t)^2\widetilde f_\lambda(t,q)
 \widetilde f_\mu(q,t)\notag\\[-2pt]
 &\qquad\times
 C_{\varnothing\lambda\nu^t}(t,q)
 C_{\mu^t\varnothing\nu}(q,t).
\end{align*}
Then
\begin{equation}
\begin{aligned}
 &\PE\left[
 \frac{(t+q)u+\sqrt{tq}\,Q/u}{(1-t)(1-q)}
 \right]
 \iota_0 Z_{2,1}^{\mathrm{fr}}
 \left(-\left(\frac qt\right)^{3/2}Qu^{-2};t^{-1},q;u,1\right)\\
 &\quad=
 \sum_{\lambda,\mu,\nu}
 (-Q)^{|\lambda|+|\mu|+|\nu|}
 \mathsf A_{\lambda\mu\nu}
 \left(\frac uQ\right)^{|\lambda|+|\mu|}
 \mathsf G_{\lambda\mu}\left(\frac Qu\right).
\end{aligned}
\label{eq:direct-ADHM-regrouping}
\end{equation}
\end{lemma}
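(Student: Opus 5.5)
The plan is to prove \eqref{eq:direct-ADHM-regrouping} coefficientwise in \(Q\), working one pair \((\lambda,\mu)\) at a time. The right side is regrouped so that it becomes the \(\ell=1\) case of Lemma~\ref{lem:Hirzebruch-four-chart-normalization}, rewritten at fixed \(Q=Q_BQ_F\), \(u=Q_F\). Combined with \eqref{eq:F1-verified-finite-parameter-form}, that case already gives the left side minus the extra factor \(\PE[\sqrt{tq}\,Q/u/((1-t)(1-q))]\). So the task splits into three parts:
\begin{itemize}
\item identify the \(\nu\)-sum on the right with a strip contraction;
\item identify the \(\mathsf G_{\lambda\mu}\) factor with the \(B\)-edge and \(B_+\)-edge strip producing \(\mathscr D\mathscr D\mathscr R\);
\item show that the leftover empty factor \(\mathsf Z_{\varnothing\varnothing}(Q/u)^{-1}\) accounts exactly for the extra plethystic exponential.
\end{itemize}

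\emph{Step 1.} I expand \(\mathsf G_{\lambda\mu}(Q/u)=\mathsf Z_{\lambda\mu}(Q/u)/\mathsf Z_{\varnothing\varnothing}(Q/u)\) by \eqref{eq:two-leg-Z}. By \eqref{eq:empty-two-leg-explicit}, \(\mathsf Z_{\varnothing\varnothing}(Q/u)^{-1}=\PE[\sqrt{tq}\,Q/u/((1-t)(1-q))]\), which is precisely the second summand in the left-hand plethystic exponential. It therefore cancels, and the claim becomes an identity between the remaining three-partition sum \(\sum_{\lambda,\mu,\nu,\tau}\) and \(\PE[(t+q)u/((1-t)(1-q))]\,\iota_0Z^{\mathrm{fr}}_{2,1}\). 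Note that \(\mathsf G\) is a polynomial (Lemma~\ref{lem:two-leg-polynomiality}), so the \(u\)-expansions are harmless. After this step there are four summed partitions \(\lambda,\mu,\nu,\tau\) and four vertices, arranged in the \(\mathbb F_1\) strip geometry.

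\emph{Step 2.} I match this four-vertex sum with the \(\ell=1\) localization sum \eqref{eq:Hirzebruch-four-chart-result} multiplied by \eqref{eq:Hirzebruch-empty-strip-normalization}. Here \(\nu\) plays the role of one fibre partition, \(\tau\) the other, and \(\lambda,\mu\) the two section partitions. The weights must agree: \((-Q)^{|\nu|}\) pairs with \(Q_F^{|\nu|}=u^{|\nu|}\)-type weights, \((-Q/u)^{|\tau|}\) corresponds to \(-Q_B\) on the second edge, and \((u/Q)^{|\lambda|+|\mu|}(-Q)^{|\lambda|+|\mu|}\) pairs with the base weights. The vertex factors \(C_{\varnothing\lambda\nu^t}(t,q)\) and \(C_{\mu^t\varnothing\nu}(q,t)\) are of the type in \eqref{eq:Hirzebruch-four-IKV-vertices}. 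Since \(\lambda,\mu\) sit here on the fibre-type legs instead, the cleanest route is not to match vertices literally. Instead I evaluate both sides in closed form. On the right, sum over \(\nu\) with \eqref{eq:strip-contraction}--\eqref{eq:strip-product}; the framing \(f_\nu(q,t)^2\) together with \(\widetilde f_\lambda\) and \(\widetilde f_\mu\) should reduce the \(\nu\)-sum to \(\mathcal K\)-type products. Then sum over \(\tau\) with the dual Cauchy identity as in the proof of Lemma~\ref{lem:two-leg-polynomiality}. The result is a product of \(\widetilde Z\) and \(q^{\|\cdot\|^2}\) prefactors times infinite products, which by \eqref{eq:self-Nekrasov-product}, \eqref{eq:cross-Nekrasov-product} and \eqref{eq:R-alpha-beta-finite} convert to \(1/\prod_{a,b}N_{\lambda_a\lambda_b}\). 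On the left, \eqref{eq:geometric-fixed-point-sum} gives the same Nekrasov denominators directly, with \(\PE[(t+q)u/\cdots]\) equal to \(\mathcal K_{\varnothing\varnothing}(u;t,q)\mathcal K_{\varnothing\varnothing}(u;q,t)\) by \eqref{eq:empty-one-leg-factors}. Under this approach the fixed-point pair on the left is \((\lambda,\mu)\) up to transposition.

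\emph{Step 3.} I then check that the monomials match termwise: the powers of \(Q,u\), the signs, and the \(t,q\) exponents. For the numerator \(u^{|\lambda_1|}t^{-n}q^{n(\cdot^t)}\) with \(\mathfrak q=-(q/t)^{3/2}Qu^{-2}\), the exponents assemble from \(\|\cdot\|^2\) and \(n(\cdot)\) via \(\|\tau\|^2=|\tau|+2n(\tau^t)\), exactly as in \eqref{eq:Hirzebruch-direct-termwise-comparison}. The \(u\)-power check uses \(u^{|\lambda_1|}u^{-2(|\lambda_1|+|\lambda_2|)}\) against \((u/Q)^{|\lambda|+|\mu|}\) together with the \(u^{|\nu|}\) weights coming out of the strip. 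Finally, \(\iota_0\) is compatible because each \(Q\)-coefficient on the right is a Laurent polynomial in \(u\) times series expanded at \(u=0\).

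The main obstacle is Step 2. The difficulty is the bookkeeping of framing factors and transpositions when the \(\nu\)-strip and the \(\tau\)-sum are evaluated in the presence of external \(\lambda,\mu\). I would first test the identity at order \(Q^1\) using \eqref{eq:two-leg-first-coefficients}, to pin down whether the correspondence is \((\lambda,\mu)\) or \((\mu^t,\lambda^t)\), before carrying out the general computation.
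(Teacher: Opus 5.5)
Your Step~1 is right. Expanding $\mathsf G_{\lambda\mu}(Q/u)$ by \eqref{eq:two-leg-Z} and cancelling $\mathsf Z_{\varnothing\varnothing}(Q/u)^{-1}=\PE[\sqrt{tq}\,Q/u/((1-t)(1-q))]$ reduces the claim to the four-partition sum \eqref{eq:ADHM-four-partition-sum}, with weight $(-Q)^{|\nu|}(-Q/u)^{|\tau|}(-u)^{|\lambda|+|\mu|}$. This is exactly the intermediate expression in the paper's proof; the paper simply runs the argument starting from the fixed point side.

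Step~2, however, assigns the roles of the partitions backwards, and the computation you describe cannot close. In $C_{\varnothing\lambda\nu^t}(t,q)C_{\mu^t\varnothing\nu}(q,t)C_{\lambda^t\varnothing\tau}(t,q)C_{\varnothing\mu\tau^t}(q,t)$ it is $\nu$ and $\tau$ that occupy the distinguished slots, with weights $Q=Q_BQ_F$ and $Q/u=Q_B$ (the edges $B_+$ and $B$). The partitions $\lambda,\mu$ sit on non-distinguished legs with weight $u=Q_F$ and framing $\widetilde f$, so they are the fibre partitions. Consequences:
\begin{itemize}
\item The $\nu$-sum is not a strip contraction \eqref{eq:strip-contraction}, which sums a non-distinguished leg with $\widetilde f$-framing. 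Here $\nu$ carries $f_\nu(q,t)^2$ on the preferred legs, and for fixed $\lambda,\mu,\tau$ its sum is a piece of the instanton sum itself, which no Cauchy-type identity closes.
\item Summing over $\tau$ by the dual Cauchy identity, as in Lemma~\ref{lem:two-leg-polynomiality}, only reassembles $\mathsf Z_{\lambda\mu}$ and undoes Step~1.
\item $\mathsf G_{\lambda\mu}$ is the $B$-edge sum alone, not the source of $\mathscr D\mathscr D\mathscr R$.
\end{itemize}
The $Q$-grading also rules out your fixed-point pair. The summand of \eqref{eq:geometric-fixed-point-sum} at $(\lambda_1,\lambda_2)$ is $(-Q)^{|\lambda_1|}(-Q/u)^{|\lambda_2|}$ times a power series in $u$; this follows by combining $\mathfrak q^{m}$, $u^{|\lambda_1|}$ and the $(-u)^m$ of \eqref{eq:cross-Nekrasov-product}. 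For fixed $\lambda,\mu$, by contrast, the right side has unbounded $Q$-degree $|\nu|+|\tau|$. So neither $(\lambda,\mu)$ nor $(\mu^t,\lambda^t)$ can be the fixed-point pair, and an order-$Q^1$ test would not choose between them.

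What is missing is the correct order of summation. After Step~1, fix $(\nu,\tau)$ and sum over the fibre partitions:
\begin{itemize}
\item $\sum_\lambda(-u)^{|\lambda|}\widetilde f_\lambda(t,q)C_{\varnothing\lambda\nu^t}(t,q)C_{\lambda^t\varnothing\tau}(t,q)=\mathcal K_{\nu^t\tau}(u;t,q)$;
\item $\sum_\mu(-u)^{|\mu|}\widetilde f_\mu(q,t)C_{\varnothing\mu\tau^t}(q,t)C_{\mu^t\varnothing\nu}(q,t)=\mathcal K_{\tau^t\nu}(u;q,t)$.
\end{itemize}
By \eqref{eq:strip-product} and \eqref{eq:empty-one-leg-factors}, the two infinite products recombine into \eqref{eq:R-alpha-beta}:
$\mathcal K_{\nu^t\tau}(u;t,q)\,\mathcal K_{\tau^t\nu}(u;q,t)=\PE[(t+q)u/((1-t)(1-q))]\,\mathscr D_{\nu^t}(t,q)\,\mathscr D_\tau(t,q)\,\mathcal S_{\nu^t\tau}(u;t,q)$.
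Now use \eqref{eq:self-Nekrasov-product}, \eqref{eq:cross-Nekrasov-product} and $\|\lambda\|^2=|\lambda|+2n(\lambda^t)$. They show that $(-Q)^{|\nu|}(-Q/u)^{|\tau|}f_\nu(q,t)^2$ times this product equals $\PE[(t+q)u/((1-t)(1-q))]$ times the summand of \eqref{eq:geometric-fixed-point-sum} at $(\lambda_1,\lambda_2)=(\nu^t,\tau)$. The leftover $t,q$-monomial is exactly $f_\nu(q,t)^2=t^{-\|\nu\|^2}q^{\|\nu^t\|^2}$.

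The regroupings are legitimate because the coefficient of $Q^Du^e$ involves only $|\nu|+|\tau|=D$ and $|\lambda|+|\mu|=e+|\tau|$, hence finitely many terms. This is the paper's proof read in reverse, and it is purely combinatorial. No appeal to Lemma~\ref{lem:Hirzebruch-four-chart-normalization} or to \eqref{eq:F1-verified-finite-parameter-form} is needed, and your Step~3 bookkeeping has to be redone with the pair $(\nu^t,\tau)$.
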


For each power of \(Q\), the left side is expanded as a Laurent series at \(u=0\), and the right side is grouped by \(Q\)-degree; by Lemma~\ref{lem:two-leg-polynomiality} the right side lies in \(\K[[Q,u]]\), so \eqref{eq:direct-ADHM-regrouping} says in particular that all negative powers of \(u\) on the left cancel.

\begin{proof}
Insert the fixed point expression \eqref{eq:geometric-fixed-point-sum}, apply \eqref{eq:self-Nekrasov-product} to the two diagonal factors and \eqref{eq:cross-Nekrasov-product} to the two cross factors, evaluate the two strip products by \eqref{eq:strip-product}, and expand the two Cauchy products; this gives
\begin{equation}
\begin{aligned}
 \sum_{\lambda,\mu,\nu,\sigma}
 &(-Q)^{|\nu|}(-Q/u)^{|\sigma|}
 (-u)^{|\lambda|+|\mu|}
 f_\nu(q,t)^2
 \widetilde f_\lambda(t,q)\widetilde f_\mu(q,t)\\
 &\quad\times
 C_{\varnothing\lambda\nu^t}(t,q)
 C_{\mu^t\varnothing\nu}(q,t)
 C_{\lambda^t\varnothing\sigma}(t,q)
 C_{\varnothing\mu\sigma^t}(q,t).
\end{aligned}
\label{eq:ADHM-four-partition-sum}
\end{equation}
The factors containing partitions from both framing components give the two skew functions joined along \(\sigma\); the other two Cauchy factors give the vertices joined along \(\nu\).  The monomial part of \eqref{eq:cross-Nekrasov-product}, together with the determinant weight \eqref{eq:det-fixed-weight}, is
\[
 (-Q)^{|\nu|}(-Q/u)^{|\sigma|}
 (-u)^{|\lambda|+|\mu|}
 f_\nu(q,t)^2\widetilde f_\lambda(t,q)
 \widetilde f_\mu(q,t),
\]
the monomial in \eqref{eq:ADHM-four-partition-sum}.

For fixed \(\lambda,\mu,\nu\), the internal sum in \eqref{eq:ADHM-four-partition-sum} is
\[
 \sum_\sigma(-x)^{|\sigma|}
 C_{\lambda^t\varnothing\sigma}(t,q)
 C_{\varnothing\mu\sigma^t}(q,t)
 =\mathsf Z_{\lambda\mu}(x),
 \qquad x=Q/u.
\]
Its empty partition factor is \eqref{eq:empty-two-leg-explicit}, and the two empty one-leg factors give \eqref{eq:empty-one-leg-factors}; moving these three factors to the left produces the plethystic exponential of \eqref{eq:direct-ADHM-regrouping}, and division by the empty two-leg factor turns \(\mathsf Z_{\lambda\mu}\) into \(\mathsf G_{\lambda\mu}\).  The remaining monomials are
\[
 (-Q)^{|\lambda|+|\mu|+|\nu|}
 (u/Q)^{|\lambda|+|\mu|}
 \mathsf A_{\lambda\mu\nu}.
\]
\end{proof}

\begin{theorem}
\label{thm:P2-PT-ADHM}
Assume Conjecture~\ref{conj:P2-vertex-comparison}.  Under \eqref{eq:introduction-variable-change},
\begin{equation}
\begin{aligned}
 Z_{X}^{\mathrm{PT},\mathrm{ref}}(Q;t,q)
 &=\lim_{u\to0}
 \PE\left[
 \frac{\sqrt{tq}\,Q}{(1-t)(1-q)u}
 \right]\\[-1pt]
 &\quad\times
 \iota_0 Z_{2,1}^{\mathrm{fr}}
 \left(-\left(\frac qt\right)^{3/2}Qu^{-2};t^{-1},q;u,1\right).
\end{aligned}
\label{eq:P2-PT-ADHM}
\end{equation}
\end{theorem}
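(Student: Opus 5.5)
By Conjecture~\ref{conj:P2-vertex-comparison}, the left side of \eqref{eq:P2-PT-ADHM} equals \(\mathsf Z_{\mathbb P^2}^{\mathrm{vert}}(Q;t,q)\), so the theorem reduces to an identity between the vertex sum \eqref{eq:P2-IK-sum} and the limit \(u\to0\) on the right.  That limit is to be read as extraction of the coefficient of \(u^0\), coefficientwise in \(Q\), as in the discussion after \eqref{eq:F1-verified-finite-parameter-form}.  The plan is to start from Lemma~\ref{lem:direct-ADHM-regrouping} and divide both sides of \eqref{eq:direct-ADHM-regrouping} by the fibre factor \(\PE[(t+q)u/((1-t)(1-q))]\).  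This factor is a formal power series in \(u\) with constant term \(1\), and it tends to \(1\) as \(u\to0\).  The left side of \eqref{eq:direct-ADHM-regrouping} then becomes exactly the expression inside the limit in \eqref{eq:P2-PT-ADHM}, up to that invertible factor.

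Next I analyse the right side of \eqref{eq:direct-ADHM-regrouping} for fixed \(\lambda,\mu,\nu\).  Put \(m=|\lambda|+|\mu|\) and \(x=u/Q\).  The summand is \((-Q)^{m+|\nu|}\mathsf A_{\lambda\mu\nu}\,x^m\mathsf G_{\lambda\mu}(x^{-1})\).  By \eqref{eq:two-leg-leading-coefficient}, \(x^m\mathsf G_{\lambda\mu}(x^{-1})\) is a polynomial in \(x\), and its constant term is \([Q^m]\mathsf G_{\lambda\mu}=\mathsf L_{\lambda\mu}\) by \eqref{eq:L-lambda-mu}.  Its remaining monomials are \(Q^{-k}u^{k}\) with \(1\le k\le m\).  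When multiplied by \(Q^{m+|\nu|}\), these still have nonnegative \(Q\)-degree and strictly positive \(u\)-degree.  So for each fixed power of \(Q\), only finitely many triples contribute, the right side lies in \(\K[[Q,u]]\), and setting \(u=0\) leaves
\[
\sum_{\lambda,\mu,\nu}(-Q)^{|\lambda|+|\mu|+|\nu|}\mathsf A_{\lambda\mu\nu}\mathsf L_{\lambda\mu},
\]
which is \(\mathsf Z_{\mathbb P^2}^{\mathrm{vert}}\) by definition.

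It remains to justify that the coefficientwise \(u=0\) evaluation on the right matches the limit \(u\to0\) of the left side as written.  Lemma~\ref{lem:direct-ADHM-regrouping} already states that, for each power of \(Q\), the \(\iota_0\)-expanded left side has no negative powers of \(u\).  Its coefficients are therefore power series in \(u\), and the limit is evaluation at \(u=0\).  Dividing by the fibre plethystic exponential does not change the value at \(u=0\).

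The main point needing care is this bookkeeping of the two completions.  The variable \(Q/u\) appears on the left with negative \(u\)-powers, both inside the prefactor \(\PE[\sqrt{tq}Q/((1-t)(1-q)u)]\) and inside the argument \(Qu^{-2}\).  One must check that multiplying the \(\iota_0\)-expansions is compatible with the grouping by \(Q\)-degree.  For fixed \(Q\)-degree only finitely many partitions and finitely many terms of the prefactor enter, so each \(Q\)-coefficient is a finite product of Laurent series in \(u\), and the identity holds coefficientwise.  With that established, the rest is the formal substitution described above.
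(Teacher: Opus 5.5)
Your proposal is correct and follows the paper's proof essentially step for step: expand the right side of Lemma~\ref{lem:direct-ADHM-regrouping} into monomials \(Q^{|\nu|+j}u^{m-j}\), keep the \(u^0\) part via Lemma~\ref{lem:two-leg-polynomiality} to obtain \eqref{eq:P2-IK-sum}, remove the fibre factor, and apply Conjecture~\ref{conj:P2-vertex-comparison}. The only slip is your finiteness claim: on the right it holds per monomial \(Q^Du^e\) (one needs \(m\le D+e\)), not per power of \(Q\) alone, but this still suffices, since the \(u^0\) part of \(Q^D\) involves only \(|\lambda|+|\mu|+|\nu|=D\).
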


For every coefficient of \(Q\), the product on the right is regular at \(u=0\), and the limit means evaluation there.

\begin{proof}
Write
\[
 \mathsf G_{\lambda\mu}(x)=\sum_{j=0}^m g_jx^j,
 \qquad m=|\lambda|+|\mu|.
\]
The corresponding term on the right of \eqref{eq:direct-ADHM-regrouping} is a sum of monomials proportional to
\[
 Q^{|\nu|+j}u^{m-j}.
\]
For fixed \(Q^D u^e\) we need \(j=D-|\nu|\) and \(m=e+j\), so \(m\leq D+e\) and only finitely many partitions occur, while the constant term in \(u\) has \(j=m\) and \(|\lambda|+|\mu|+|\nu|=D\); taking it commutes with the partition sums, and Lemma~\ref{lem:two-leg-polynomiality} gives \eqref{eq:P2-IK-sum}.  The factor
\[
 \PE\left[\frac{(t+q)u}{(1-t)(1-q)}\right]
\]
tends to \(1\); removing it from the left of \eqref{eq:direct-ADHM-regrouping} gives the left of \eqref{eq:P2-PT-ADHM}.  Conjecture~\ref{conj:P2-vertex-comparison} identifies \eqref{eq:P2-IK-sum} with the stable pair series of \(X\).
\end{proof}

The combinatorial formula in Theorem~\ref{thm:P2-PT-ADHM} is assembled from the rank \(2\) refined vertex/Nekrasov fixed point identity of Taki and Awata--Kanno \cite{TakiRefined,AwataKanno} and the local \(\mathbb F_1\) to local \(\mathbb P^2\) transition of Iqbal--Koz\c{c}az \cite{IqbalKozcaz}.  Kononov--Pi--Shen record the relation in primitive degree, up to a monomial prefactor, in \cite[equation~(21)]{KononovPiShen}.  In the argument above the empty partition factors are carried through the \(u\)-adic specialization rather than discarded.

\section{Blowup equations for local \texorpdfstring{\(\mathbb P^2\)}{P2}}
\label{sec:P2-blowup}

\subsection{The Nakajima--Yoshioka formula with insertion \texorpdfstring{\(\det\mathcal V\)}{det V}}
\label{subsec:NY-P2-level-one}
For the full generating series we use the normalization of \cite{NakajimaYoshiokaPerverse}.  Let \(s_\eps=\eps_1+\eps_2\).  Initially take \(\operatorname{Re}\eps_i>0\) and \(\operatorname{Re}x\) sufficiently large.  The logarithmic series below then converge.  It is their exponentials, not the logarithms themselves, that we subsequently continue meromorphically.  Let \(\Lambda\in\C^*\) be the scale parameter, let \(\Lambda^4=e^{-M}\), and fix
\begin{equation}
 \log(\Lambda^4)=-M+2\pi\mathrm{i},
 \qquad \log\Lambda=\frac{-M+2\pi\mathrm{i}}{4}.
\label{eq:Lambda-continuation-branch}
\end{equation}
Let
\begin{equation}
\begin{aligned}
 \gamma_{\eps_1,\eps_2}(x;\Lambda)
 :={}&
 \frac{-\frac16(x+s_\eps/2)^3+x^2\log\Lambda}
 {2\eps_1\eps_2}\\
 &+\sum_{m\geq1}
 \frac{e^{-mx}}
 {m(e^{m\eps_1}-1)(e^{m\eps_2}-1)},\\
 \widetilde\gamma_{\eps_1,\eps_2}(x;\Lambda)
 :={}&\gamma_{\eps_1,\eps_2}(x;\Lambda)
 +\frac{\pi^2x/6-\zeta(3)}{\eps_1\eps_2}\\
 &+\frac{s_\eps}{2\eps_1\eps_2}
 \left(x\log\Lambda+\frac{\pi^2}{6}\right)
 +\frac{\eps_1^2+\eps_2^2+3\eps_1\eps_2}
 {12\eps_1\eps_2}\log\Lambda.
\end{aligned}
\label{eq:NY-gamma}
\end{equation}

For clarity, denote the polynomial part of \(\widetilde\gamma\) by
\[
 P^{\mathrm{pert}}_{\eps_1,\eps_2}(x;\Lambda)
 :=\widetilde\gamma_{\eps_1,\eps_2}(x;\Lambda)
 -\sum_{m\geq1}\frac{e^{-mx}}
 {m(e^{m\eps_1}-1)(e^{m\eps_2}-1)}.
\]
Thus \(P^{\mathrm{pert}}\) is the explicit polynomial and constant expression in \eqref{eq:NY-gamma}, with \(\log\Lambda\) fixed.  On the initial domain,
\begin{equation}
 e^{-\widetilde\gamma_{\eps_1,\eps_2}(x;\Lambda)}
 =e^{-P^{\mathrm{pert}}_{\eps_1,\eps_2}(x;\Lambda)}
 (e^{-x-\eps_1-\eps_2};e^{-\eps_1},e^{-\eps_2})_\infty.
\label{eq:NY-exponentiated-gamma}
\end{equation}

\begin{definition}
\label{def:negative-root-continuation}
For \(a_1+a_2=0\), write \(a_{12}=a_1-a_2\) and \(a_{21}=-a_{12}\).  We choose opposite parameters
\[
 a_1=-\frac A2,\qquad a_2=\frac A2,\qquad u=e^{-A},
\]
so \(a_{12}=-A=\log u\) and \(a_{21}=A\).  For either root, \(e^{-\widetilde\gamma(\pm A;\Lambda)}\) means the right side of \eqref{eq:NY-exponentiated-gamma}, continued as a meromorphic product with the fixed additive lifts.  Base inversions use \eqref{eq:double-Pochhammer-base-inversion}; the connection formula and analytic hypotheses below specify the continuations used in the limit.  If a value of \(\widetilde\gamma\) itself is needed, choose a simply connected zero- and pole-free domain and the analytic logarithm agreeing with the initial series.  No single-valued meromorphic logarithm is asserted.  The lift \eqref{eq:Lambda-continuation-branch} is used for both roots throughout the specialization at \(u=0\).
\end{definition}

For \(\vec a=(a_1,a_2)\) with \(a_1+a_2=0\), let \(u_\alpha=e^{a_\alpha}\) and \(q_i=e^{-\eps_i}\).  The framed sheaf series is
\begin{equation}
 Z_1^{\mathrm{fr}}(\eps_1,\eps_2,\vec a;\Lambda)
 =\sum_{N\geq0}
 \bigl(\Lambda^4(q_1q_2)^{3/2}\bigr)^N
 \chi_{\mathbf T_{\mathrm{fr}}}^{\mathrm{loc}}
 \bigl(\M(2,N),\det\cV\bigr),
\label{eq:NY-ADHM-series}
\end{equation}
The full rank \(2\) series is
\begin{equation}
\begin{aligned}
 Z_1(\eps_1,\eps_2,\vec a;\Lambda)
 :={}&
 \exp\left[
 -\sum_{\alpha\ne\beta}
 \widetilde\gamma_{\eps_1,\eps_2}
 (a_\alpha-a_\beta;\Lambda)
 -\sum_{\alpha=1}^2
 \frac{a_\alpha^3}{6\eps_1\eps_2}\right]\\
 &\times Z_1^{\mathrm{fr}}
 (\eps_1,\eps_2,\vec a;\Lambda).
\end{aligned}
\label{eq:NY-full-partition}
\end{equation}

\begin{lemma}
\label{lem:NY-determinant-is-ADHM}
Let \(\ell_\infty\subset\mathbb P^2\) be the framing line, let \(\mathcal E\) be a family of framed sheaves on \(\mathbb P^2\times\M(2,N)\), and let \(\pi_2\) be the projection to \(\M(2,N)\).  Write
\begin{equation}
 \mathcal V_0
 =R^1\pi_{2*}\bigl(\mathcal E(-\ell_\infty)\bigr)
 \cong\cV.
\label{eq:NY-V0}
\end{equation}
This is the bundle denoted \(\mathcal V\) on \(\mathbb P^2\) in \cite{NakajimaYoshiokaPerverse}; on the blowup their notation \(\mathcal V_0\) uses the same cohomological definition.  Consequently, for every \(c\in\Z\), the insertion with exponent \(c\) is \(\ch((\det\cV)^c)\).
\end{lemma}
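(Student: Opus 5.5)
The plan is to identify both sides of \eqref{eq:NY-V0} with a cohomology sheaf of the monad given by the ADHM data.  Fix a point of \(\M(2,N)\) with ADHM data \((B_1,B_2,I,J)\) on \(V\) and \(W\).  The framed sheaf \(E\) on \(\mathbb P^2\) is the middle cohomology of the Barth monad
\[
 V\otimes\cO_{\mathbb P^2}(-1)\xrightarrow{\ a\ }
 (V\oplus V\oplus W)\otimes\cO_{\mathbb P^2}\xrightarrow{\ b\ }
 V\otimes\cO_{\mathbb P^2}(1),
\]
where \(a\) is injective as a sheaf map and \(b\) is surjective.  This is the monad description of \cite{NakajimaYoshiokaI} and \cite[Section~2]{NakajimaYoshiokaII}.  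Over \(\M(2,N)\) it globalizes: replace \(V\) by the tautological bundle \(\cV\) and \(W\) by \(\cW\), twisting by the universal \(GL(V)\)-torsor.  This gives a universal family \(\mathcal E\), and any other family differs from it by a line bundle pulled back from the base; this ambiguity is addressed at the end.

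Next, tensor the monad with \(\cO(-\ell_\infty)=\cO(-1)\) and compute \(R\pi_{2*}\) using the two short exact sequences \(0\to\ker b\to\cdots\) and \(0\to\operatorname{im}a\to\ker b\to\mathcal E\to0\).  On \(\mathbb P^2\) one has \(H^*(\cO(-1))=0\) and \(H^*(\cO(-2))=\C[-2]\).  The only nonzero input is therefore \(R^2\pi_{2*}\) of the left term \(\cV\otimes\cO(-2)\), which is \(\cV\) placed in degree \(2\).  Chasing the long exact sequences gives \(R^0\pi_{2*}=R^2\pi_{2*}=0\) and \(R^1\pi_{2*}(\mathcal E(-\ell_\infty))\cong\cV\).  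The isomorphism is canonical and \(GL(V)\)-equivariant, so it descends to \(\M(2,N)\).  It is also \(\mathbf T_{\mathrm{fr}}\)-equivariant once \(\cO(-1)\) and the monad carry the linearizations fixed in \eqref{eq:ADHM-tangent}.  This last check is the main point to verify: the torus character of \(H^2(\cO(-2))\) must be trivial in the chosen conventions, or else be absorbed.  I would check it at a fixed point against \eqref{eq:V-fixed-weight}.  In any case, a residual character \(c\) only rescales \(\mathfrak q\) by Lemma~\ref{lem:linearization}, and with the paper's normalization it should be trivial.

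The identification with the bundle \(\mathcal V\) of \cite{NakajimaYoshiokaPerverse} is immediate, because they define it by the same formula.  On the blowup \(\widehat{\mathbb P}^2\), their \(\mathcal V_0\) is defined as \(R^1\) of the same twist by the pulled-back line, so the two notations agree.

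Finally, for the insertion, note that \(\det\) of an equivariant bundle is determined by its \(K\)-class.  Hence \(\ch((\det\cV)^c)=\exp(c\,c_1(\cV))\), which is the exponent-\(c\) insertion of Nakajima--Yoshioka once their \(\mathcal V\) is identified with \(\cV\).  The choice of universal family must also be handled.  Because of the framing, the universal sheaf is normalized by requiring its restriction to \(\ell_\infty\) to be \(\cW\otimes\cO_{\ell_\infty}\), and this fixes the twist.  So \(\mathcal V_0\) is independent of the choice, and the lemma follows.
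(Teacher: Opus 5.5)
Your overall route is the paper's: identify \(R^1\pi_{2*}(\mathcal E(-\ell_\infty))\) with \(\cV\) through the relative monad, descend, and read off the insertion as \(\exp(c\,c_1(\cV))=\ch((\det\cV)^c)\). The paper adds that equivariant Hirzebruch--Riemann--Roch turns the Nakajima--Yoshioka integrand \(\operatorname{td}\cdot\exp(c\,c_1(\mathcal V_0))\) into the localized Euler characteristic. However, the cohomological input in your chase is wrong.

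On \(\mathbb P^2\) one has \(H^2(\cO(-2))\cong H^0(\cO(-1))^\vee=0\), so \(H^*(\cO(-2))=0\), not \(\C[-2]\). The left term \(\cV\otimes\cO(-2)\) of the twisted monad therefore contributes nothing. You also dropped the term that does contribute. After twisting, the right-hand term is \(\cV\otimes\cO(1)\otimes\cO(-\ell_\infty)\cong\cV\otimes\cO\), whose \(R\pi_{2*}\) is \(\cV\) in degree \(0\).

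The correct chase runs as follows. In \(0\to(\ker b)(-1)\to(\cV^{\oplus2}\oplus\cW)\otimes\cO(-1)\to\cV\otimes\cO\to0\), the middle term has vanishing \(R\pi_{2*}\). The connecting map then gives \(R^1\pi_{2*}((\ker b)(-1))\cong\cV\) and \(R^0=R^2=0\). In \(0\to\cV\otimes\cO(-2)\to(\ker b)(-1)\to\mathcal E(-\ell_\infty)\to0\), we have \(R\pi_{2*}(\cV\otimes\cO(-2))=0\), so the same conclusions hold for \(\mathcal E(-\ell_\infty)\). Your version reaches \(R^1\cong\cV\) only because the spurious \(H^2\) on the left and the omitted \(H^0\) on the right happen to compensate.

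This also moves the equivariance check you single out. The character to control is not that of \(H^2(\cO(-2))\), which vanishes. It is the linearization of the twisted right-hand term, namely of \(\cO(1)\otimes\cO(-\ell_\infty)\) together with whatever weight the monad attaches to its last term. If \(\cO(-\ell_\infty)\) is taken as the ideal sheaf of \(\ell_\infty\) and the defining coordinate of \(\ell_\infty\) is invariant, this character is trivial. Otherwise your fallback via Lemma~\ref{lem:linearization} applies, at the cost of reading the isomorphism as \(\mathcal V_0\cong c\otimes\cV\).

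With this repair, the rest of your argument is fine: the relative computation, the descent of the \(GL(V)\)-equivariant isomorphism, and the use of the framing to pin down the universal family. It is somewhat more explicit than the paper, which states the fibrewise vanishing and asserts the relative version without further comment.
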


\begin{proof}
For a fibre \(E\) of \(\mathcal E\), the monad description gives
\[
 H^0(E(-\ell_\infty))=H^2(E(-\ell_\infty))=0,
 \qquad H^1(E(-\ell_\infty))\cong V.
\]
The relative monad gives the same in families, so \(R^1\pi_{2*}(\mathcal E(-\ell_\infty))\) is the tautological bundle of the principal \(GL(V)\)-bundle in \eqref{eq:ADHM-quotient}.  For determinant exponent \(c\), the integrand of Nakajima--Yoshioka is \(\operatorname{td}(\M(2,N))\exp(c\,c_1(\mathcal V_0))\) \cite{NakajimaYoshiokaPerverse}, and equivariant Hirzebruch--Riemann--Roch turns it into the localized Euler characteristic of \((\det\cV)^c\).
\end{proof}

Write the framing weights symmetrically,
\[
 q_i=e^{-\eps_i},\qquad u=v^2,\qquad
 (e^{a_1},e^{a_2})=(v,v^{-1}),
\]
and define \(\mathfrak p\) by
\begin{equation}
 \Lambda^4(q_1q_2)^{3/2}=v\mathfrak p.
\label{eq:NY-p-variable}
\end{equation}
Multiplying both framing weights by \(v\) changes them from \((v,v^{-1})\) to \((u,1)\); it leaves the tangent character unchanged and multiplies \(\det\cV\) on \(\M(2,N)\) by \(v^N\).  Let
\begin{equation}
\begin{aligned}
 \Lambda_{\mathrm c}(\mathfrak p,u;q_1,q_2)
 &:={}
 \left(\frac{u^{1/2}\mathfrak p}{(q_1q_2)^{3/2}}\right)^{1/4},\\
 \mathscr Z(\mathfrak p,u;q_1,q_2)
 &:={}
 Z_1\left(
 -\log q_1,-\log q_2,
 \left(\tfrac12\log u,-\tfrac12\log u\right);
 \Lambda_{\mathrm c}(\mathfrak p,u;q_1,q_2)
 \right).
\end{aligned}
\label{eq:NY-mathscrZ-definition}
\end{equation}
We regard \(\eps_i\) and \(\log u\) as formal additive parameters, or fix analytic branches, and choose the fourth root in \(\Lambda_{\mathrm c}\) once and for all.  By the change of framing, the framed sheaf series of \(\mathscr Z(\mathfrak p,u;q_1,q_2)\) is
\begin{equation}
 \sum_{N\geq0}\mathfrak p^N
 \chi_{\mathbf T_{\mathrm{fr}}}^{\mathrm{loc}}\bigl(\M(2,N),\det\cV\bigr)
\label{eq:NY-centered-ADHM}
\end{equation}
with framing weights \((u,1)\).

\begin{theorem}
\label{thm:NY-native}
For \(k\in\{0,1\}\) and \(d_{\mathrm{NY}}\in\Z_{\geq0}\), let
\[
 c_{k,d_{\mathrm{NY}}}
 =\frac{k^3-k-2d_{\mathrm{NY}}+2}{24},\qquad
 \ell_n=(n-k/2,-n+k/2),
\]
and
\begin{equation}
\begin{aligned}
 \widehat Z_{1,k,d_{\mathrm{NY}}}
 (\eps_1,\eps_2,\vec a;\Lambda)
 ={}&e^{c_{k,d_{\mathrm{NY}}}s_\eps}
 \sum_{n\in\Z}
 Z_1\!\left(
 \begin{gathered}
 \eps_1,\eps_2-\eps_1,\vec a+\eps_1\ell_n;\\[-2pt]
 \Lambda e^{\eps_1(2d_{\mathrm{NY}}+k-3)/8}
 \end{gathered}\right)\\
 &\hspace{22mm}\times
 Z_1\!\left(
 \begin{gathered}
 \eps_1-\eps_2,\eps_2,\vec a+\eps_2\ell_n;\\[-2pt]
 \Lambda e^{\eps_2(2d_{\mathrm{NY}}+k-3)/8}
 \end{gathered}\right).
\end{aligned}
\label{eq:NY-native-additive}
\end{equation}
Then
\begin{equation}
\begin{aligned}
 \widehat Z_{1,0,d_{\mathrm{NY}}}
 (\eps_1,\eps_2,\vec a;\Lambda)
 &=Z_1(\eps_1,\eps_2,\vec a;\Lambda),
 &&0\leq d_{\mathrm{NY}}\leq2,\\
 \widehat Z_{1,1,1}(\eps_1,\eps_2,\vec a;\Lambda)&=0.
\end{aligned}
\label{eq:NY-unity-vanishing}
\end{equation}
\end{theorem}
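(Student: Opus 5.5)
The plan is to deduce Theorem~\ref{thm:NY-native} from the determinant power \(1\) case of Theorem~\ref{thm:finite-NY-levels}, rather than re-deriving the Nakajima--Yoshioka wall-crossing.  That case gives the coefficients \(c_{1;0,d}=1\) for \(0\le d\le2\) and \(c_{1;1,1}=0\), which are exactly the two statements of \eqref{eq:NY-unity-vanishing}.  The work is a dictionary between the additive normalization \eqref{eq:NY-full-partition} and the multiplicative one \eqref{eq:Hirzebruch-completed-ADHM-series}, together with a check that the perturbative factors and the constant \(e^{c_{k,d_{\mathrm{NY}}}s_\eps}\) match the shift quotients of Lemma~\ref{lem:Hirzebruch-shift-quotients}.

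\emph{Instanton parts.}  First I identify the instanton parts.  By Lemma~\ref{lem:NY-determinant-is-ADHM}, \(Z_1^{\mathrm{fr}}\) is the generating series of \(\chi^{\mathrm{loc}}(\M(2,N),\det\cV)\) with \(q_i=e^{-\eps_i}\), framing \((v,v^{-1})\) and counting variable \(\Lambda^4(q_1q_2)^{3/2}\).  I compare this with \eqref{eq:Hirzebruch-ADHM-series} at \(\ell=1\), where the coordinate characters are \(p_i^{-1}\), the framing is \((U_1^{-1},U_2^{-1})\) and the counting variable is \(z(p_1p_2)^{-3/2}\).  The substitution is \(p_i=e^{\eps_i}\), \(U=u^{-1}=e^{A}\) (up to swapping the two framing weights, which is harmless by symmetry of the fixed point sum) and \(z=\Lambda^4\), with the branch \eqref{eq:Lambda-continuation-branch} matching \eqref{eq:Hirzebruch-signed-branch} at \(\ell=1\) up to \(2\pi\mathrm i\).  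Under this substitution the two framed sheaf series agree.  I then check that the blowup substitutions match.  The shift \(\vec a\mapsto\vec a+\eps_i\ell_n\) becomes \(U\mapsto Up_i^{2\nu}\) with \(\nu=n-k/2\in\Z+k/2\), so \(j=k\).  The shift \(\Lambda\mapsto\Lambda e^{\eps_i(2d_{\mathrm{NY}}+k-3)/8}\) multiplies \(z=\Lambda^4\) by \(p_i^{(2d_{\mathrm{NY}}+k-3)/2}\).  After the re-centering of counting variables by \((p_1p_2)^{3/2}\) in the two blown-up charts, this exponent should equal \(s_{1;j,d}=d-1+(j-1)/2\) with \(d=d_{\mathrm{NY}}\), and I will verify the equality directly.

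\emph{Perturbative factors.}  Next I compare perturbative factors.  By \eqref{eq:NY-exponentiated-gamma}, \(\exp(-\widetilde\gamma(a_{12})-\widetilde\gamma(a_{21}))\) equals the product \((e^{-A-s_\eps};e^{-\eps_1},e^{-\eps_2})_\infty(e^{A-s_\eps};\dots)_\infty\) times \(e^{-P^{\mathrm{pert}}}\).  Applying the base inversion \eqref{eq:double-Pochhammer-base-inversion} twice in each factor converts this product into \(\mathcal E(U;p_1,p_2)\), up to a harmless reciprocal convention that I will fix.  The cubic and quadratic part of \(P^{\mathrm{pert}}\), together with \(\sum a_\alpha^3/6\eps_1\eps_2\), reduces to \(\mathcal A^{(0)}\) times a remainder \(R(A,\Lambda;\eps_1,\eps_2)\) that is at most linear in \(A\) and \(\log\Lambda\).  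I then form the bilinear combination of the three copies of \(R\), using the blown-up parameters \((\eps_1,\eps_2-\eps_1)\) and \((\eps_1-\eps_2,\eps_2)\).  By the same rational-function identities as in the proof of Corollary~\ref{cor:Hirzebruch-exact-HSW} (the functions \(g,h\) there), the \(A\)- and \(\log\Lambda\)-dependence should cancel.  What should remain is a constant \(\pm\pi\mathrm i\nu\) phase and a multiple of \(s_\eps\), which I expect to equal \(-c_{k,d_{\mathrm{NY}}}s_\eps\) and so to be cancelled by the prefactor in \eqref{eq:NY-native-additive}.

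\emph{Conclusion and main obstacle.}  With the perturbative comparison in hand, dividing \eqref{eq:NY-native-additive} by the unshifted perturbative factor turns it, coefficientwise in \(\Lambda\), into \(\overline{\mathscr B}_1^{k,d}\) of \eqref{eq:normalized-finite-NY-bilinear-operator}.  Theorem~\ref{thm:finite-NY-levels} then gives \(\overline{\mathscr B}_1^{0,d}=\mathcal A_1\) and \(\overline{\mathscr B}_1^{1,1}=0\), and multiplying back yields \eqref{eq:NY-unity-vanishing}.  The analytic reading follows on the initial domain \(\operatorname{Re}\eps_i>0\), \(\operatorname{Re}x\gg0\), as in Theorem~\ref{thm:Hirzebruch-main}.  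The main obstacle is the bookkeeping of constants, specifically the \(\zeta(3)\), \(\pi^2/6\) and \(\log\Lambda\) terms of \(\widetilde\gamma\).  I have to check that the prescribed branch of \(\log\Lambda\) and the \((-1)^n\) phases cancel exactly, not merely up to a \(T_F\)-independent factor, since only then is the coefficient \(1\) rather than a root of unity.  I would first evaluate everything at lowest order in \(\Lambda\), where the computation preceding \eqref{eq:det-two-det-zero} gives the explicit coefficients \((p_1p_2)^{(d-1)/4}(U^{d/2}-U^{1-d/2})/(1-U)\), and use these to pin down the constant before doing the general symbolic cancellation.
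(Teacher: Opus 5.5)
You take a different route from the paper. The paper imports Nakajima--Yoshioka's Theorem~2.11 and their equations (3.6) and (3.8) directly, twisting by \(\cO(C)\) for \(k=1\). Reducing instead to Theorem~\ref{thm:finite-NY-levels} is non-circular, and your dictionary is in fact exact. Take \(a_1=-A/2\), \(U=e^{A}\), \(p_i=e^{\eps_i}\), \(z=\Lambda^4\) and \(\log z=4\log\Lambda\). Then:
the framings of \(Z_1^{\mathrm{fr}}\) and \(\mathcal A_1\) agree, in the same order;
the chart exponent \((2d_{\mathrm{NY}}+k-3)/2\) is exactly \(s_{1;k,d}\) with \(j=k\) and \(d=d_{\mathrm{NY}}\), so no recentering is needed;
\(\sum_\alpha a_\alpha^3=0\), and the odd parts of \(\widetilde\gamma(A)+\widetilde\gamma(-A)\) cancel;
the series part is exactly \(\mathcal E(U;p_1,p_2)\), with no reciprocal;
the \(A^2\)-terms are exactly \(\log\mathcal A^{(0)}\).
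Hence
\[
 Z_1=e^{-P_0}\,\widehat{\mathcal A}_1,\qquad
 P_0=-\frac{s_\eps^3}{48\eps_1\eps_2}-\frac{2\zeta(3)}{\eps_1\eps_2}
 +\frac{\pi^2s_\eps}{6\eps_1\eps_2}
 +\frac{\eps_1^2+\eps_2^2+3\eps_1\eps_2}{6\eps_1\eps_2}\log\Lambda ,
\]
with \(P_0\) independent of \(\vec a\). In particular, no \(\pm\pi\mathrm{i}\nu\) phase arises.

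The gap is the constant you deferred, and it does not come out as you expect. Let \(\Delta P_0\) be the sum of the two chart values of \(P_0\), minus the unshifted value. The chart values are taken at \((\eps_1,\eps_2-\eps_1)\) and \((\eps_1-\eps_2,\eps_2)\), with \(\log\Lambda\) shifted by \(\eps_i(2d_{\mathrm{NY}}+k-3)/8\). Write the chart data as \((s_1,D_1)=(\eps_2,\eps_1(\eps_2-\eps_1))\) and \((s_2,D_2)=(\eps_1,(\eps_1-\eps_2)\eps_2)\). Then:
the \(\zeta(3)\) and \(\pi^2\) terms cancel;
the cubic term gives \(s_\eps/24\), since \(s_1^3/D_1+s_2^3/D_2-s_\eps^3/(\eps_1\eps_2)=-2s_\eps\);
the \(\log\Lambda\) term gives \((2d_{\mathrm{NY}}+k-3)s_\eps/24\), by the \(g\)-identities in the proof of Corollary~\ref{cor:Hirzebruch-exact-HSW}.
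So \(\Delta P_0=(2d_{\mathrm{NY}}+k-2)s_\eps/24\), and your argument yields
\[
 \widehat Z_{1,k,d_{\mathrm{NY}}}
 =e^{c_{k,d_{\mathrm{NY}}}s_\eps-\Delta P_0}\,c_{1;k,d_{\mathrm{NY}}}\,Z_1 .
\]
This proves the cases \((1,1)\) and \((0,1)\). For \(k=0\), however, \(-\Delta P_0=+c_{0,d_{\mathrm{NY}}}s_\eps\), so the prefactor doubles instead of cancelling. You obtain \(e^{(1-d_{\mathrm{NY}})s_\eps/6}Z_1=e^{\pm s_\eps/6}Z_1\) for \(d_{\mathrm{NY}}=0,2\).

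Finer bookkeeping cannot fix this. In the \(n=0\) summand all \(A\)-dependence cancels, because \(\mathcal L_0=1\) and the classical quotient with \(r=0\) is \(1\). The summands with \(n\neq0\) are \(O(\Lambda^{4n^2})\). So the \(\Lambda^0\)-coefficient of \(\widehat Z_{1,0,d_{\mathrm{NY}}}/Z_1\) equals \(e^{c_{0,d_{\mathrm{NY}}}s_\eps-\Delta P_0}\), directly from \eqref{eq:NY-gamma}--\eqref{eq:NY-full-partition}. With \(\widetilde\gamma\) normalized as displayed, the unity equations for \(d_{\mathrm{NY}}=0,2\) therefore require \(c_{0,d_{\mathrm{NY}}}=(d_{\mathrm{NY}}-1)/12\). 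Equivalently, the right side of \eqref{eq:NY-multiplicative} should be \((q_1q_2)^{(d_{\mathrm{NY}}-1)/12}\). The scalar as stated has the opposite sign. The paper's proof takes it over from Nakajima--Yoshioka without re-checking it against \eqref{eq:NY-gamma}. Your method proves the corrected statement, but it cannot prove the displayed one.

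Separately, drop your closing analytic claim. The pairs \((\eps_1,\eps_2-\eps_1)\) and \((\eps_1-\eps_2,\eps_2)\) cannot both have positive real parts. So \(\operatorname{Re}\eps_i>0\) is not a common convergence domain for the three one-loop products, and only the coefficientwise normalized identity follows.
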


\begin{proof}
The \(K\)-theoretic blowup formula of Nakajima--Yoshioka \cite{NakajimaYoshiokaII,NakajimaYoshiokaPerverse} is the Atiyah--Bott--Lefschetz formula on the framed sheaf moduli space of the blowup of \(\mathbb P^2\), whose fixed loci are indexed by \(\ell_n\) and by two framed sheaf fixed loci at the two charts.  In their notation, take rank \(2\) and determinant power \(1\), and write \(\ell=(\ell_1,\ell_2)\) for a lattice vector.  The permitted vectors satisfy
\[
 \ell_1+\ell_2=0,\qquad
 \ell_\alpha\equiv-\frac k2\pmod{\Z},
\]
so \(\ell=\ell_n=(n-k/2,-n+k/2)\).  Their exponent multiplying \(\Lambda\) becomes
\[
 \frac14\left(d_{\mathrm{NY}}-\frac12+\frac k2-1\right)
 =\frac{2d_{\mathrm{NY}}+k-3}{8},
\]
which gives the two shifts in \eqref{eq:NY-native-additive}, and their scalar exponent becomes
\[
 -\frac{(4(d_{\mathrm{NY}}-\frac12+\frac k2)-2)}{48}
 +\frac{k^3}{24}=c_{k,d_{\mathrm{NY}}},
\]
since
\[
 -\frac{4d_{\mathrm{NY}}-4+2k}{48}+\frac{k^3}{24}
 =\frac{k^3-k-2d_{\mathrm{NY}}+2}{24}.
\]
Equation~(3.8) of \cite{NakajimaYoshiokaPerverse}, obtained from their Theorem~2.11(1) and equation~(3.6), identifies the blowup series with the framed sheaf series on \(\mathbb P^2\) for \(k=0\) and \(0\leq d_{\mathrm{NY}}\leq2\).  In their Theorem~2.11(2) and equation~(3.6), let \(C\) be the exceptional curve of \(\operatorname{Bl}_0\mathbb P^2\), let \(E\) be the framed sheaf in their notation, and take rank \(2\), their parameter \(a=1\), determinant power \(1\), \((c_1(E),[C])=1\) and \(d=1\).  The resulting blowup series is \(0\).  Let \(E'=E\otimes\cO(C)\); then \(\mathcal V_1(E)=\mathcal V_0(E')\), so the determinant lines agree, while
\[
 (c_1(E'),[C])=(c_1(E)+2[C],[C])=1+2C^2=-1.
\]
This is the \(k=1\) convention of \eqref{eq:NY-native-additive}.  Lemma~\ref{lem:NY-determinant-is-ADHM} identifies the inserted line with \(\det\cV\).
\end{proof}

In multiplicative form: for
\[
 (k,d_{\mathrm{NY}})\in
 \{(0,0),(0,1),(0,2),(1,1)\},
\]
and in the notation of \eqref{eq:NY-mathscrZ-definition},
\begin{equation}
\begin{aligned}
 &\sum_{n\in\Z}
 \frac{
 \mathscr Z(\mathfrak p q_1^{n-d_{\mathrm{NY}}-k},u q_1^{k-2n};
 q_1,q_2/q_1)}
 {\mathscr Z(\mathfrak p,u;q_1,q_2)}\\[-1mm]
 &\hspace{18mm}\times
 \mathscr Z(\mathfrak p q_2^{n-d_{\mathrm{NY}}-k},u q_2^{k-2n};
 q_1/q_2,q_2)\\
 &\qquad=
 \begin{cases}
 (q_1q_2)^{(1-d_{\mathrm{NY}})/12},
   &k=0,\ d_{\mathrm{NY}}=0,1,2,\\
 0,&(k,d_{\mathrm{NY}})=(1,1).
 \end{cases}
\end{aligned}
\label{eq:NY-multiplicative}
\end{equation}
Indeed, by \eqref{eq:NY-p-variable} on each chart, the shifts of \(\Lambda\) and \(\vec a\) in \eqref{eq:NY-native-additive} become
\[
 \mathfrak p_i=\mathfrak p q_i^{n-d_{\mathrm{NY}}-k},\qquad
 u_i=uq_i^{k-2n}.
\]
Divide \eqref{eq:NY-native-additive} by \(Z_1\) and use \eqref{eq:NY-unity-vanishing}; for \(k=0\), \(e^{-c_{0,d_{\mathrm{NY}}}s_\eps} =(q_1q_2)^{(1-d_{\mathrm{NY}})/12}\).

\subsection{Blowup equations}
\label{subsec:P2-equations-recursion}
If \(H\) is the class of a line, then \(-H\cdot[\mathbb P^2]=3\) in the total space.  The lattice sign in the equations below is derived from the continued degree \(0\) factors in Lemma~\ref{lem:two-chart-factor}.

Let
\begin{equation}
 \mathcal Z(Q;t,q)
 :=Z_X^{\PT,\mathrm{ref}}(-Q;t,q)
 =\sum_{d\geq0}A_d(t,q)Q^d,
 \qquad A_0=1,
\label{eq:P2-A-coefficients}
\end{equation}
and
\begin{equation}
 F_{\mathbb P^2}^{(0)}(T;\alpha,\beta)
 =\frac{T^3}{18\alpha\beta}
 -\frac{\pi^2T}{6\alpha\beta}+\frac{T}{12}
 +\frac{(\alpha+\beta)^2T}{24\alpha\beta}.
\label{eq:P2-degree-zero-polynomial}
\end{equation}
The normalized series is
\begin{equation}
 \widehat Z_X(T;\alpha,\beta)
 =e^{F_{\mathbb P^2}^{(0)}(T;\alpha,\beta)}
 \mathcal Z(e^{-T};e^\alpha,e^{-\beta}).
\label{eq:HSW-full-partition}
\end{equation}
For \(r=\pm1\), let
\begin{equation}
\begin{aligned}
 R_{r,n}&=3n+\frac r2,&
 m_r(n)&=\frac{n(3n+r)}2,\\
 \gamma_r(n)&=-\frac32n^3-\frac{3r}{4}n^2+\frac14n,
\end{aligned}
\label{eq:P2-unity-exponents}
\end{equation}
and for \(r=3\),
\begin{equation}
 R_{3,n}=3n+\frac32,\qquad
 m_3(n)=\frac{3n(n+1)}2,\qquad
 \gamma_3(n)=-\frac34n(n+1)(2n+1).
\label{eq:P2-vanishing-exponents}
\end{equation}
The multiplicative and additive equivariant variables are related by
\begin{equation}
 t=e^{\eps_1},\qquad q=e^{-\eps_2},\qquad
 q_1=e^{-\eps_1}=t^{-1},\qquad q_2=e^{-\eps_2}=q.
\label{eq:P2-blowup-variable-change}
\end{equation}
Let \(\mathfrak a=q_1q_2\), and write the framed sheaf variable recording \(c_2\) as
\begin{equation}
 \mathfrak p=-\mathfrak a^{3/2}z.
\label{eq:NY-z-variable}
\end{equation}
In the sign convention of \eqref{eq:P2-A-coefficients}, the comparison of Theorem~\ref{thm:P2-PT-ADHM} gives
\begin{equation}
 Q=-zu^2.
\label{eq:P2-plane-variable}
\end{equation}

\begin{lemma}
\label{lem:NY-chart-substitutions}
Take \((k,d_{\mathrm{NY}})\in\{(0,1),(0,2),(1,1)\}\) and replace \(n\) by \(-n\) in \eqref{eq:NY-multiplicative}.  On chart \(i=1,2\),
\begin{equation}
 \mathfrak p_i=\mathfrak p q_i^{-n-d_{\mathrm{NY}}-k},\qquad
 u_i=uq_i^{2n+k},\qquad
 \mathfrak a_i=\frac{\mathfrak a}{q_i},\qquad
 z_i=zq_i^{-n-d_{\mathrm{NY}}-k+3/2}.
\label{eq:NY-chart-substitutions}
\end{equation}
With \(Q_i=-z_i u_i^2\),
\begin{equation}
 \frac{Q_i}{Q}
 =q_i^{3n-d_{\mathrm{NY}}+k+3/2}
 =q_i^{R_{r,n}},
 \qquad
 R_{r,n}=3n+\frac r2,\qquad r=3-2d_{\mathrm{NY}}+2k.
\label{eq:NY-plane-shift}
\end{equation}
The three cases give \(r=1,-1,3\) respectively.
\end{lemma}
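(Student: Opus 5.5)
The plan is a direct substitution into \eqref{eq:NY-multiplicative}, keeping track of which equivariant parameters each chart carries and of the half-integer powers.

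\emph{Step 1: shifted variables.} Replacing \(n\) by \(-n\) in \eqref{eq:NY-multiplicative}, the arguments of the chart-\(i\) factor become
\[
 \mathfrak p_i=\mathfrak p\,q_i^{-n-d_{\mathrm{NY}}-k},
 \qquad
 u_i=u\,q_i^{k+2n}.
\]
This is the first half of \eqref{eq:NY-chart-substitutions}.

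\emph{Step 2: the product \(\mathfrak a_i\).} The chart-\(1\) factor carries the pair \((q_1,q_2/q_1)\), whose product is \(q_2=\mathfrak a/q_1\). The chart-\(2\) factor carries \((q_1/q_2,q_2)\), whose product is \(q_1=\mathfrak a/q_2\). So in both cases \(\mathfrak a_i=\mathfrak a/q_i\).

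\emph{Step 3: the variable \(z_i\).} On each chart I define \(z_i\) by the same rule \eqref{eq:NY-z-variable}, namely \(\mathfrak p_i=-\mathfrak a_i^{3/2}z_i\). This gives
\[
 z_i=-\mathfrak p_i\,\mathfrak a_i^{-3/2}
 =z\,\mathfrak a^{3/2}q_i^{-n-d_{\mathrm{NY}}-k}\,\mathfrak a^{-3/2}q_i^{3/2}
 =z\,q_i^{-n-d_{\mathrm{NY}}-k+3/2}.
\]

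This step is the only point needing care, and I expect it to be the main obstacle. It requires fixing \(\mathfrak a_i^{1/2}=\mathfrak a^{1/2}q_i^{-1/2}\) compatibly with the square roots \(q_i^{1/2}\) already fixed through \(t^{1/2},q^{1/2}\). The check is that this agrees with the additive lifts: \(\mathfrak a_i\) corresponds to \(-(\eps_1+\eps_2)+\eps_i\), so no sign ambiguity enters. I would record this explicitly so that the fourth roots used later in \(\Lambda_{\mathrm c}\) are consistent with it.

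\emph{Step 4: the plane variable.} Set \(Q_i=-z_iu_i^2\), following \eqref{eq:P2-plane-variable}. Then
\[
 \frac{Q_i}{Q}=\frac{z_i}{z}\Bigl(\frac{u_i}{u}\Bigr)^2
 =q_i^{-n-d_{\mathrm{NY}}-k+3/2+4n+2k}
 =q_i^{3n-d_{\mathrm{NY}}+k+3/2}.
\]
This equals \(q_i^{R_{r,n}}\) with \(r=3-2d_{\mathrm{NY}}+2k\).

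\emph{Step 5: the three cases.} Evaluating \(r\) gives \(1\) for \((0,1)\), \(-1\) for \((0,2)\), and \(3\) for \((1,1)\). These match \eqref{eq:P2-unity-exponents} and \eqref{eq:P2-vanishing-exponents}.
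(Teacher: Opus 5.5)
Your proposal is correct and follows the paper's proof essentially step by step. Both substitute \(n\mapsto-n\), use \(\mathfrak a_i=\mathfrak a/q_i\) in \eqref{eq:NY-z-variable} to obtain \(z_i/z=q_i^{-n-d_{\mathrm{NY}}-k+3/2}\), and then multiply by \((u_i/u)^2\), with the sign in \eqref{eq:P2-plane-variable} cancelling in the ratio. Your extra remark fixing \(\mathfrak a_i^{1/2}\) through the additive lifts is a harmless clarification that the paper leaves implicit.
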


\begin{proof}
The first two equations are \eqref{eq:NY-multiplicative} after \(n\mapsto-n\).  The product of the two coordinate characters on chart \(i\) is \(\mathfrak a_i=\mathfrak a/q_i\), so \eqref{eq:NY-z-variable} gives
\[
 \frac{z_i}{z}
 =\frac{\mathfrak p_i}{\mathfrak p}
  \left(\frac{\mathfrak a}{\mathfrak a_i}\right)^{3/2}
 =q_i^{-n-d_{\mathrm{NY}}-k+3/2}.
\]
The sign in \eqref{eq:P2-plane-variable} cancels in the ratio, and \(Q_i/Q=z_iu_i^2/(zu^2)\) gives
\[
 \frac{Q_i}{Q}
 =q_i^{-n-d_{\mathrm{NY}}-k+3/2+4n+2k}
 =q_i^{3n-d_{\mathrm{NY}}+k+3/2}.
\]
\end{proof}

With the framing characters \((v,v^{-1})\), let
\begin{equation}
 u=v^2=e^{-A},\qquad Q=-zu^2=e^{-T}.
\label{eq:centered-variables}
\end{equation}
Together with \(\Lambda^4=e^{-M}\) and the branch \eqref{eq:Lambda-continuation-branch}, equations \eqref{eq:NY-p-variable} and \eqref{eq:NY-z-variable} give
\begin{equation}
 \Lambda^4=-vz,\qquad T=M+\frac32A.
\label{eq:centered-parameter-equality}
\end{equation}
The parameter \(M\) in \(\mathscr Z\) satisfies
\begin{equation}
 M=T-\frac32A.
\label{eq:u-zero-parameter-path}
\end{equation}
The coefficientwise specialization at \(u=0\) keeps \(Q=e^{-T}\) fixed, so \(M\) moves along \eqref{eq:u-zero-parameter-path} as \(A\to\infty\).

After replacing the Novikov coordinate by its negative, let
\begin{equation}
\begin{aligned}
 \mathscr A_{\mathrm c}(Q,u;t,q):={}&
 \exp\left[
 \sum_{m\geq1}
 \frac{(t^m+q^m)u^m+(-1)^m(tq)^{m/2}Q^mu^{-m}}
 {m(1-t^m)(1-q^m)}\right]\\
 &\times\iota_0 Z_{2,1}^{\mathrm{fr}}
 \left(\left(\frac qt\right)^{3/2}Qu^{-2};
 t^{-1},q;u,1\right).
\end{aligned}
\label{eq:normalized-u-series}
\end{equation}
\begin{lemma}
\label{lem:u-zero-regularity}
Assume Conjecture~\ref{conj:P2-vertex-comparison}.  Then \(\mathscr A_{\mathrm c}\in\K[[Q,u]]\), and
\begin{equation}
 \mathscr A_{\mathrm c}(Q,0;t,q)=\mathcal Z(Q;t,q).
\label{eq:u-zero-constant-term}
\end{equation}
\end{lemma}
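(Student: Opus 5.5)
The plan is to reduce the statement to Lemma~\ref{lem:direct-ADHM-regrouping} and Theorem~\ref{thm:P2-PT-ADHM} after the sign change \(Q\mapsto-Q\). First, observe that \(\mathscr A_{\mathrm c}(Q,u;t,q)\) is exactly the left side of \eqref{eq:direct-ADHM-regrouping} with \(Q\) replaced by \(-Q\). The argument of \(Z_{2,1}^{\mathrm{fr}}\) becomes \(-(q/t)^{3/2}(-Q)u^{-2}=(q/t)^{3/2}Qu^{-2}\). For the plethystic prefactor, \(\PE\bigl[\sqrt{tq}\,Q/u/((1-t)(1-q))\bigr]\) becomes, under \(Q\mapsto -Q\), the exponential of \(\sum_m (-1)^m(tq)^{m/2}Q^mu^{-m}/(m(1-t^m)(1-q^m))\). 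Here \(\psi_m\) acts on \(-Q\) as \((-Q)^m\), since \(\psi_m\) raises the monomial including its sign factor. I will check this convention against \eqref{eq:global-plethystic-exponential}, where \(\psi_m(Q^\gamma)=Q^{m\gamma}\), and treat the sign as a coefficient. The \(u\)-term is unchanged. So the identity is a formal substitution, and I would verify it coefficient by coefficient in \(Q\).

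Second, applying the substitution to the right side of \eqref{eq:direct-ADHM-regrouping} expresses \(\mathscr A_{\mathrm c}\) as
\[
 \sum_{\lambda,\mu,\nu}Q^{|\lambda|+|\mu|+|\nu|}\mathsf A_{\lambda\mu\nu}(-u/Q)^{|\lambda|+|\mu|}\mathsf G_{\lambda\mu}(-Q/u).
\]
By Lemma~\ref{lem:two-leg-polynomiality}, \(\mathsf G_{\lambda\mu}\) is a polynomial of degree at most \(m=|\lambda|+|\mu|\). Hence \((u/Q)^m\mathsf G_{\lambda\mu}(\pm Q/u)=\sum_j g_j(\pm1)^jQ^{j-m}u^{m-j}\) contains only nonnegative powers of \(u\). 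Each summand is then \(Q^{|\nu|+j}u^{m-j}\) times a coefficient in \(\K\), with \(0\le j\le m\).

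The finiteness count from the proof of Theorem~\ref{thm:P2-PT-ADHM} shows that only finitely many partition triples contribute to a fixed monomial \(Q^Du^e\). Therefore \(\mathscr A_{\mathrm c}\in\K[[Q,u]]\). This is also the statement that the negative \(u\)-powers of the Laurent expansion \(\iota_0\) cancel on the left.

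Third, for \eqref{eq:u-zero-constant-term}, take the constant term in \(u\). Only \(j=m\) survives, which gives \(\sum(-Q)^{|\lambda|+|\mu|+|\nu|}\mathsf A_{\lambda\mu\nu}\mathsf L_{\lambda\mu}\) evaluated at \(-Q\), that is \(\mathsf Z^{\mathrm{vert}}_{\mathbb P^2}(-Q)\). I would track the sign \((-1)^j(-1)^m\) with \(j=m\) to confirm it cancels. The \(u\)-part of the prefactor restricts to \(1\) at \(u=0\). Conjecture~\ref{conj:P2-vertex-comparison} then identifies this with \(Z_X^{\PT,\mathrm{ref}}(-Q)=\mathcal Z(Q)\) by \eqref{eq:P2-A-coefficients}. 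Equivalently, one can cite Theorem~\ref{thm:P2-PT-ADHM} directly with \(Q\mapsto-Q\).

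The main obstacle is the sign bookkeeping in the first step: the plethystic \((-1)^m\) and the branch of \((q/t)^{3/2}\). The remaining steps are routine given the earlier results.
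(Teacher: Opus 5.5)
Your proposal is correct and is essentially the paper's proof: substitute \(-Q\) for \(Q\) in Lemma~\ref{lem:direct-ADHM-regrouping}, use Lemma~\ref{lem:two-leg-polynomiality} to see that every summand is a \(\K\)-multiple of \(Q^{|\nu|+j}u^{m-j}\) with only finitely many partition triples contributing to each monomial, and read off the \(j=m\) constant term in \(u\) as \(\mathsf Z^{\mathrm{vert}}_{\mathbb P^2}(-Q)=\mathcal Z(Q)\) via Conjecture~\ref{conj:P2-vertex-comparison}. One correction to your justification: the factor \((-1)^m\) in the plethystic term comes from substituting \(-Q\) into the already expanded series, not from \(\psi_m\) acting on a signed monomial, since Adams operations are additive and give \(\psi_m(-Q)=-Q^m\).
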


\begin{proof}
Substitute \(-Q\) for \(Q\) in Lemma~\ref{lem:direct-ADHM-regrouping}.  If \(m=|\lambda|+|\mu|\) and \(\mathsf G_{\lambda\mu}(x)=\sum_{h=0}^m g_hx^h\), a summand is an element of \(\K\) times
\[
 Q^{|\nu|+h}u^{m-h}.
\]
Both exponents are nonnegative, and for fixed exponents the sizes of \(\lambda,\mu,\nu\) are bounded, so the sum is coefficientwise finite, while the constant term in \(u\) has \(h=m\); by Lemma~\ref{lem:two-leg-polynomiality} and Conjecture~\ref{conj:P2-vertex-comparison} it is \(Z_{X}^{\PT,\mathrm{ref}}(-Q;t,q)=\mathcal Z(Q;t,q)\).
\end{proof}

The analytic passage to \(u=0\) uses Conjecture~\ref{conj:P2-asymptotic-resummation} and the support estimates of Appendix~\ref{app:u-zero}.

\begin{lemma}
\label{lem:P2-perturbative-shift}
For any \(R\),
\begin{equation}
\begin{aligned}
 &F_{\mathbb P^2}^{(0)}
 (T+\eps_1R;\eps_1,\eps_2-\eps_1)
 +F_{\mathbb P^2}^{(0)}
 (T+\eps_2R;\eps_1-\eps_2,\eps_2)\\
 &\qquad-F_{\mathbb P^2}^{(0)}
 (T;\eps_1,\eps_2)\\
 &=-\left(\frac{R^2}{6}-\frac1{24}\right)T
 +(\eps_1+\eps_2)\left(-\frac{R^3}{18}+\frac R8\right).
\end{aligned}
\label{eq:P2-perturbative-shift}
\end{equation}
\end{lemma}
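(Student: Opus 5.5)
This is an identity of rational functions in \(\eps_1,\eps_2\), polynomial in \(T\) and \(R\), so I would prove it by direct expansion. The only structural input is a small set of partial-fraction identities for the two blown-up charts. Write \(\alpha_1=\eps_1,\ \beta_1=\eps_2-\eps_1\) and \(\alpha_2=\eps_1-\eps_2,\ \beta_2=\eps_2\). For \(k=0,1,2,3\) I would first record
\[
 \frac{\eps_1^k}{\alpha_1\beta_1}+\frac{\eps_2^k}{\alpha_2\beta_2}
 =\frac{1}{\eps_1\eps_2},\ 0,\ -1,\ -(\eps_1+\eps_2).
\]
Each case follows by putting the two terms over the common denominator \(\eps_1\eps_2(\eps_2-\eps_1)\). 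These are the same manipulations as the identities for \(g,h\) in the proof of Corollary~\ref{cor:Hirzebruch-exact-HSW}. Since \(\alpha_1+\beta_1=\eps_2\) and \(\alpha_2+\beta_2=\eps_1\), I would also record
\[
 \frac{\eps_2^2}{\alpha_1\beta_1}+\frac{\eps_1^2}{\alpha_2\beta_2}
 =\frac{\eps_1^2+\eps_1\eps_2+\eps_2^2}{\eps_1\eps_2},
 \qquad
 \frac{\eps_1\eps_2^2}{\alpha_1\beta_1}+\frac{\eps_2\eps_1^2}{\alpha_2\beta_2}
 =\eps_1+\eps_2 .
\]

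Next I would split \(F^{(0)}_{\mathbb P^2}(T;\alpha,\beta)=\frac{T^3}{18\alpha\beta}+c(\alpha,\beta)\,T\), where
\[
 c(\alpha,\beta)=\frac{-\pi^2/6+(\alpha+\beta)^2/24}{\alpha\beta}+\frac1{12}.
\]
I would then handle the cubic and linear parts separately.

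For the cubic part, expand \((T+\eps_iR)^3\) and apply the \(k\)-identities in turn. The \(T^3\) terms cancel against the unshifted term by \(k=0\). The \(T^2\) terms vanish by \(k=1\). The \(T\) term gives \(3R^2(-1)/18=-R^2/6\) by \(k=2\). The constant gives \(-R^3(\eps_1+\eps_2)/18\) by \(k=3\).

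For the linear part, first take the coefficient of \(T\), namely \(c(\alpha_1,\beta_1)+c(\alpha_2,\beta_2)-c(\eps_1,\eps_2)\). The \(\pi^2\) terms cancel by \(k=0\). The \((\alpha+\beta)^2\) terms give
\[
 \frac{\eps_1^2+\eps_1\eps_2+\eps_2^2-(\eps_1+\eps_2)^2}{24\eps_1\eps_2}=-\frac1{24}.
\]
The constants give \(\tfrac1{12}\). Together with the cubic contribution, the total coefficient of \(T\) is \(-R^2/6+1/24\).

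Then take the \(T\)-free part of the linear term, namely \(R\bigl(\eps_1c(\alpha_1,\beta_1)+\eps_2c(\alpha_2,\beta_2)\bigr)\). The \(\pi^2\) piece vanishes by \(k=1\). The \((\alpha+\beta)^2\) piece gives \(R(\eps_1+\eps_2)/24\) by the last displayed identity. The constant \(\tfrac1{12}\) gives \(R(\eps_1+\eps_2)/12\). The sum is \(R(\eps_1+\eps_2)/8\), and adding \(-R^3(\eps_1+\eps_2)/18\) gives the right side of \eqref{eq:P2-perturbative-shift}.

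There is no real obstacle. The only care needed is bookkeeping: \((\alpha+\beta)\) equals \(\eps_2\) on the first chart and \(\eps_1\) on the second, and the signs of \(\eps_2-\eps_1\) versus \(\eps_1-\eps_2\) in the denominators must be tracked consistently.
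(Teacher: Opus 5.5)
Your proposal is correct and follows the same route as the paper. The paper's proof is a single line: substitute \eqref{eq:P2-degree-zero-polynomial} and observe that the \(T^3\), \(T^2\) and \(\pi^2T\) terms cancel. Your partial-fraction identities over the common denominator \(\eps_1\eps_2(\eps_2-\eps_1)\) supply the bookkeeping behind that line, and the values you compute (the four \(k\)-identities, the \(-1/24\) in the \(T\)-coefficient, and the constant \(R(\eps_1+\eps_2)/8\)) are all correct.
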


\begin{proof}
Substitute \eqref{eq:P2-degree-zero-polynomial}.  The coefficients of \(T^3\), \(T^2\) and \(\pi^2T\) cancel; the coefficient of \(T\) and the constant term are as displayed.
\end{proof}

\subsection{Continuation of the degree \(0\) factors}
Let \(D=\eps_1\eps_2\) and \(s=\eps_1+\eps_2\).  Define the polynomials
\begin{equation}
 P_{\pm A}(A,M;\eps_1,\eps_2)
 =\frac1D\left(
 \frac{A^3}{6}+\frac{MA^2}{4}-\frac{\pi^2A}{3}
 +\frac{\eps_1^2+\eps_2^2+3D}{12}A\right).
\label{eq:root-polynomial}
\end{equation}
The part depending only on \(M\) is
\begin{equation}
 P_M(M;\eps_1,\eps_2)
 =\frac1D\left(
 \frac{M^3}{9}
 +\left(\frac{\pi^2}{3}-\frac{s^2}{12}\right)M\right).
\label{eq:M-polynomial}
\end{equation}
For the empty partition factor, define
\begin{equation}
 W(Y)=-\frac1D\left(
 \frac{Y^3}{6}+
 \left(\frac{\pi^2}{6}-\frac{\eps_1^2+\eps_2^2}{24}\right)Y\right),
 \qquad -zu=e^Y,
\label{eq:empty-two-leg-polynomial}
\end{equation}
where \(Y\) is an additive logarithmic variable.  Since \(Q=-zu^2=e^{-T}\) and \(u=e^{-A}\),
\begin{equation}
 Y=-M-\frac A2,\qquad T=M+\frac32A=A-Y.
\label{eq:Y-A-M-T}
\end{equation}
With \(Y\) and \(T\) as in \eqref{eq:Y-A-M-T},
\begin{equation}
 P_{\pm A}(A,M;\eps_1,\eps_2)+W(Y)
 =F_{\mathbb P^2}^{(0)}(T;\eps_1,\eps_2)
  +P_M(M;\eps_1,\eps_2).
\label{eq:polynomial-decomposition}
\end{equation}
Indeed, \(A=-2M-2Y\) and \(T=-2M-3Y\), so
\[
 \frac{A^3}{6}+\frac{MA^2}{4}-\frac{Y^3}{6}
 =\frac{T^3}{18}+\frac{M^3}{9}.
\]
Substituting in the remaining terms gives
\[
 -\frac{\pi^2T}{6}+\frac{DT}{12}
 +\frac{s^2T}{24}
 +\left(\frac{\pi^2}{3}-\frac{s^2}{12}\right)M,
\]
and after division by \(D\) these are the linear terms of \eqref{eq:P2-degree-zero-polynomial} and \eqref{eq:M-polynomial}.  For additive parameters \(\alpha,\beta\), let
\begin{equation}
\begin{aligned}
 G_{\alpha,\beta}(A,M)
 :={}&\frac{2\zeta(3)}{\alpha\beta}
 -\frac{\pi\mathrm{i}(\alpha+\beta)A}{2\alpha\beta}\\
 &+\frac{\alpha^2+\beta^2+3\alpha\beta}
 {48\alpha\beta}(2\pi\mathrm{i}+M)
 +\frac{\alpha^3+\alpha^2\beta+\alpha\beta^2+\beta^3}
 {48\alpha\beta}.
\end{aligned}
\label{eq:gamma-normalization}
\end{equation}
We use the exponentiated continuation of Definition~\ref{def:negative-root-continuation}.  The following connection identity uses the double Pochhammer convention \eqref{eq:global-double-Pochhammer}.

\begin{lemma}
\label{lem:triple-sine-continuation}
Let
\[
\begin{aligned}
 \omega_1&=\eps_1,& \omega_2&=-\eps_2,&
 \omega_3&=2\pi\mathrm{i},\\
 \omega_{\Sigma}&=\omega_1+\omega_2+\omega_3,&
 z_Y&=\omega_{\Sigma}/2+Y,&&
\end{aligned}
\]
and \(\omega=(\omega_1,\omega_2,\omega_3)\).  Suppose first that the periods lie in a common open half-plane with boundary through the origin and that \(\operatorname{Im}(\omega_j/\omega_k)\ne0\) for all \(j\ne k\).  For
\[
 x_j=e^{2\pi\mathrm{i}z_Y/\omega_j},\qquad
 q_{kj}=e^{2\pi\mathrm{i}\omega_k/\omega_j},
\]
let \((x_j;(q_{kj})_{k\ne j})_\infty\) denote the double Pochhammer \((x_j;q_{k_1j},q_{k_2j})_\infty\), \(\{k_1,k_2\}=\{1,2,3\}\setminus\{j\}\), and let
\begin{equation}
 \Phi(Y)
 :=\prod_{j=1}^2
 \frac{(x_j^{-1};(q_{kj}^{-1})_{k\ne j})_\infty}
 {(x_j;(q_{kj})_{k\ne j})_\infty}.
\label{eq:modular-connection-factor}
\end{equation}
Then
\begin{equation}
 \frac{\displaystyle\prod_{j=1}^3
 (x_j;(q_{kj})_{k\ne j})_\infty}
 {\displaystyle\prod_{j=1}^3
 (x_j^{-1};(q_{kj}^{-1})_{k\ne j})_\infty}
 =e^{W(Y)},
\label{eq:triple-sine-connection}
\end{equation}
with \(W\) as in \eqref{eq:empty-two-leg-polynomial}.  If \(t=e^{\eps_1}\), \(q=e^{-\eps_2}\) and \(Y=\log(Q/u)\), the third numerator factor is
\begin{equation}
 \left(-\sqrt{tq}\,\frac Qu;t,q\right)_\infty
 =\exp\left[
 -\sum_{m\geq1}
 \frac{(-1)^m(tq)^{m/2}Q^mu^{-m}}
 {m(1-t^m)(1-q^m)}\right],
\label{eq:outgoing-empty-product}
\end{equation}
and the third denominator factor, in the bases \((t,q)\), is
\begin{equation}
 \left(-\sqrt{tq}\,\frac uQ;t,q\right)_\infty.
\label{eq:incoming-empty-product}
\end{equation}
Equivalently, for \(Y\) in the same branch,
\begin{equation}
 (-\sqrt{tq}\,e^Y;t,q)_\infty
 =e^{W(Y)}(-\sqrt{tq}\,e^{-Y};t,q)_\infty
  \Phi(Y).
\label{eq:exact-double-Pochhammer-connection}
\end{equation}
\end{lemma}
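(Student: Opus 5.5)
This identity is an instance of the modular property of the triple sine function, in the form due to Narukawa; the plan is to reduce it to that property. The triple sine with periods \(\omega=(\omega_1,\omega_2,\omega_3)\) is
\[
 S_3(z\mid\omega)
 =e^{\frac{\pi\mathrm i}{6}B_{3,3}(z\mid\omega)}
 \prod_{j=1}^3\frac{(x_j;(q_{kj})_{k\ne j})_\infty}{(x_j^{-1};(q_{kj}^{-1})_{k\ne j})_\infty}.
\]
The hypotheses on the periods (a common half-plane and pairwise non-real ratios) are exactly the conditions under which each double Pochhammer converges, after base inversion \eqref{eq:double-Pochhammer-base-inversion} where a base has modulus greater than \(1\). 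Under the same hypotheses, the infinite product representation of \(S_3\) shows that \(S_3(z\mid\omega)\equiv1\), because \(S_3\) is then a product of ratios of \(q\)-shifted factorials whose \(\log\)-expansions cancel. Equivalently, the ratio of the three numerator products to the three denominator products equals \(e^{-\frac{\pi\mathrm i}{6}B_{3,3}(z_Y\mid\omega)}\).

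First, I will prove \eqref{eq:triple-sine-connection} by comparing both sides as functions of \(Y\). I will take logarithmic derivatives in \(Y\) and use the plethystic expansions \eqref{eq:global-double-Pochhammer}; Narukawa's identity is exactly the statement that the resulting Lambert-type series sum to a cubic polynomial. The normalization constant is fixed by \(Y\mapsto -Y\) antisymmetry: both sides invert, and \(W\) is odd.

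Second, I will expand \(-\frac{\pi\mathrm i}{6}B_{3,3}(z_Y\mid\omega)\) with \(z_Y=\omega_\Sigma/2+Y\), using the fact that the centered Bernoulli polynomial is odd:
\[
 B_{3,3}(\omega_\Sigma/2+Y\mid\omega)=\frac{Y^3-\frac14(\omega_1^2+\omega_2^2+\omega_3^2)Y}{\omega_1\omega_2\omega_3}.
\]
Inserting \(\omega_3=2\pi\mathrm i\) and \(\omega_1\omega_2=-D\), the prefactor becomes
\[
 -\frac{\pi\mathrm i}{6}\cdot\frac1{-2\pi\mathrm i D}=\frac1{12D}.
\]
This gives
\[
 -\frac{1}{D}\Bigl(\frac{Y^3}{6}+\Bigl(\frac{\pi^2}{6}-\frac{\eps_1^2+\eps_2^2}{24}\Bigr)Y\Bigr)
\]
up to the overall sign convention. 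Checking this sign against \(W\) in \eqref{eq:empty-two-leg-polynomial} is the main bookkeeping step, and I expect it to be the principal obstacle: the orientation of \(\omega_2=-\eps_2\) and the branch \(\omega_3=2\pi\mathrm i\) both flip signs.

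Third, for the \(j=3\) factor I have \(x_3=e^{z_Y}=e^{\omega_\Sigma/2}e^{Y}\). Since \(e^{\omega_3/2}=-1\), this equals \(-\sqrt{tq}\,e^Y\) with \(t=e^{\eps_1}\) and \(q=e^{-\eps_2}\), and the bases are \(q_{13}=e^{\omega_1}=t\) and \(q_{23}=q\). Setting \(e^Y=Q/u\), which is consistent with \(-zu=e^Y\) and \(Q=-zu^2\), gives \eqref{eq:outgoing-empty-product} via \eqref{eq:global-double-Pochhammer}. The denominator factor has argument \(x_3^{-1}\) and inverted bases. Rewriting it by base inversion in both variables gives \((-\sqrt{tq}\,e^{-Y}\cdot tq\cdot(tq)^{-1};t,q)_\infty\), that is, \eqref{eq:incoming-empty-product}; I will check the shift by \(tq\) carefully, since the symmetric choice \(\omega_\Sigma/2\) is precisely what makes it cancel. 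Finally, \eqref{eq:exact-double-Pochhammer-connection} follows by moving the \(j=1,2\) factors to the right, which is the definition of \(\Phi(Y)\).
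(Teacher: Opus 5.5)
Your overall plan matches the paper's: reduce to Narukawa's modular property of the triple sine, evaluate \(B_{33}\) at the centred point, and read off the \(j=3\) factor. Your third step is correct. The gap is that you quote Narukawa's identity in the wrong form, and this breaks the computation.

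\(S_3(z\mid\omega)\) is not identically \(1\); it has zeros and poles along the period lattice. Nor is it \(e^{\pi\mathrm i B_{33}/6}\) times the full ratio. What Narukawa proves, for \(\operatorname{Im}(\omega_j/\omega_k)\ne0\), is two separate product representations:
\[
 S_3(z\mid\omega)
 =e^{-\pi\mathrm i B_{33}(z\mid\omega)/6}\prod_{j=1}^3(x_j;(q_{kj})_{k\ne j})_\infty
 =e^{\pi\mathrm i B_{33}(z\mid\omega)/6}\prod_{j=1}^3(x_j^{-1};(q_{kj}^{-1})_{k\ne j})_\infty .
\]
Dividing one by the other shows that the ratio in \eqref{eq:triple-sine-connection} is \(e^{\pi\mathrm i B_{33}(z_Y\mid\omega)/3}\), not \(e^{-\pi\mathrm i B_{33}(z_Y\mid\omega)/6}\).

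The discrepancy is a factor \(-2\) in the exponent, not a sign convention. Your prefactor \(1/(12D)\) gives \(\frac{Y^3}{12D}+\cdots=-\tfrac12W(Y)\). No choice of orientation of \(\omega_2\) or of branch for \(\omega_3\) turns that into \(W(Y)\), so the ``principal obstacle'' you anticipate is really this misquotation.

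With the correct exponent the prefactor is
\[
 \frac{\pi\mathrm i}{3}\cdot\frac1{\omega_1\omega_2\omega_3}
 =\frac{\pi\mathrm i}{3}\cdot\frac{1}{-2\pi\mathrm i D}
 =-\frac1{6D}.
\]
Your (correct) formula for \(B_{33}(\omega_\Sigma/2+Y\mid\omega)\) then gives exactly \(W(Y)\), with \(\omega_3^2=-4\pi^2\) producing the \(-\pi^2Y/(6D)\) term.

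Your first step does not repair this. The logarithmic-derivative argument just reinvokes Narukawa's identity for its content. The \(Y\mapsto-Y\) symmetry does interchange numerator and denominator: \(x_j(-Y)=q_{k_1j}q_{k_2j}\,x_j(Y)^{-1}\), and two base inversions apply. But it only fixes a multiplicative constant up to \(\pm1\).

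Once both product formulas are used as above, no separate normalization step is needed. The rest of your argument then goes through as in the paper:
\begin{itemize}
\item the identifications \(x_3=-\sqrt{tq}\,e^Y\), \(q_{13}=t\), \(q_{23}=q\);
\item the double inversion \((x_3^{-1};t^{-1},q^{-1})_\infty=(tq\,x_3^{-1};t,q)_\infty=(-\sqrt{tq}\,e^{-Y};t,q)_\infty\);
\item moving the \(j=1,2\) factors into \(\Phi(Y)\).
\end{itemize}
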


The identity holds meromorphically in \(Y\) on this initial period domain and persists on compatible continuations where they exist, with \(\omega_3=2\pi\mathrm i\) fixed.  Its use on the common domains for the three chart functions is part of the analytic hypotheses below; the lemma alone does not assert such a common continuation.

\begin{proof}
Let \(S_3(z\mid\omega_1,\omega_2,\omega_3)\) be the Barnes triple sine in Narukawa's convention, with multiple Bernoulli polynomial \(B_{33}\); on the open set of the statement, Narukawa's hypothesis \(\operatorname{Im}(\omega_j/\omega_k)\ne0\) holds, and his two product formulas \cite[Section~4]{Narukawa} give
\[
 S_3(z_Y\mid\omega)
 =e^{-\pi\mathrm{i}B_{33}(z_Y\mid\omega)/6}
   \prod_{j=1}^3(x_j;(q_{kj})_{k\ne j})_\infty
\]
and
\[
 S_3(z_Y\mid\omega)
 =e^{\pi\mathrm{i}B_{33}(z_Y\mid\omega)/6}
   \prod_{j=1}^3
   (x_j^{-1};(q_{kj}^{-1})_{k\ne j})_\infty.
\]
Narukawa's construction gives the meromorphic continuation of the products, so the quotient gives \eqref{eq:triple-sine-connection} as a meromorphic identity in \((\eps_1,\eps_2)\).  At \(z_Y=\omega_{\Sigma}/2+Y\),
\[
 B_{33}(\omega_{\Sigma}/2+Y\mid\omega)
 =\frac{Y^3-\frac14
  (\omega_1^2+\omega_2^2+\omega_3^2)Y}
 {\omega_1\omega_2\omega_3},
\]
and since \(\omega_1\omega_2\omega_3=-2\pi\mathrm{i}\eps_1\eps_2\) \(\pi\mathrm{i}B_{33}/3=W(Y)\).  Moreover
\[
 x_3=e^{z_Y}=-\sqrt{tq}\,e^Y
 =-\sqrt{tq}\,Q/u,
\]
which proves \eqref{eq:outgoing-empty-product}.  The rule for two inverted bases gives
\[
 (x_3^{-1};t^{-1},q^{-1})_\infty
 =(tqx_3^{-1};t,q)_\infty
 =\left(-\sqrt{tq}\,\frac uQ;t,q\right)_\infty.
\]
Solving \eqref{eq:triple-sine-connection} for the third numerator factor gives \eqref{eq:exact-double-Pochhammer-connection}.
\end{proof}

Define the linear \(M\)-term
\begin{equation}
\begin{aligned}
 L_{\eps_1,\eps_2}(M)
 &=\frac{\eps_1^2+\eps_2^2+3D}{48D}M,\\
 \widetilde P_M(M;\eps_1,\eps_2)
 &=P_M(M;\eps_1,\eps_2)
  +L_{\eps_1,\eps_2}(M).
\end{aligned}
\label{eq:NY-linear-M-term}
\end{equation}

Let
\begin{equation}
 Y_{\mathrm{edge}}=A-T=-M-A/2,
 \qquad
 Y_{\mathrm{root}}=A-\frac{\eps_1+\eps_2}{2}+\pi\mathrm{i}.
\label{eq:edge-root-arguments}
\end{equation}
These are the edge and negative-root arguments.  Define
\begin{equation}
 \mathcal R(A,T;\eps_1,\eps_2)
 =\left(-\sqrt{tq}\,\frac uQ;t,q\right)_\infty
   \Phi(Y_{\mathrm{edge}})\Phi(Y_{\mathrm{root}})^{-1}.
\label{eq:remaining-product-factor}
\end{equation}
Here \(t=e^{\eps_1}\), \(q=e^{-\eps_2}\), and \(\Phi\) uses the displayed equivariant parameters.  At a chart value all of these parameters are substituted simultaneously.

The analytic hypotheses below ask for representations of \(\mathcal R\) and its chart translates as finite products of factors
\begin{equation}
 \left(a_\rho e^{-\lambda_\rho A}Q^{d_\rho};
       r_\rho,s_\rho\right)_\infty^{\epsilon_\rho},
 \qquad \epsilon_\rho\in\{1,-1\},
\label{eq:angular-double-Pochhammer-factor}
\end{equation}
where \(a_\rho,r_\rho,s_\rho\) are meromorphic in \((\eps_1,\eps_2)\) and independent of \(A,T,Q\).  The coefficients \(a_\rho\) of a chart translate may also depend on its lattice index.  Existence of representations with only decaying arguments is an assumption, not a consequence of base inversion or of the connection identity.

For the unshifted factor and the two chart factors, the first two triple sine periods are
\[
 (\eps_1,-\eps_2),\qquad
 (\eps_1,\eps_1-\eps_2),\qquad
 (\eps_1-\eps_2,-\eps_2)
\]
respectively.  The permitted generating pairs in \eqref{eq:angular-double-Pochhammer-factor} form the finite set
\begin{equation}
 \begin{aligned}
 \mathcal P_{\mathrm{exp}}={}&\{(1,-1)\}\\
 &\cup\left\{
 \left(-\sigma\frac{2\pi\mathrm{i}}{\omega},
        \sigma\frac{2\pi\mathrm{i}}{\omega}\right),
 \left(-\sigma\frac{2\pi\mathrm{i}}{\omega},0\right):
 \omega\in\{\eps_1,\eps_1-\eps_2,-\eps_2\},\quad
 \sigma=\pm1\right\}.
 \end{aligned}
\label{eq:finite-exponent-pairs}
\end{equation}
A factor with period \(\omega\) and argument \(Y_{\mathrm{edge}}=A-T\) is a meromorphic coefficient times
\[
 e^{\sigma 2\pi\mathrm{i}A/\omega}
 Q^{\sigma 2\pi\mathrm{i}/\omega},
\]
while the same factor with argument \(Y_{\mathrm{root}}\) has no power of \(Q\).  Since
\begin{equation}
 \log\left(ae^{-\lambda A}Q^d;r,s\right)_\infty^\epsilon
 =-\epsilon\sum_{m\geq1}
 \frac{a^m e^{-m\lambda A}Q^{md}}
 {m(1-r^m)(1-s^m)},
\label{eq:double-Pochhammer-log-expansion}
\end{equation}
all exponent pairs in one factor are positive integral multiples of one member of \(\mathcal P_{\mathrm{exp}}\), and the number \(N\) of distinct generating pairs in such a representation is at most \(13\).

\subsection{Analytic hypotheses}
Fix the lift \eqref{eq:Lambda-continuation-branch} and one of
\[
 (k,d_{\mathrm{NY}})=(0,1),\ (0,2),\ (1,1),
 \qquad r=3-2d_{\mathrm{NY}}+2k.
\]
For a function \(f(A,T;\alpha,\beta)\), define its two chart substitutions, for every \(n\in\Z\), by
\begin{equation}
\begin{aligned}
 \tau_{1,n}f&=
 f\bigl(A+\eps_1(2n+k),T+\eps_1R_{r,n};
        \eps_1,\eps_2-\eps_1\bigr),\\
 \tau_{2,n}f&=
 f\bigl(A+\eps_2(2n+k),T+\eps_2R_{r,n};
        \eps_1-\eps_2,\eps_2\bigr).
\end{aligned}
\label{eq:analytic-chart-operators}
\end{equation}
In particular \(M=T-3A/2\) changes to
\(M+\eps_i(3-2d_{\mathrm{NY}}-k)/2\).
We apply \(\tau_{i,n}\) to \(\mathcal R\) and to
\(\mathscr A_{\mathrm c}(e^{-T},e^{-A};e^\alpha,e^{-\beta})\).
The latter series will retain its \(A\)-independent terms in \(Q\).

We say that the \emph{analytic hypotheses} hold for this pair if there exist a nonempty connected open set \(\mathscr U\subset\C^3\) with coordinates \((T,\eps_1,\eps_2)\), an open interval \(I^\circ\) on which \(\operatorname{Re}(e^{\mathrm i\theta})>0\), and \(R_0>0\), with
\[
 \mathscr D_\infty=
 \{(A,T,\eps_1,\eps_2):(T,\eps_1,\eps_2)\in\mathscr U,\
 |A|>R_0,\ \arg A\in I^\circ\},
\]
such that the following conditions hold.
\begin{enumerate}
\item[(A1)] The exponentiated degree \(0\) products of \(Z_1\) in \eqref{eq:NY-full-partition}, its unshifted framed sheaf series, and both chart substitutions for every \(n\in\Z\) have chosen analytic continuations from nonempty initial convergence domains to a common simply connected pole-free domain containing \(\mathscr D_\infty\).  These continuations keep the specified lifts, satisfy \eqref{eq:exact-double-Pochhammer-connection}, and agree with the original germs on their initial domains.

\item[(A2)] Fix one finite product representation \eqref{eq:angular-double-Pochhammer-factor} for \(\mathcal R\) at each of the three equivariant parameter pairs.  For every \(n\), its translated representation is obtained by \eqref{eq:analytic-chart-operators}, without changing the generating exponent pairs.  All generating pairs belong to \(\mathcal P_{\mathrm{exp}}\).  These representations are exact after the polynomial factors already displayed have been extracted; no further \(A\)- or \(T\)-dependent prefactor is suppressed.  In particular their formal constant term is \(1\).

\item[(A3)] For every generating pair \((\lambda_\rho,d_\rho)\) actually used in these representations, every compact \(K\subset\mathscr U\), and every closed interval \(J\subset I^\circ\),
\begin{equation}
 \inf_{\substack{(T,\eps_1,\eps_2)\in K\\ \theta\in J}}
 \operatorname{Re}(\lambda_\rho e^{\mathrm i\theta})>0.
\label{eq:P2-asymptotic-decay-condition}
\end{equation}
This condition applies to the nonconstant generators of the remaining products, not to their unit terms or to the \(A\)-independent \(Q\)-series in \(\mathscr A_{\mathrm c}\).  The bound uses the same finite list for all \(n\).

\item[(A4)] The product expansions in (A2), the regrouped \(Q,u\)-series \(\mathscr A_{\mathrm c}\) and their chart substitutions converge normally on \(\mathscr D_\infty\).  The same holds for the normalized bilateral sum
\begin{equation}
 \sum_{n\in\Z}(-1)^n Q^{m_r(n)}
 e^{(\eps_1+\eps_2)\gamma_r(n)}
 \prod_{i=1}^2
 \bigl(\tau_{i,n}\mathscr A_{\mathrm c}\bigr)
 \bigl(\tau_{i,n}\mathcal R\bigr).
\label{eq:analytic-normalized-lattice-sum}
\end{equation}
For each \(K,J\) as above, normal convergence here includes a summable majorant uniform for \((T,\eps_1,\eps_2)\in K\) and \(A=\rho e^{\mathrm i\theta}\), \(\rho\geq R_{K,J}\), \(\theta\in J\), for some \(R_{K,J}\geq R_0\).  This is asserted for the displayed normalized expressions, not for unremoved polynomial exponentials.

\item[(A5)] Applying the connection identity to each instanton coefficient, continuing it as in (A1), and regrouping it into the expressions of (A2)--(A4) commutes with summing the instanton and lattice series.  Thus the continued Nakajima--Yoshioka identities agree with these expansions after removal of their explicit common factors.  The expansions are unique as convergent series in \(e^{-\lambda A}Q^d\), with \(\operatorname{Log}Q=-T\), and their coefficient identities extend meromorphically in the equivariant parameters.
\end{enumerate}

\begin{conjecture}
\label{conj:P2-asymptotic-resummation}
The analytic hypotheses \textup{(A1)--(A5)} hold for each of
\[
 (k,d_{\mathrm{NY}})=(0,1),\ (0,2),\ (1,1).
\]
\end{conjecture}

The conjecture includes existence of the positive-exponent representations and compatibility with the entire bilateral sum.  Neither is proved by the degree \(0\) connection formula.  The support argument in Appendix~\ref{app:u-zero} proves the required formal finiteness once these hypotheses are granted.

\subsection{The three equations}
For \(s=\eps_1+\eps_2\), let
\begin{equation}
 \Lambda_{X,1}=e^{s/18},\qquad
 \Lambda_{X,-1}=e^{-s/18},\qquad
 \Lambda_{X,3}=0.
\label{eq:X-bilinear-coefficients}
\end{equation}

\begin{theorem}
\label{thm:P2-blowup-main}
Assume Conjectures~\ref{conj:P2-vertex-comparison} and \ref{conj:P2-asymptotic-resummation}.  For \(r=1,-1,3\),
\begin{equation}
\begin{aligned}
 &\sum_{n\in\Z}(-1)^n
 \widehat Z_X\left(
 T+\eps_1\left(3n+\frac r2\right);
 \eps_1,\eps_2-\eps_1\right)\\
 &\hspace{23mm}\times
 \widehat Z_X\left(
 T+\eps_2\left(3n+\frac r2\right);
 \eps_1-\eps_2,\eps_2\right)
 =\Lambda_{X,r}\widehat Z_X(T;\eps_1,\eps_2).
\end{aligned}
\label{eq:P2-main-blowup}
\end{equation}
The equality is interpreted after division by
\(e^{F_{\mathbb P^2}^{(0)}(T;\eps_1,\eps_2)}\).
For \(r=\pm1\), also cancel the nonzero scalar \(\Lambda_{X,r}\); the result is an identity in \(\K[[Q]]\), \(Q=e^{-T}\).
For \(r=3\), also divide the left side by its common monomial \(Q^{1/3}\) before taking coefficients; the right side is zero.  In each case every coefficient of the resulting series is a finite sum over \(n\).
\end{theorem}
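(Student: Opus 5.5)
The plan is to start from the multiplicative Nakajima--Yoshioka identities \eqref{eq:NY-multiplicative} for \((k,d_{\mathrm{NY}})=(0,1),(0,2),(1,1)\), after the substitution \(n\mapsto-n\). By Lemma~\ref{lem:NY-chart-substitutions} these correspond to \(r=1,-1,3\). We then pass to the limit \(u\to0\) with \(Q=-zu^2\) held fixed.

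The first step writes each factor \(\mathscr Z\) on the unshifted chart and on the two blown-up charts in the form
\[
e^{\text{polynomial}}\times(\text{degree }0\text{ products})\times\text{framed sheaf series}.
\]
Using the framing change \eqref{eq:NY-centered-ADHM} and Lemma~\ref{lem:NY-determinant-is-ADHM}, the framed sheaf series is \(\iota_0Z^{\mathrm{fr}}_{2,1}\) at framing \((u,1)\). The exponentiated \(\widetilde\gamma(\pm A)\) factors from \eqref{eq:NY-exponentiated-gamma} are then regrouped as follows. The positive root gives the fibre factor \(\PE[(t+q)u/((1-t)(1-q))]\). The negative root is converted through the connection identity \eqref{eq:exact-double-Pochhammer-connection} at \(Y_{\mathrm{root}}\). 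Combining this with the empty two-leg factor \(\mathsf Z_{\varnothing\varnothing}\) at \(Y_{\mathrm{edge}}\), and using \eqref{eq:polynomial-decomposition}, each chart function becomes
\[
e^{F^{(0)}_{\mathbb P^2}(T)+\widetilde P_M(M)+G}\,\mathscr A_{\mathrm c}\,\mathcal R .
\]

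The second step cancels the polynomial prefactors. We apply the chart operators \(\tau_{i,n}\) of \eqref{eq:analytic-chart-operators}, in which \(M\) shifts by \(\eps_i(3-2d_{\mathrm{NY}}-k)/2\). We then compute the difference of the two shifted \(\widetilde P_M+G\) terms and the unshifted one. Lemma~\ref{lem:P2-perturbative-shift} handles the \(F^{(0)}_{\mathbb P^2}\) part. The \(M\)-dependent and \(A\)-dependent pieces should combine into the lattice weights \(Q^{m_r(n)}e^{s\gamma_r(n)}\) together with a sign \((-1)^n\). That sign comes from the \(\pi\mathrm i A\) term in \(G\) under \(A\mapsto A+\eps_i(2n+k)\), combined with the \(2\pi\mathrm i\) lift \eqref{eq:Lambda-continuation-branch}. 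The resulting constants, together with \((q_1q_2)^{(1-d_{\mathrm{NY}})/12}\), must produce \(\Lambda_{X,\pm1}=e^{\pm s/18}\); this is a finite polynomial verification. For \(r=3\) the overall \(Q^{1/3}\) comes from \(m_3\) and the polynomial constant term. The outcome is that the continued Nakajima--Yoshioka identity is equivalent to the statement that the normalized bilateral sum \eqref{eq:analytic-normalized-lattice-sum} equals \(\Lambda_{X,r}\,\mathscr A_{\mathrm c}\,\mathcal R\) on \(\mathscr D_\infty\). The validity of this regrouping, and of the continuation, is exactly (A1), (A2) and (A5).

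The third step is the limit \(A\to\infty\) inside the sector \(I^\circ\). By (A3) every nonconstant generator \(e^{-\lambda_\rho A}Q^{d_\rho}\) of the \(\mathcal R\)-products decays, so \(\tau_{i,n}\mathcal R\to1\). By Lemma~\ref{lem:u-zero-regularity}, \(\tau_{i,n}\mathscr A_{\mathrm c}\) tends to its \(u^0\) part, which is \(\mathcal Z(Q_i)\) with \(Q_i=Qq_i^{R_{r,n}}\). The uniform summable majorant in (A4) lets us interchange the limit with the sum over \(n\) by dominated convergence. This gives, analytically in \(Q\),
\[
\sum_n(-1)^nQ^{m_r(n)}e^{s\gamma_r(n)}\,\mathcal Z(Q_1)\,\mathcal Z(Q_2)=\Lambda_{X,r}\,\mathcal Z(Q).
\]
Uniqueness of the expansions in (A5) then turns this into a coefficient identity in \(\K[[Q]]\). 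Since \(m_r(n)\to\infty\) quadratically, each coefficient is a finite sum over \(n\). Reinserting \(e^{F^{(0)}}\) and reading the substitution \((t,q)=(e^{\eps_1},e^{-\eps_2})\) chart by chart gives \eqref{eq:P2-main-blowup}.

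The main obstacle is the support argument in this last step. One must show that after \(A\to\infty\) no cross terms survive, i.e.\ that negative \(u\)-powers in \(\tau_{i,n}\mathscr A_{\mathrm c}\) never combine with the \(A\)-dependent exponentials to produce constant contributions. Here I would bound the \(u\)-exponents of shifted monomials against the finite exponent set \(\mathcal P_{\mathrm{exp}}\), as in Appendix~\ref{app:u-zero}.
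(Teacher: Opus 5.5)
You have the paper's architecture right: the Nakajima--Yoshioka identities for \((k,d_{\mathrm{NY}})=(0,1),(0,2),(1,1)\) after \(n\mapsto-n\), the factorization \(Z_1=e^{c}e^{G+\widetilde P_M}e^{F^{(0)}_{\mathbb P^2}}\mathscr A_{\mathrm c}\mathcal R\) of Lemma~\ref{lem:full-u-zero-expansion}, the chart quotients of Lemmas~\ref{lem:two-chart-factor} and~\ref{lem:P2-perturbative-shift}, and then \(u\to0\). However, the step you defer as a ``finite polynomial verification'', namely the constant, is done wrong. Your limiting identity \(\sum_n(-1)^nQ^{m_r(n)}e^{s\gamma_r(n)}\mathcal Z(Q_1)\mathcal Z(Q_2)=\Lambda_{X,r}\mathcal Z(Q)\) is false for \(r=\pm1\). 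At \(Q^0\) only \(n=0\) contributes, and \(\gamma_r(0)=0\), \(A_0=1\), so the left side begins with \(1\) while the right side begins with \(e^{\pm s/18}\). With the weights \(e^{s\gamma_r(n)}\) the coefficient on the right is \(1\), which is the form in Corollary~\ref{cor:P2-finite-coefficients}. The factor \(\Lambda_{X,r}\) appears only when \(e^{F^{(0)}_{\mathbb P^2}}\) is reinserted, as the \(n\)-independent part of the perturbative shift: \(-R_{r,n}^3/18+R_{r,n}/8=\gamma_r(n)+r/18\). As you wrote it, reinserting \(e^{F^{(0)}_{\mathbb P^2}}\) would give \(e^{rs/9}\), not \(\Lambda_{X,r}\).

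The fix is to determine the constant from the \(Q^0\) coefficient rather than by matching against \((q_1q_2)^{(1-d_{\mathrm{NY}})/12}\). For \(\eta_{k,d_{\mathrm{NY}}}=\pm\frac12\), the exponential left by Lemma~\ref{lem:two-chart-factor}, together with the Nakajima--Yoshioka normalization, is independent of \(n\), \(T\) and \(A\), so the \(Q^0\) term pins it down. The direct match would not close in any case. For \(d_{\mathrm{NY}}=2\) the two lemmas give the \(n=0\) constant \(e^{-s/36}e^{-s/18}=e^{-s/12}\), whereas \((q_1q_2)^{-1/12}=e^{s/12}\).

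For \(r=3\) there is a related point. The factor \(e^{M/12-s/24}\), with \(M=T-\frac32A\), is not a lattice weight. It is \(n\)-independent and must be divided out \emph{before} you let \(A\to\infty\). Otherwise it tends to \(0\) and your limit yields only \(0=0\).

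Your final step also differs in technique from the paper. You take the analytic limit \(A\to\infty\), interchange it with \(\sum_n\) by dominated convergence using the majorant in (A4), and then pass to \(\K[[Q]]\) via (A5). The paper instead expands in the completed algebra \(\mathscr H_{\mathfrak S}\), proves locally finite support of the bilateral sum (Lemma~\ref{lem:bilateral-u-zero-limit}), and applies the continuous ring homomorphism \(\operatorname{CT}_A\) term by term. Under (A4) your limit is in effect \(\operatorname{CT}_A\) computed by dominated convergence over the joint index of \(n\) and the monomials, so the two routes agree. The formal route gives the coefficient identity directly and independently of the sector. Yours needs the extra analytic-to-formal step.

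The ``main obstacle'' you name does not arise in either route. By Lemma~\ref{lem:u-zero-regularity}, \(\mathscr A_{\mathrm c}\in\K[[Q,u]]\) has no negative powers of \(u\), and the chart substitutions preserve this, so no cross term can reach \(u^0\). The finiteness the paper actually has to prove concerns monomials of \(\mathcal R\) whose \(Q\)-exponents have negative real part, such as \(u/Q\) and some \(Q^{\pm2\pi\mathrm i/\omega}\). These are controlled by \(v\geq-Ch\) together with the finite sublevel sets of \(m_r\). In your analytic route they are absorbed into (A4).
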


\begin{proof}
The analytic \(u\to0\) specialization is Lemma~\ref{lem:bilateral-u-zero-limit} and Proposition~\ref{prop:u-zero-specialization} of Appendix~\ref{app:u-zero}.  Theorem~\ref{thm:P2-PT-ADHM} identifies the positive degree stable pair series with the coefficientwise limit of the framed sheaf series.  Apply the multiplicative formula \eqref{eq:NY-multiplicative} for
\[
 (k,d_{\mathrm{NY}})=(0,1),\qquad(0,2),\qquad(1,1).
\]
By Lemma~\ref{lem:NY-chart-substitutions} the chart translations are \(T\mapsto T+\eps_i(3n+r/2)\) with \(r=1,-1,3\).  By the appendix, the constant term at \(u=0\) commutes with the two chart substitutions, their product, and the bilateral lattice sum.  The three identities therefore give \eqref{eq:P2-main-blowup}, including the removal of \(Q^{1/3}\) in the vanishing case.
\end{proof}

\subsection{Coefficient recursion}
\begin{corollary}
\label{cor:P2-finite-coefficients}
Assume the two conjectures of Theorem~\ref{thm:P2-blowup-main}, and let \(x=t/q\).  For \(r=1,-1\) and every \(N\geq0\),
\begin{equation}
 \sum_{\substack{n\in\Z,\ a,b\geq0\\m_r(n)+a+b=N}}
 (-1)^n x^{\gamma_r(n)}
 t^{-R_{r,n}a}q^{R_{r,n}b}
 A_a(t,tq)A_b(tq,q)=A_N(t,q).
\label{eq:P2-unity-finite-coefficients}
\end{equation}
For \(r=3\) and every \(N\geq0\),
\begin{equation}
 \sum_{\substack{n\in\Z,\ a,b\geq0\\m_3(n)+a+b=N}}
 (-1)^n x^{\gamma_3(n)}
 t^{-R_{3,n}a}q^{R_{3,n}b}
 A_a(t,tq)A_b(tq,q)=0.
\label{eq:P2-vanishing-finite-coefficients}
\end{equation}
Both sums are finite.
\end{corollary}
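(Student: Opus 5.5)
We will extract coefficients of \(Q^N\) from the three identities of Theorem~\ref{thm:P2-blowup-main}, after normalizing by \(e^{F^{(0)}_{\mathbb P^2}(T;\eps_1,\eps_2)}\).

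\emph{Step 1: each shifted factor becomes a series in \(Q\).} Write \(\widehat Z_X=e^{F^{(0)}_{\mathbb P^2}}\mathcal Z(e^{-T};e^{\alpha},e^{-\beta})\) with \(\mathcal Z=\sum_d A_dQ^d\) as in \eqref{eq:P2-A-coefficients}. In the first chart \((\alpha,\beta)=(\eps_1,\eps_2-\eps_1)\), so \((e^\alpha,e^{-\beta})=(t,tq)\) by \eqref{eq:P2-blowup-variable-change}. The shifted Novikov variable is \(e^{-T-\eps_1R_{r,n}}=Qt^{-R_{r,n}}\). Hence the first factor contributes \(\sum_a A_a(t,tq)t^{-R_{r,n}a}Q^a\). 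In the second chart \((\alpha,\beta)=(\eps_1-\eps_2,\eps_2)\) gives \((e^\alpha,e^{-\beta})=(tq,q)\). Its shift is \(e^{-\eps_2R_{r,n}}=q^{R_{r,n}}\), so it contributes \(\sum_b A_b(tq,q)q^{R_{r,n}b}Q^b\).

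\emph{Step 2: the degree-\(0\) exponentials.} By Lemma~\ref{lem:P2-perturbative-shift} with \(R=R_{r,n}\), the ratio of the two shifted exponentials to the unshifted one is
\[
\exp\Bigl(-\bigl(\tfrac{R^2}{6}-\tfrac1{24}\bigr)T+s\bigl(-\tfrac{R^3}{18}+\tfrac R8\bigr)\Bigr).
\]
We then check directly from \eqref{eq:P2-unity-exponents} and \eqref{eq:P2-vanishing-exponents}, setting \(R=3n+r/2\), that two identities hold:
\[
\frac{R^2}{6}-\frac1{24}=m_r(n)+c_r,\qquad -\frac{R^3}{18}+\frac R8=\gamma_r(n)+e_r.
\]
Here \(c_r=r^2/24-1/24\) and \(e_r=-r^3/144+r/16\) are independent of \(n\). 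For \(r=\pm1\) we get \(c_r=0\) and \(e_{\pm1}=\pm1/18\), which matches \(\Lambda_{X,\pm1}=e^{\pm s/18}\). For \(r=3\) we get \(c_3=1/3\), giving the monomial \(Q^{1/3}\) removed in Theorem~\ref{thm:P2-blowup-main}, while \(e_3\) is a harmless constant because the right side is zero.

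Since \(e^{s\gamma}\) with \(s=\eps_1+\eps_2\) equals \((e^{\eps_1}/e^{-\eps_2})^{\gamma}=x^{\gamma}\) with \(x=t/q\), after cancelling these \(n\)-independent factors the \(n\)-th summand is
\[
(-1)^nQ^{m_r(n)}x^{\gamma_r(n)}\sum_{a,b}t^{-R_{r,n}a}q^{R_{r,n}b}A_a(t,tq)A_b(tq,q)Q^{a+b}.
\]
The right side is \(\mathcal Z(Q;t,q)\) for \(r=\pm1\) and \(0\) for \(r=3\). Taking the coefficient of \(Q^N\) gives \eqref{eq:P2-unity-finite-coefficients} and \eqref{eq:P2-vanishing-finite-coefficients}.

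\emph{Step 3: finiteness.} The quadratic \(m_r(n)\) is a nonnegative integer for all \(n\in\Z\) and tends to \(\infty\) as \(|n|\to\infty\). So the constraint \(m_r(n)+a+b=N\) with \(a,b\geq0\) leaves only finitely many triples \((n,a,b)\). This also justifies interchanging the lattice sum with coefficient extraction, as asserted in Theorem~\ref{thm:P2-blowup-main}.

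The main obstacle is the bookkeeping in Step 2. The cubic identity relating \(R^3\) to \(\gamma_r(n)\) and the constants \(e_r\) must agree exactly with the normalization \(\Lambda_{X,r}\) and the branch conventions; the remaining steps are formal.
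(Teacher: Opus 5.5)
Your proposal is correct and follows the paper's proof essentially step for step. It uses the same chart substitutions \((t,tq,Qt^{-R_{r,n}})\) and \((tq,q,Qq^{R_{r,n}})\), the same appeal to Lemma~\ref{lem:P2-perturbative-shift} giving \(Q^{m_r(n)}x^{\gamma_r(n)}\) up to the \(n\)-independent factors \(x^{\pm1/18}=\Lambda_{X,\pm1}\) and \(Q^{1/3}\), and the same finiteness argument from the finite sublevel sets of \(m_r\). Your constant \(e_3\) is in fact \(0\), as the paper records.
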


\begin{proof}
The two charts replace \((t,q,Q)\) by
\[
 (t,tq,Qt^{-R_{r,n}}),\qquad
 (tq,q,Qq^{R_{r,n}})
\]
respectively.  By Lemma~\ref{lem:P2-perturbative-shift}, the ratio of the two shifted degree \(0\) exponentials to the unshifted one is
\begin{equation}
 Q^{R_{r,n}^2/6-1/24}
 x^{-R_{r,n}^3/18+R_{r,n}/8}.
\label{eq:P2-perturbative-ratio}
\end{equation}
For \(r=\pm1\),
\[
 \frac{R_{r,n}^2}{6}-\frac1{24}=m_r(n),\qquad
 -\frac{R_{r,n}^3}{18}+\frac{R_{r,n}}8-\frac r{18}
 =\gamma_r(n).
\]
After division by the unshifted degree \(0\) exponential and by \(x^{r/18}\), the blowup identity reads
\[
 \sum_{n\in\Z}(-1)^nQ^{m_r(n)}x^{\gamma_r(n)}
 \mathcal Z(Qt^{-R_{r,n}};t,tq)
 \mathcal Z(Qq^{R_{r,n}};tq,q)
 =\mathcal Z(Q;t,q),
\]
and its coefficient of \(Q^N\) is \eqref{eq:P2-unity-finite-coefficients}.  For \(r=3\),
\[
 \frac{R_{3,n}^2}{6}-\frac1{24}=\frac13+m_3(n),\qquad
 -\frac{R_{3,n}^3}{18}+\frac{R_{3,n}}8=\gamma_3(n),
\]
and coefficient extraction after removing \(Q^{1/3}\) gives \eqref{eq:P2-vanishing-finite-coefficients}.  Each \(m_r\) is a quadratic polynomial with finite sublevel sets on \(\Z\), so the sums are finite.
\end{proof}

For \(N>0\), let \(E_{r,N}(t,q)\) be the part of the left side of \eqref{eq:P2-unity-finite-coefficients}, or of \eqref{eq:P2-vanishing-finite-coefficients} when \(r=3\), with \(a<N\) and \(b<N\):
\begin{equation}
 E_{r,N}
 =\sum_{\substack{n\in\Z,\ a,b\geq0\\
                   m_r(n)+a+b=N\\a<N,\ b<N}}
 (-1)^n x^{\gamma_r(n)}
 t^{-R_{r,n}a}q^{R_{r,n}b}
 A_a(t,tq)A_b(tq,q).
\label{eq:P2-lower-degree-remainder}
\end{equation}
Then
\begin{equation}
 \begin{pmatrix}
 -1&t^{-N/2}&q^{N/2}\\
 -1&t^{N/2}&q^{-N/2}\\
 0&t^{-3N/2}-t^{3N/2}&q^{3N/2}-q^{-3N/2}
 \end{pmatrix}
 \begin{pmatrix}
 A_N(t,q)\\ A_N(t,tq)\\ A_N(tq,q)
 \end{pmatrix}
 =-
 \begin{pmatrix}
 E_{1,N}\\ E_{-1,N}\\ E_{3,N}
 \end{pmatrix}.
\label{eq:P2-recursion-matrix}
\end{equation}
The determinant of the matrix is
\begin{equation}
 \frac{(t^N-1)(q^N-1)(t^N-q^N)((tq)^N-1)}
 {(tq)^{3N/2}},
\label{eq:P2-recursion-determinant}
\end{equation}
which is nonzero in \(\K\).  The three blowup equations and \(A_0=1\) therefore determine \(\mathcal Z\) uniquely.

To see this, note that in the \(r=\pm1\) equations the only terms containing \(A_N\) have \(n=0\) and \((a,b)=(N,0)\) or \((0,N)\), so their coefficients give the first two rows of \eqref{eq:P2-recursion-matrix}; for \(r=3\), the zero set of \(m_3(n)\) is \(\{0,-1\}\), and the terms with \(n=0,-1\) give the third row.  Every other summand has \(a,b<N\) and lies in \(E_{r,N}\).  Direct expansion gives \eqref{eq:P2-recursion-determinant}.  In the induction step, \(A_a\in\K\) is known for \(a<N\), and induction on \(N\) computes each coefficient.  The same determinant argument in the smallest degree where two solutions differ proves uniqueness.

\subsection{First coefficients}
Let
\[
 s=\sqrt{tq},\qquad x=\frac tq,\qquad
 [j]_x=x^{-j}+x^{-j+1}+\cdots+x^j,
 \qquad j\in\tfrac12\Z_{\geq0}.
\]
Substituting \(N=1\) and \(N=2\) in \eqref{eq:P2-recursion-matrix} gives
\begin{equation}
 A_1(t,q)=
 \frac{t^2+tq+q^2}{\sqrt{tq}(1-t)(1-q)}
\label{eq:P2-A1-from-recursion}
\end{equation}
and
\begin{equation}
 A_2(t,q)=
 \frac{(t^2+tq+q^2)\mathcal P_2(t,q)}
 {t^2q^2(1-t)^2(1+t)(1-q)^2(1+q)},
\label{eq:P2-A2-from-recursion}
\end{equation}
where
\begin{equation}
\begin{aligned}
 \mathcal P_2(t,q)={}&t^5q^2+t^2q^5+t^4q+tq^4+t^3q^3+t^2q^2\\
 &+t^3+q^3-t^5-q^5.
\end{aligned}
\label{eq:P2-A2-numerator}
\end{equation}
With \(N=3\) as well, and \(t=sx^{1/2}\), \(q=sx^{-1/2}\),
\begin{equation}
\begin{aligned}
 A_1(t,q)&=s[1]_x
  +s^2\bigl([3/2]_x+[1/2]_x\bigr)+O(s^3),\\
 A_2(t,q)&=s[5/2]_x
  +s^2\bigl([3]_x+[2]_x+[1]_x\bigr)+O(s^3),\\
 A_3(t,q)&=[9/2]_x
  +s\bigl([5]_x+[4]_x+[3]_x\bigr)+O(s^2).
\end{aligned}
\label{eq:P2-first-three-recursive-expansions}
\end{equation}
These are obtained by exact rational simplification of the three linear systems \eqref{eq:P2-recursion-matrix}.

To compare with the Laurent polynomials tabulated by Choi--Katz--Klemm, let
\begin{equation}
 P_{n,d}(x)
 :=(-1)^{d+n}[s^n]
 A_d(sx^{1/2},sx^{-1/2}).
\label{eq:P2-CKK-comparison-definition}
\end{equation}
On the branch \eqref{eq:PT-refined-square-root-branches}, \(s=\sqrt{tq}=y\).  The replacement \(\mathcal Z(Q)=Z_X^{\PT,\mathrm{ref}}(-Q)\) contributes \((-1)^d\).  By definition,
\begin{equation}
 [s^n]A_d(sx^{1/2},sx^{-1/2})
 =(-1)^{d+n}P_{n,d}(x).
\label{eq:P2-CKK-comparison-transform}
\end{equation}
The recursion gives
\begin{equation}
\begin{aligned}
 P_{1,1}&=[1]_x,&
 P_{2,1}&=-[3/2]_x-[1/2]_x,\\
 P_{1,2}&=-[5/2]_x,&
 P_{2,2}&=[3]_x+[2]_x+[1]_x,\\
 P_{0,3}&=-[9/2]_x,&
 P_{1,3}&=[5]_x+[4]_x+[3]_x.
\end{aligned}
\label{eq:P2-CKK-comparison-values}
\end{equation}
The polynomials \(P_{1,1}\), \(P_{1,2}\), \(P_{0,3}\) and \(P_{1,3}\) agree with the Laurent polynomials of Choi--Katz--Klemm \cite[Sections~8--9]{ChoiKatzKlemm}.  A geometric identification of \(P_{n,d}\) with the symmetrized index of \cite[Sections~7--8]{ChoiKatzKlemm} would additionally require a comparison of the square root, the equivariant lift, the branch, and the variable conventions.

\section{The Huang--Sun--Wang conjecture}
\label{sec:general-HSW}

\subsection{Lattice data}
The Huang--Sun--Wang equation is determined by the intersection pairing between compact curves and compact divisors \cite[Introduction and Section~3]{HuangSunWang}.  Let \(Y\) be a nonsingular quasi-projective 3-fold with \(K_Y\cong\cO_Y\).

Let \(N_{1,c}(Y)_{\Z}\) be the group generated by proper algebraic \(1\)-cycles, modulo numerical equivalence and then modulo torsion, and define \(N^1_c(Y)_{\Z}\) in the same way from compact Cartier divisors; we assume these are free of finite ranks \(b\) and \(g\), and that their intersection pairing has rank \(g\).  Choose integral bases \(\beta_1,\ldots,\beta_b\) and \(D_1,\ldots,D_g\), and let
\begin{equation}
 C_{i\alpha}=-\beta_i\cdot D_\alpha,\qquad
 \mathbf C=(C_{i\alpha})\in\operatorname{Mat}_{b\times g}(\Z).
\label{eq:general-C-matrix}
\end{equation}
Let \(\mathbf t=(t_1,\ldots,t_b)^t\) be the logarithmic curve coordinates.  For \(\mathbf d=(d_1,\ldots,d_b)\), the Novikov monomial of the corresponding class is \(Q^{\mathbf d}=e^{-\mathbf d\cdot\mathbf t}\).  Choose a \(\Z\)-basis \(v_1,\ldots,v_{b-g}\) of \(\ker(\mathbf C^t:\Z^b\to\Z^g)\), let \(\mathbf M\in\operatorname{Mat}_{(b-g)\times b}(\Z)\) have rows \(v_a^t\), and let
\begin{equation}
 \mathbf m=\mathbf M\mathbf t.
 \label{eq:general-mass-coordinates}
\end{equation}
Then \(\mathbf m\) is unchanged by \(\mathbf t\mapsto\mathbf t+\mathbf C\mathbf x\), and \(\mathbf M\) and \(\mathbf m\) are empty when \(b=g\).  Following Huang--Sun--Wang, the components of \(\mathbf m\) are called mass coordinates.

Fix an additive degree
\begin{equation}
 \deg_H:\operatorname{NE}_c(Y)\longrightarrow\Z_{\geq0}
 \label{eq:general-degree-filtration}
\end{equation}
which is positive on every nonzero effective class and has finite sublevel sets.  All curve class series in this section are completed with respect to \(\deg_H\).  The positive degree series below has coefficients meromorphic in \((\eps_1,\eps_2)\) and lies in the augmentation ideal of the completion \eqref{eq:completed-monoid-algebra-general}; the degree \(0\) exponential is a meromorphic prefactor outside the Novikov completion.

\subsection{The refined partition function}
Let
\[
 q_L=e^{(\eps_1-\eps_2)/2},\qquad
 q_R=e^{(\eps_1+\eps_2)/2},\qquad
 \chi_j(x)=\frac{x^{2j+1}-x^{-2j-1}}{x-x^{-1}}.
\]
Fix a refined curve counting partition function for \(Y\) with degree \(0\) coefficient \(1\) and positive degree logarithm of the form \eqref{eq:general-refined-BPS-free-energy} below.  Fix an orientation, that is, a square root of the virtual canonical line with its equivariant lift.  We assume the logarithm of the positive degree part has a multiple cover expansion
\begin{equation}
\begin{aligned}
 F_Y^{>0}(\mathbf t;\eps_1,\eps_2)
 ={}&\sum_{\mathbf d,w,j_L,j_R}
 \frac{(-1)^{2j_L+2j_R}N^{\mathbf d}_{j_L,j_R}}{w}
 \frac{\chi_{j_L}(q_L^w)\chi_{j_R}(q_R^w)}
 {(e^{w\eps_1/2}-e^{-w\eps_1/2})}\\[-2pt]
 &\hspace{35mm}\times
 \frac{e^{-w\mathbf d\cdot\mathbf t}}
 {(e^{w\eps_2/2}-e^{-w\eps_2/2})},
\end{aligned}
\label{eq:general-refined-BPS-free-energy}
\end{equation}
with integers \(N^{\mathbf d}_{j_L,j_R}\), where \(\mathbf d\) runs over nonzero effective compact classes, \(w\geq1\), and \(j_L,j_R\in\tfrac12\Z_{\geq0}\).  Local finiteness means that for every \(N\) only finitely many triples \((\mathbf d,j_L,j_R)\) with \(\deg_H(\mathbf d)\leq N\) have nonzero multiplicity; when such an expansion exists, the integers are determined by character decomposition and M\"obius inversion.

Fix symmetric coefficients \(a_{ijk}\) and linear coefficients \(b_i,b_i^{\mathrm{NS}}\) for the degree \(0\) normalization of Huang--Sun--Wang.  With repeated indices summed, the degree \(0\) part is
\begin{equation}
\begin{aligned}
 F_Y^{(0)}(\mathbf t;\eps_1,\eps_2)
 ={}&\frac1{\eps_1\eps_2}
 \left(\frac16a_{ijk}t_it_jt_k
       +4\pi^2b_i^{\mathrm{NS}}t_i\right)
 +b_it_i\\
 &-\frac{(\eps_1+\eps_2)^2}{\eps_1\eps_2}
 b_i^{\mathrm{NS}}t_i.
\end{aligned}
\label{eq:general-HSW-perturbative}
\end{equation}
Choose a parity class \(\mathbf B\in\Z^b\) with
\begin{equation}
 N^{\mathbf d}_{j_L,j_R}\ne0
 \quad\Longrightarrow\quad
 2j_L+2j_R+1\equiv\mathbf B\cdot\mathbf d\pmod2,
\label{eq:general-parity-class}
\end{equation}
and let
\begin{equation}
\begin{aligned}
 \widehat F_Y(\mathbf t;\eps_1,\eps_2)
 &:=F_Y^{(0)}(\mathbf t;\eps_1,\eps_2)
 +F_Y^{>0}(\mathbf t+\pi\mathrm{i}\mathbf B;
                     \eps_1,\eps_2),\\
 \widehat Z_Y(\mathbf t;\eps_1,\eps_2)
 &:=\exp\widehat F_Y(\mathbf t;\eps_1,\eps_2).
\end{aligned}
\label{eq:general-twisted-partition}
\end{equation}
An integer vector \(\mathbf r\equiv\mathbf B\pmod{2\Z^b}\) is called an \(r\)-field.  For such \(\mathbf r\), let
\begin{equation}
 \mathbf R_{\mathbf r}(\mathbf n)
 =\mathbf C\mathbf n+\frac{\mathbf r}{2},
 \qquad \mathbf n\in\Z^g,
\label{eq:general-R-field}
\end{equation}
and let
\begin{equation}
 \mathcal P_{Y,\mathbf B}
 =\{\mathbf r\in\Z^b:\mathbf r\equiv\mathbf B\pmod2\}
   \big/2\mathbf C\Z^g.
 \label{eq:general-parity-quotient}
\end{equation}
Every lattice sum below is interpreted coefficientwise in the completed curve class ring: for a fixed \(r\)-field and curve degree coefficient, the sum is first taken on a nonempty connected open set, away from its poles, on which it converges normally, and the result is continued meromorphically; the initial open set may depend on the coefficient and the \(r\)-field, and the continuations agree under \(\deg_H\)-adic truncation.

\subsection{The conjecture}
\begin{conjecture}[Huang--Sun--Wang \cite{HuangSunWang}]
\label{conj:general-HSW}
There are finite disjoint subsets
\[
 \mathcal R_Y^{\mathrm{unity}},\mathcal R_Y^{\mathrm{vanishing}}
 \subseteq\mathcal P_{Y,\mathbf B},
 \qquad
 \mathcal R_Y^{\mathrm{unity}}\cup
 \mathcal R_Y^{\mathrm{vanishing}}\ne\varnothing,
\]
such that every associated lattice sum has the coefficientwise meromorphic continuation described above and, for a representative \(\mathbf r\) of each class in their union and a nonzero meromorphic function \(\Lambda_{\mathbf r}(\mathbf m;\eps_1,\eps_2)\) for each unity class,
\begin{equation}
\begin{aligned}
 &\sum_{\mathbf n=(n_1,\ldots,n_g)\in\Z^g}(-1)^{n_1+\cdots+n_g}
 \widehat Z_Y(\mathbf t+\eps_1\mathbf R_{\mathbf r}(\mathbf n);
                 \eps_1,\eps_2-\eps_1)\\
 &\hspace{25mm}\times
 \widehat Z_Y(\mathbf t+\eps_2\mathbf R_{\mathbf r}(\mathbf n);
                 \eps_1-\eps_2,\eps_2)\\
 &\qquad=
 \begin{cases}
 \Lambda_{\mathbf r}(\mathbf m;\eps_1,\eps_2)
 \widehat Z_Y(\mathbf t;\eps_1,\eps_2),
     &[\mathbf r]\in\mathcal R_Y^{\mathrm{unity}},\\
  0,&[\mathbf r]\in\mathcal R_Y^{\mathrm{vanishing}}.
 \end{cases}
\end{aligned}
\label{eq:general-HSW-blowup}
\end{equation}
\end{conjecture}

For \(\mathbf n_0\in\Z^g\), the vectors \(\mathbf r\) and \(\mathbf r+2\mathbf C\mathbf n_0\) give equivalent equations: the change is absorbed by translating \(\mathbf n\) and multiplying the sum by \((-1)^{(n_0)_1+\cdots+(n_0)_g}\).  The relevant classes are therefore those of \eqref{eq:general-parity-quotient}, and for unity classes
\[
 \Lambda_{\mathbf r+2\mathbf C\mathbf n_0}
 =(-1)^{(n_0)_1+\cdots+(n_0)_g}\Lambda_{\mathbf r}.
\]
For the resolved conifold, the compact divisor lattice is \(0\) and the sum has a single term.

For \(Y_\ell\), \(0\leq\ell\leq2\), in the integral coordinates \((T_F,T_B)\), equation \eqref{eq:Hirzebruch-integral-HSW-data} gives
\[
 \mathbf C=\widetilde{\mathbf C}_\ell
 =\begin{pmatrix}2\\2-\ell\end{pmatrix},\qquad
 \mathbf r=\widetilde{\mathbf r}_{\ell;j,d}=
 \begin{pmatrix}2j\\\ell+2-2d+2(1-\ell)j\end{pmatrix},
 \qquad \mathbf B=\widetilde{\mathbf B}_\ell
 =\begin{pmatrix}0\\\ell\end{pmatrix}\pmod2.
\]
Corollary~\ref{cor:Hirzebruch-exact-HSW} proves the corresponding equations, including the vanishing class represented by \((2,0)^t\) in the \((T_F,T_M)\) coordinates, whose integral representative is \((2,2-\ell)^t\).  For local \(\mathbb P^2\), \(\mathbf C=(3)\) and \(\mathbf B=(1)\), and Theorem~\ref{thm:P2-blowup-main} gives \(r=1,-1,3\) under its two hypotheses.

\section{The rational elliptic surface}
\label{sec:rational-elliptic-conjecture}

\subsection{Geometry}
Let \(x_1,\ldots,x_9\) be the base points of a general pencil of plane cubics, and let
\[
 S=\operatorname{Bl}_{x_1,\ldots,x_9}\mathbb P^2.
\]
We require the contraction sequences used below to leave points in del Pezzo general position, that is, to produce surfaces with ample anticanonical class; this holds for a general pencil.  Let \(h\) be the pullback of a line and \(e_i\) the exceptional classes.  The anticanonical system defines an elliptic fibration \(\pi:S\to\mathbb P^1\).  We fix the section and fibre classes
\begin{equation}
 b_0=e_9,
 \qquad
 f=3h-\sum_{i=1}^9e_i=-K_S,
\label{eq:rational-elliptic-section-fibre}
\end{equation}
so \(b_0^2=-1\), \(b_0\cdot f=1\) and \(f^2=0\).  Let
\begin{equation}
 L_{E_8}=(\Z b_0\oplus\Z f)^\perp\subset H_2(S,\Z),
 \qquad
 (\lambda,\mu)_{E_8}=-\lambda\cdot\mu.
\label{eq:rational-elliptic-E8-lattice}
\end{equation}
With this positive definite form, \(L_{E_8}\) is the \(E_8\) root lattice.  Since \(\Z b_0\oplus\Z f\) is unimodular, every class has a unique expression
\begin{equation}
 \beta=kb_0+d f+\lambda,
 \qquad k,d\in\Z,
 \quad \lambda\in L_{E_8},
\label{eq:rational-elliptic-class-decomposition}
\end{equation}
and \(k=\beta\cdot f\) is the degree of \(\beta\) over the base of \(\pi\).

Let \(X_S=\Tot_S K_S\), with \(\C^*\) scaling the fibres of \(K_S\), and let \(\kappa\) be the character of the fibre scaling.

\begin{lemma}
\label{lem:rational-elliptic-fixed-proper}
Let \(S_0\) be a nonsingular projective surface and \(X_{S_0}=\Tot_{S_0}K_{S_0}\), with \(\C^*\) scaling the fibres.  For every nonzero effective \(\beta\in H_2(S_0,\Z)\) and every \(n\in\Z\), the fixed locus \(P_n(X_{S_0},\beta)^{\C^*}\) is proper.
\end{lemma}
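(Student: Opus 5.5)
The plan is to show that every \(\C^*\)-fixed stable pair on \(X_{S_0}\) is scheme-theoretically supported on a fixed finite-order thickening of the zero section \(S_0\), and that the fixed pairs form a closed subscheme of a proper moduli space. Once this is done, properness follows from the projectivity of stable pair spaces on projective 3-folds, together with the fact that fixed loci are closed.

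First, compactify as in Lemma~\ref{lem:PT-fixed-proper}: put \(\overline X=\mathbb P_{S_0}(\cO\oplus K_{S_0})\), with divisor at infinity \(D_\infty\). The \(\C^*\)-action extends to \(\overline X\), fixing both the zero section and \(D_\infty\).

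Next, let \(\cO_X\to F\) be a fixed stable pair. Its support \(C\) is a proper \(\C^*\)-invariant curve in \(X\). Each irreducible component of \(C\) is invariant, and it is proper. If it is not contained in \(S_0\), its \(\C^*\)-orbit closures run along fibres of \(K_{S_0}\) toward \(D_\infty\), which would contradict properness inside \(X\). So set-theoretically \(C\subset S_0\).

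To bound the thickening, use the \(\C^*\)-equivariant structure of \(F\). Since \(p_*F\) is a coherent sheaf on \(S_0\) with a compatible module structure over \(\operatorname{Sym}K_{S_0}^{-1}\), it is graded by \(\C^*\)-weights. Because \(\chi(F)=n\) and the class \(\beta\) are fixed, the weight decomposition has finitely many nonzero pieces. The weight range is bounded in terms of \(\beta\) and \(n\), by the same argument as in the box-counting bound of \cite{PandharipandeThomasVertex}, now applied in the fibre direction: each weight piece is a sheaf on \(S_0\) of dimension \(\le1\), and the pieces of dimension \(1\) contribute positively to the class \(p_*\beta=\beta\).

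More concretely, write the fibre coordinate as \(x\), of weight \(\kappa^{-1}\). Multiplication by \(x\) shifts weights, and purity of \(F\) forces it to be injective on the \(1\)-dimensional parts. Hence the number of weights carrying \(1\)-dimensional support is at most the degree of \(\beta\) against a fixed ample class, call it \(m\). The \(0\)-dimensional weight pieces (the cokernel of the section) are then bounded by \(n\) together with this bound on \(m\). So \(x^N\) annihilates \(F\) for some \(N=N(\beta,n)\), and every fixed pair lives on the thickening \(S_0^{(N)}\subset X\), which is projective.

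Finally, the fixed pairs supported on \(S_0^{(N)}\) form a closed subscheme of the stable pair space of the projective 3-fold \(\overline X\) in class \(\beta\) and Euler characteristic \(n\), because the condition \(x^N F=0\) is closed. That space is projective by \cite{PandharipandeThomasStablePairs}. Its \(\C^*\)-fixed locus is closed, and fixed pairs on \(\overline X\) in class \(\beta\) whose support avoids \(D_\infty\) coincide with ours. An open-and-closed argument then shows that \(P_n(X_{S_0},\beta)^{\C^*}\) is proper. For the open part, use that support meeting \(D_\infty\) is a closed condition for invariant supports. For the closed part, use the bound \(N\): the limit of a family of pairs on \(S_0^{(N)}\) stays on \(S_0^{(N)}\), away from \(D_\infty\).

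The main obstacle is the weight bound \(N(\beta,n)\). Pure sheaves can carry \(0\)-dimensional embedded pieces in high weights only if the section's cokernel absorbs them, and that cokernel's length has to be controlled using \(n\) together with a lower bound on \(\chi\) of the \(1\)-dimensional part. My first attempt would use the standard lower bound \(\chi(\cO_C)\ge\) a function of \(\beta\) for Cohen--Macaulay curves on a fixed projective surface thickening, as in the boundedness arguments of \cite{PandharipandeThomasStablePairs}.
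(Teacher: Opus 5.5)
Your overall architecture matches the paper's: set-theoretic support in the zero section, then a uniform bound on the order of thickening, then projectivity of pairs on a projective finite thickening of \(S_0\). The gap is that the one nontrivial step, the thickening bound, is left open, and the sketch you give for it is off track.

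Two specific problems:
\begin{enumerate}
\item The claim that purity makes multiplication by \(x\) injective on the \(1\)-dimensional parts is false. The operator \(x\) is nilpotent on \(F\) and kills the top nonzero weight piece, which is \(1\)-dimensional.
\item The \(0\)-dimensional weight pieces are not ``the cokernel of the section''; that is a quotient of \(F\), not a sum of its weight pieces. Your proposal to control them through \(n\) and a lower bound on \(\chi(\cO_C)\) is neither carried out nor needed.
\end{enumerate}

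The missing idea is to use purity directly. In your weight language:
\begin{itemize}
\item Let \(T_w\subset F_w\) be the \(0\)-dimensional torsion of the weight-\(w\) piece.
\item Then \(\ker(x|_{T_w})\) is \(\cO_{S_0}\)-stable and killed by \(x\), so it is a \(0\)-dimensional \(\cO_X\)-submodule of \(F\), hence \(0\).
\item So \(x\) maps \(T_w\) injectively into \(T_{w+1}\), and nilpotence of \(x\) forces every \(T_w=0\).
\item Thus every nonzero weight piece is a pure \(1\)-dimensional sheaf on \(S_0\) and contributes at least \(1\) to \(H\cdot\beta\).
\end{itemize}
Hence \(x^{H\cdot\beta}F=0\), with a bound independent of \(n\). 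No boundedness of \(\chi(\cO_C)\) enters.

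The paper reaches the same bound without weights. At the generic point of each component \(C\) of the support, the stalk of \(F\) has length equal to the multiplicity of \(C\), which is at most \(H\cdot\beta\). The ideal \(I\) of the zero section lies in the maximal ideal there. So \(I^{H\cdot\beta}F\) has \(0\)-dimensional support and vanishes by purity.

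With this bound in hand, the rest of your argument goes through. The condition that \(I^{N}\) annihilates \(F\) is closed in flat families. The fixed locus is then closed inside the projective pair space of \(\overline X\). Alternatively, as in the paper, it is closed inside the pair space of the projective thickening \(V(I^{H\cdot\beta})\), which is finite over \(S_0\).
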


\begin{proof}
Let \(I\subset\cO_{X_{S_0}}\) be the ideal of the zero section; a compact \(\C^*\)-invariant subset of \(X_{S_0}\) is set-theoretically contained in the zero section, since the orbit of a point with nonzero fibre coordinate is not proper, and for a fixed stable pair \(\cO_{X_{S_0}}\to F\), the ideal \(I\) therefore acts nilpotently on \(F\).  It remains to bound the nilpotence index.

Choose an ample integral divisor \(H\) on \(S_0\); at the generic point of an irreducible component \(C\) of the support, the nilpotence index of \(I\) is at most the multiplicity of \(C\) in the fundamental cycle of \(F\), which is at most \(H\cdot\beta\), whence \(I^{H\cdot\beta}F\) has \(0\)-dimensional support, and since \(F\) is pure of dimension \(1\) this subsheaf is \(0\).  Every fixed pair therefore factors through the finite thickening
\[
 S_{0,\beta}=V(I^{H\cdot\beta})\subset X_{S_0},
\]
which is finite over \(S_0\), hence projective.  Pairs on \(S_{0,\beta}\) with fixed Hilbert polynomial live in a projective relative Quot scheme, and the \(\C^*\)-fixed locus is closed.
\end{proof}

\subsection{Localized invariants}
The moduli space itself need not be proper.  For each \((n,\beta)\), choose an equivariant orientation of the fixed obstruction theory, that is, an equivariant square root of \(K^{\mathrm{vir}}|_{\mathscr F}\) on every connected component \(\mathscr F\subset P_n(X_S,\beta)^{\C^*}\), and let
\[
 \widehat{\mathcal O}^{\mathrm{vir}}_{\mathscr F}
 =\mathcal O^{\mathrm{vir}}_{\mathscr F}
  \otimes(K^{\mathrm{vir}}|_{\mathscr F})^{1/2}.
\]
By Lemma~\ref{lem:rational-elliptic-fixed-proper} every fixed component is proper, and by Lemma~\ref{lem:fixed-locus-square-root-independence} the rationalized localized index does not depend on the square roots.  For a compact class \(\beta\ne0\), let
\begin{equation}
 \mathsf P_{n,\beta}(\kappa)
 =\sum_{\mathscr F\subset P_n(X_S,\beta)^{\C^*}}
 \chi_{\C^*}\left(
  \mathscr F,
  \frac{\widehat{\mathcal O}^{\mathrm{vir}}_{\mathscr F}}
       {\Lambda_{-1}((N_{\mathscr F}^{\mathrm{vir}})^\vee)}
 \right),
\label{eq:rational-elliptic-localized-PT}
\end{equation}
with \(N_{\mathscr F}^{\mathrm{vir}}\) the moving virtual normal class.  Let \(Q_{b_0}=\mathbf Q^{b_0}\) and \(Q_\tau=\mathbf Q^f\).  For each nonzero effective \(\beta\), let
\begin{equation}
 \mathcal P_\beta(y,\kappa)
 :=\sum_{n\in\Z}y^n\mathsf P_{n,\beta}(\kappa)
 \in\Q(\kappa^{1/2})((y)).
\label{eq:rational-elliptic-fixed-class-series}
\end{equation}
Whenever this series is the Laurent expansion at \(y=0\) of a rational function, we denote the function by
\begin{equation}
 \mathcal P_\beta^{\mathrm{rat}}(y,\kappa)
 \in\Q(\kappa^{1/2})(y).
\label{eq:rational-elliptic-rationality}
\end{equation}

Let \(p_i=e^{\eps_i}\) with \(p_i^{1/2}=e^{\eps_i/2}\).  The unshifted parameters and the two chart substitutions are
\begin{equation}
\begin{array}{c|cc}
 i&\kappa_i^{1/2}&y_i\\ \hline
 0&p_1^{1/2}p_2^{1/2}&-p_1^{1/2}p_2^{-1/2}\\
 1&p_2^{1/2}&-p_1p_2^{-1/2}\\
 2&p_1^{1/2}&-p_1^{1/2}p_2^{-1}.
\end{array}
\label{eq:rational-elliptic-refined-variables}
\end{equation}
Here \(i=1\) corresponds to \((\eps_1,\eps_2-\eps_1)\) and \(i=2\) to \((\eps_1-\eps_2,\eps_2)\).

Fix an ample integral divisor \(H\) on \(S\), and let \(\K_{\mathrm{ell}} =\Q(p_1^{1/2},p_2^{1/2})\).  The coefficient ring is
\begin{equation}
 \Lambda_{S,H}^{\mathrm{ell}}
 :=\left\{\sum_{\gamma\in H_2(S,\Z)}a_\gamma \mathbf Q^\gamma:
 a_\gamma\in\K_{\mathrm{ell}},\
 \#\{\gamma:a_\gamma\ne0,\ H\cdot\gamma\leq N\}<\infty
 \text{ for every }N\in\mathbb R\right\}.
\label{eq:rational-elliptic-Novikov-ring}
\end{equation}
Under \eqref{eq:rational-elliptic-class-decomposition} we write
\[
 \mathbf Q^\beta=Q_{b_0}^kQ_\tau^d e^\lambda,
 \qquad e^\lambda:=\mathbf Q^\lambda,
\]
where the \(e^\lambda\) are the basis elements of the group algebra of \(L_{E_8}\).

If, after cancellation, each substitution
\begin{equation}
 \mathcal P_\beta^{\mathrm{rat}}(y_i,\kappa_i),
 \qquad i=0,1,2,
\label{eq:rational-elliptic-specialization-regularity}
\end{equation}
has denominator nonzero in \(\K_{\mathrm{ell}}\), it defines an element of that field.  In this case, for \(i=0,1,2\) and \(k\geq0\), let
\begin{equation}
\begin{aligned}
 \mathsf Z_k^{(i)}
 :={}&\delta_{k0}
 +\sum_{\substack{\beta=kb_0+df+\lambda\ne0\ \mathrm{effective}}}
 Q_\tau^d e^\lambda
 \mathcal P_\beta^{\mathrm{rat}}(y_i,\kappa_i),\\
 \mathsf E_k^{(i)}
 :={}&(\mathsf Z_0^{(i)})^{-1}\mathsf Z_k^{(i)},
 \qquad \mathsf Z_k:=\mathsf Z_k^{(0)},\quad
 \mathsf E_k:=\mathsf E_k^{(0)}.
\end{aligned}
\label{eq:rational-elliptic-Ek-definition}
\end{equation}
These lie in \(\Lambda_{S,H}^{\mathrm{ell}}\).  The term \(\delta_{k0}\) is the empty stable pair, so \(\mathsf Z_0^{(i)}\) has constant term \(1\) and is invertible, and \(\mathsf E_0^{(i)}=1\).

\subsection{The conjecture}
We use the normalized odd theta series
\begin{equation}
 \widehat\Theta_1(Q_\tau,X)
 :=\sum_{r\in\Z}(-1)^r
 Q_\tau^{r(r+1)/2}X^r.
\label{eq:logarithmic-odd-theta}
\end{equation}
For \(X=e^\alpha p_1^a p_2^b\) with \(a,b\in\Z\), its terms lie in \(\Lambda_{S,H}^{\mathrm{ell}}\): the quadratic power of \(Q_\tau\) gives the support condition of \eqref{eq:rational-elliptic-Novikov-ring}.  For \(\alpha\in L_{E_8}\), define the group algebra automorphisms
\begin{equation}
 \mathsf T_{i,\alpha}(e^\lambda)
 =p_i^{(\lambda,\alpha)_{E_8}}e^\lambda,
 \qquad
 \mathsf T_{i,\alpha}(Q_{b_0})=Q_{b_0},\qquad
 \mathsf T_{i,\alpha}(Q_\tau)=Q_\tau
 \quad(i=1,2).
\label{eq:rational-elliptic-character-translation}
\end{equation}

\begin{conjecture}
\label{conj:rational-elliptic-theta}
There is a system of equivariant orientations as above for which every series \eqref{eq:rational-elliptic-fixed-class-series} has the rational form \eqref{eq:rational-elliptic-rationality} and all three substitutions \eqref{eq:rational-elliptic-specialization-regularity} are well defined.  With the definitions above, let \(\alpha\in L_{E_8}\) be a root, \((\alpha,\alpha)_{E_8}=2\).  For every \(k\geq0\),
\begin{equation}
\begin{aligned}
 &\sum_{\substack{k_1+k_2=k\\k_1,k_2\geq0}}
 p_1^{-k_1/2}p_2^{-k_2/2}
 \widehat\Theta_1\left(
 Q_\tau,e^\alpha p_1^{1-k_1}p_2^{1-k_2}\right)\\
 &\qquad\times
 \mathsf T_{1,\alpha}(\mathsf E_{k_1}^{(1)})
 \mathsf T_{2,\alpha}(\mathsf E_{k_2}^{(2)})
 =\widehat\Theta_1(Q_\tau,e^\alpha p_1p_2)
 \mathsf E_k,
\end{aligned}
\label{eq:rational-elliptic-unity}
\end{equation}
and
\begin{equation}
 \sum_{\substack{k_1+k_2=k\\k_1,k_2\geq0}}
 p_1^{k_1/2}p_2^{k_2/2}
 \widehat\Theta_1\left(
 Q_\tau,p_1^{k_1}p_2^{k_2}\right)
 \mathsf E_{k_1}^{(1)}\mathsf E_{k_2}^{(2)}=0.
\label{eq:rational-elliptic-vanishing}
\end{equation}
\end{conjecture}

These are identities in \(\Lambda_{S,H}^{\mathrm{ell}}\), with the \(E_8\)-characters formal and no convergence condition on the Novikov coordinates.  They have the theta function form of the root and zero vector identities of \cite[Section~3]{GuHaghighatKlemmSunWang}, whose convention is related to ours by \(Q_\tau=e^{2\pi\mathrm{i}\tau}\) and
\[
 \Theta_1(\tau,z)
 =Q_\tau^{1/8}e^{z/2}
  \widehat\Theta_1(Q_\tau,e^z).
\]
For \(\boldsymbol\mu\in L_{E_8}\otimes\C\), the evaluation \(e^\lambda\mapsto\exp((\lambda,\boldsymbol\mu)_{E_8})\) is applied to each finite \(H\)-degree truncation.  When the evaluated truncations converge normally on a nonempty open set, their sums define the specialization at \(\boldsymbol\mu\) and extend meromorphically.  Let
\begin{equation}
 \mathcal Z_S^{\mathrm{norm}}(Q_{b_0};p_1,p_2)
 :=\sum_{k\geq0}Q_{b_0}^k
 \mathsf E_k,
\label{eq:rational-elliptic-total-series}
\end{equation}
the stable pair series with every base degree coefficient divided by \(\mathsf Z_0\).

\subsection{Contraction of a \texorpdfstring{\((-1)\)-curve}{(-1)-curve}}
Let \(e\subset S\) be a \((-1)\)-curve with contraction \(\rho:S\to S'\), and let \(X_{S'}=\Tot_{S'}K_{S'}\).  For \(\gamma\in H_2(S',\Z)\), let \(\rho^!\gamma\) be the unique integral lift orthogonal to \(e\), so that
\[
 H_2(S,\Z)=\rho^!H_2(S',\Z)\oplus\Z e.
\]
Let \(Q_e=\mathbf Q^e\), and let \(\mathbf Q'^{\gamma}\) be the Novikov monomial of \(\gamma\) on \(S'\).  The change of Novikov monomials is
\begin{equation}
 \mathbf Q^{\rho^!\gamma+m e}
 \longmapsto \mathbf Q'^{\gamma}Q_e^m,
 \qquad \gamma\in H_2(S',\Z),\quad m\in\Z,
 \label{eq:contraction-curve-variable-map}
\end{equation}
extended coefficientwise to series satisfying the support condition of \eqref{eq:rational-elliptic-Novikov-ring}.  Let \(k(\gamma)=f\cdot\rho^!\gamma\).

Choose parity homomorphisms
\[
 \mathbf B_S:H_2(S,\Z)\longrightarrow\Z/2,
 \qquad
 \mathbf B':H_2(S',\Z)\longrightarrow\Z/2
\]
satisfying \eqref{eq:general-parity-class} for their local 3-folds and the compatibility
\begin{equation}
 \mathbf B_S(\rho^!\gamma+m e)
 =\mathbf B'(\gamma)+m\pmod2;
 \label{eq:contraction-parity-extension}
\end{equation}
in particular \(\mathbf B_S(e)=1\).  For a parity homomorphism \(\mathbf B\), the parity twist is
\begin{equation}
 \operatorname{tw}_{\mathbf B}(\mathbf Q^\beta)
 =(-1)^{\mathbf B(\beta)}\mathbf Q^\beta,
 \qquad \operatorname{tw}_{\mathbf B}(p_i^{1/2})=p_i^{1/2},
 \label{eq:parity-twist-homomorphism}
\end{equation}
extended linearly and continuously to \eqref{eq:rational-elliptic-Novikov-ring}; on \(S'\) it is \(\mathbf Q'^\gamma\mapsto(-1)^{\mathbf B'(\gamma)} \mathbf Q'^\gamma\).  Choose an integral basis \(\beta'_1,\ldots,\beta'_{b'}\) of \(H_2(S',\Z)\), \(b'=\rk H_2(S',\Z)\), and the adapted basis \(\rho^!\beta'_1,\ldots,\rho^!\beta'_{b'},e\) on \(S\).  The intersection matrices \eqref{eq:general-C-matrix} of \(X_S\) and \(X_{S'}\) are single columns with
\[
 \mathbf C_S=\begin{pmatrix}\mathbf C_{S'}\\1\end{pmatrix}.
\]
Deleting the last component of an \(r\)-field defines
\begin{equation}
 \pi_e:
 \mathcal P_{X_S,\mathbf B_S}\longrightarrow
 \mathcal P_{X_{S'},\mathbf B'},
 \qquad
 [(\mathbf r',r_e)]\longmapsto[\mathbf r'],
\label{eq:contraction-r-field-projection}
\end{equation}
which is well defined by \eqref{eq:contraction-parity-extension} and the relation between the two columns.  Let
\[
 \mathcal Z_S^{\mathrm{norm,tw}}
 =\operatorname{tw}_{\mathbf B_S}(\mathcal Z_S^{\mathrm{norm}}).
\]

Fix an ample integral divisor \(H'\) on \(S'\), and let \(\Lambda_{S',H'}^{\mathrm{ell}}\) be the ring \eqref{eq:rational-elliptic-Novikov-ring} with \(S,H,\mathbf Q^\gamma\) replaced by \(S',H',\mathbf Q'^\gamma\).  With \(w_e=Q_e^{-1}\), let
\begin{equation}
 \mathscr L_{S',H'}
 :=\left\{
  \sum_{\gamma\in H_2(S',\Z)}
  f_\gamma(w_e)\mathbf Q'^\gamma:
  \begin{array}{l}
   f_\gamma(w_e)\in\K_{\mathrm{ell}}((w_e)),\\
   \#\{\gamma:f_\gamma\ne0,\ H'\cdot\gamma\leq N\}<\infty\\
   \text{for every }N\in\mathbb R
  \end{array}
 \right\},
\label{eq:contraction-Laurent-ring}
\end{equation}
where \(R((w_e))=R[[w_e]][w_e^{-1}]\).  The support condition makes multiplication coefficientwise finite and defines the continuous map
\begin{equation}
 [w_e^0]:\mathscr L_{S',H'}
 \longrightarrow\Lambda_{S',H'}^{\mathrm{ell}}
\label{eq:contraction-constant-term-map}
\end{equation}
taking the coefficient of \(w_e^0\) in every \(f_\gamma\).

Choose equivariant orientations on the fixed loci of \(X_{S'}\), let \(\mathsf P^{S'}_{n,\gamma}(\kappa)\) be the localized index \eqref{eq:rational-elliptic-localized-PT} for \(S'\), and let
\[
 \mathcal P^{S'}_\gamma(y,\kappa)
 =\sum_{n\in\Z}y^n\mathsf P^{S'}_{n,\gamma}(\kappa).
\]
Assuming the rational continuation \(\mathcal P_{\gamma}^{S',\mathrm{rat}}\) exists and is regular at \((y_0,\kappa_0)\), let
\begin{equation}
\begin{aligned}
 \mathsf Z_{S',k}
 &:={\delta}_{k0}+
 \sum_{\substack{\gamma\ne0\ \mathrm{effective}\\ k(\gamma)=k}}
 \mathbf Q'^\gamma
 \mathcal P_{\gamma}^{S',\mathrm{rat}}(y_0,\kappa_0),\\
 \mathsf Z_{S',k}^{\mathrm{tw}}
 &:=\operatorname{tw}_{\mathbf B'}(\mathsf Z_{S',k}),\\
 \mathcal Z_{S'}^{\mathrm{norm,tw}}
 &:=(\mathsf Z_{S',0}^{\mathrm{tw}})^{-1}
   \sum_{k\geq0}\mathsf Z_{S',k}^{\mathrm{tw}}.
\end{aligned}
\label{eq:contracted-surface-series-definition}
\end{equation}
With \(p_i=e^{\eps_i}\) and \(p_i^{1/2}=e^{\eps_i/2}\), the exceptional curve factor is
\begin{equation}
 \mathcal U_e(Q_e;p_1,p_2)
 =\PE\left[
 \frac{Q_e}
 {(p_1^{1/2}-p_1^{-1/2})(p_2^{1/2}-p_2^{-1/2})}
 \right],
\label{eq:exceptional-curve-factor}
\end{equation}
which for \(|p_1|,|p_2|<1\) is the convergent product
\begin{equation}
 \mathcal U_e(Q_e;p_1,p_2)
 =\prod_{i,j\geq0}
  \left(1-Q_ep_1^{i+1/2}p_2^{j+1/2}\right)^{-1}.
 \label{eq:exceptional-curve-product}
\end{equation}
By \eqref{eq:contraction-parity-extension}, \(\operatorname{tw}_{\mathbf B_S}(\mathcal U_e(Q_e))= \mathcal U_e(-Q_e)\).  Take first \(0<p_1,p_2<1\).  Choose \(R_e>0\) and \(\theta_-<\theta_+\) so that the closure of
\[
 \mathscr S_e=\{Q_e:|Q_e|>R_e,\ \theta_-<\arg Q_e<\theta_+\}
\]
avoids the negative real axis.  Let \(\mathscr D_e\subset\C^*\) be a simply connected pole free domain containing \(\mathscr S_e\) and an open set on which \eqref{eq:exceptional-curve-product} converges, fix a branch of \(\log Q_e\) on it, and let \(\mathcal U_e^{\mathrm{tw,an}}\) be the analytic continuation of \(\mathcal U_e(-Q_e;p_1,p_2)\) to \(\mathscr D_e\); shifted and chart substituted factors are continued in the same way, on pole free domains meeting a common large \(\lvert Q_e\rvert\) sector and with compatible logarithm branches.  The identities are then continued meromorphically in \(p_1,p_2\).

We refer to the following as the \emph{contraction hypotheses}.
\begin{enumerate}
\item[(C1)] There is an equivariant orientation on \(S\) for which Conjecture~\ref{conj:rational-elliptic-theta} holds, and an orientation on \(S'\) compatible with it and with \eqref{eq:contraction-parity-extension}.

\item[(C2)] The series \(\mathcal P_{\gamma}^{S'}\) admit the rational continuations used in \eqref{eq:contracted-surface-series-definition}, regular at \((y_0,\kappa_0)\).

\item[(C3)] After the change of variables \eqref{eq:contraction-curve-variable-map}, each coefficient of \(\mathcal Z_S^{\mathrm{norm,tw}}\), viewed as a Laurent series in \(Q_e\) with finite principal part, converges normally on compact subsets of a punctured neighbourhood of \(Q_e=0\) and continues analytically to the common sector fixed above.

\item[(C4)] In the coordinate \(w_e=Q_e^{-1}\), the quotient
\begin{equation}
 (\mathcal U_e^{\mathrm{tw,an}})^{-1}
 \mathcal Z_S^{\mathrm{norm,tw}}
\label{eq:contraction-normalized-quotient}
\end{equation}
extends holomorphically to \(|w_e|<\delta\) for some \(\delta>0\), coefficientwise in the \(H'\)-adic completion.  Equivalently, after expansion at \(w_e=0\), every coefficient lies in \(\K_{\mathrm{ell}}[[w_e]]\), with the same finite \(H'\)-degree support condition as in \eqref{eq:contraction-Laurent-ring}.

\item[(C5)] Divide every shifted factor of a Huang--Sun--Wang equation on \(X_S\) by the corresponding shifted and chart substituted \(\mathcal U_e^{\mathrm{tw,an}}\), and the unity term by the unshifted exceptional factor.  Each normalized shifted factor extends holomorphically to \(w_e=0\), coefficientwise with the support condition of \textup{(C4)}, and both sides of the resulting equation are convergent there.  The value at \(w_e=0\) of each shifted normalized factor is the corresponding shifted series on \(X_{S'}\) under \eqref{eq:contraction-r-field-projection}.

\item[(C6)] For each root identity and the zero vector identity in \eqref{eq:rational-elliptic-unity}--\eqref{eq:rational-elliptic-vanishing}, there is a class \([\mathbf r_S]\in\mathcal P_{X_S,\mathbf B_S}\) whose Huang--Sun--Wang equation is that identity, such that \(\pi_e([\mathbf r_S])\) has the same unity or vanishing type on \(X_{S'}\), the normalized unity coefficient depends only on the projected class, and the cubic and linear terms of \eqref{eq:general-HSW-perturbative} restrict under \eqref{eq:contraction-curve-variable-map} to those of \(X_{S'}\).
\end{enumerate}

\begin{conjecture}
\label{conj:rational-elliptic-contraction}
Assume \textup{(C1)--(C6)}.  Then
\begin{equation}
 [w_e^0]\left((\mathcal U_e^{\mathrm{tw,an}})^{-1}
 \mathcal Z_S^{\mathrm{norm,tw}}\right)
 =\mathcal Z_{S'}^{\mathrm{norm,tw}}.
\label{eq:stable-pair-contraction-conjecture}
\end{equation}
\end{conjecture}

Under the hypotheses above, the expansions occurring in the divided root and zero vector equations are power series in \(w_e\).  Hence \([w_e^0]\) is evaluation at \(w_e=0\) and is multiplicative on every product appearing in those equations.  Applying this evaluation, using \textup{(C5)} and Conjecture~\ref{conj:rational-elliptic-contraction}, and then restoring \eqref{eq:general-HSW-perturbative}, gives the Huang--Sun--Wang equation \eqref{eq:general-HSW-blowup} on \(X_{S'}\).

\appendix
\section{The specialization at \texorpdfstring{\(u=0\)}{u=0}}
\label{app:u-zero}

The framing parameter enters the degree \(0\) factors of the Nakajima--Yoshioka series through infinite products whose region of convergence need not contain the limit \(u\to0\), and the lattice sum has infinitely many terms.  We use the meromorphic continuation of the degree \(0\) factors and a completion of the monoid generated by the monomials which decay as \(A\to\infty\); for fixed product expansions with this decay property, the connection formulas and support estimates below are unconditional.  Conjecture~\ref{conj:P2-asymptotic-resummation} supplies the expansions and allows the passage from individual coefficients of \(\Lambda^4\) to the full instanton and lattice sums; Conjecture~\ref{conj:P2-vertex-comparison} identifies the constant term with the local \(\mathbb P^2\) series.

We keep \(u=e^{-A}\), \(Q=e^{-T}\) and \(M=T-3A/2\) from \eqref{eq:centered-variables}--\eqref{eq:u-zero-parameter-path}.

\subsection{The formal series ring}
We use the factors \(\mathcal R\), their specified product representations, and the finite set \(\mathcal P_{\mathrm{exp}}\) defined in Section~\ref{sec:P2-blowup}.

A \emph{Stokes direction} is a ray of angle \(\theta\) with
\[
 \operatorname{Re}(\lambda e^{\mathrm{i}\theta})=0
 \quad\text{for some }(\lambda,d)\in\mathcal P_{\mathrm{exp}}.
\]
Under Conjecture~\ref{conj:P2-asymptotic-resummation}, choose a ray of angle \(\theta_*\) with \(\operatorname{Re}(e^{\mathrm{i}\theta_*})>0\) avoiding the finitely many Stokes directions, together with product representations supplied by the conjecture, and shrink to a closed interval \(I\) about \(\theta_*\) whose closure contains no Stokes direction and such that every exponent pair in those representations satisfies
\[
 \operatorname{Re}(\lambda e^{\mathrm{i}\theta})>0
 \qquad(\theta\in I).
\]
Choose \(R_0>0\) and let
\[
 \mathfrak S=\{re^{\mathrm{i}\theta}:r\geq R_0,\ \theta\in I\}.
\]
We let \(A\to\infty\) in \(\mathfrak S\) with \(T\) in a compact set.  Each product is expanded by \eqref{eq:double-Pochhammer-log-expansion}, and the resulting formal Laurent expansions are the ones used below.

Fix a generic value \(\eps^\circ=(\eps_1^\circ,\eps_2^\circ)\) for which the product representations are valid, away from the poles and from the proper analytic subsets on which distinct exponents coincide.  The exponent pairs and the real functions \(h,v\) below are evaluated at \(\eps^\circ\); the completed algebra is constructed separately at each generic value, and different values may give different monoids \(\Gamma_{\mathfrak S}\).  Let \((\lambda_\rho,d_\rho)\), \(1\leq\rho\leq N\), be the distinct members of \(\mathcal P_{\mathrm{exp}}\) occurring in the product representations on \(\mathfrak S\).  By \eqref{eq:double-Pochhammer-log-expansion}, the logarithm of each factor is a series in positive powers of a monomial
\begin{equation}
 a_\rho(\eps_1,\eps_2)e^{-\lambda_\rho A}Q^{d_\rho},
 \qquad
 \operatorname{Re}(\lambda_\rho e^{\mathrm{i}\theta_*})>0.
\label{eq:modular-factor-monomial}
\end{equation}
Here \(d_\rho\) may be nonintegral; fix \(\operatorname{Log}Q\) on the region considered, read \(Q^d=\exp(d\operatorname{Log}Q)\), and let \(\K_{\eps^\circ}\subset\C\) be the field generated by the values at \(\eps^\circ\) of \(\K\) and of the meromorphic coefficients of these products and all their specified chart translates.  The identities obtained extend meromorphically by the identity principle.

Let \(\Gamma_{\mathfrak S}\subset\C^2\) be the additive submonoid generated by the exponent pairs
\[
 (0,1),\qquad (1,-1),\qquad
 (\lambda_\rho,d_\rho)\quad(\rho=1,\ldots,N).
\]
The first two represent \(Q\) and \(u/Q\), so \((1,0)=(0,1)+(1,-1)\) represents \(u\).  We write
\[
 \mathbf e^{(\lambda,d)}=e^{-\lambda A}Q^d.
\]
Define the additive real functions
\[
h(\lambda,d)=\operatorname{Re}(\lambda e^{\mathrm{i}\theta_*}),
 \qquad v(\lambda,d)=\operatorname{Re}(d).
\]  Every generator other than \((0,1)\) has positive \(h\)-value.  Since the generators are finite in number, there is \(C>0\) with
\begin{equation}
 v(\gamma)\geq-Ch(\gamma)
 \qquad(\gamma\in\Gamma_{\mathfrak S}).
\label{eq:semigroup-cone-bound}
\end{equation}

\begin{definition}
\label{def:completed-semigroup-algebra}
Let \(\mathscr H_{\mathfrak S}\) be the set of formal sums
\begin{equation}
 \sum_{\gamma\in\Gamma_{\mathfrak S}}c_\gamma \mathbf e^\gamma,
 \qquad c_\gamma\in\K_{\eps^\circ},
\label{eq:completed-semigroup-algebra}
\end{equation}
such that for every \(H,V\in\mathbb R\) only finitely many nonzero terms satisfy \(h(\gamma)\leq H\) and \(v(\gamma)\leq V\).  For \(H\geq0\), let
\[
 F^{>H}\mathscr H_{\mathfrak S}
 =\left\{\sum_\gamma c_\gamma \mathbf e^\gamma:
 c_\gamma=0\text{ whenever }h(\gamma)\leq H\right\}.
\]
\end{definition}

The subsets \(F^{>H}\mathscr H_{\mathfrak S}\) form a neighbourhood basis of \(0\).  Let \(\mathfrak m_A=F^{>0}\mathscr H_{\mathfrak S}\).  Projection to \(h=0\) defines
\begin{equation}
 \operatorname{CT}_A:\mathscr H_{\mathfrak S}
 \longrightarrow\K_{\eps^\circ}[[Q]].
\label{eq:asymptotic-constant-term}
\end{equation}

\begin{lemma}
\label{lem:completed-semigroup-support}
Multiplication is well defined in \(\mathscr H_{\mathfrak S}\), each \(F^{>H}\mathscr H_{\mathfrak S}\) is an ideal, and the algebra is complete and separated.  Every element of \(1+\mathfrak m_A\) is invertible, and \(\operatorname{CT}_A\) is a continuous ring homomorphism.  For \(i\in\{1,2\}\), \(n\in\Z\), and
\[
 (k,d_{\mathrm{NY}})\in\{(0,1),(0,2),(1,1)\},
 \qquad r=3-2d_{\mathrm{NY}}+2k,
\]
the chart translations
\[
 A\longmapsto A+\eps_i(2n+k),\qquad
 T\longmapsto T+\eps_iR_{r,n}
\]
preserve the exponent pairs, hence \(h\), \(v\), and the estimate \eqref{eq:semigroup-cone-bound}.  They commute with extraction of the \(A\)-constant term, with the induced substitution \(Q\mapsto Qe^{-\eps_iR_{r,n}}\) on the target:
\[
 \operatorname{CT}_A(\tau_{i,n}f)
 =\tau_{i,n}^{\,0}(\operatorname{CT}_A f).
\]
Here \(\tau_{i,n}^{\,0}\) denotes this \(Q\)-substitution together with the corresponding change of equivariant parameters.  This is commutation, not invariance of the resulting \(Q\)-series.  The common monoid includes the generators for all three parameter pairs before the additive translations are applied.  The factor
\[
 \left(-\sqrt{tq}\,\frac uQ;t,q\right)_\infty
\]
lies in \(1+\mathfrak m_A\).  Under Conjecture~\ref{conj:P2-asymptotic-resummation}, the specified expansions of \(\mathcal R\) and every chart translate lie in \(1+\mathfrak m_A\).  Dividing by the corresponding incoming product shows that the induced expansions of \(\Phi(Y_{\mathrm{edge}})\Phi(Y_{\mathrm{root}})^{-1}\) and its chart translates do so as well.
\end{lemma}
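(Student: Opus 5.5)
The plan is to treat \(\mathscr H_{\mathfrak S}\) as a Novikov-type completion of a monoid algebra with a two-parameter filtration by \(h\) and \(v\), and to derive every assertion from the support condition in Definition~\ref{def:completed-semigroup-algebra} combined with the cone bound \eqref{eq:semigroup-cone-bound}.

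\emph{Multiplication.} For \(f=\sum c_\gamma\mathbf e^\gamma\) and \(g=\sum d_\delta\mathbf e^\delta\), fix \(\varepsilon\in\Gamma_{\mathfrak S}\). Any decomposition \(\varepsilon=\gamma+\delta\) satisfies \(h(\gamma),h(\delta)\geq0\) and \(h(\gamma)+h(\delta)=h(\varepsilon)\). The cone bound then gives \(v(\gamma)=v(\varepsilon)-v(\delta)\leq v(\varepsilon)+Ch(\varepsilon)\), and the same bound holds for \(\delta\). By the support condition on \(f\) and \(g\), only finitely many pairs contribute, so each coefficient of \(fg\) is a finite sum. The same estimate shows \(fg\) again satisfies the support condition. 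Indeed, terms with \(h(\varepsilon)\leq H\) and \(v(\varepsilon)\leq V\) come from \(\gamma,\delta\) with \(h\leq H\) and \(v\leq V+CH\), and there are finitely many such.

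\emph{Ideals, completeness, and \(\operatorname{CT}_A\).} Since \(h\geq0\) on \(\Gamma_{\mathfrak S}\), each \(F^{>H}\) is an ideal. Separatedness is immediate. Completeness holds because a Cauchy sequence stabilizes coefficientwise, and the limit inherits the support condition on each sublevel set \(\{h\leq H\}\). For \(x\in\mathfrak m_A\), the geometric series \(\sum(-x)^k\) converges. The reason is that \(h\) takes only finitely many values below any bound \(H\) on the support of \(x^k\), so \(x^k\in F^{>H}\) for \(k\gg0\). To see this, note the generators of positive \(h\) have a minimum \(h\)-value \(h_{\min}>0\), so \(x^k\in F^{>kh_{\min}}\).

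For \(\operatorname{CT}_A\), observe that \(h(\gamma)=0\) forces \(\gamma\) to be a multiple of \((0,1)\), because every other generator has \(h>0\). Hence the projection lands in \(\K_{\eps^\circ}[[Q]]\). It is multiplicative because \(h\) is additive and nonnegative: only pairs with both \(h\)-values \(0\) contribute to \(h=0\). Continuity is clear.

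\emph{Chart translations.} The substitution multiplies \(\mathbf e^{(\lambda,d)}\) by the constant \(e^{-\lambda\eps_i(2n+k)-d\eps_iR_{r,n}}\). This constant is absorbed into the coefficient field after adjoining it, so the exponent pairs are unchanged. This preserves \(h\), \(v\), the cone bound, and the support condition. On \(h=0\) terms the factor is \(e^{-d\eps_iR_{r,n}}\), which is exactly the substitution \(Q\mapsto Qe^{-\eps_iR_{r,n}}\) together with the change of equivariant parameters. This gives the commutation \(\operatorname{CT}_A(\tau_{i,n}f)=\tau^{\,0}_{i,n}(\operatorname{CT}_A f)\). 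The main subtlety, and the step I expect to be the obstacle, is that the parameter change \((\eps_1,\eps_2)\mapsto(\eps_1,\eps_2-\eps_1)\) alters the generating pairs of the translated products. I would handle this as the statement indicates: generate \(\Gamma_{\mathfrak S}\) from the union of the pairs for all three parameter pairs, which is finite by (A2) and \(\mathcal P_{\mathrm{exp}}\). Then one must check, using (A3), that \(h>0\) for each generator uniformly on \(I\). This is also the point where the genericity of \(\eps^\circ\) is used, to keep distinct exponents distinct.

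\emph{The factors.} By \eqref{eq:double-Pochhammer-log-expansion}, \(\bigl(-\sqrt{tq}\,u/Q;t,q\bigr)_\infty\) is \(\exp\) of a series in positive powers of \(\mathbf e^{(1,-1)}\), whose \(h\)-value is \(\operatorname{Re}e^{\mathrm i\theta_*}>0\). The exponential converges in the completion, so the factor lies in \(1+\mathfrak m_A\). Under Conjecture~\ref{conj:P2-asymptotic-resummation}, each factor in the representation (A2) of \(\mathcal R\) is likewise \(\exp\) of a series in a positive-\(h\) monomial, and the same holds after chart translation. Finite products stay in \(1+\mathfrak m_A\). Finally, \(\Phi(Y_{\mathrm{edge}})\Phi(Y_{\mathrm{root}})^{-1}\) equals \(\mathcal R\) divided by the incoming product, by \eqref{eq:remaining-product-factor}. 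Since \(1+\mathfrak m_A\) is a group, this quotient also lies in \(1+\mathfrak m_A\).
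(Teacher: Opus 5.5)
Your proposal is correct and follows the paper's proof essentially step by step. The shared steps are: the cone bound \eqref{eq:semigroup-cone-bound} makes convolutions finite; the positive lower bound on the \(h\)-values of the positive generators gives completeness and the inverse of \(1+\mathfrak m_A\); \(\operatorname{CT}_A\) is the projection onto the \(h=0\) part \(\K_{\eps^\circ}[[Q]]\); chart translations act only on coefficients, with the common monoid generated by the pairs for all three parameter pairs; and the incoming product is a unit, so the \(\Phi\)-quotient inherits membership in \(1+\mathfrak m_A\) from \(\mathcal R\).

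One cosmetic slip: you only get \(x^k\in F^{>(k-1)h_{\min}}\), not \(F^{>kh_{\min}}\). This does not affect the argument.
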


\begin{proof}
Let \(S_1,S_2\) be two supports satisfying the condition of Definition~\ref{def:completed-semigroup-algebra}.  If \(\gamma_1+\gamma_2\) has \(h\)-value at most \(H\) and \(v\)-value at most \(V\), then \(h(\gamma_i)\leq H\) and
\[
 v(\gamma_i)
 =v(\gamma_1+\gamma_2)-v(\gamma_{3-i})
 \leq V+CH
\]
by \eqref{eq:semigroup-cone-bound}.  Each \(S_i\) then leaves only finitely many possibilities for \(\gamma_i\): every convolution coefficient is a finite sum, and the product again has locally finite support; since \(h\) is additive and nonnegative on \(\Gamma_{\mathfrak S}\), multiplication by any element sends \(F^{>H}\mathscr H_{\mathfrak S}\) into itself, and these are ideals and define a multiplicative topology.

The only elements of \(\Gamma_{\mathfrak S}\) with \(h=0\) are the nonnegative multiples of \((0,1)\), so the \(h=0\) part is \(\K_{\eps^\circ}[[Q]]\), and since the positive \(h\)-values of the finite generating set have a positive lower bound, for \(x\in\mathfrak m_A\) the partial sums of \(\sum_{k\geq0}(-x)^k\) stabilize below every fixed \(h\)-bound and define \((1+x)^{-1}\).  The same stabilization proves completeness.  An element in every \(F^{>H}\) has empty support, so the topology is separated; since \(\operatorname{CT}_A\) is the projection onto the \(h=0\) subalgebra, it is continuous and multiplicative.

The logarithm of the incoming product is a series in positive powers of \(u/Q\).  Assumptions (A2)--(A3) and \eqref{eq:double-Pochhammer-log-expansion} put the specified expansion of \(\mathcal R\) in \(1+\mathfrak m_A\).  The incoming product is a unit there, giving the assertion for the quotient of the \(\Phi\)-functions.  A chart translation multiplies an exponent monomial by a nonzero meromorphic coefficient and leaves the exponent pair unchanged, while the Stokes directions depend only on the \(\lambda_\rho\) and are preserved by the chart translations.
\end{proof}

\subsection{Asymptotic expansion}
\begin{lemma}
\label{lem:full-u-zero-expansion}
Assume Conjectures~\ref{conj:P2-vertex-comparison} and \ref{conj:P2-asymptotic-resummation}, and let \(A\to\infty\) in the closed angular region fixed above.  With
\[
 c(\eps_1,\eps_2)
 =-\frac{\pi\mathrm{i}(\eps_1^2+3\eps_1\eps_2+\eps_2^2)}
 {24\eps_1\eps_2},
\]
which is independent of \(A,M,T,Q\), one has
\begin{equation}
\begin{aligned}
 Z_1(\eps_1,\eps_2,(-A/2,A/2);\Lambda)
 ={}&e^{c(\eps_1,\eps_2)}
 e^{G_{\eps_1,\eps_2}(A,M)
       +\widetilde P_M(M;\eps_1,\eps_2)}\\
 &\times e^{F_{\mathbb P^2}^{(0)}
       (T;\eps_1,\eps_2)}
 \mathscr A_{\mathrm c}(Q,u;e^{\eps_1},e^{-\eps_2})
 \mathcal R(A,T;\eps_1,\eps_2)
\end{aligned}
\label{eq:full-u-zero-expansion}
\end{equation}
as meromorphic functions on that region, where \(u=e^{-A}\), \(T=M+3A/2\) with \(T\) fixed as \(A\to\infty\), and \(\mathcal R\) is \eqref{eq:remaining-product-factor}.

After multiplying the left side by
\[
 e^{-c(\eps_1,\eps_2)
    -G_{\eps_1,\eps_2}(A,M)
    -\widetilde P_M(M;\eps_1,\eps_2)
    -F_{\mathbb P^2}^{(0)}(T;\eps_1,\eps_2)},
\]
its formal Laurent expansion is \(\mathscr A_{\mathrm c}(Q,u;e^{\eps_1},e^{-\eps_2}) \mathcal R(A,T;\eps_1,\eps_2)\in\mathscr H_{\mathfrak S}\).  The factor \(\mathcal R\) lies in \(1+\mathfrak m_A\), and
\begin{equation}
 \operatorname{CT}_A\bigl(
 \mathscr A_{\mathrm c}(Q,u;t,q)\mathcal R(A,T;\eps_1,\eps_2)\bigr)
 =\mathscr A_{\mathrm c}(Q,0;t,q)=\mathcal Z(Q;t,q).
\label{eq:u-zero-order-zero}
\end{equation}
\end{lemma}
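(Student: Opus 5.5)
The plan is to factor \(Z_1\) at \(\vec a=(-A/2,A/2)\) into its degree \(0\) part and its framed sheaf part, and then to treat each piece using results already established. The framed sheaf part comes first. By \eqref{eq:NY-ADHM-series}, \eqref{eq:NY-p-variable} and the change of framing from \((v,v^{-1})\) to \((u,1)\), it equals \eqref{eq:NY-centered-ADHM} with \(\mathfrak p=-\mathfrak a^{3/2}z\). Using \eqref{eq:P2-plane-variable} and replacing \(Q\) by \(-Q\), this is the series \(\iota_0 Z_{2,1}^{\mathrm{fr}}\bigl((q/t)^{3/2}Qu^{-2};t^{-1},q;u,1\bigr)\) appearing in \eqref{eq:normalized-u-series}. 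The one point to check here is that \(\iota_0\), the expansion at \(u=0\), agrees with the analytic continuation on \(\mathfrak S\). This holds coefficientwise in \(Q\) by (A4)--(A5).

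Next come the degree \(0\) factors. By \eqref{eq:NY-exponentiated-gamma}, the two roots \(\pm A\) give the polynomial \(-P^{\mathrm{pert}}(\pm A)\) together with the products \((e^{\mp A-\eps_1-\eps_2};e^{-\eps_1},e^{-\eps_2})_\infty\). For the root \(+A\) the product argument contains \(u\), and after base inversion \eqref{eq:double-Pochhammer-base-inversion} to the bases \((t,q)\) it becomes \(\PE[(t+q)u/((1-t)(1-q))]\), which matches the first term of the exponential in \(\mathscr A_{\mathrm c}\). For the root \(-A\) the product contains \(u^{-1}\) and does not decay. Here I would use the connection identity \eqref{eq:exact-double-Pochhammer-connection} at \(Y_{\mathrm{root}}\) to rewrite it as \(e^{W(Y_{\mathrm{root}})}\) times the decaying product and \(\Phi(Y_{\mathrm{root}})\).

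Rewriting in this way introduces \(\Phi(Y_{\mathrm{root}})^{-1}\). To bring in the empty two-leg factor \(\PE[-\sqrt{tq}\,Q/((1-t)(1-q)u)]\) of \(\mathscr A_{\mathrm c}\), I would multiply by that factor and divide by it. The connection identity at \(Y_{\mathrm{edge}}\) then expresses the inverse outgoing product as \(e^{W(Y_{\mathrm{edge}})}\) times the incoming product and \(\Phi(Y_{\mathrm{edge}})\), which is exactly how \(\mathcal R\) in \eqref{eq:remaining-product-factor} arises. By (A1) the identities from Lemma~\ref{lem:triple-sine-continuation} are available on the common domain.

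The main obstacle is the polynomial bookkeeping. The pieces to collect are the perturbative parts \(P^{\mathrm{pert}}(\pm A)\), the cubic terms \(a_\alpha^3/6\eps_1\eps_2\) (which cancel because \(a_1=-a_2\)), the two values \(W(Y)\), and the \(\log\Lambda\) terms with the branch \eqref{eq:Lambda-continuation-branch}, where \(\log\Lambda=(-M+2\pi\mathrm i)/4\). I would show that their sum equals \(c+G+\widetilde P_M+F^{(0)}_{\mathbb P^2}(T)\). The key input is \eqref{eq:polynomial-decomposition}, which combines the \(A^3\), \(MA^2\) and \(Y^3\) terms into \(F^{(0)}_{\mathbb P^2}+P_M\). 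What remains is to match the \(\zeta(3)\), \(\pi\mathrm i A\), and \(M\)-linear terms against \(G\) and \(L\). The constant \(\pi\mathrm i\)-terms, including the shift \(\pi\mathrm i\) in \(Y_{\mathrm{root}}\) and \(\omega_3=2\pi\mathrm i\), should collect into \(c(\eps_1,\eps_2)\). This is a finite computation in \(A,M,\eps_i\), and I would carry it out by comparing coefficients of the monomials \(A^aM^b\).

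With the polynomial identity in hand, \eqref{eq:full-u-zero-expansion} follows. Its formal expansion lies in \(\mathscr H_{\mathfrak S}\): by Lemma~\ref{lem:u-zero-regularity}, \(\mathscr A_{\mathrm c}\in\K[[Q,u]]\), and \(u\) has positive \(h\)-value, while Lemma~\ref{lem:completed-semigroup-support} puts \(\mathcal R\) in \(1+\mathfrak m_A\). Since \(\operatorname{CT}_A\) is a ring homomorphism with \(\operatorname{CT}_A(\mathcal R)=1\) and \(\operatorname{CT}_A(\mathscr A_{\mathrm c})=\mathscr A_{\mathrm c}(Q,0)\), equation \eqref{eq:u-zero-constant-term} gives \eqref{eq:u-zero-order-zero}.
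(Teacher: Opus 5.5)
Your route matches the paper's: identify the framed sheaf part with the series in \eqref{eq:normalized-u-series}, apply the connection identity \eqref{eq:exact-double-Pochhammer-connection} at $Y_{\mathrm{root}}$ and at $Y_{\mathrm{edge}}$, collect polynomials through \eqref{eq:polynomial-decomposition}, and finish with $\operatorname{CT}_A$. However, your accounting of the one-leg factor is wrong, and as written the factorization does not close.

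With $t=e^{\eps_1}$ and $q=e^{-\eps_2}$, the root $+A$ product is $(ut^{-1}q;t^{-1},q)_\infty$. Base inversion \eqref{eq:double-Pochhammer-base-inversion} turns it into $(qu;t,q)_\infty^{-1}=\PE[qu/((1-t)(1-q))]$. That is only the $q$-half of $\PE[(t+q)u/((1-t)(1-q))]$ in $\mathscr A_{\mathrm c}$.

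The $t$-half comes from the other root. Its factor is $(q/u;t,q)_\infty^{-1}$. Since $q/u=-\sqrt{tq}\,e^{Y_{\mathrm{root}}}$ and $-\sqrt{tq}\,e^{-Y_{\mathrm{root}}}=tu$, the connection identity gives
\[
 (q/u;t,q)_\infty^{-1}=e^{-W(Y_{\mathrm{root}})}\,(tu;t,q)_\infty^{-1}\,\Phi(Y_{\mathrm{root}})^{-1}.
\]
It is the product $(qu;t,q)_\infty^{-1}(tu;t,q)_\infty^{-1}$ that equals the one-leg plethystic exponential. In your version the ``decaying product'' $(tu;t,q)_\infty^{-1}$ produced at $Y_{\mathrm{root}}$ is never placed anywhere. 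Because you already credited the whole one-leg factor to the root $+A$, you would be left with a stray $\PE[tu/((1-t)(1-q))]$. That factor is not part of $\mathcal R$ in \eqref{eq:remaining-product-factor}, so \eqref{eq:full-u-zero-expansion} would come out wrong by exactly that factor. Since the stray factor lies in $1+\mathfrak m_A$, \eqref{eq:u-zero-order-zero} would survive, but the displayed identity would not.

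Track the orientations the same way.
\begin{itemize}
\item The root contributes $e^{-W(Y_{\mathrm{root}})}$, not $e^{W(Y_{\mathrm{root}})}$.
\item After dividing out the two-leg factor $(-\sqrt{tq}\,Q/u;t,q)_\infty^{-1}$ of $\mathscr A_{\mathrm c}$, what remains is the outgoing product $(-\sqrt{tq}\,Q/u;t,q)_\infty$ itself, not its inverse. This becomes $e^{W(Y_{\mathrm{edge}})}(-\sqrt{tq}\,u/Q;t,q)_\infty\Phi(Y_{\mathrm{edge}})$.
\end{itemize}

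With these signs the polynomial bookkeeping you defer does close. It is most of the content of the lemma, so write it out. With $D=\eps_1\eps_2$ and $s=\eps_1+\eps_2$:
\begin{itemize}
\item The term $\pi\mathrm iA^2/(2D)$ in $-W(Y_{\mathrm{root}})$ cancels the $2\pi\mathrm i$ part of $A^2\log\Lambda/D$ coming from the lift \eqref{eq:Lambda-continuation-branch}.
\item The linear $A$-terms give $P_{\pm A}$, using $s^2/8-(\eps_1^2+\eps_2^2)/24=(\eps_1^2+\eps_2^2+3D)/12$, plus the term $-\pi\mathrm i sA/(2D)$ of $G$.
\item The $M$-coefficient $(\eps_1^2+\eps_2^2+3D)/(24D)$ splits evenly between $G$ and $L_{\eps_1,\eps_2}$.
\item The $\pi\mathrm i$-constants of $-P^{\mathrm{pert}}(A)-P^{\mathrm{pert}}(-A)-W(Y_{\mathrm{root}})$ cancel, so $c$ is exactly minus the $\pi\mathrm i$-constant of $G$.
\item $W(Y_{\mathrm{edge}})$ is absorbed by \eqref{eq:polynomial-decomposition}.
\end{itemize}
Your final paragraph on $\mathscr H_{\mathfrak S}$ and $\operatorname{CT}_A$ is fine.
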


\begin{proof}
Combine the two exponentiated root factors in \eqref{eq:NY-full-partition}.  By \eqref{eq:NY-exponentiated-gamma}, after removing their explicit polynomial exponentials, their product is
\begin{equation}
 (ut^{-1}q;t^{-1},q)_\infty
 (u^{-1}t^{-1}q;t^{-1},q)_\infty
 =(qu;t,q)_\infty^{-1}(q/u;t,q)_\infty^{-1},
\label{eq:root-series-Pochhammer}
\end{equation}
where \(t=e^{\eps_1}\), \(q=e^{-\eps_2}\).  This is an identity of the exponentiated products, so no logarithmic continuation at \(-A\) is needed.
Indeed, before inverting the first base the two arguments are \(ut^{-1}q\) and \(u^{-1}t^{-1}q\) with bases \((t^{-1},q)\), and \((x;t^{-1},q)_\infty=(tx;t,q)_\infty^{-1}\) gives \eqref{eq:root-series-Pochhammer}.

Since \(q/u=-\sqrt{tq}\,e^{Y_{\mathrm{root}}}\), the inverse of \eqref{eq:exact-double-Pochhammer-connection} gives
\begin{equation}
 (q/u;t,q)_\infty^{-1}
 =e^{-W(Y_{\mathrm{root}})}(tu;t,q)_\infty^{-1}
  \Phi(Y_{\mathrm{root}})^{-1}.
\label{eq:root-outgoing-connection}
\end{equation}
The factors \((qu;t,q)_\infty^{-1}\) and \((tu;t,q)_\infty^{-1}\) satisfy
\begin{equation}
 (qu;t,q)_\infty^{-1}(tu;t,q)_\infty^{-1}
 =\PE\left[\frac{(t+q)u}{(1-t)(1-q)}\right].
\label{eq:root-one-leg-products}
\end{equation}
Combining the polynomial part of \eqref{eq:NY-gamma} with \(-W(Y_{\mathrm{root}})\), in the branch \eqref{eq:Lambda-continuation-branch},
\begin{equation}
\begin{aligned}
 &\exp[-\widetilde\gamma_{\eps_1,\eps_2}(A;\Lambda)
       -\widetilde\gamma_{\eps_1,\eps_2}(-A;\Lambda)]\\
 &\quad={}
 e^{c+G_{\eps_1,\eps_2}(A,M)+P_{\pm A}(A,M;\eps_1,\eps_2)
       +L_{\eps_1,\eps_2}(M)}
 \PE\left[
  \frac{(t+q)u}{(1-t)(1-q)}\right]
 \Phi(Y_{\mathrm{root}})^{-1}.
\end{aligned}
\label{eq:root-continuation}
\end{equation}
Indeed, the \(\pi\mathrm{i}\)-dependent part of \(-W(Y_{\mathrm{root}})\) contributes \(\pi\mathrm{i}A^2/(2D)\), while the lift \eqref{eq:Lambda-continuation-branch} changes \(\log\Lambda\) by \(\pi\mathrm{i}/2\) and the quadratic term of \eqref{eq:NY-gamma} contributes the opposite \(-\pi\mathrm{i}A^2/(2D)\).  The \(A\)-dependent polynomial left after removing \(G_{\eps_1,\eps_2}(A,M)\) is
\[
 \frac1D\left(
  \frac{A^3}{6}+\frac{MA^2}{4}-\frac{\pi^2A}{3}
  +\frac{\eps_1^2+\eps_2^2+3D}{12}A\right)
 =P_{\pm A}(A,M;\eps_1,\eps_2).
\]
The coefficient of \(M\) independent of \(A\) in the paired roots is
\[
 \frac{\eps_1^2+\eps_2^2+3D}{24D}M;
\]
half of it sits in \(G_{\eps_1,\eps_2}(A,M)\) and the other half is \(L_{\eps_1,\eps_2}(M)\).  The remaining \(A,M\)-independent terms are
\(-\pi\mathrm{i}(\eps_1^2+3D+\eps_2^2)/(24D)\), giving the stated \(c\).

Solving \eqref{eq:normalized-u-series} for the framed sheaf series cancels the one-leg factor in \eqref{eq:root-continuation} and leaves \(\left(-\sqrt{tq}\,Q/u;t,q\right)_\infty\).  Since \(Y_{\mathrm{edge}}=-M-A/2=A-T\), equation \eqref{eq:exact-double-Pochhammer-connection} rewrites it as
\[
 e^{W(Y_{\mathrm{edge}})}
 \left(-\sqrt{tq}\,\frac uQ;t,q\right)_\infty
 \Phi(Y_{\mathrm{edge}}).
\]
Multiplying by \(\Phi(Y_{\mathrm{root}})^{-1}\) from \eqref{eq:root-continuation} and using \eqref{eq:polynomial-decomposition} gives \eqref{eq:full-u-zero-expansion} coefficientwise in \(\Lambda^4\); Conjecture~\ref{conj:P2-asymptotic-resummation} assembles these identities in the chosen convergent product representation.

Assumptions (A2)--(A3) place the specified product expansion of \(\mathcal R\) in \(1+\mathfrak m_A\).  Since \(\mathscr A_{\mathrm c}\in\K_{\eps^\circ}[[Q,u]]\) after the equivariant specialization, continuity of \(\operatorname{CT}_A\) and Lemma~\ref{lem:u-zero-regularity} give \eqref{eq:u-zero-order-zero}.
\end{proof}

\begin{lemma}
\label{lem:two-chart-factor}
For the chart substitutions of Lemma~\ref{lem:NY-chart-substitutions}, let
\begin{equation}
 A_i=A+\eps_i(2n+k),\qquad
 M_i=M+\eps_i\eta_{k,d_{\mathrm{NY}}},\qquad
 \eta_{k,d_{\mathrm{NY}}}
 =\frac{3-2d_{\mathrm{NY}}-k}{2}.
\label{eq:centered-additive-chart-data}
\end{equation}
The quotient of the exponentials of \(G_{\alpha,\beta}(A,M)+\widetilde P_M(M;\alpha,\beta)\) at the two chart substitutions by the unshifted exponential is
\begin{equation}
 (-1)^n e^{-\pi\mathrm{i}k/2}
 \exp\left[
 \left(\frac1{12}-\frac{\eta_{k,d_{\mathrm{NY}}}^2}{3}\right)M
 -s\left(\frac{\eta_{k,d_{\mathrm{NY}}}^3}{9}
          +\frac1{24}\right)\right].
\label{eq:two-chart-factor}
\end{equation}
\end{lemma}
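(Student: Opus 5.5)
The proof is a direct bookkeeping computation with the explicit polynomials \eqref{eq:gamma-normalization}, \eqref{eq:M-polynomial} and \eqref{eq:NY-linear-M-term}. I would write \(\Delta[f]\) for the sum of the two chart substitutions of a term \(f\) minus its unshifted value. Chart \(1\) uses the data \((\alpha,\beta,A,M)=(\eps_1,\eps_2-\eps_1,A_1,M_1)\), and chart \(2\) uses \((\eps_1-\eps_2,\eps_2,A_2,M_2)\). I would then evaluate \(\Delta\) term by term, exactly as in the proofs of Corollary~\ref{cor:Hirzebruch-exact-HSW} and Lemma~\ref{lem:P2-perturbative-shift}. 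The basic rational identities are those already recorded there:
\[
 \frac1{a(b-a)}+\frac1{(a-b)b}=\frac1{ab},\qquad
 g(a,b-a)+g(a-b,b)=g(a,b),\qquad
 a\,g(a,b-a)+b\,g(a-b,b)=\frac{a+b}{24},
\]
\[
 h(a,b-a)+h(a-b,b)-h(a,b)=-\frac{a+b}{24}.
\]

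First, the \(\zeta(3)\) term of \(G\) cancels by the first identity. Next, in the term \(-\pi\mathrm i(\alpha+\beta)A/(2\alpha\beta)\), the coefficient of \(A\) in the two charts is
\[
 -\frac{\pi\mathrm i}2\cdot\frac{\eps_2^2-\eps_1^2}{\eps_1\eps_2(\eps_2-\eps_1)},
\]
which reproduces the unshifted coefficient, so the \(A\)-dependence cancels. The shift \(\eps_i(2n+k)\) leaves the constant
\[
 -\frac{\pi\mathrm i(2n+k)}2\cdot\frac{\eps_2-\eps_1}{\eps_2-\eps_1}
 =-\pi\mathrm i\left(n+\frac k2\right),
\]
whose exponential is the sign \((-1)^ne^{-\pi\mathrm ik/2}\). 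In the term \(g(\alpha,\beta)(2\pi\mathrm i+M)\), the additivity of \(g\) cancels both the \(2\pi\mathrm i\) part and the \(M\) part. The shift \(M\mapsto M+\eps_i\eta\) contributes \(\eta s/24\) by the weighted identity. The cubic \(h\)-term contributes \(-s/24\). Likewise, \(L_{\eps_1,\eps_2}(M)=g(\eps_1,\eps_2)M\) contributes another \(\eta s/24\) and no \(M\)-term.

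The remaining piece is \(P_M\). Here I would repeat the computation of Lemma~\ref{lem:P2-perturbative-shift} with \(M\) in the role of \(T\) and \(\eta\) in the role of \(R\), but with the different coefficients \(1/9\) and \((\pi^2/3-s^2/12)\). The \(M^3\) and \(M^2\) terms cancel by the first identity and by \(\eps_1/(\eps_1(\eps_2-\eps_1))+\eps_2/((\eps_1-\eps_2)\eps_2)=0\). The \(\pi^2M\) term cancels, and \(\pi^2\eta\) also cancels since \(\eps_1\cdot\frac1{\eps_1(\eps_2-\eps_1)}+\eps_2\cdot\frac1{(\eps_1-\eps_2)\eps_2}=0\). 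What survives is an \(M\)-linear term, coming from \(3\eta^2\cdot\frac19\cdot\sum_i\eps_i^2/(\alpha_i\beta_i)\) and from the changed \(s_i^2/(\alpha_i\beta_i)\), together with a constant in \(\eta^3\) and \(\eta\). The simple evaluations \(\sum_i\eps_i^2/(\alpha_i\beta_i)=-1\) and \(\sum_i\eps_i^3/(\alpha_i\beta_i)=-s\) feed into this.

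Finally, I would add up all the contributions. These are the sign, the two \(\eta s/24\), the \(-s/24\), and the \(P_M\) terms, including \(-\tfrac1{12}\sum_i s_i^2/(\alpha_i\beta_i)\) with \(s_1=\eps_2\), \(s_2=\eps_1\). The check is that they combine to \((\tfrac1{12}-\eta^2/3)M-s(\eta^3/9+\tfrac1{24})\). I expect this collection of the linear-in-\(\eta\) constants to be the main obstacle. The pieces \(\eta s/24\) from \(G\) and \(L\) must cancel against the \(\eta\)-terms produced by \(P_M\). So I would carry out this step over the common denominator \(\eps_1\eps_2(\eps_2-\eps_1)\) and check it against the special values \(\eta=\tfrac12,-\tfrac12\) occurring for the three pairs \((k,d_{\mathrm{NY}})\).
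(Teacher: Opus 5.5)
Your proposal is correct and is essentially the paper's proof: a direct term-by-term computation giving $\Delta G=-\pi\mathrm{i}(n+k/2)+s(\eta-1)/24$ and $\Delta\widetilde P_M=(\tfrac1{12}-\tfrac{\eta^2}{3})M-s(\tfrac{\eta^3}{9}+\tfrac{\eta}{12})+\tfrac{\eta s}{24}$. In the sum, the three linear-in-$\eta$ constants ($\eta s/24$ from $G$, $\eta s/24$ from $L$, $-\eta s/12$ from $P_M$) cancel exactly as you anticipate; as a small correction to your sanity check, the three pairs give $\eta=\tfrac12,-\tfrac12,0$, not only $\pm\tfrac12$.
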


The exponential is independent of \(n\).  For \((k,d_{\mathrm{NY}})=(0,1)\) or \((0,2)\) it is also independent of \(M\), and the quotient is independent of \(u\).

\begin{proof}
Let \(\eta=\eta_{k,d_{\mathrm{NY}}}\), and let \(\Delta f\) denote the sum of the two chart values of \(f\) minus its unshifted value.  The \(A\)-dependent term of \(G_{\alpha,\beta}\) is \(-\pi\mathrm{i}(\alpha+\beta)A/(2\alpha\beta)\).  Substituting the two chart weights, including the shifts of \(M\) in \eqref{eq:centered-additive-chart-data},
\begin{equation}
 \Delta G
 =-\pi\mathrm{i}\left(n+\frac k2\right)
  +\frac{s(\eta-1)}{24}.
\label{eq:gamma-two-chart}
\end{equation}
The coefficients of the unshifted additive variables cancel by
\[
 \frac1{\eps_1(\eps_2-\eps_1)}
 +\frac1{(\eps_1-\eps_2)\eps_2}
 =\frac1{\eps_1\eps_2}.
\]
Direct expansion gives
\begin{equation}
\begin{aligned}
 &\widetilde P_M
 (M+\eps_1\eta;\eps_1,\eps_2-\eps_1)
 +\widetilde P_M
 (M+\eps_2\eta;\eps_1-\eps_2,\eps_2)\\
 &\qquad-\widetilde P_M
 (M;\eps_1,\eps_2)\\
 &=\left(\frac1{12}-\frac{\eta^2}{3}\right)M
 -s\left(\frac{\eta^3}{9}+\frac{\eta}{12}\right)
 +\frac{\eta s}{24}.
\end{aligned}
\label{eq:M-polynomial-transformation}
\end{equation}
Combining \eqref{eq:gamma-two-chart} and \eqref{eq:M-polynomial-transformation} gives \eqref{eq:two-chart-factor}.

For \((k,d_{\mathrm{NY}})=(0,1)\), \(\eta=1/2\) and the coefficient of \(M\) vanishes:
\begin{equation}
 \Delta G=-\pi\mathrm{i}n-\frac{s}{48},\qquad
 \Delta\widetilde P_M=-\frac{5s}{144},
 \qquad
 \exp(\Delta G+\Delta\widetilde P_M)
 =(-1)^ne^{-s/18}.
\label{eq:unity-common-factor}
\end{equation}
For \((k,d_{\mathrm{NY}})=(1,1)\), \(\eta=0\) and the exponential in \eqref{eq:two-chart-factor} is
\begin{equation}
 \exp\left(\frac{M}{12}-\frac{s}{24}\right).
\label{eq:zero-common-M-factor}
\end{equation}
It depends on \(M\), hence on \(A\) and \(T\) along \(M=T-3A/2\), but it is independent of \(n\) and common to every summand of the vanishing equation, so we divide by it before expanding in \(\mathscr H_{\mathfrak S}\) and applying \(\operatorname{CT}_A\); the right side stays \(0\).  The constants in \eqref{eq:full-u-zero-expansion} contribute the further quotient
\[
 \exp\bigl(c(\eps_1,\eps_2-\eps_1)
 +c(\eps_1-\eps_2,\eps_2)-c(\eps_1,\eps_2)\bigr),
\]
independent of \(n,A,M,T\).  Substitution of the explicit \(c\) in Lemma~\ref{lem:full-u-zero-expansion} shows that its exponent is zero, so this quotient is \(1\).  The lift \eqref{eq:Lambda-continuation-branch} contributes \(1\) when \(k=0\) and one further nonzero constant when \(k=1\), which is removed together with \eqref{eq:zero-common-M-factor}.
\end{proof}

Recall \(R_{r,n}\), \(m_r(n)\) and \(\gamma_r(n)\) from \eqref{eq:P2-unity-exponents} and \eqref{eq:P2-vanishing-exponents}.

\begin{lemma}
\label{lem:bilateral-u-zero-limit}
Assume Conjectures~\ref{conj:P2-vertex-comparison} and \ref{conj:P2-asymptotic-resummation}, and fix the angular region \(\mathfrak S\) above.  For
\[
 (k,d_{\mathrm{NY}})=(0,1),\qquad (0,2),\qquad (1,1),
\]
remove the factors of Lemma~\ref{lem:two-chart-factor} from the bilateral sum over \(n\in\Z\); in the last case also remove \(Q^{1/3}\), the factor \eqref{eq:zero-common-M-factor}, and the constants described in the proof of that lemma.  The resulting sums lie in \(\mathscr H_{\mathfrak S}\), and \(\operatorname{CT}_A\) may be applied term by term.
\end{lemma}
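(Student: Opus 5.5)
The plan is to show that each summand of the normalized bilateral sum is an element of \(\mathscr H_{\mathfrak S}\), and then that the family of summands is summable in the topology of Definition~\ref{def:completed-semigroup-algebra}. Term by term application of \(\operatorname{CT}_A\) then follows from its continuity (Lemma~\ref{lem:completed-semigroup-support}).

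\emph{Rewriting each summand.} For fixed \(n\), I write the chart values of \(Z_1\) using Lemma~\ref{lem:full-u-zero-expansion} applied with the chart parameters. Dividing by the unshifted value and by the factor of Lemma~\ref{lem:two-chart-factor} leaves three pieces:
\begin{itemize}
\item the quotient of the \(F^{(0)}_{\mathbb P^2}\) exponentials, which by Lemma~\ref{lem:P2-perturbative-shift} is the monomial \(Q^{m_r(n)}x^{\gamma_r(n)}\), up to a factor \(Q^{1/3}\) when \(r=3\);
\item the sign \((-1)^n\);
\item the product \(\prod_i(\tau_{i,n}\mathscr A_{\mathrm c})(\tau_{i,n}\mathcal R)\), divided by the unshifted \(\mathscr A_{\mathrm c}\mathcal R\).
\end{itemize}
Since \(\tau_{i,n}\) preserves exponent pairs (Lemma~\ref{lem:completed-semigroup-support}), each \(\tau_{i,n}\mathcal R\) lies in \(1+\mathfrak m_A\). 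The series \(\tau_{i,n}\mathscr A_{\mathrm c}\) is a power series in \(Qe^{-\eps_iR_{r,n}}\) and \(ue^{-\eps_i(2n+k)}\), hence lies in \(\mathscr H_{\mathfrak S}\) with coefficients rescaled by elements of \(\K_{\eps^\circ}\). The unshifted denominator is a unit because its \(\operatorname{CT}_A\) is \(\mathcal Z\), and \(\mathcal Z\) has constant term \(1\). So every summand lies in \(\mathscr H_{\mathfrak S}\).

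\emph{Summability.} This is the main obstacle. I need that for each pair \((H,V)\) only finitely many \(n\) contribute terms with \(h\le H\) and \(v\le V\). The summand of index \(n\) has the prefactor \(Q^{m_r(n)}\) (shifted by \(1/3\) when \(r=3\)), so every monomial in it has \(v\ge m_r(n)+v(\text{rest})\). The rest is a product of elements supported in \(\Gamma_{\mathfrak S}\), so by \eqref{eq:semigroup-cone-bound} it satisfies \(v(\text{rest})\ge -Ch\). The \(\mathfrak m_A\)-part is fine because \(h\) is translation invariant. The \(u\)-powers of \(\tau_{i,n}\mathscr A_{\mathrm c}\) are the delicate point: the substitution multiplies \(u\) by \(e^{-\eps_i(2n+k)}\), which only changes coefficients and does not change the monoid. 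Hence a term with \(h\le H\) and \(v\le V\) forces \(m_r(n)\le V+CH\). Since \(m_r\) is a positive definite quadratic, only finitely many \(n\) satisfy this.

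The remaining issue is coefficient growth in \(n\). Each coefficient of a fixed \((h,v)\) is a finite sum over finitely many \(n\), so no analytic estimate is needed; (A4) is used only to identify this formal sum with the analytic bilateral sum. In the case \((1,1)\) the common factor \eqref{eq:zero-common-M-factor} has already been removed, so no \(A\)-dependence remains outside \(\mathscr H_{\mathfrak S}\). After that, continuity and multiplicativity of \(\operatorname{CT}_A\) together with the commutation \(\operatorname{CT}_A\tau_{i,n}=\tau^0_{i,n}\operatorname{CT}_A\) give the term by term application.
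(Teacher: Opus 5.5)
Your argument is essentially the paper's. The cone bound \eqref{eq:semigroup-cone-bound} forces \(m_r(n)\le V+CH\) for any term with \(h\le H\), \(v\le V\), and the finite sublevel sets of \(m_r\) then leave finitely many \(n\). Chart translations only rescale coefficients, and multiplicativity of \(\operatorname{CT}_A\) together with \eqref{eq:u-zero-order-zero} identifies the \(h=0\) part. Your division by the unshifted \(\mathscr A_{\mathrm c}\mathcal R\) is harmless, since it is a unit as you say, but it is not needed.

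One justification is wrong, and it is exactly the point the paper singles out. The family of summands is \emph{not} summable in the topology of Definition~\ref{def:completed-semigroup-algebra}, whose neighbourhood basis is \(F^{>H}\mathscr H_{\mathfrak S}\):

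\begin{itemize}
\item The \(h=0\) part of the \(n\)-th summand is nonzero. Up to a scalar it is \(Q^{m_r(n)}\) times the two chart-shifted copies of \(\mathcal Z\).
\item So no summand lies in \(F^{>0}\mathscr H_{\mathfrak S}\), and the summands do not tend to \(0\) in that topology.
\item Hence the continuity of \(\operatorname{CT}_A\) from Lemma~\ref{lem:completed-semigroup-support}, which refers to that topology, gives you nothing here.
\end{itemize}

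What you have actually proved is \((h,v)\)-local finiteness. The bilateral sum therefore exists only as a coefficientwise sum in \(\mathscr H_{\mathfrak S}\), not as a limit of partial sums in the \(h\)-filtration. Term-by-term application of \(\operatorname{CT}_A\) still holds, for a different reason: \(\operatorname{CT}_A\) is the projection onto the coefficients with \(h(\gamma)=0\), and such a projection commutes with any coefficientwise sum of a locally finite family. Replace the continuity argument with this and your proof is complete.
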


\begin{proof}
Substituting \(R=3n+\tfrac12,3n-\tfrac12,3n+\tfrac32\) in \eqref{eq:P2-degree-zero-polynomial} gives the initial \(Q\)-exponents
\[
 m_1(n)
 \quad\text{for }(k,d_{\mathrm{NY}})=(0,1),\qquad
 m_{-1}(n)
 \quad\text{for }(k,d_{\mathrm{NY}})=(0,2),
\]
and
\[
 \frac13+m_3(n)
 \quad\text{for }(k,d_{\mathrm{NY}})=(1,1).
\]
In the last case, first remove \(Q^{1/3}\).  By Lemma~\ref{lem:completed-semigroup-support}, every translated \(\mathcal R\)-factor and every translated \(\mathscr A_{\mathrm c}\)-factor lies in \(\mathscr H_{\mathfrak S}\), and all three functions \(m_r\) have finite sublevel sets on \(\Z\).  Let \(\mathbf e^\gamma\) be a monomial of the product of the two translated \(\mathcal R\)-factors, with \(v(\gamma)=\delta\), and write the two monomials selected from the translated \(\mathscr A_{\mathrm c}\)-factors as \(Q^{a_1}u^{b_1}\) and \(Q^{a_2}u^{b_2}\), \(a_i,b_i\geq0\); their chart translations change only the coefficients.  A term with total \(Q\)-exponent \(d_{\mathrm{tot}}\) satisfies
\[
 m_r(n)+a_1+a_2+\delta=d_{\mathrm{tot}},
\]
and since \(\delta\geq-Ch(\gamma)\),
\[
 m_r(n)\leq d_{\mathrm{tot}}+Ch(\gamma).
\]
To check the support condition of Definition~\ref{def:completed-semigroup-algebra}, fix bounds \(h\leq H\) and \(v\leq V_0\).  The \(h\)-value of the term is
\[
 h(\gamma)+(b_1+b_2)h(1,0)
\]
and its \(v\)-value is
\[
 v(\gamma)+m_r(n)+a_1+a_2.
\]
Since \(h(1,0)>0\), the first bound leaves finitely many \(b_1+b_2\), hence finitely many \((b_1,b_2)\).  It also gives \(h(\gamma)\leq H\), while the second gives \(v(\gamma)\leq V_0\) since \(m_r(n),a_1,a_2\geq0\), and the common finitely generated monoid has only finitely many such \(\gamma\), independently of \(n\): the \(h\)-bound bounds all positive-\(h\) generator multiplicities, and the \(v\)-bound then bounds the multiplicity of \((0,1)\).  For each \(\gamma\), \eqref{eq:semigroup-cone-bound} gives
\[
 m_r(n)\leq V_0-v(\gamma)\leq V_0+CH.
\]
Only finitely many \(n\) occur, and once \(\gamma,n,b_1,b_2\) are fixed, the inequality \(a_1+a_2\leq V_0-v(\gamma)-m_r(n)\) leaves finitely many \((a_1,a_2)\).  The bilateral sum therefore has locally finite support and lies in \(\mathscr H_{\mathfrak S}\).  This is a coefficientwise sum of a locally finite family; it need not be the limit of its partial sums in the \(h\)-filtration, since the \(Q\)-series have \(h=0\).

By Lemma~\ref{lem:completed-semigroup-support}, the chart translations preserve \(h,v\) and commute with \(\operatorname{CT}_A\) with the induced \(Q\)-substitution.  The coefficientwise finiteness just proved allows \(\operatorname{CT}_A\) to be applied term by term; its \(h=0\) part selects the constant term \(1\) of each \(\mathcal R\)-factor and the constant term in \(u\) of each \(\mathscr A_{\mathrm c}\)-factor, which is the shifted series \(\mathcal Z\) by \eqref{eq:u-zero-order-zero}.
\end{proof}

\begin{proposition}
\label{prop:u-zero-specialization}
Assume Conjectures~\ref{conj:P2-vertex-comparison} and \ref{conj:P2-asymptotic-resummation}, fix the branch \eqref{eq:Lambda-continuation-branch} and the angular region \(\mathfrak S\), and let
\[
 (k,d_{\mathrm{NY}})=(0,1),\ (0,2),\text{ or }(1,1),
 \qquad r=3-2d_{\mathrm{NY}}+2k.
\]
After the \(n\)-independent factors of Lemma~\ref{lem:two-chart-factor} are removed from the corresponding Nakajima--Yoshioka identity, its Laurent expansion lies in \(\mathscr H_{\mathfrak S}\), and applying \(\operatorname{CT}_A\) gives
\begin{equation}
\begin{aligned}
 &\sum_{n\in\Z}(-1)^n
 \widehat Z_X
 (T+\eps_1R_{r,n};\eps_1,\eps_2-\eps_1)\\
 &\hspace{24mm}\times
 \widehat Z_X
 (T+\eps_2R_{r,n};\eps_1-\eps_2,\eps_2)\\
 &\qquad=\Lambda_{X,r}\widehat Z_X(T;\eps_1,\eps_2),
\end{aligned}
\label{eq:u-zero-specialization}
\end{equation}
with \(\Lambda_{X,r}\) as in \eqref{eq:X-bilinear-coefficients}.  For \((k,d_{\mathrm{NY}})=(1,1)\), the removed factors also include \(Q^{1/3}\), the factor \eqref{eq:zero-common-M-factor}, and the constants of the proof of Lemma~\ref{lem:two-chart-factor}.
\end{proposition}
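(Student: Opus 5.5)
We start from the multiplicative Nakajima--Yoshioka identity \eqref{eq:NY-multiplicative} for the given \((k,d_{\mathrm{NY}})\), after the reindexing \(n\mapsto-n\) of Lemma~\ref{lem:NY-chart-substitutions}, so that the two chart arguments are \(u_i=uq_i^{2n+k}\) and \(Q_i=Qq_i^{R_{r,n}}\).  Equivalently we work with the additive form \eqref{eq:NY-native-additive} in the centered variables \(u=e^{-A}\), \(Q=e^{-T}\), \(M=T-3A/2\).  Each of the three \(Z_1\)-factors (two chart values and the unshifted one) is replaced by its factorization from Lemma~\ref{lem:full-u-zero-expansion}:
\[
 e^{c+G+\widetilde P_M}\,e^{F^{(0)}_{\mathbb P^2}(T)}\,\mathscr A_{\mathrm c}\,\mathcal R .
\]
By (A1) these continuations are simultaneous on \(\mathscr D_\infty\), and by (A5) the instanton sum commutes with the connection identity.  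For the chart factors this is the statement of Lemma~\ref{lem:full-u-zero-expansion} with \(\tau_{i,n}\) applied, that is, with parameters \((\eps_1,\eps_2-\eps_1)\) or \((\eps_1-\eps_2,\eps_2)\) and shifted \(A,T\).

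The explicit exponentials are then collected.  By Lemma~\ref{lem:two-chart-factor}, the quotient of the two chart values of \(e^{c+G+\widetilde P_M}\) by the unshifted one is \((-1)^n\) times an \(n\)-independent factor.  This factor is \(e^{-s/18}\) for \((0,1)\).  For \((0,2)\), the same computation with \(\eta=-1/2\) gives \(e^{+s/18}\) up to the \(M\)-coefficient, which again vanishes.  For \((1,1)\) it is \eqref{eq:zero-common-M-factor} times a constant.  The \(c\)-constants cancel.  The \(F^{(0)}_{\mathbb P^2}\) quotient is handled by Lemma~\ref{lem:P2-perturbative-shift}, which produces \(Q^{m_r(n)}x^{\gamma_r(n)}\) (times \(Q^{1/3}\) when \(r=3\)), exactly the prefactor of \eqref{eq:analytic-normalized-lattice-sum}.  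Dividing the Nakajima--Yoshioka identity by the unshifted factor \(e^{c+G+\widetilde P_M+F^{(0)}}\mathscr A_{\mathrm c}\mathcal R\) (a unit in \(\mathscr H_{\mathfrak S}\), since \(\mathcal R\in1+\mathfrak m_A\) and \(\mathscr A_{\mathrm c}\) has constant term \(1\)) and by the removed common factors, we obtain the following.  The left side is the bilateral sum \eqref{eq:analytic-normalized-lattice-sum}.  The right side is the scalar \((q_1q_2)^{(1-d_{\mathrm{NY}})/12}\) from \eqref{eq:NY-multiplicative}, reconciled with the \(e^{\mp s/18}\) above to give \(\Lambda_{X,r}\), or \(0\) when \(r=3\).

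Next we pass to constant terms.  By (A4) the normalized sum converges normally on \(\mathfrak S\), and by (A5) its asymptotic expansion is the formal sum of the termwise expansions.  Lemma~\ref{lem:bilateral-u-zero-limit} shows this formal sum lies in \(\mathscr H_{\mathfrak S}\) and that \(\operatorname{CT}_A\) may be applied term by term.  By Lemma~\ref{lem:completed-semigroup-support}, \(\operatorname{CT}_A\) is a ring homomorphism commuting with \(\tau_{i,n}\), so it sends each translated \(\mathcal R\) to \(1\).  By \eqref{eq:u-zero-order-zero}, it sends each translated \(\mathscr A_{\mathrm c}\) to \(\mathcal Z(Qq_i^{R_{r,n}};\cdot)\) in the chart parameters.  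Restoring \(e^{F^{(0)}_{\mathbb P^2}}\) on both sides then turns the resulting \(Q\)-series identity into \eqref{eq:u-zero-specialization}.  The identity holds at the generic point \(\eps^\circ\) and extends meromorphically by the identity principle.

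The main obstacle is justifying that the analytic identity on \(\mathfrak S\) is faithfully reflected by its formal expansion in \(\mathscr H_{\mathfrak S}\), that is, uniqueness of asymptotic expansions in the monomials \(e^{-\lambda A}Q^d\) with possibly nonintegral \(d\).  We would rely on (A5) directly here, using genericity of \(\eps^\circ\) to ensure that distinct exponent pairs give distinct monomials.  A second delicate point is the exact bookkeeping of the constants for \(r=\pm1\): the \(s/18\) sign, its interplay with the \((1-d_{\mathrm{NY}})/12\) scalar, and the constant coming from the lift \eqref{eq:Lambda-continuation-branch}.  We would verify these by a direct lowest-order check at \(Q^0\), where both sides reduce to \(n=0\).
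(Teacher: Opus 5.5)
This is essentially the paper's proof: you insert the factorization \eqref{eq:full-u-zero-expansion} into the Nakajima--Yoshioka identity, strip \((-1)^n\) and the \(n\)-independent factors via Lemma~\ref{lem:two-chart-factor}, apply \(\operatorname{CT}_A\) termwise by Lemma~\ref{lem:bilateral-u-zero-limit}, and replace each \(\mathscr A_{\mathrm c}\) by \(\mathcal Z\) via \eqref{eq:u-zero-order-zero}.

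One correction concerns the \((0,2)\) constant. At \(\eta=-1/2\), Lemma~\ref{lem:two-chart-factor} gives \(e^{-s/36}\), not \(e^{s/18}\). Neither value reconciles term by term with \((q_1q_2)^{-1/12}=e^{s/12}\) to produce \(\Lambda_{X,-1}\). So the scalar has to come from your \(Q^0\) normalization. Only \(n=0\) contributes at \(Q^0\), and \eqref{eq:P2-perturbative-shift} at \(R=\pm1/2\) gives \(e^{\pm s/18}\). This is exactly how the paper reads off \(\Lambda_{X,r}\).
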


The formal identity \eqref{eq:u-zero-specialization} does not depend on the angular region; the lift \eqref{eq:Lambda-continuation-branch} remains fixed.

\begin{proof}
Apply Theorem~\ref{thm:NY-native} with the chart variables of Lemma~\ref{lem:NY-chart-substitutions}, and insert \eqref{eq:full-u-zero-expansion} for the unshifted function and the two chart functions.  For \(k=0\), Lemma~\ref{lem:two-chart-factor} leaves the sign \((-1)^n\) and a factor independent of \(n\) and \(T\).  For \((k,d_{\mathrm{NY}})=(1,1)\), division by \eqref{eq:zero-common-M-factor} and by the nonzero constants of that proof again leaves only the \(n\)-dependence \((-1)^n\).

By Conjecture~\ref{conj:P2-asymptotic-resummation}, the continued coefficient identities assemble into the chosen normally convergent product representations; Lemmas~\ref{lem:full-u-zero-expansion} and \ref{lem:completed-semigroup-support} place their Laurent expansions in \(\mathscr H_{\mathfrak S}\), and Lemma~\ref{lem:bilateral-u-zero-limit} allows \(\operatorname{CT}_A\) to be applied term by term to the lattice sum.  Equation~\eqref{eq:u-zero-order-zero} replaces each \(\mathscr A_{\mathrm c}\)-factor by the corresponding shifted series \(\mathcal Z\), which together with the polynomial factor gives \(\widehat Z_X\).  In the two unity cases, \eqref{eq:P2-perturbative-shift} at \(R=1/2\) and \(R=-1/2\) gives \(e^{(\eps_1+\eps_2)/18}=\Lambda_{X,1}\) and \(e^{-(\eps_1+\eps_2)/18}=\Lambda_{X,-1}\).  The divided vanishing equation gives \(\Lambda_{X,3}=0\).

After \(\operatorname{CT}_A\) is applied, the expression involves only \(\mathcal Z\), the polynomial factor and the chart shifts, none of which depends on \(\mathfrak S\).  Any angular region satisfying the conditions above therefore gives the same identity.
\end{proof}

\end{document}